\numberwithin{equation}{section}
\theoremstyle{plain}
\newtheorem{thm}{Theorem}[section]
\newtheorem{prop}[thm]{Proposition}
\newtheorem{defi}[thm]{Definition}
\newtheorem{lem}[thm]{Lemma}
\newtheorem{cor}[thm]{Corollary}
\newtheorem{eg}[thm]{Example}
\theoremstyle{remark}
\newtheorem{rema}[thm]{Remark}
\title[Quantum affine KZ equations]
{Quantum affine Knizhnik-Zamolodchikov equations and quantum
spherical functions, I}
\author{Jasper V. Stokman}
\address{KdV Institute for Mathematics, University of Amsterdam,
Science Park 904, 1098 XH Amsterdam, The Netherlands.}
\email{j.v.stokman@uva.nl}
\subjclass[2000]{33D80, 33D52}
\begin{document}
\keywords{(Double) affine Hecke algebra, quantum affine Knizhnik-Zamolodchikov
equations, Cherednik-Macdonald $q$-difference operators, Cherednik-Dunkl
operators}
\begin{abstract}
Cherednik's quantum affine Knizhnik-Zamolodchikov equations associated to an
affine Hecke algebra module $M$ form a holonomic system of $q$-difference
equations acting on $M$-valued functions on a complex torus $T$. In this paper
the quantum affine Knizhnik-Zamolodchikov equations are related to 
the Che\-red\-nik-Mac\-do\-nald theory when $M$ is induced from
a character of a standard parabolic subalgebra
of the affine Hecke algebra.
We set up correspondences between solutions of the quantum affine KZ equations
and, on the one hand, 
solutions to the spectral problem of the
Cherednik-Dunkl $q$-difference reflection
operators (generalizing work of
Kasatani and Takeyama) and, on the other hand, solutions to the spectral
problem of the 
Cherednik-Macdonald $q$-difference operators (generalizing work of Cherednik).
The correspondences are applicable 
to all relevant spaces of functions on $T$ and for all
parameter values,
including the cases that $q$ and/or 
the Hecke algebra parameters are roots of unity.
\end{abstract}
\maketitle
\setcounter{tocdepth}{1}
\tableofcontents

\section{Introduction}
This is the first part of a sequel of papers reporting on the
analysis of Cherednik's \cite{CQKZ, CInd} 
quantum affine Knizhnik-Zamolodchikov (KZ) equations and
their applications to quantum harmonic analysis and integrable systems. 

Cherednik's \cite{CQKZ, CInd}
quantum affine KZ equations associated to
an affine Hecke algebra module $M$ form a holonomic system of first
order $q$-difference equations (an integrable $q$-connection)
acting on $M$-valued functions on a complex
torus $T$; typically 
meromorphic or Laurent polynomial solutions are considered. 
For $0<|q|<1$ and for $M$ a principal
series module 
(i.e. obtained from inducing a character of the minimal
standard parabolic subalgebra of the affine Hecke algebra), 
Cherednik \cite{CInd} studied a correspondence between meromorphic
solutions of the associated quantum affine KZ equations on the one hand,
and common meromorphic
eigenfunctions of the Cherednik-Macdonald $q$-difference operators
on the other hand. 
One of the objectives of the present paper is to determine explicit
conditions on the induction data of the principal series module
to ensure the bijectivity of this correspondence.

In addition we generalize and refine 
Cherednik's correspondence, allowing modules $M$ that are 
induced from a character of
a standard parabolic subalgebra of the affine Hecke algebra,
allowing arbitrary classes of functions on $T$, and allowing
all values of the quantum parameter $q$ and of 
the Hecke algebra parameters $k$
(including roots of unity). 
The main properties of this generalized and refined correspondence
are stated in Theorem \ref{CMcorr}.

The classical analogue of Cherednik's correspondence is due to 
Matsuo \cite{Mat} and Cherednik \cite{CInt}. 
It was pursued further by Opdam \cite{O} and by 
Cherednik and Ma \cite{CM}.
A substantial part of the present work is inspired by Opdam's \cite[\S 3]{O}
approach to the classical correspondence, in which Cherednik's
trigonometric analogues of Dunkl's
differential-reflection operators naturally
come into play. In the present quantum setup, Opdam's approach 
suggests a natural
intermediate stage of the correspondence in which solutions of
quantum affine KZ equations are related to common eigenfunctions
of the Cherednik-Dunkl commuting $q$-difference reflection operators.
Such a correspondence has indeed recently been
established for root system of type A
by Kasatani and Takeyama \cite{KT}. We extend these results
to arbitrary root systems in Theorem \ref{mainthmY}.

For $q=1$, for root system of
type A,
and for $M$ induced from a character a maximal
standard parabolic subalgebra, the $q$-connection
matrices of the quantum KZ equations
are interpolants of the transfer matrix of an inhomogeneous
spin chain. It leads to the possibility to construct particular
eigenstates for XXZ spin chains from solutions of quantum affine KZ
equations.
In the case that the Hecke algebra parameter is a third root
of unity this approach is explored extensively in the context of
the Razumov-Stroganov conjectures
(see, e.g., \cite{RS, Pas, dFZ, KT, KP}). The correspondences 
investigated in the present paper, in case that the underlying
root system is of classical type, are expected 
to be useful in the
analysis of recent generalizations \cite{dGPS, dGP} of the
Razumov-Stroganov conjectures. 

Part II of the present paper
will be devoted to the interplay between quantum affine KZ equations
and quantum harmonic analysis. 
The crucial starting point will be the fact that 
for $0<|q|<1$, asymptotically
free solutions of the quantum KZ equations 
can be constructed which map, under the correspondence,
to $q$-analogues of the Harish-Chandra series (see \cite{vMS, vM}).
The results in the present paper then lead to explicit conditions on the
spectral parameters to ensure 
that the $q$-analogues of the Harish-Chandra series become
a basis of the meromorphic solution space of the spectral problem of the 
Cherednik-Macdonald $q$-difference operators.
Combined with the recent results of Cherednik 
\cite{CWhit} on $q$-analogues of the Harish-Chandra $c$-function,
this will accumulate in the derivation of 
an explicit expansion of Cherednik's \cite{CMehta}
quantum spherical function in
terms of the $q$-analogues of the Harish-Chandra series
(generalizing Harish-Chandra's $c$-function expansion
of the spherical function).\\
\noindent
{\bf Conventions}\\
Lots of results come in two forms: a ``$+$''-version, related to
symmetric theory, and a ``$-$''-version, related to antisymmetric
theory. We formulate both versions at the same time, labeling the objects
by $\pm$. An equality like $\pm a=\mp b$ should thus be read as
$a=-b$ and $-a=b$; it will always
be clear from context which of the two
equalities should be seen as the $+$-version and which as the 
$-$-version.\\ 
\noindent
{\bf Acknowledgments}\\
The author is supported by the Netherlands
Organization for Scientific Research (NWO) in the VIDI-project
``Symmetry and modularity in exactly solvable models''.
The author thanks Ivan Cherednik,
Eric Opdam and Paul Zinn-Justin for valuable discussions.

\section{The classes of affine Hecke algebra modules}
In this section we recall the definition of the affine Hecke algebra.
In addition, we discuss the affine Hecke algebra modules
obtained by inducing a suitable character of a standard parabolic
subalgebra.
\subsection{Weyl groups and parabolic subgroups}

Let $R_0\subset V_0$ be a finite, crystallographic, reduced, irreducible
root system in an Euclidean space $\bigl(V_0,\langle\cdot,\cdot\rangle\bigr)$ 
of dimension $n$.
We normalize the roots in such a way that long roots have squared
length $2$. The Weyl group of $R_0$ is denoted by $W_0$.
We fix a basis $\Delta_0=\{\alpha_1,\ldots,\alpha_n\}$ 
of $R_0$ once and for all. Let $R_0^{\pm}$, $\varphi$, 
$\{s_i\}_{i=1}^n$, $w_0$ be the associated positive/negative
roots, longest root, simple reflections and longest Weyl group
element, respectively. The length of a Weyl group element
$w\in W_0$ is $l(w):=\#(R_0^+\cap w^{-1}R_0^-)$. The associated
Bruhat ordering on $W_0$ is denoted by $\leq$.

Unless specified explicitly
otherwise, $I$ will always stand for a fixed subset of $\{1,\ldots,n\}$.
We write $W_{0,I}$ for the subgroup of $W_0$ generated by
the simple reflections $s_i$ ($i\in I$). Then 
$R_0^I=R_0\cap\textup{span}_{\mathbb{Z}}\{\alpha_i\}_{i\in I}$
is a root system in $V_{0,I}:=\bigoplus_{i\in I}\mathbb{R}\alpha_i$
with Weyl group isomorphic to $W_{0,I}$.
Furthermore, $\{\alpha_i\}_{i\in I}$ is a basis of $R_0^I$.
We write $R_0^{I,\pm}$ for the associated set of positive and negative
roots, respectively. The length function on $W_{0,I}$
coincides with the restriction of the length function $l$ to
$W_{0,I}$. 

Set
\begin{equation*}
\begin{split}
W_0^I:=&\{w\in W_0 \,\, | \,\, l(ws_i)>l(w)\quad \forall\, i\in I\}\\
=&\{w\in W_0 \,\, | \,\, w(R_0^{I,+})\subseteq R_0^+\}.
\end{split}
\end{equation*}
It is a complete set of representatives of the coset space
$W_0/W_{0,I}$. Furthermore,
\[
l(uv)=l(u)+l(v),\qquad \forall\, u\in W_0^I,\, \forall\, v\in W_{0,I}.
\]
The elements of $W_0^I$ are called the minimal coset representatives 
of $W_0/W_{0,I}$. When decomposing an element $w\in W_0$ as a product
of a minimal coset representative and an element of $W_{0,I}$, we use the
notation
\[
w=\overline{w}\underline{w}\qquad (\overline{w}\in W_0^I,\, 
\underline{w}\in W_{0,I})
\]
(although $\overline{w}$ and $\underline{w}$ depends on the
choice of $I$, we suppress this from the notations).
We will frequently make use of the following elementary
lemma (see \cite[Lemma 2.1]{D}).
\begin{lem}\label{ABC}
Fix $1\leq i\leq n$. Define 
\begin{equation*}
\begin{split}
A_i&=\{w\in W_0^I \,\, | \,\, l(s_iw)=l(w)-1\},\\
B_i&=\{w\in W_0^I \,\, | \,\, l(s_iw)=l(w)+1\,\, \& \,\, s_iw\in W_0^I\},\\
C_i&=\{w\in W_0^I \,\, | \,\, l(s_iw)=l(w)+1\,\, \& \,\, s_iw\not\in W_0^I\}.
\end{split}
\end{equation*}
Then 
\begin{enumerate}
\item[{\bf (i)}]
$W_0^I=A_i\cup B_i\cup C_i$ (disjoint union).
\item[{\bf (ii)}] The map $w\mapsto s_iw$ defines an involution
of $A_i\cup B_i$. It maps $A_i$ onto $B_i$.
\item[{\bf (ii)}] For $w\in C_i$ there exists a unique $i_w\in I$
such that $s_iw=ws_{i_w}$. In particular, $\overline{s_iw}=w$.
\end{enumerate} 
\end{lem}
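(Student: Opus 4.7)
The plan is to reduce everything to the standard length-function facts and to the root-theoretic characterization $w\in W_0^I\iff w(R_0^{I,+})\subseteq R_0^+$ recalled in the preceding discussion.

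Statement (i) is immediate. For any $w\in W_0$ one has $l(s_iw)\in\{l(w)-1,l(w)+1\}$, so each $w\in W_0^I$ falls into exactly one of the sets $A_i$, $B_i$, $C_i$ according to the value of $l(s_iw)$ and, in the $l(w)+1$ case, according to whether $s_iw$ still lies in $W_0^I$.

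The crux of both (ii) and (iii) is the following observation, which I would isolate first: if $w\in W_0^I$ and $s_iw\notin W_0^I$, then there exists a unique $j\in I$ with $w\alpha_j=\alpha_i$. Indeed, $w\in W_0^I$ gives $w\alpha_k\in R_0^+$ for all $k\in I$, and since $s_i$ permutes $R_0^+\setminus\{\alpha_i\}$, the failure of $s_iw$ to lie in $W_0^I$ forces $w\alpha_j=\alpha_i$ for some $j\in I$; uniqueness follows from the injectivity of $w$ on the simple roots. This is the only mildly subtle step; once it is in hand, everything else is routine length-function bookkeeping.

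Granting this, (ii) and (iii) fall out. For (ii), suppose $w\in A_i$: if $s_iw\notin W_0^I$, the observation yields $j\in I$ with $w\alpha_j=\alpha_i$, whence $s_i=ws_jw^{-1}$ and $s_iw=ws_j$, so $l(s_iw)=l(ws_j)>l(w)$ by the defining property of $W_0^I$, contradicting $w\in A_i$. Hence $s_iw\in W_0^I$, i.e., $s_i(A_i)\subseteq B_i$; the reverse inclusion is immediate from $l(s_i(s_iw))=l(w)=l(s_iw)-1$, and involutivity is just $s_i^2=e$. For (iii), if $w\in C_i$ the observation produces $i_w:=j\in I$ with $w\alpha_{i_w}=\alpha_i$, hence $s_iw=ws_{i_w}$; since $w\in W_0^I$ and $s_{i_w}\in W_{0,I}$, this factorization is the minimal-coset decomposition of $s_iw$, so $\overline{s_iw}=w$.
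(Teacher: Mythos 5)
Your proof is correct. Note that the paper itself offers no argument for this lemma — it is quoted from Deodhar (\cite[Lemma 2.1]{D}) — so there is no in-paper proof to compare with; your write-up supplies the standard self-contained argument. One small point of hygiene in your key observation: if you test membership of $s_iw$ in $W_0^I$ via the characterization $u(R_0^{I,+})\subseteq R_0^+$, the failure only gives you some $\beta\in R_0^{I,+}$ with $w\beta=\alpha_i$, and you then need the (easy) extra remark that $\beta$ must be simple, e.g.\ because $\alpha_i$, being simple in $R_0$, cannot be a nonnegative integral combination of positive roots with coefficient sum at least two, while each $w\alpha_j$ ($j\in I$) is positive. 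Alternatively, and more cleanly, use the paper's first characterization $u\in W_0^I\iff l(us_j)>l(u)$ for all $j\in I$, equivalently $u\alpha_j\in R_0^+$ for all $j\in I$: then the failure immediately produces $j\in I$ with $s_i(w\alpha_j)\in R_0^-$ and $w\alpha_j\in R_0^+$, hence $w\alpha_j=\alpha_i$. With that step made explicit, the deductions you draw for (ii) (ruling out $s_iw\notin W_0^I$ when $w\in A_i$ via $s_iw=ws_j$ and $l(ws_j)>l(w)$) and for (iii) (the factorization $s_iw=ws_{i_w}$ being the minimal-coset decomposition, so $\overline{s_iw}=w$) are all sound.
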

We record here a technical lemma, which we will be needing at a later stage.
\begin{lem}\label{Blift}
Suppose $w\in W_0^I$ and $w\not\in B_i$ for all $1\leq i\leq n$.
Then $w=\overline{w_0}$.
\end{lem}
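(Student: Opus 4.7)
The plan is to prove the contrapositive: if $w\in W_0^I$ and $w\neq\overline{w_0}$, then $w\in B_i$ for some $i$. First I would rewrite the condition $w\in B_i$ as an inversion-type condition. Combining the standard length criterion $l(s_iw)=l(w)+1\Leftrightarrow w^{-1}(\alpha_i)\in R_0^+$ with the remark that any $\beta\in R_0^{I,+}$ with $w(\beta)$ simple must itself lie in $\Delta_0^I$ (otherwise $\beta=\gamma+\delta$ with $\gamma,\delta\in R_0^{I,+}$ would express $w(\beta)$ as a sum of two positive roots, since $w\in W_0^I$), one checks that $w\in B_i$ if and only if $w^{-1}(\alpha_i)\in R_0^+\setminus R_0^{I,+}$. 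Hence the hypothesis of the lemma reads $w^{-1}(\alpha_i)\in R_0^-\cup\Delta_0^I$ for every $i\in\{1,\ldots,n\}$.

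Next I would introduce $N(w):=\{\beta\in R_0^+\setminus R_0^{I,+}:w(\beta)\in R_0^+\}$ and reduce the lemma to showing $N(w)=\emptyset$. Indeed, if $N(w)=\emptyset$ then, combined with $w\in W_0^I$, the inversion set $\{\beta\in R_0^+:w(\beta)\in R_0^-\}$ equals $R_0^+\setminus R_0^{I,+}$, which is also the inversion set of $\overline{w_0}$; since a Weyl group element is determined by its inversion set, $w=\overline{w_0}$.

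The main argument is by contradiction. Assume $N(w)\neq\emptyset$ and pick $\beta_0\in N(w)$ such that $\gamma_0:=w(\beta_0)$ has minimal height in $R_0^+$. The hypothesis forces $\gamma_0$ to be non-simple, for $\gamma_0=\alpha_i$ would imply $w\in B_i$. Using the standard fact that every non-simple positive root has the form $\alpha_i+\delta$ with $\alpha_i\in\Delta_0$ and $\delta\in R_0^+$, pick such an $\alpha_i$ and set $\delta_0:=\gamma_0-\alpha_i$. The key identity is
\[
w^{-1}(\delta_0)=\beta_0-w^{-1}(\alpha_i),
\]
and I apply the case distinction $w^{-1}(\alpha_i)\in R_0^-$ or $w^{-1}(\alpha_i)\in\Delta_0^I$. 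In both cases a coefficient-wise analysis in the basis $\Delta_0$ shows $w^{-1}(\delta_0)\in R_0^+\setminus R_0^{I,+}$, using that $\beta_0$ has a positive coefficient at some $\alpha_k$ with $k\notin I$ together with the root-system constraint that all simple-root coefficients of a root share a common sign. Hence $\delta_0\in w(N(w))$ with smaller height than $\gamma_0$, contradicting minimality. The step I expect to require the most care is the case $w^{-1}(\alpha_i)=\alpha_{i'}\in\Delta_0^I$, where $\beta_0-\alpha_{i'}$ could a priori have a negative coefficient at $\alpha_{i'}$, and ruling this out combines sign-uniformity of root coefficients with the persistence of $\beta_0$'s positive coefficient outside $\Delta_0^I$.
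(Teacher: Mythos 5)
Your proposal is correct, but it takes a genuinely different route from the paper. The paper argues by contradiction purely with length bookkeeping: if $w\neq\overline{w_0}$ then $w\underline{w_0}\neq w_0$, so some simple reflection $s_i$ satisfies $l(s_iw\underline{w_0})=l(w\underline{w_0})+1=l(w)+l(\underline{w_0})+1$; if $s_iw\in W_0^I$ this forces $l(s_iw)=l(w)+1$, i.e.\ $w\in B_i$, while if $s_iw\notin W_0^I$ then $w\in C_i$ by Lemma \ref{ABC}, so $s_iw=ws_{i_w}$ with $i_w\in I$ and $l(s_iw\underline{w_0})=l(w)+l(\underline{w_0})-1$, a contradiction --- so the whole proof rides on the factorization $w_0=\overline{w_0}\,\underline{w_0}$ and the trichotomy $A_i\cup B_i\cup C_i$. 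You instead work at the level of inversion sets: your characterization $w\in B_i\Leftrightarrow w^{-1}(\alpha_i)\in R_0^+\setminus R_0^{I,+}$ is correct (both directions check out, the forward one using your observation that an element of $R_0^{I,+}$ mapped by $w\in W_0^I$ to a simple root must itself be simple), your identification of $\overline{w_0}$ as the unique element of $W_0^I$ with inversion set $R_0^+\setminus R_0^{I,+}$ is the same fact the paper uses later in the proof of Theorem \ref{1exp}, and your minimal-height descent (writing a non-simple $\gamma_0=w(\beta_0)$ as $\alpha_i+\delta_0$ and showing $w^{-1}(\delta_0)\in N(w)$ in both cases $w^{-1}(\alpha_i)\in R_0^-$ and $w^{-1}(\alpha_i)\in\Delta_0^I$, via sign-uniformity of root coefficients and the surviving coefficient at some $\alpha_k$, $k\notin I$) is sound, including the delicate Case B where the fact that $\beta_0-\alpha_{i'}$ is itself a root rules out a mixed-sign coefficient vector. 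The trade-off: the paper's argument is shorter and stays entirely inside Coxeter-length combinatorics already packaged in Lemma \ref{ABC}, whereas yours is longer but more explicit about the root picture and yields the reusable byproducts (the inversion-theoretic description of $B_i$ and of $\overline{w_0}$) along the way.
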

\begin{proof}
Suppose to the contrary that $w\not=\overline{w_0}$.
Then $w\underline{w_0}\not=w_0$, hence there exists
an $i\in\{1,\ldots,n\}$ such that 
\begin{equation}\label{Blifteq}
l(s_iw\underline{w_0})=
l(w\underline{w_0})+1=l(w)+l(\underline{w_0})+1.
\end{equation}
If $s_iw\in W_0^I$ then it follows from \eqref{Blifteq}
that $l(s_iw)=l(w)+1$,
contradicting the fact that $w\not\in B_i$. If $s_iw\not\in W_0^I$
then $w\in C_i$, hence
$s_iw=ws_{i_w}$ with $i_w\in I$. Consequently
\[
l(s_iw\underline{w_0})=l(ws_{i_w}\underline{w_0})=
l(w)+l(s_{i_w}\underline{w_0})=
l(w)+l(\underline{w_0})-1,
\]
which contradicts \eqref{Blifteq}.
\end{proof}

\subsection{Affine root systems and affine Weyl groups}
We recall in this subsection some facts on
twisted affine root systems. We stick as much as possible
to the treatment in \cite[Chpt. 1\& 2]{M}.

Identify $V:=\mathbb{R}c\oplus V_0$
with the real vector space of affine
linear, real valued functionals on $V_0$ by interpreting $rc+v$
as the functional $v^\prime\mapsto r+\langle v,v^\prime\rangle$.
Let $D: V\rightarrow V_0$ be the projection onto $V_0$ along the direct
sum decomposition $V=\mathbb{R}c\oplus V_0$. 
We extend the scalar product $\langle \cdot,\cdot\rangle$ to 
a symmetric bilinear form on $V$ by requiring $D$ to be form preserving.
The correponding
semi-norm on $V$ is denoted by $\|\cdot\|$. 
We write $a^\vee=2a/\|a\|^2$ for a vector $a\in V$ satisfying $\|a\|\not=0$.

The twisted affine root system associated to $R_0$ is the set
\[R:=\{a^\vee \,\, | \,\, a\in\mathbb{Z}c+R_0\}\subset V.
\]
Let $W^a\subset \textup{O}(V)$ (with $\textup{O}(V)$ the group of invertible
linear endomorphisms of $V$ preserving the symmetric bilinear form
$\langle \cdot,\cdot\rangle$)
be the subgroup generated by the involutions
\[s_a: v\mapsto v-\langle a,v\rangle a^\vee,\qquad v\in V
\]
for all $a\in R$. It is called the affine Weyl group of $R$.

The affine Weyl group $W^a$ admits two important alternative descriptions,
namely as the semidirect product group $W_0\ltimes Q^\vee$ associated to 
the ($W_0$-invariant) coroot lattice $Q^\vee=
\textup{span}_{\mathbb{Z}}\{\alpha^\vee \, | \, \alpha\in R_0\}$
of $V_0$, and as a Coxeter group. Concretely, denote by 
$\tau: V_0\rightarrow \textup{O}(V)$ the semi-norm preserving
action of $V_0$ on $V$ given
by 
\[\tau(v)(rc+v^\prime)=(r-\langle v,v^\prime\rangle)c+v^\prime
\]
for $v,v^\prime\in V_0$ and $r\in\mathbb{R}$.
Then $w\tau(v)w^{-1}=\tau(wv)$ for $w\in W_0$ and $v\in V_0$.
Furthermore, $s_{a}=s_{\alpha}\tau(r\alpha^\vee)$ for 
$a=(rc+\alpha)^\vee$ ($r\in\mathbb{Z}$, $\alpha\in R_0$) and
$W_0\ltimes Q^\vee\simeq W^a$ by $(w,\lambda)\mapsto w\tau(\lambda)$.
It follows from this description of $W^a$ that $R$
is $W^a$-invariant. 

In fact, $R$ 
is invariant under the action of the extended affine Weyl
group 
\[
W:=\{w\tau(\lambda)\}_{w\in W_0, \lambda\in P^\vee}
\simeq W_0\ltimes P^\vee,
\] 
where $P^\vee:=\{\lambda\in V_0 \,\, | \,\, \langle \lambda,\alpha\rangle
\in\mathbb{Z} \,\, \forall\, \alpha\in Q\}$ 
is the coweight lattice of $R_0$ in $V_0$. 
We write $\{\varpi_i^\vee\}_{i=1}^n\subset P^\vee$ 
for the fundamental coweights with respect
to the ordered basis $\Delta_0$ of $R_0$ (they are characterized by
$\langle \varpi_i^\vee,\alpha_j\rangle=\delta_{i,j}$ for all $1\leq j\leq n$).

Observe that the affine Weyl group
$W^a$ is a normal subgroup of $W$ with finite abelian quotient group
$W/W^a\simeq P^\vee/Q^\vee$.

We recall now the presentation of $W^a$ as a Coxeter group. 
An ordered basis $\Delta$ of the twisted affine root system $R^\vee$ 
is given by
\begin{equation}\label{basis}
\{a_0,a_1,\ldots,a_n\}:=\{c-\varphi^\vee,\alpha_1^\vee,\ldots,\alpha_n^\vee\}
\end{equation}
(recall 
that, by our convention that long roots in $R_0$ have squared length $2$,
we have $\varphi^\vee=\varphi$). Let $R^{\pm}$ be the associated sets of
positive/negative roots. The corresponding simple
reflections are denoted by 
$S:=\{s_j:=s_{a_j}\}_{j=0}^n$ (since for $1\leq i\leq n$,
$s_i|_{V_0}\in W_0$ is the simple reflection associated to the basis element
$\alpha_i\in R_0$ as defined in the previous subsection, 
there is no conflict in notation). 
Note that $s_0=s_{\varphi}\tau(-\varphi^\vee)$.
Then $(W^a,S)$ is a Coxeter group with associated set of
simple reflections $S$. The defining relations of $W^a$ in terms
of $S$ are
\begin{equation}\label{braidrelations}
\begin{split}
s_is_js_i\cdots&=s_js_is_j\cdots\qquad (m_{ij} \textup{ terms on both sides}),\\
s_{i}^2&=1
\end{split}
\end{equation}
for $0\leq i,j\leq n$ with, for the first identity, the additional
requirements that $i\not=j$ and that $s_is_j\in W^a$
has finite order $m_{ij}$.

The length $l(w)$ of $w\in W$ with respect to the choice $\Delta$
of positive roots of $R$ is defined by
\begin{equation}\label{length}
l(w):=\#\bigl(R^+\cap w^{-1}R^-),\qquad w\in W.
\end{equation}
Its restriction to $W_0$ coincides with the length function of $W_0$
as considered in the previous subsection.
The subset $\Omega:=\{w\in W \,\, | \,\, l(w)=0\}$ is a subgroup of $W$, 
isomorphic to $P^\vee/Q^\vee$, and $W\simeq \Omega\ltimes W^a$.
In fact, $\Omega$ permutes the simple roots of $R$.
Hence an element $\omega\in\Omega$ gives rise to
a permutation of the index set $\{0,\ldots,n\}$ of the simple
reflections of $R$, which we again
denote by $\omega$. Consequently
$\omega(a_i)=a_{\omega(i)}$ and $\omega s_i\omega^{-1}=s_{\omega(i)}$
for $0\leq i\leq n$ and $\omega\in\Omega$.

\subsection{The affine Hecke algebra}
The results on the affine Hecke algebra which we recall in this
section to fix notations, are well known. We match the notations
to \cite[Chpt. 4]{M} as much as possible.

A multiplicity function on $R$ is a map $k: R\rightarrow \mathbb{C}^\times:=
\mathbb{C}\setminus\{0\}$, denoted by $a\mapsto k_a$ ($a\in R$),
which satisfies $k_{w(a)}=k_a$ for all $w\in W$ and $a\in R$. {}From now on, 
$k$ will always stand for a multiplicity function on $R$.
We write $k_j:=k_{a_j}$ for $0\leq j\leq n$. We emphasize that we are {\it not}
assuming $k_j$ to be generic, in particular, it may be a root of unity.

Note that
$k_a=k_{Da}$, hence it is uniquely
determined by its restriction $k|_{R_0^\vee}$ to a multiplicity function
of the underlying finite root system $R_0^\vee$.
In particular, the value $k_a$ only depends on the seminorm
$\|a\|$ of $a\in R$, hence $k$ takes on at most two different values.

The affine Hecke algebra $H^a(k)$ is the unital, complex associative algebra
generated by $T_i$ ($0\leq i\leq n$) with defining relations
\begin{equation}\label{braidrelations2}
\begin{split}
&T_iT_jT_i\cdots=T_jT_iT_j\cdots\qquad (m_{ij} \textup{ terms on both sides}),\\
&(T_{i}-k_{i})(T_{i}+k_{i}^{-1})=0
\end{split}
\end{equation}
for $0\leq i,j\leq n$ with, for the first identity,
the additional requirements that $i\not=j$ and that $s_is_j\in W^a$
has finite order $m_{ij}$.

Let $w\in W^a$ and fix a reduced expression
$w=s_{i_1}s_{i_2}\cdots s_{i_{l(w)}}$ ($0\leq i_j\leq n$).
Then $T_w:=T_{i_1}T_{i_2}\cdots T_{i_{l(w)}}\in H(k)$ is well defined
(it is by definition the unit of $H(k)$ if $w=e$ is the
identity element of $W$).
The $T_w$ ($w\in W^a$) form a $\mathbb{C}$-linear basis of $H(k)$.
The subalgebra $H_0(k)$ of $H^a(k)$ generated by $T_i$ ($1\leq i\leq n$)
is called the finite Hecke algebra. The $T_w$ ($w\in W_0$) form a 
$\mathbb{C}$-linear basis
of $H_0(k)$.

The finite abelian subgroup $\Omega$ of $W$ acts by algebra automorphisms
on $H^a(k)$ by $\omega(T_j)=T_{\omega(j)}$ for all $0\leq j\leq n$
($\omega\in \Omega$). The {\it extended} affine Hecke algebra is the
corresponding smashed product algebra $H(k):=H^a(k)\#\Omega$.
Recall that, as a complex vector space, $H(k)\simeq H^a(k)\otimes_{\mathbb{C}}
\mathbb{C}[\Omega]$ with $\mathbb{C}[\Omega]$ the complex
group algebra of $\Omega$. The algebra structure of $H(k)$ is 
then characterized by $(h\otimes\omega)(h^\prime\otimes\omega^\prime)=
h\omega(h^\prime)\otimes\omega\omega^\prime$ for $h,h^\prime\in H^a(k)$
and $\omega,\omega^\prime\in\Omega$.

A reduced expression of an extended affine Weyl group element
$w\in W$ is an expression of $w$ of the
form $w=s_{i_1}s_{i_2}\cdots s_{i_{l(w)}}\omega$
for some $0\leq i_j\leq n$ and for some $\omega\in\Omega$.
Then $T_w:=T_{i_1}T_{i_2}\cdots T_{i_{l(w)}}\otimes\omega\in H(k)$
is well defined, it reduces to the previous definition of $T_w$
in case $w\in W^a$, and $\{T_w\}_{w\in W}$ is a $\mathbb{C}$-linear
basis of $H(k)$.

Consider the complex torus
\begin{equation}\label{torus}
T:=\textup{Hom}_{\mathbb{Z}}(P^\vee,\mathbb{C}^\times)
\end{equation}
of group homomorphisms $P^\vee\rightarrow \mathbb{C}^\times$.
We write $t^\lambda\in\mathbb{C}^\times$ ($t\in T$, $\lambda\in P^\vee$)
for the evaluation of $t$ at $\lambda$.
For $z\in \mathbb{C}^\times$ and $\alpha\in Q$ with $Q\subset V_0$
the root lattice of $R_0$, we write $z^\alpha\in T$ for the group
homomorphism $P^\vee\ni\lambda\mapsto
z^{\langle\lambda,\alpha\rangle}$. 

The Weyl group $W_0$ acts on $P^\vee$, By transposition,
it also acts on $T$. Note that for $\alpha\in R_0$ and
$t\in T$,
\begin{equation}\label{saction}
s_\alpha t=z^{\alpha}t\quad \textup{with }\,\,
z=t^{-\alpha^\vee}\in\mathbb{C}^\times.
\end{equation}

Let $\mathbb{C}[T]$ be the
algebra of regular functions on $T$. We write 
$e^\lambda$ ($\lambda\in P^\vee$) for the canonical $\mathbb{C}$-basis
of $\mathbb{C}[T]$, where $e^\lambda$ stands for the regular function 
$T\ni t\mapsto t^\lambda$.
Note that $\mathbb{C}[T]$ is isomorphic to the group
algebra $\mathbb{C}[P^\vee]$ of the coweight lattice $P^\vee$.
 
The action of
$W_0$ on $\mathbb{C}[T]$, contragredient to the $W_0$-action
on $T$, satisfies $w(e^\lambda):=e^{w\lambda}$
for $w\in W_0$ and $\lambda\in P^\vee$. It is an action 
by algebra automorphisms on $\mathbb{C}[T]$, 
hence it extends uniquely to an action by field
automorphisms on the quotient field $\mathbb{C}(T)$ of $\mathbb{C}[T]$.
In addition, by transposition of the $W_0$-action on $T$, the finite
Weyl group $W_0$ acts on
the algebra $\mathcal{O}(T)$ of analytic functions on $T$
(respectively the field $\mathcal{M}(T)$ of meromorphic functions
on $T$) by algebra (respectively field) automorphisms.
It results in the following
inclusion of $W_0$-module algebras,
\[
\mathbb{C}[T]\subset\mathbb{C}(T)\subset \mathcal{O}(T)\subset
\mathcal{M}(T).
\]

For all $\alpha^\vee\in R_0^\vee$ define 
$c_{\alpha^\vee}^k\in\mathbb{C}(T)$ by
\begin{equation}\label{calpha}
c_{\alpha^\vee}^k(t):=\frac{k_{\alpha^\vee}^{-1}-
k_{\alpha^\vee}t^{\alpha^\vee}}{1-t^{\alpha^\vee}}.
\end{equation}
We furthermore write $c_i^k=c_{\alpha_i^\vee}^k$
for $1\leq i\leq n$.

Let $P_+^\vee=\{\lambda\in P^\vee \,\, | \,\,
\langle \lambda,\alpha\rangle\in\mathbb{Z}_{\geq 0}\}$ be the
cone of dominant coweights and set
\[Y^\lambda:=T_{\tau(\lambda)}\in H(k).
\]
Then $P_+^\vee\ni\lambda\mapsto Y^\lambda\in H(k)^\times$ is a
morphism of semigroups. It has a unique extension to a group
homomorphism $P^\vee\rightarrow H(k)^\times$, also denoted by
$\lambda\mapsto Y^\lambda$ ($\lambda\in P^\vee$).
Denote by $\mathcal{A}_Y^k$ the commutative subalgebra of 
$H(k)$ generated by the $Y^\lambda$ ($\lambda\in P^\vee$).
The following theorem is due to Bernstein and Zelevinsky
(for a proof, see \cite{L} or \cite{M}).
\begin{thm}\label{Hchar}
{\bf (i)} The surjective algebra map $\mathbb{C}[T]\rightarrow
\mathcal{A}_Y^k$
mapping $e^\lambda$ to $Y^\lambda$ for $\lambda\in P^\vee$,
is an isomorphism. We write $f(Y)$ for the element in $\mathcal{A}_Y^k$ 
corresponding to $f\in\mathbb{C}[T]$ under this isomorphism.\\
{\bf (ii)} For all $f\in\mathbb{C}[T]$ and $1\leq i\leq n$,
\begin{equation}\label{crossrelation}
f(Y)T_i=T_i(s_if)(Y)+(c_i^k(Y^{-1})-k_i)((s_if)(Y)-f(Y)),
\end{equation}
where $(c_i^k(Y^{-1})-k_i)((s_if)(Y)-f(Y))$ is the element of $\mathcal{A}_Y^k$
corresponding, under the isomorphism of {\bf (i)}, to the regular function
$(c_i^k(t^{-1})-k_i)((s_if)(t)-f(t))$ in $t\in T$.\\
{\bf (iii)} The multiplication map defines an isomorphism
$\mathcal{A}_Y^k\otimes_{\mathbb{C}}H_0(k)\overset{\sim}{\longrightarrow}
H(k)$ of vector spaces.\\ 
{\bf (iv)} 
The cross relations \eqref{crossrelation}
characterize the algebraic structure of $H(k)$
in terms of the algebras $\mathcal{A}_Y^k$ and $H_0(k)$.
\end{thm}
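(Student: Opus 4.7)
The plan is to establish the Bernstein--Zelevinsky presentation by driving everything from the cross relations in (ii): once those are in hand, (i), (iii) and (iv) follow by essentially formal arguments. As preparation I would first verify that for dominant coweights $\lambda,\mu\in P_+^\vee$ the translations satisfy $l(\tau(\lambda)\tau(\mu))=l(\tau(\lambda))+l(\tau(\mu))$, so that $T_{\tau(\lambda)}T_{\tau(\mu)}=T_{\tau(\lambda+\mu)}$ in $H(k)$. Thus $P_+^\vee\ni\lambda\mapsto Y^\lambda$ is a semigroup morphism; setting $Y^{-\lambda}:=(Y^\lambda)^{-1}$ extends it to the asserted group morphism $P^\vee\to H(k)^\times$ whose image is commutative. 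In particular there is a well-defined surjective algebra map $\mathbb{C}[T]\simeq\mathbb{C}[P^\vee]\twoheadrightarrow\mathcal{A}_Y^k$ sending $e^\lambda$ to $Y^\lambda$.

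For (ii) it suffices by linearity to verify \eqref{crossrelation} for monomials $f=e^\lambda$. I would treat two base cases. If $\langle\lambda,\alpha_i\rangle=0$, then $s_i$ and $\tau(\lambda)$ commute in $W$ with length-additive product, yielding $T_iY^\lambda=Y^\lambda T_i$; the correction term vanishes because $s_ie^\lambda=e^\lambda$. For $\lambda=-\alpha_i^\vee$, a direct computation using the Hecke quadratic relation in the form $T_i^{-1}=T_i-(k_i-k_i^{-1})$ together with a reduced expression for $\tau(-\alpha_i^\vee)$ (mixing $s_0=s_\varphi\tau(-\varphi^\vee)$ and $s_i$) yields the identity once one simplifies $c_i^k(Y^{-1})-k_i=(k_i^{-1}-k_i)(1-Y^{-\alpha_i^\vee})^{-1}$. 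To propagate to the general case I would show that the set of $f\in\mathbb{C}[T]$ for which \eqref{crossrelation} holds is closed under multiplication by $Y^{\pm\alpha_j^\vee}$ and by $Y^\lambda$ with $\langle\lambda,\alpha_i\rangle=0$; since these generate $\mathbb{C}[T]$ as an algebra, this finishes (ii).

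For (iii), I would use the cross relations to rewrite every $T_w$ ($w\in W$) in the form $Y^\lambda T_u$ with $\lambda\in P^\vee$ and $u\in W_0$: via the semidirect product decomposition $W\simeq P^\vee\rtimes W_0$ any $w$ factors, up to an element of $\Omega$, as $\tau(\lambda)u$, and (ii) combined with Step~1 allows iterative rearrangement. This gives surjectivity of the multiplication map $\mathcal{A}_Y^k\otimes_{\mathbb{C}}H_0(k)\to H(k)$. For injectivity I would, for each pair $(\lambda,u)$, shift $\lambda$ by a sufficiently antidominant element so that $l(\tau(\lambda)u)=l(\tau(\lambda))+l(u)$; the resulting triangularity of $Y^\lambda T_u$ with respect to the basis $\{T_w\}_{w\in W}$ forces linear independence. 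Claim (i) then falls out: the $Y^\lambda$ ($\lambda\in P^\vee$) are linearly independent because they form part of a basis of $H(k)$, so the surjection from Step~1 is an isomorphism. Finally, (iv) follows because (ii) together with (iii) exhibits $H(k)$ as generated by $\mathcal{A}_Y^k$ and $H_0(k)$ subject only to those cross relations, so any algebra containing commuting copies of $\mathcal{A}_Y^k$ and $H_0(k)$ obeying \eqref{crossrelation} is automatically a homomorphic image of $H(k)$, with equality by a dimension count against (iii).

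The main obstacle is the explicit base-case computation for $\lambda=-\alpha_i^\vee$ in Step~2. It requires a careful length-function analysis of $\tau(-\alpha_i^\vee)$ in $W^a$ and a delicate bookkeeping of the interplay between $s_0=s_\varphi\tau(-\varphi^\vee)$ and the finite simple reflections; it is precisely this step that produces the rational factor $c_i^k(Y^{-1})-k_i$ out of the Hecke quadratic relation. All subsequent arguments are comparatively mechanical once this identity is established.
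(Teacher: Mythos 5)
The paper does not prove this theorem at all; it is quoted with a reference to Lusztig \cite{L} and Macdonald \cite{M}, and your overall architecture (dominant translations give the semigroup map and the $Y^\lambda$; cross relations checked on monomial generators and propagated multiplicatively; spanning plus a triangularity argument for {\bf (iii)}; the presentation argument for {\bf (iv)}) is exactly the standard Bernstein--Lusztig route found there. Steps 1, 3 and 4 of your sketch are fine modulo routine details.

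There is, however, a genuine gap in your Step 2, and it is located precisely where you put the whole weight of the argument. Set $L_i(f):=f(Y)T_i-T_i(s_if)(Y)-\bigl[(c_i^k(\cdot^{-1})-k_i)((s_if)-f)\bigr](Y)$. A short computation inside $\mathcal{A}_Y^k$ gives the twisted Leibniz identity
\begin{equation*}
L_i(fg)=L_i(f)\,(s_ig)(Y)+f(Y)\,L_i(g),
\end{equation*}
so for an invertible monomial $g$, ``the set of $f$ with $L_i(f)=0$ is closed under multiplication by $g$'' is \emph{equivalent} to $L_i(g)=0$. Hence your propagation step by $Y^{\pm\alpha_j^\vee}$ with $j\neq i$ is circular: for $j$ adjacent to $i$ these monomials are neither orthogonal to $\alpha_i$ nor equal to $e^{\mp\alpha_i^\vee}$, so the relation for them has not been established. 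Worse, your base cases only contain monomials $e^\lambda$ with $\langle\lambda,\alpha_i\rangle\in\{0,\pm 2\}$, so everything you can legitimately reach has \emph{even} pairing with $\alpha_i$; in particular $e^{\varpi_i^\vee}$ (pairing $1$) is unreachable, and in type $A_1$ (and at the long node of $B_2=C_2$) your generating claim ``these generate $\mathbb{C}[T]$'' is simply false, since the reachable exponents lie in a proper subgroup of $P^\vee$. The standard repair is to replace the base case $\lambda=-\alpha_i^\vee$ by dominant $\lambda$ with $\langle\lambda,\alpha_i\rangle=1$ (e.g. $\varpi_i^\vee$): there the relation collapses to $T_i^{-1}Y^\lambda T_i^{-1}=Y^{s_i\lambda}$ and follows from the length identities $l(\tau(\lambda)s_i)=l(\tau(\lambda))-1$, $s_i\tau(\lambda)s_i=\tau(s_i\lambda)$; since every $\lambda\in P^\vee$ is a $\mathbb{Z}$-combination of fundamental coweights, each of which pairs with $\alpha_i$ to $0$ or $1$, the subalgebra-plus-inverse closure coming from the Leibniz identity then finishes {\bf (ii)}. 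Note also that your proposed computation for $\lambda=-\alpha_i^\vee$ via a reduced word in $s_0$ and $s_i$ only makes literal sense when $\alpha_i=\varphi$, i.e. in rank one; in higher rank $\tau(\alpha_i^\vee)$ does not lie in the subgroup generated by $s_0,s_i$, which is a further reason to use the pairing-one case instead.
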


Recall that $I$ is a fixed subset of $\{1,\ldots,n\}$.
Write $H_I(k)$ for the unital subalgebra of $H(k)$ generated
by $\mathcal{A}_Y^k$ and the $T_i$ ($i\in I$). The subalgebra
$H_{0,I}(k)$ generated by the $T_i$ ($i\in I$) has 
as complex linear basis $\{T_w\}_{w\in W_{0,I}}$. 
Then $H_I(k)\simeq \mathbb{C}_Y[T]\otimes_{\mathbb{C}}H_{0,I}(k)$
as vector spaces (by the multiplication map). Theorem \ref{Hchar}
holds true for $H_I(k)$ with the role of $H_0(k)$ replaced by $H_{0,I}(k)$.
We call $H_{0,I}(k)$ and $H_I(k)$ standard parabolic subalgebras
of $H_0(k)$ and $H(k)$, respectively.
Note that $H_0(k)$ (respectively $H(k)$) is a
free right $H_{0,I}(k)$-module (respectively $H_I(k)$-module) with
basis $\{T_w\}_{w\in W_0^I}$. 

We write $H^a_I(k)$ for the unital complex subalgebra of $H^a(k)$ generated by
$Y^{\lambda}$ ($\lambda\in Q^\vee$) and $T_i$ ($i\in I$). The following
technical lemma will be convenient at a later stage.
\begin{lem}\label{generateI}
The standard parabolic algebra $H^a_I(k)$ is algebraically generated
by $T_i$ ($i\in I$) and $Y^{\pm w^{-1}(\varphi^\vee)}$ ($w\in \widetilde{W}_0^I$),
where $\widetilde{W}_0^I$ is a complete set of representatives of the coset
space $W_0/W_{0,I}$.
\end{lem}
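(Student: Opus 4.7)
The goal is to show that the subalgebra $A\subseteq H^a_I(k)$ generated by the $T_i$ ($i\in I$) and the $Y^{\pm w^{-1}\varphi^\vee}$ ($w\in\widetilde W_0^I$) exhausts $H^a_I(k)$. Since $H^a_I(k)$ is by definition generated by the $T_i$ ($i\in I$) together with $Y^\lambda$ ($\lambda\in Q^\vee$), the task reduces to proving that $Y^\lambda\in A$ for every $\lambda\in Q^\vee$.

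I would first show that $Y^{\pm\nu}\in A$ for every $\nu$ in the full $W_0$-orbit $W_0\varphi^\vee$. Set $S:=\{\nu\in W_0\varphi^\vee \,\,|\,\, Y^\nu,\,Y^{-\nu}\in A\}$; by hypothesis $S$ contains $\{\tilde w^{-1}\varphi^\vee:\tilde w\in\widetilde W_0^I\}$. The coset decomposition $W_0=\bigsqcup_{\tilde w\in\widetilde W_0^I}\tilde w W_{0,I}$ yields $W_0\varphi^\vee=\bigcup_{\tilde w} W_{0,I}\cdot\tilde w^{-1}\varphi^\vee$ (writing $u^{-1}=\tilde w v$ exhibits $u\varphi^\vee\in W_{0,I}\cdot\tilde w^{-1}\varphi^\vee$). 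Hence, by induction on reduced length in $W_{0,I}$, the identity $S=W_0\varphi^\vee$ reduces to the single propagation statement: if $\nu\in S$ and $i\in I$, then $s_i\nu\in S$.

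For this propagation step set $m:=\langle\alpha_i,\nu\rangle$. Because $\|\nu\|^2=2$ one has $\nu^\vee=\nu$, so $m$ is a Cartan integer and in particular $m\in\mathbb{Z}$; combined with $\|\alpha_i\|\leq\sqrt{2}$, Cauchy--Schwarz forces $m\in\{0,\pm 1,\pm 2\}$, with $|m|=2$ only when $\alpha_i$ is a long simple root and $\nu=\pm\alpha_i$. The cases $m=0$ (where $s_i\nu=\nu$) and $|m|=2$ (where $s_i\nu=-\nu$) are trivial since $S$ is by definition closed under negation. For $m=\pm 1$ I would substitute $f=e^\nu$ into the cross relation \eqref{crossrelation}, use the simplification $c_i^k(Y^{-1})-k_i=(k_i^{-1}-k_i)/(1-Y^{-\alpha_i^\vee})$, and then invoke the quadratic Hecke identity in the form $T_i+k_i^{-1}-k_i=T_i^{-1}$. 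A short manipulation collapses the rational correction term and produces $T_iY^{s_i\nu}=Y^\nu T_i^{-1}$ when $m=1$, and $T_i^{-1}Y^{s_i\nu}=Y^\nu T_i$ when $m=-1$; in either case $Y^{s_i\nu}$ equals $T_i^{-1}Y^\nu T_i^{-1}$ or $T_iY^\nu T_i$ respectively and manifestly lies in $A$. Running the same argument with $-\nu$ in place of $\nu$ gives $Y^{-s_i\nu}\in A$.

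To conclude, I would invoke the standard fact that for an irreducible reduced root system $R_0$ in the present normalization, the $\mathbb{Z}$-span of the long roots coincides with the coroot lattice, i.e.\ $\mathbb{Z}W_0\varphi^\vee=Q^\vee$ (a case-by-case verification from Bourbaki's root system tables). Writing any $\lambda\in Q^\vee$ as $\sum_j n_j\mu_j$ with $\mu_j\in W_0\varphi^\vee$ and $n_j\in\mathbb{Z}$ and using the commutativity of $\mathcal{A}_Y^k$ then yields $Y^\lambda=\prod_j(Y^{\mu_j})^{n_j}\in A$. The main obstacle in this plan is the $m=\pm 1$ portion of the propagation step: the cross relation carries a genuinely rational-function correction, and only the precise form $T_i+k_i^{-1}-k_i=T_i^{-1}$ of the Hecke quadratic relation permits one to absorb that correction and isolate $Y^{s_i\nu}$ in closed form.
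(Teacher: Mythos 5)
Your proposal is correct; I checked the key computation and it holds: for $\nu$ in the orbit $W_0\varphi^\vee$ and $m=\langle\alpha_i,\nu\rangle$, the cross relation \eqref{crossexp} with $f=e^\nu$ indeed collapses (the rational correction equals $-Y^\nu$ for $m=1$ and $Y^{s_i\nu}$ for $m=-1$) and, together with $T_i^{-1}=T_i-k_i+k_i^{-1}$, yields $Y^{s_i\nu}=T_i^{-1}Y^\nu T_i^{-1}$ resp.\ $Y^{s_i\nu}=T_iY^\nu T_i$, while the bound $|m|\leq 2$ with $|m|=2$ forcing $\nu=\pm\alpha_i$ is justified exactly as you say. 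Structurally your argument follows the same skeleton as the paper's proof: both reduce to the orbit $W_0\varphi^\vee$ (the paper declares generation by all $Y^{\pm w^{-1}(\varphi^\vee)}$, $w\in W_0$, to be clear, which is the lattice fact $\mathbb{Z}W_0\varphi^\vee=Q^\vee$ that you spell out), both propagate along $W_{0,I}$ one simple reflection at a time, and both treat the degenerate case $w^{-1}\varphi=\pm\alpha_i$ by passing to the inverse $Y^{-\nu}$. The genuine difference is the mechanism for the propagation identity: the paper derives it from the explicit affine formulas \eqref{YT0} and \eqref{TWI} (expressing $Y^{w^{-1}(\varphi^\vee)}$ through $T_0$ and finite Hecke elements, with sign bookkeeping via $\sigma$), obtaining $Y^{s_iw^{-1}(\varphi^\vee)}=T_i^{-\sigma(w\alpha_i)}Y^{w^{-1}(\varphi^\vee)}T_i^{\sigma(s_\varphi w\alpha_i)}$, whereas you never touch $T_0$ and instead extract the same conjugation-type identities directly from the Bernstein--Zelevinsky relation \eqref{crossrelation} plus the quadratic relation, organized by the Cartan integer $\langle\alpha_i,\nu\rangle\in\{0,\pm1,\pm2\}$. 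Your route is more self-contained (it uses only Theorem \ref{Hchar}{\bf (ii)} and the case analysis on long roots, and makes the ``clear'' reduction explicit), at the cost of invoking the span-of-long-roots fact, which you relegate to a case check; the paper's route produces formulas involving $T_0$ that are reused later (e.g.\ in the proof of Proposition \ref{Wacase}), so it earns its extra machinery elsewhere.
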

\begin{proof}
It is clear that $H^a_I(k)$ is algebraically generated by
$T_i$ ($i\in I$) and $Y^{\pm w^{-1}(\varphi^\vee)}$ ($w\in W_0$).

For $\alpha\in R_0$ we write $\sigma(\alpha)=1$ if $\alpha\in R_0^+$
and $\sigma(\alpha)=-1$ if $\alpha\in R_0^-$. Then 
\begin{equation}\label{YT0}
Y^{w^{-1}(\varphi^\vee)}=T_w^{-1}T_0^{\sigma(w^{-1}\varphi)}T_{s_{\varphi}w}
\end{equation}
and
\begin{equation}\label{TWI}
T_{ws_i}=T_wT_i^{\sigma(w\alpha_i)}
\end{equation}
in $H^a(k)$ for all $w\in W_0$ and $i\in\{1,\ldots,n\}$, see
\cite[(3.3.6)]{M} and \cite[(3.1.7)]{M}. 

Let $i\in I$ and $w\in W_0$. If $\sigma(w^{-1}\varphi)\not=
\sigma(s_iw^{-1}\varphi)$ then $w^{-1}\varphi=\alpha_i$ or $=-\alpha_i$,
hence 
\[Y^{s_iw^{-1}(\varphi^\vee)}=Y^{-w^{-1}(\varphi)}.
\]
If $\sigma(w^{-1}\varphi)=\sigma(s_iw^{-1}\varphi)$ then 
\begin{equation*}
\begin{split}
Y^{s_iw^{-1}(\varphi^\vee)}&=
T_i^{-\sigma(w\alpha_i)}\bigl(T_w^{-1}T_0^{\sigma(w^{-1}\varphi)}
T_{s_{\varphi}w}\bigr)T_i^{\sigma(s_{\varphi}w\alpha_i)}\\
&=T_i^{-\sigma(w\alpha_i)}Y^{w^{-1}(\varphi^\vee)}T_i^{\sigma(s_{\varphi}w\alpha_i)}.
\end{split}
\end{equation*}
This shows that 
the $Y^{\pm w^{-1}(\varphi^\vee)}$ with $w\in \widetilde{W}_0^I$, 
together with the $T_i$ ($i\in I$), already form
algebraic generators of $H^a_I(k)$.
\end{proof}

\subsection{The affine Hecke algebra modules}

We have two characters (algebra maps) $\epsilon_{\pm}^k: H(k)\rightarrow
\mathbb{C}$, characterized by $\epsilon_\pm^k(T_j)=
\pm k_j^{\pm 1}$ 
($0\leq j\leq n$) and $\epsilon_{\pm}^k(T_{\omega})=1$ ($\omega\in\Omega$). 
The character $\epsilon^k_+$ (respectively $\epsilon^k_-$) is called
the trivial (respectively Steinberg) character of $H(k)$. {}From e.g.
\cite[\S 2.4]{M},
\begin{equation}\label{sstar}
\epsilon_{\pm}^k(T_{\tau(\lambda)})=\prod_{\alpha\in R_0^+}
\bigl(\pm k_{\alpha^\vee}\bigr)^{\pm\langle\lambda,\alpha\rangle},
\qquad \lambda\in P_+^\vee.
\end{equation}
Set
\begin{equation}\label{deltapm}
\delta_{\pm}^k:=
\prod_{\alpha\in R_0^+}\bigl(\pm k_{\alpha^\vee}\bigr)^{\pm\alpha}\in T,
\end{equation}
i.e. it is the element of $T=\textup{Hom}_{\mathbb{Z}}(P^\vee,\mathbb{C}^\times)$ 
mapping the coweight
$\lambda\in P^\vee$ to 
$\prod_{\alpha\in R_0^+}\bigl(\pm k_{\alpha^\vee}\bigr)^{\pm
\langle\lambda,\alpha\rangle}$. Then it follows from \eqref{sstar} that
\begin{equation}\label{trivY}
\epsilon_{\pm}^k(f(Y))=f\bigl(\delta_{\pm}^k\bigr),
\qquad f\in \mathbb{C}[T].
\end{equation}

More generally, we will consider $H(k)$-modules induced from
characters of a standard parabolic subalgebra $H_I(k)$. The characters
will be parametrized by elements of
\begin{equation}\label{TI}
T_I^k:=\{\gamma\in T \,\, | \,\, \gamma^{\alpha_i^\vee}=k_i^2\quad 
\forall\, i\in I\}.
\end{equation}
Note that $\delta_{\pm}^k\in T_{[1,n]}^{k^{\pm 1}}$, where
$[1,n]:=\{1,\ldots,n\}$ and $k^{-1}$ is the multiplicity function
$R\ni a\mapsto k_a^{-1}$.

Observe that
\begin{equation}\label{W0Iact}
w\gamma=\gamma \prod_{\alpha\in R_0^{I,+}\cap wR_0^{I,-}}k_{\alpha^\vee}^{-2\alpha}
\qquad (\gamma\in T_I^k,\,\, w\in W_{0,I}),
\end{equation}
hence in particular
\begin{equation}\label{W0Iactcons}
\underline{w_0}\gamma=\rho_I^k\gamma\qquad (\gamma\in T_I^k)
\end{equation}
where
\begin{equation}\label{rhoI}
\rho_I^k:=\prod_{\alpha\in R_0^{I,+}}k_{\alpha^\vee}^{-2\alpha}\in T.
\end{equation}

\begin{lem}\label{chilemma}
Let 
$\gamma\in T_I^{k^{\pm 1}}$.\\
{\bf (i)}
There exists a unique character $\chi_{\gamma}^{k,\pm,I}: 
H_I(k)\rightarrow\mathbb{C}$
satisfying $\chi_\gamma^{k,\pm,I}(f(Y))=f(\gamma)$ and 
$\chi_\gamma^{k,\pm,I}(T_i)=\pm k_i^{\pm 1}$
for $f\in\mathbb{C}[T]$ and $i\in I$.\\
{\bf (ii)} The left $H(k)$-module
\[
M^{k,\pm,I}(\gamma):=\textup{Ind}_{H_I(k)}^{H(k)}\bigl(\chi_\gamma^{k,\pm,I}\bigr)
\]
has complex linear basis
\[
v_w^{k,\pm,I}(\gamma):=T_w\otimes_{H_I(k),\chi_\gamma^{k,\pm,I}}1,\qquad w\in W_0^I.
\]
\end{lem}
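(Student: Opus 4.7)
The plan is to verify the relations defining $H_I(k)$ on the proposed images, then invoke the freeness statement recorded just above the lemma. For uniqueness in (i), observe that $H_I(k)$ is generated as an algebra by $\mathcal{A}_Y^k$ and $\{T_i\}_{i\in I}$, so the prescribed values of $\chi_\gamma^{k,\pm,I}$ on these generators determine the character uniquely if it exists.

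For existence I would appeal to Theorem \ref{Hchar}(iv) for $H_I(k)$: this algebra is presented by the finite Hecke relations among the $T_i$ with $i\in I$, the commutativity relations inside $\mathcal{A}_Y^k$, and the cross relations \eqref{crossrelation} for $i\in I$. The first two families are respected trivially by the scalar assignments $T_i\mapsto \pm k_i^{\pm 1}$ (a scalar trivially satisfies the braid relations, and $\pm k_i^{\pm 1}$ is a root of $(x-k_i)(x+k_i^{-1})$) and $f(Y)\mapsto f(\gamma)$. The only genuine check is the cross relation; applying $\chi_\gamma^{k,\pm,I}$ to both sides of \eqref{crossrelation} and canceling a common factor of $f(\gamma)-(s_if)(\gamma)$ reduces the identity to the single scalar requirement
\[
c_i^k(\gamma^{-1})=k_i\mp k_i^{\pm 1},\qquad i\in I.
\]
Using the algebraic simplification $c_i^k(t^{-1})-k_i=(k_i^{-1}-k_i)/(1-t^{-\alpha_i^\vee})$, or equivalently a direct evaluation of \eqref{calpha} at $\gamma^{\alpha_i^\vee}=k_i^{\pm 2}\neq 1$, the defining condition \eqref{TI} of $T_I^{k^{\pm 1}}$ yields exactly the required scalar. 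The potentially awkward case $k_i^2=1$ is handled by the interpretation in Theorem \ref{Hchar}(ii): one first rewrites $(c_i^k(Y^{-1})-k_i)((s_if)(Y)-f(Y))$ as the genuine polynomial $(k_i^{-1}-k_i)((s_if)(t)-f(t))/(1-t^{-\alpha_i^\vee})\in\mathbb{C}[T]$ and only then evaluates at $\gamma$, so no division by zero occurs and the coefficient $k_i^{-1}-k_i$ kills the whole term.

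For (ii) I would invoke the observation recorded at the end of the previous subsection, namely that $H(k)$ is a free right $H_I(k)$-module with basis $\{T_w\}_{w\in W_0^I}$. Since $M^{k,\pm,I}(\gamma)=H(k)\otimes_{H_I(k)}\mathbb{C}_{\chi_\gamma^{k,\pm,I}}$ by definition, the elements $v_w^{k,\pm,I}(\gamma)=T_w\otimes_{H_I(k),\chi_\gamma^{k,\pm,I}}1$ with $w\in W_0^I$ form a complex linear basis, proving (ii).

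There is no serious obstacle: the whole content of the lemma is that the cross relation forces precisely the parameter constraints in the definition of $T_I^{k^{\pm 1}}$, while everything else is formal. The main subtlety worth flagging carefully is the uniform treatment of the $\pm$-cases and the root-of-unity case $k_i^2=1$, where one must read $(c_i^k(Y^{-1})-k_i)((s_if)(Y)-f(Y))$ as the regular polynomial provided by Theorem \ref{Hchar}(ii) rather than as the naive product of its two factors.
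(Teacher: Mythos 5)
Your overall route is the same as the paper's: uniqueness from the generators, part (ii) from the freeness of $H(k)$ as a right $H_I(k)$-module with basis $\{T_w\}_{w\in W_0^I}$, and existence reduced (via the parabolic version of Theorem \ref{Hchar}) to checking that $\chi_\gamma^{k,\pm,I}$ respects the cross relation \eqref{crossrelation} for $i\in I$. Your verification in the case $k_i^2\neq 1$ is correct: the condition is exactly $\pm k_i^{\pm 1}+c_i^k(\gamma^{-1})-k_i=0$, which $\gamma^{\alpha_i^\vee}=k_i^{\pm 2}$ delivers.

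There is, however, a gap in the very case you flag as delicate, $k_i^2=1$. Observing that the Bernstein correction term $(c_i^k(Y^{-1})-k_i)((s_if)(Y)-f(Y))$, read as the regular function $(k_i^{-1}-k_i)\bigl((s_if)(t)-f(t)\bigr)/(1-t^{-\alpha_i^\vee})$, vanishes because of the factor $k_i^{-1}-k_i$ does not finish the check: the cross relation then reduces to $f(Y)T_i=T_i(s_if)(Y)$, and applying $\chi_\gamma^{k,\pm,I}$ still demands $f(\gamma)=f(s_i\gamma)$ for every $f\in\mathbb{C}[T]$, i.e. $s_i\gamma=\gamma$. This is a genuine condition on $\gamma$, and it is exactly what membership in $T_I^{k^{\pm 1}}$ buys you here: $\gamma^{\alpha_i^\vee}=k_i^{\pm 2}=1$ forces $s_i\gamma=\gamma$ by \eqref{saction} (equivalently \eqref{W0Iact}), which is precisely how the paper closes this case. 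Relatedly, your step of ``canceling the common factor $f(\gamma)-(s_if)(\gamma)$'' is only legitimate when this factor can be nonzero, i.e. when $s_i\gamma\neq\gamma$; in the degenerate case one must instead argue that the factor vanishes identically, so the missing line $s_i\gamma=\gamma$ is unavoidable. With that one observation added, your proof is complete and coincides with the paper's.
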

\begin{proof}
It suffices to show that
$\chi_\gamma^{k,\pm,I}$ preserves the cross relation \eqref{crossrelation}
for $f\in\mathbb{C}[T]$ and $i\in I$. The cross relation is
explicitly given by
\begin{equation}\label{crossexp}
f(Y)T_i=T_i(s_if)(Y)+(k_i^{-1}-k_i)\left(\frac{(s_if)(Y)-f(Y)}
{1-Y^{-\alpha_i^\vee}}\right).
\end{equation}
Fix $i\in I$ and $f\in\mathbb{C}[T]$.
If $k_i^2=1$ then \eqref{crossexp} reduces to $f(Y)T_i=T_i(s_if)(Y)$,
which is indeed respected by $\chi_\gamma^{k,\pm,I}$ since 
$s_i\gamma=\gamma$ for $\gamma\in T_I^{k^{\pm 1}}$ by \eqref{W0Iact}.
If $k_i^2\not=1$, then \eqref{crossexp} is respected by 
$\chi_{\gamma}^{k,\pm,I}$ since $\gamma^{\alpha_i^\vee}=k_i^{\pm 2}$ for
$\gamma\in T_I^{k^{\pm 1}}$ and
\[\pm k_i^{\pm 1}f(\gamma)=\pm k_i^{\pm 1}f(s_i\gamma)+(k_i^{-1}-k_i)
\left(\frac{f(s_i\gamma)-f(\gamma)}{1-k_i^{\mp 2}}\right).
\]  
\end{proof}
The modules $M^{k}(\gamma):=M^{k,\pm,\emptyset}(\gamma)$ ($\gamma\in T$) 
are called the principal series modules of $H(k)$.
We write $v_w^k(\gamma):=v_w^{k,\pm,\emptyset}(\gamma)$ ($w\in W_0$) 
for the corresponding standard basis elements.

\begin{eg}
In view of \eqref{trivY},
$M^{k,\pm,[1,n]}\bigl(\delta_{\pm}^k\bigr)$
is the one-dimensional $H(k)$-module characterized by the
algebra map $\epsilon_{\pm}^k: H(k)\rightarrow \mathbb{C}$.
\end{eg}

By Lemma \ref{ABC}, 
the action of the finite Hecke algebra $H_0(k)$ on the standard
basis $\{v_w^{k,\pm,I}(\gamma)\}_{w\in W_0^I}$ of
$M^{k,\pm,I}(\gamma)$ is given by
\begin{equation}\label{vaction}
T_iv_w^{k,\pm,I}(\gamma)=
\begin{cases}
(k_i-k_i^{-1})v_w^{k,\pm,I}(\gamma)+v_{s_iw}^{k,\pm,I}(\gamma)
,\quad &\hbox{ if } w\in A_i,\\
v_{s_iw}^{k,\pm,I}(\gamma), 
&\hbox{ if } w\in B_i,\\
\pm k_i^{\pm 1}v_{w}^{k,\pm,I}(\gamma),
&\hbox{ if } w\in C_i.
\end{cases}
\end{equation}
Furthermore, $f(Y)v_e^{k,\pm,I}(\gamma)=f(\gamma)v_e^{k,\pm,I}(\gamma)$
for all $f\in\mathbb{C}[T]$.
\subsection{Intertwiners}\label{Intersection}

By a well known result of Bernstein, the center $Z(H(k))$ of the affine
Hecke algebra equals $\mathcal{A}_Y^{k,W_0}$ (the subalgebra
of $H(k)$ consisting of elements $f(Y)$ with $f\in \mathbb{C}[T]^{W_0}$).

Let $M$ be a finite dimensional left $H(k)$-module and $\gamma\in T$.
We write
\[M^{W_0\gamma}:=\{m\in M \,\, | \,\, f(Y)m=f(\gamma)m\quad \forall\, 
f\in\mathbb{C}[T]^{W_0}\},
\]
which is a $H(k)$-submodule of $M$.  We furthermore write
\[M_\gamma:=\{m\in M \,\, | \,\, f(Y)m=f(\gamma)m\quad \forall\,
f\in\mathbb{C}[T]\}.
\]
\begin{defi}
Let $M$ be a finite dimensional left $H(k)$-module.\\
{\bf (i)} $M$ is said to have central character
$W_0\gamma\in T/W_0$ if $M=M^{W_0\gamma}$.\\
{\bf (ii)} We say that $M$ is calibrated if
$M=\bigoplus_{\gamma\in T}M_\gamma$.
\end{defi}
Note that the $H(k)$-module $M^{k,\pm,I}(\gamma)$ ($\gamma\in T_I^{k^{\pm 1}}$)
has central character $W_0\gamma$. 

We now determine the conditions on $\gamma\in T_I^{k^{\pm 1}}$ 
that ensure that $M^{k,\pm,I}(\gamma)$ is calibrated
using the intertwiners of $H(k)$. 
In the following theorem we collect the definitions
and the basic properties of the intertwiners (cf., e.g., \cite{Ma,Kat},
\cite[\S 2.2]{Op}).
\begin{thm}\label{intertwinerthm}
For $1\leq i\leq n$ set
\[
I_i(k):=T_i(1-Y^{\alpha_i^\vee})+(k_i-k_i^{-1})Y^{\alpha_i^\vee}
\in H(k).
\]
There exists unique elements $I_w(k)\in H(k)$ ($w\in W_0$)
satisfying
\[
I_w(k)=I_{i_1}(k)I_{i_2}(k)\cdots I_{i_r}(k)
\]
if $w=s_{i_1}s_{i_2}\cdots s_{i_r}\in W_0$ is a reduced expression
($1\leq i_j\leq n$). Furthermore,
\[
I_i(k)^2=(k_i-k_i^{-1}Y^{\alpha_i^\vee})
(k_i-k_i^{-1}Y^{-\alpha_i^\vee})
\] 
for $1\leq i\leq n$ and in $H(k)$,
\[
I_w(k)f(Y)=(wf)(Y)I_w(k)\qquad (w\in W_0,\, f\in\mathbb{C}[T]).
\]
\end{thm}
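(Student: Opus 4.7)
My plan is to first establish the intertwining relation for simple reflections, $I_i(k) f(Y) = (s_i f)(Y) I_i(k)$ for $f \in \mathbb{C}[T]$, and then to bootstrap from there. For the simple case I would work directly from the Bernstein-Zelevinsky cross relation of Theorem \ref{Hchar}(ii). The key observation is that the correction term $(c_i^k(Y^{-1}) - k_i)((s_i f)(Y) - f(Y))$ equals $\frac{(k_i - k_i^{-1})Y^{\alpha_i^\vee}}{1 - Y^{\alpha_i^\vee}}[(s_if)(Y) - f(Y)]$, and the factor $(1 - Y^{\alpha_i^\vee})$ sitting inside $I_i(k)$ is designed precisely to clear this denominator. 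Rearranging the cross relation to express $T_i f(Y)$ in terms of $(s_i f)(Y) T_i$, multiplying through by $(1 - Y^{\alpha_i^\vee})$ and using the commutativity of $\mathcal{A}_Y^k$ with itself then collapses everything to $I_i(k) f(Y) - (s_i f)(Y) I_i(k) = 0$.

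Next I would tackle the well-definedness of $I_w(k)$, which by Matsumoto's theorem amounts to verifying the braid relations $I_iI_jI_i\cdots = I_jI_iI_j\cdots$ (with $m_{ij}$ factors on each side) for every pair $i \neq j$ with $s_is_j$ of finite order $m_{ij}$. I expect this to be the main obstacle. My strategy is to observe that, by the simple-intertwining relation just established, both products intertwine $f(Y)$ with $(w_{ij}f)(Y)$, where $w_{ij}$ is the longest element of the rank-$2$ parabolic $\langle s_i, s_j\rangle$; hence their difference $D$ centralizes $\mathcal{A}_Y^k$. By Theorem \ref{Hchar}(iii)--(iv) the centralizer of $\mathcal{A}_Y^k$ in $H(k)$ equals $\mathcal{A}_Y^k$ itself, so $D \in \mathcal{A}_Y^k$. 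Comparing the leading $T_{w_{ij}}$-coefficient in the Bernstein basis $\{f(Y)T_w\}_{w \in W_0}$ on both sides, using the Hecke braid relation \eqref{braidrelations2} for the $T_i$'s, forces $D = 0$. An alternative cleaner route is to localize $H(k)$ at the multiplicative set generated by $\{1 - Y^{\alpha^\vee}\}_{\alpha \in R_0}$ and work with normalized intertwiners $\widetilde I_i(k) := I_i(k)/(1 - Y^{\alpha_i^\vee})$, for which the braid identity reduces to a rank-$2$ calculation in types $A_2$, $B_2$, $G_2$.

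For the square formula, the simple intertwining relation of the first step implies that $I_i(k)^2$ commutes with every $f(Y)$ and so, by the Bernstein presentation again, lies in $\mathcal{A}_Y^k$. To identify the precise element, I would expand
\[
I_i(k)^2 = \bigl[T_i(1-Y^{\alpha_i^\vee}) + (k_i-k_i^{-1})Y^{\alpha_i^\vee}\bigr]^2
\]
using the Hecke relation $T_i^2 = (k_i-k_i^{-1})T_i + 1$ and push every $T_i$ to the left with the cross relation; the $T_i$-coefficients must cancel (a consistency check, since the answer must lie in $\mathcal{A}_Y^k$), and the pure $\mathcal{A}_Y^k$-part must equal the claimed $(k_i - k_i^{-1}Y^{\alpha_i^\vee})(k_i - k_i^{-1}Y^{-\alpha_i^\vee})$. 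Finally, the general intertwining relation $I_w(k) f(Y) = (wf)(Y) I_w(k)$ is a routine induction on $l(w)$: for a reduced expression $w = s_{i_1}\cdots s_{i_r}$ one iterates the simple-reflection relation to push $f(Y)$ past each $I_{i_j}(k)$, accumulating the action $s_{i_1}\cdots s_{i_r} = w$ on $f$.
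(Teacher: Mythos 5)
A preliminary remark: the paper itself does not prove this theorem (it is quoted from Matsumoto, Kato and Opdam), so your proposal has to stand on its own. Most of it does: deriving $I_i(k)f(Y)=(s_if)(Y)I_i(k)$ from the cross relation \eqref{crossexp} works exactly as you say (the factor $1-Y^{\alpha_i^\vee}$ clears the denominator), the square formula follows from a finite expansion using $T_i^2=1+(k_i-k_i^{-1})T_i$ and the cross relation, and the induction on $l(w)$ for the general intertwining relation is routine. Your appeal to ``the centralizer of $\mathcal{A}_Y^k$ in $H(k)$ is $\mathcal{A}_Y^k$'' is also correct, though it is not literally part of Theorem \ref{Hchar}: it needs the standard triangularity argument (take a Bruhat-maximal $T_u$ in the support and use that $\mathbb{C}[T]$ is a domain).

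The genuine gap is in the step you yourself flag as the main obstacle, the braid relations. If $A=I_i(k)I_j(k)I_i(k)\cdots$ and $B=I_j(k)I_i(k)I_j(k)\cdots$ ($m_{ij}$ factors each), then both satisfy $Xf(Y)=(w_{ij}f)(Y)X$, and hence so does $D=A-B$; this is a $w_{ij}$-\emph{twisted} intertwining property, not centralizing. So the claim that $D$ centralizes $\mathcal{A}_Y^k$ is false for $w_{ij}\neq e$, the conclusion $D\in\mathcal{A}_Y^k$ does not follow, and the last step does not stand on its own either: agreement of the top $T_{w_{ij}}$-coefficients of $A$ and $B$ says nothing about their lower-order terms, so it cannot force $D=0$. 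The argument can be repaired in the direction of your ``alternative route'': localize $H(k)$ at $\mathcal{A}_Y^k\setminus\{0\}$ (an Ore set, as for $\mathbb{H}_{loc}(k,q)$ in the paper). Since $I_i(k)^2$ is a nonzero element of $\mathcal{A}_Y^k$ (your square formula, established beforehand), each $I_i(k)$, hence $B$, is invertible there; then $AB^{-1}$ genuinely centralizes $\mathbb{C}(T)$, hence lies in $\mathbb{C}(T)$, so $A=p(Y)B$ with $p\in\mathbb{C}(T)$. Both $A$ and $B$ have $T_{w_{ij}}$-coefficient $\prod_{\alpha\in R_0^+\cap w_{ij}^{-1}R_0^-}(1-Y^{\alpha^\vee})$ (the same product for either reduced expression, exactly as in the expansion used in Lemma \ref{expb}), whence $p=1$ and $A=B$; equivalently, expand $D$ over $\mathbb{C}(T)$ in the basis of normalized intertwiners and compare leading coefficients. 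By contrast, your fallback of dividing by $1-Y^{\alpha_i^\vee}$ and ``reducing to a rank-$2$ calculation'' is not a simplification: the scalar prefactors coming from the normalization are the same on both sides of the braid identity, so this is just the unnormalized braid relation verified by brute force in types $A_1\times A_1$, $A_2$, $B_2$, $G_2$, a computation the proposal does not carry out.
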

Using \eqref{crossrelation}, we have the alternative expression
\begin{equation}\label{Iialt}
I_i(k)=(1-Y^{-\alpha_i^\vee})T_i+k_i^{-1}-k_i,\qquad 1\leq i\leq n
\end{equation}
for the intertwiners of the affine Hecke algebra $H(k)$.
\begin{cor}\label{productY}
For $w\in W_0$ we have
\[I_w(k)I_{w^{-1}}(k)=d_w(Y)(wd_{w^{-1}})(Y)
\]
in $H(k)$, with $d_w\in\mathbb{C}[T]$ given by
\[
d_w(t)=\prod_{\alpha\in R_0^+\cap wR_0^-}(k_{\alpha^\vee}-
k_{\alpha^\vee}^{-1}t^{\alpha^\vee}).
\]
\end{cor}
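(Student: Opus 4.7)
The plan is to prove the identity by induction on $l(w)$, using the factorization of intertwiners along reduced expressions and the intertwining commutation relation from Theorem \ref{intertwinerthm}.

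The base cases are immediate: for $w=e$ both sides equal $1$, and for $w=s_i$ the identity reduces to $I_i(k)^2 = d_{s_i}(Y)(s_i d_{s_i})(Y)$, which is exactly the squaring formula in Theorem \ref{intertwinerthm}, since $R_0^+\cap s_iR_0^-=\{\alpha_i\}$ gives $d_{s_i}(t)=k_i-k_i^{-1}t^{\alpha_i^\vee}$ and $(s_id_{s_i})(t)=k_i-k_i^{-1}t^{-\alpha_i^\vee}$.

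For the inductive step, suppose the claim holds for all elements of length $<l(w)$, and write $w=s_iw'$ with $l(w)=l(w')+1$ (so $w'^{-1}\alpha_i\in R_0^+$). Then $w^{-1}=w'^{-1}s_i$ is also a reduced factorization, so by Theorem \ref{intertwinerthm},
\[
I_w(k)I_{w^{-1}}(k)=I_i(k)I_{w'}(k)I_{w'^{-1}}(k)I_i(k).
\]
Applying the induction hypothesis to $w'$, then moving the outer $I_i(k)$ on the left past the $Y$-polynomial using $I_i(k)f(Y)=(s_if)(Y)I_i(k)$, and finally using $I_i(k)^2=d_{s_i}(Y)(s_id_{s_i})(Y)$, yields
\[
I_w(k)I_{w^{-1}}(k)=(s_id_{w'})(Y)(s_iw'd_{w'^{-1}})(Y)\,d_{s_i}(Y)(s_id_{s_i})(Y),
\]
all factors commuting in $\mathcal{A}_Y^k$.

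It remains to match this with $d_w(Y)(wd_{w^{-1}})(Y)$. The key combinatorial fact is the length-additive decomposition
\[
R_0^+\cap(uv)R_0^- \;=\; (R_0^+\cap uR_0^-)\;\sqcup\; u(R_0^+\cap vR_0^-)
\qquad\text{when } l(uv)=l(u)+l(v),
\]
applied in two ways. Applied to $w=s_iw'$, together with $W_0$-invariance of $k$, it gives $d_w(t)=d_{s_i}(t)(s_id_{w'})(t)$. Applied to $w^{-1}=w'^{-1}s_i$, it gives $d_{w^{-1}}(t)=d_{w'^{-1}}(t)(k_i-k_i^{-1}t^{w'^{-1}\alpha_i^\vee})$; substituting $t\mapsto w^{-1}t=w'^{-1}s_it$ then produces the factor $(s_id_{s_i})(t)$ since $ww'^{-1}\alpha_i^\vee=s_i\alpha_i^\vee=-\alpha_i^\vee$, so $(wd_{w^{-1}})(t)=(s_iw'd_{w'^{-1}})(t)(s_id_{s_i})(t)$. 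Multiplying, one recovers exactly the expression above, completing the induction.

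The only genuine obstacle is bookkeeping the two $R_0^+\cap wR_0^-$ decompositions and keeping track of which simple reflection acts where; once that combinatorics is in place the commutativity of $\mathcal{A}_Y^k$ makes the rearrangement painless.
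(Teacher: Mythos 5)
Your proof is correct: the paper states Corollary \ref{productY} without proof, as an immediate consequence of Theorem \ref{intertwinerthm}, and your induction on $l(w)$ along reduced expressions—using $I_w(k)f(Y)=(wf)(Y)I_w(k)$, the squaring formula $I_i(k)^2=d_{s_i}(Y)(s_id_{s_i})(Y)$, and the inversion-set decomposition $R_0^+\cap(uv)R_0^-=(R_0^+\cap uR_0^-)\sqcup u(R_0^+\cap vR_0^-)$ for $l(uv)=l(u)+l(v)$—is exactly the standard argument the paper leaves implicit. The bookkeeping in your two applications of that decomposition (giving $d_w=d_{s_i}\,(s_id_{w'})$ and $wd_{w^{-1}}=(s_iw'd_{w'^{-1}})(s_id_{s_i})$, with $ww'^{-1}\alpha_i^\vee=-\alpha_i^\vee$ and the $W$-invariance of $k$) checks out.
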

For $\gamma\in T_I^{k^{\pm 1}}$ and $w\in W_0^I$ we set
\begin{equation}\label{bbasis}
b_w^{k,\pm,I}(\gamma):=I_w(k)v_e^{k,\pm,I}(\gamma)\in M^{k,\pm,I}(\gamma)_{w\gamma}.
\end{equation}
\begin{rema}
If $w\in W_0\setminus W_0^I$ and $\gamma\in T_I^{k^{\pm 1}}$
then $I_w(k)v_e^{k,\pm,I}(\gamma)=0$.
To prove this it suffices to note that $I_i(k)v_e^{k,\pm,I}(\gamma)=0$
for $i\in I$, which is immediate from the definition of $I_i(k)$
and the fact that $\gamma^{\alpha_i^\vee}=k_i^{\pm 2}$.
\end{rema}

\begin{lem}\label{expb}
Let $\gamma\in T_I^{k^{\pm 1}}$. For $w\in W_0^I$ we have
\[
b_w^{k,\pm,I}(\gamma)=\bigl(\prod_{\alpha\in (R_0^+\setminus R_0^{I,+})\cap w^{-1}R_0^-}
(1-\gamma^{\alpha^\vee})\bigr)v_w^{k,\pm,I}(\gamma)+
\sum_{u\in W_0^I: u<w} a_u^{\pm}v_u^{k,\pm,I}(\gamma)
\]
for some $a_u^{\pm}\in\mathbb{C}$.
\end{lem}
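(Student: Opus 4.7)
I will prove this by induction on $l(w)$ for $w\in W_0^I$. The base case $w=e$ is immediate: $b_e^{k,\pm,I}(\gamma)=v_e^{k,\pm,I}(\gamma)$, the product over the empty set equals $1$, and the sum is empty.

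For the inductive step, let $w\in W_0^I$ with $l(w)\ge 1$, pick a simple reflection $s_i$ with $l(s_iw)=l(w)-1$, and set $w':=s_iw$. I rely on five elementary observations. (a) $w'\in W_0^I$: for $j\in I$ we have $w\alpha_j\in R_0^+$, and $w\alpha_j=\alpha_i$ would force $\alpha_j=w^{-1}\alpha_i\in R_0^-$, a contradiction; since $s_i$ permutes $R_0^+\setminus\{\alpha_i\}$, it follows that $s_iw\alpha_j\in R_0^+$. (b) $w'\in B_i$ in the sense of Lemma \ref{ABC}, because $s_iw'=w\in W_0^I$ and $l(s_iw')=l(w')+1$. (c) $I_w(k)=I_i(k)I_{w'}(k)$, obtained by prepending $s_i$ to a reduced expression for $w'$. (d) $\beta:=(w')^{-1}\alpha_i=-w^{-1}\alpha_i$ lies in $R_0^+\setminus R_0^{I,+}$: positivity is clear from $l(s_iw)<l(w)$; and if $\beta\in R_0^{I,+}$, then $w(-\beta)=\alpha_i\in R_0^+$ would contradict $w(R_0^{I,-})\subseteq R_0^-$. (e) Combining (d) with the standard identity $w^{-1}R_0^-=((w')^{-1}R_0^-\setminus\{-\beta\})\cup\{\beta\}$ and intersecting with $R_0^+\setminus R_0^{I,+}$ yields
\[
(R_0^+\setminus R_0^{I,+})\cap w^{-1}R_0^-=\bigl((R_0^+\setminus R_0^{I,+})\cap (w')^{-1}R_0^-\bigr)\sqcup\{\beta\}.
\]

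By the inductive hypothesis,
\[
b_{w'}^{k,\pm,I}(\gamma)=A'\,v_{w'}^{k,\pm,I}(\gamma)+\sum_{u<w',\,u\in W_0^I}a_u'\,v_u^{k,\pm,I}(\gamma),\quad A':=\prod_{\alpha\in(R_0^+\setminus R_0^{I,+})\cap (w')^{-1}R_0^-}(1-\gamma^{\alpha^\vee}).
\]
By \eqref{bbasis} and the intertwining relation of Theorem \ref{intertwinerthm}, $b_{w'}^{k,\pm,I}(\gamma)\in M^{k,\pm,I}(\gamma)_{w'\gamma}$, so $Y^{\alpha_i^\vee}$ acts on it by the scalar $(w'\gamma)^{\alpha_i^\vee}=\gamma^{\beta^\vee}$. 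Writing $I_i(k)=T_i(1-Y^{\alpha_i^\vee})+(k_i-k_i^{-1})Y^{\alpha_i^\vee}$ and using (c),
\[
b_w^{k,\pm,I}(\gamma)=I_i(k)b_{w'}^{k,\pm,I}(\gamma)=(1-\gamma^{\beta^\vee})\,T_ib_{w'}^{k,\pm,I}(\gamma)+(k_i-k_i^{-1})\gamma^{\beta^\vee}\,b_{w'}^{k,\pm,I}(\gamma).
\]

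To identify the coefficient of $v_w^{k,\pm,I}(\gamma)$ in $b_w^{k,\pm,I}(\gamma)$, apply \eqref{vaction}. By (b), $T_iv_{w'}^{k,\pm,I}(\gamma)=v_w^{k,\pm,I}(\gamma)$. For each other $v_u^{k,\pm,I}(\gamma)$ occurring in the expansion of $b_{w'}^{k,\pm,I}(\gamma)$ (indexed by $u\in W_0^I$ with $u<w'$), the case analysis of Lemma \ref{ABC} together with the Bruhat-order lifting property (using $s_iw<w$) shows that $T_iv_u^{k,\pm,I}(\gamma)$ is a linear combination of $v_{u'}^{k,\pm,I}(\gamma)$ with $u'\in W_0^I$ and $u'<w$, none of which equals $v_w^{k,\pm,I}(\gamma)$. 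Hence $T_ib_{w'}^{k,\pm,I}(\gamma)=A'\,v_w^{k,\pm,I}(\gamma)+(\text{lower-order }v\text{'s})$, while the second summand in the displayed expression for $b_w^{k,\pm,I}(\gamma)$ lives entirely in $v_u^{k,\pm,I}(\gamma)$ with $u\le w'<w$. Applying (e), the coefficient of $v_w^{k,\pm,I}(\gamma)$ in $b_w^{k,\pm,I}(\gamma)$ equals $(1-\gamma^{\beta^\vee})A'=\prod_{\alpha\in(R_0^+\setminus R_0^{I,+})\cap w^{-1}R_0^-}(1-\gamma^{\alpha^\vee})$, completing the induction.

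The main obstacle is claim (d): one must verify that a left descent of an element of $W_0^I$ never produces a coroot in $R_0^{I,+}$, for otherwise the leading product would fail to propagate correctly under the induction. All remaining ingredients reduce to Lemma \ref{ABC}, the reduced-expression description of $I_w(k)$, the intertwining property of Theorem \ref{intertwinerthm}, and elementary Bruhat-order manipulations.
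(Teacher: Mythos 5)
Your argument is correct, but it follows a different route than the paper. You run the induction inside the module $M^{k,\pm,I}(\gamma)$: factor $I_w(k)=I_i(k)I_{s_iw}(k)$ along a left descent, use that $b_{s_iw}^{k,\pm,I}(\gamma)$ lies in the weight space $M^{k,\pm,I}(\gamma)_{s_iw\gamma}$ so that $Y^{\alpha_i^\vee}$ acts by the scalar $\gamma^{\beta^\vee}$, and then control the coefficient of $v_w^{k,\pm,I}(\gamma)$ via \eqref{vaction} together with the Bruhat lifting property; your verifications (a)--(e) (in particular that the left descent stays in $W_0^I$, which also follows directly from Lemma \ref{ABC}{\bf (ii)} since $w\in A_i$ and $A_i$ maps onto $B_i\subseteq W_0^I$, and that $\beta\in R_0^+\setminus R_0^{I,+}$) are all sound, and the root bookkeeping in (e) correctly propagates the leading product. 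The paper instead proves by induction an identity purely in $H(k)$, namely $I_w(k)=T_w\prod_{\alpha\in R_0^+\cap w^{-1}R_0^-}(1-Y^{\alpha^\vee})+\sum_{u<w}T_uf_u(Y)$ for all $w\in W_0$, then applies it to $v_e^{k,\pm,I}(\gamma)$, using $T_uv_e^{k,\pm,I}(\gamma)=\epsilon_{\pm}^k(T_{\underline{u}})v_{\overline{u}}^{k,\pm,I}(\gamma)$ with $\overline{u}\leq u<w$, and finally observes that for $w\in W_0^I$ the index set $R_0^+\cap w^{-1}R_0^-$ equals $(R_0^+\setminus R_0^{I,+})\cap w^{-1}R_0^-$. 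The paper's version yields a reusable algebra-level expansion valid for all $w\in W_0$ and avoids the lifting-property and descent-in-$W_0^I$ bookkeeping; yours stays entirely within the parabolic module and makes the weight-space mechanism explicit, at the cost of the extra combinatorial checks, which you supply correctly.
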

\begin{proof}
By induction on $l(w)$ we have for $w\in W_0$,
\[
I_w(k)=T_w\prod_{\alpha\in R_0^+\cap w^{-1}R_0^-}(1-Y^{\alpha^\vee})
+\sum_{u\in W_0: u<w}T_uf_u(Y)
\]
in $H(k)$, for some $f_u\in\mathbb{C}[T]$.
Thus, for $w\in W_0^I$,
\[
b_w^{k,\pm,I}(\gamma)=\bigl(\prod_{\alpha\in R_0^+\cap w^{-1}R_0^-}
(1-\gamma^{\alpha^\vee})\bigr)v_w^{k,\pm,I}(\gamma)
+\sum_{u\in W_0: u<w}f_u(\gamma)T_uv_e^{k,\pm,I}(\gamma).
\]
Since $w(R_0^{I,+})\subseteq R_0^+$ for $w\in W_0^I$, the product over
$\alpha$ is in fact 
a product over the set $(R_0^+\setminus R_0^{I,+})\cap w^{-1}(R_0^-)$.
Furthermore, if $u\in W_0$, $w\in W_0^I$ and $u<w$, then 
$\overline{u}\leq u=\overline{u}\underline{u}<w$
and 
\[
T_uv_e^{k,\pm,I}(\gamma)=T_{\overline{u}}T_{\underline{u}}v_e^{k,\pm,I}(\gamma)=
\epsilon_{\pm}^k(T_{\underline{u}})v_{\overline{u}}^{k,\pm,I}(\gamma).
\]
This completes the proof.
\end{proof}
\begin{prop}\label{cal}
If $\gamma\in T_I^{k^{\pm 1}}$ satisfies $\gamma^{\alpha^\vee}\not=1$
for all $\alpha\in R_0^+\setminus R_0^{I,+}$, then
\begin{enumerate}
\item[{\bf (i)}] $w\gamma=w^\prime\gamma$ for $w,w^\prime\in W_0^I$
if and only if $w=w^\prime$.
\item[{\bf (ii)}] $M^{k,\pm,I}(\gamma)$ is calibrated.
\item[{\bf (iii)}] $M^{k,\pm,I}(\gamma)=\bigoplus_{w\in W_0^I}
M^{k,\pm,I}(\gamma)_{w\gamma}$ and
$\textup{Dim}_{\mathbb{C}}\bigl(M^{k,\pm,I}(\gamma)_{w\gamma}\bigr)=1$
for all $w\in W_0^I$.
\item[{\bf (iv)}] $M^{k,\pm,I}(\gamma)_{w\gamma}=\mathbb{C}b_w^{k,\pm,I}(\gamma)$
for all $w\in W_0^I$.
\end{enumerate}
\end{prop}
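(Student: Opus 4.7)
The plan is to first establish (i) as a stabilizer statement, and then deduce (ii)--(iv) at once from Lemma \ref{expb} together with a dimension count.

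For part (i), suppose $w,w'\in W_0^I$ satisfy $w\gamma=w'\gamma$, so that $v:=(w')^{-1}w$ fixes $\gamma$. The central input is the classical fact (Steinberg-type theorem for Weyl group actions on tori) that the isotropy subgroup $\{v\in W_0\mid v\gamma=\gamma\}$ is generated by the reflections it contains, namely the $s_\alpha$ with $\alpha\in R_0^+$ and $\gamma^{\alpha^\vee}=1$. Under the hypothesis, every such $\alpha$ must lie in $R_0^{I,+}$, so the isotropy group is contained in $W_{0,I}$. Therefore $v\in W_{0,I}$, i.e.\ $wW_{0,I}=w'W_{0,I}$, and since $W_0^I$ parametrizes these cosets uniquely we conclude $w=w'$.

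For (ii)--(iv), I would apply Lemma \ref{expb}: for every $w\in W_0^I$ the leading coefficient
\[
\prod_{\alpha\in(R_0^+\setminus R_0^{I,+})\cap w^{-1}R_0^-}(1-\gamma^{\alpha^\vee})
\]
of $b_w^{k,\pm,I}(\gamma)$ on $v_w^{k,\pm,I}(\gamma)$ is a product of nonzero factors, by hypothesis. Combined with the Bruhat-unitriangular expansion of $b_w^{k,\pm,I}(\gamma)$ in the basis $\{v_u^{k,\pm,I}(\gamma)\}_{u\in W_0^I}$, this forces $\{b_w^{k,\pm,I}(\gamma)\}_{w\in W_0^I}$ to be linearly independent, hence a basis of $M^{k,\pm,I}(\gamma)$ by Lemma \ref{chilemma}(ii).

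Since $b_w^{k,\pm,I}(\gamma)\in M^{k,\pm,I}(\gamma)_{w\gamma}$ by \eqref{bbasis}, this yields $M^{k,\pm,I}(\gamma)=\sum_{w\in W_0^I}M^{k,\pm,I}(\gamma)_{w\gamma}$. The weight space decomposition with respect to $\mathcal{A}_Y^k$ is always a direct sum across distinct characters of $\mathbb{C}[T]$, and by (i) the characters $w\gamma$ are pairwise distinct, so the sum is direct. Comparing dimensions, $\#W_0^I=\dim M^{k,\pm,I}(\gamma)\ge\sum_w\dim M^{k,\pm,I}(\gamma)_{w\gamma}\ge\#W_0^I$ (the last inequality since each summand contains the nonzero vector $b_w^{k,\pm,I}(\gamma)$), forcing each $M^{k,\pm,I}(\gamma)_{w\gamma}$ to be one-dimensional and spanned by $b_w^{k,\pm,I}(\gamma)$, which settles (ii), (iii), (iv) simultaneously.

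The principal obstacle is (i): parts (ii)--(iv) are essentially a triangularity-plus-bookkeeping argument, whereas (i) rests on the structural fact that point stabilizers for $W_0$ acting on $T$ are generated by the reflections they contain. Once this is granted (either by citation or a short reflection-theoretic argument exploiting that $\gamma^{\alpha^\vee}=1$ restricts $\alpha$ to $R_0^I$), the rest is mechanical.
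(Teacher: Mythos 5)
Your proposal is correct and follows essentially the same route as the paper: part (i) via the Steinberg fact that the stabilizer $W_{0,\gamma}$ is generated by the reflections it contains, forcing $W_{0,\gamma}\subset W_{0,I}$, and parts (ii)--(iv) via the nonvanishing of $b_w^{k,\pm,I}(\gamma)$ guaranteed by Lemma \ref{expb} together with a dimension count over the pairwise distinct weights $w\gamma$. The only difference is cosmetic: your extra triangularity step to show the $b_w^{k,\pm,I}(\gamma)$ form a basis is not needed, since nonvanishing of each $b_w^{k,\pm,I}(\gamma)$ in its weight space already suffices for the count.
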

\begin{proof}
{\bf (i)} The fixpoint subgroup
\[
W_{0,\gamma}:=\{w\in W_0 \,\, | \,\, w\gamma=\gamma\}
\] 
of $\gamma$
is generated by the reflections $s_\alpha$ ($\alpha\in R_0^+$)
it contains (see \cite{St}). 
For $\alpha\in R_0^+$ we have $s_\alpha\in W_{0,\gamma}$
if and only if $\gamma^{\alpha^\vee}=1$. By the assumption that
$\gamma^{\alpha^\vee}\not=1$ for $\alpha\in R_0^+\setminus R_0^{I,+}$,
we conclude that $s_\alpha\in W_{0,\gamma}$
($\alpha\in R_0^+$) implies that $\alpha\in R_0^{I,+}$. Hence
$W_{0,\gamma}\subset W_{0,I}$. The result now follows immediately.\\
{\bf (ii)} follows from {\bf (iii)}.\\
{\bf (iii)}\&{\bf (iv)} 
The previous
lemma shows that $b_w^{k,\pm,I}(\gamma)\in M^{k,\pm,I}(\gamma)_{w\gamma}$
is nonzero for all $w\in W_0^I$. By a dimension count and {\bf (i)}
we get 
$M^{k,\pm,I}(\gamma)=\bigoplus_{w\in W_0^I}
M^{k,\pm,I}(\gamma)_{w\gamma}$ and
$M^{k,\pm,I}(\gamma)_{w\gamma}=\mathbb{C}b_w^{k,\pm,I}(\gamma)$ for all
$w\in W_0^I$.
\end{proof}

\begin{lem}\label{baction}
Let $\gamma\in T_I^{k^{\pm 1}}$ such that $\gamma^{\alpha^\vee}\not=1$
for all $\alpha\in R_0^+\setminus R_0^{I,+}$. Let $1\leq i\leq n$.
If $w\in A_i$ then
\begin{equation}\label{Aformula}
\begin{split}
T_ib_w^{k,\pm,I}(\gamma)&=
\frac{(k_i-k_i^{-1}\gamma^{w^{-1}(\alpha_i)^\vee})
(k_i-k_i^{-1}\gamma^{-w^{-1}(\alpha_i)^\vee})}
{(1-\gamma^{w^{-1}(\alpha_i)^\vee})}b_{s_iw}^{k,\pm,I}(\gamma)\\
&+\frac{(k_i^{-1}-k_i)\gamma^{w^{-1}(\alpha_i)^\vee}}
{(1-\gamma^{w^{-1}(\alpha_i)^\vee})}b_w^{k,\pm,I}(\gamma).
\end{split}
\end{equation}
If $w\in B_i$ then
\begin{equation}\label{Bformula}
T_ib_w^{k,\pm,I}(\gamma)=
\frac{1}{(1-\gamma^{w^{-1}(\alpha_i)^\vee})}b_{s_iw}^{k,\pm,I}(\gamma)\\
+\frac{(k_i^{-1}-k_i)\gamma^{w^{-1}(\alpha_i)^\vee}}
{(1-\gamma^{w^{-1}(\alpha_i)^\vee})}b_w^{k,\pm,I}(\gamma).
\end{equation}
If $w\in C_i$ then $T_ib_w^{k,\pm,I}(\gamma)=\pm k_i^{\pm 1}b_w^{k,\pm,I}(\gamma)$.
\end{lem}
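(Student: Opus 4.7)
The plan is to compute $I_i(k)b_w^{k,\pm,I}(\gamma)$ in two independent ways and equate. First, using the defining form $I_i(k)=T_i(1-Y^{\alpha_i^\vee})+(k_i-k_i^{-1})Y^{\alpha_i^\vee}$ together with Proposition \ref{cal}(iv), which gives $Y^{\alpha_i^\vee}b_w^{k,\pm,I}(\gamma)=\gamma^{w^{-1}\alpha_i^\vee}b_w^{k,\pm,I}(\gamma)$, one obtains
\[
I_i(k)b_w^{k,\pm,I}(\gamma)=(1-\gamma^{w^{-1}\alpha_i^\vee})T_ib_w^{k,\pm,I}(\gamma)+(k_i-k_i^{-1})\gamma^{w^{-1}\alpha_i^\vee}b_w^{k,\pm,I}(\gamma).
\]
Second, $I_i(k)b_w^{k,\pm,I}(\gamma)=I_i(k)I_w(k)v_e^{k,\pm,I}(\gamma)$ is analyzed via reduced expressions. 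For $w\in A_i$, Lemma \ref{ABC}(ii) gives $s_iw\in B_i\subset W_0^I$ and $I_w(k)=I_i(k)I_{s_iw}(k)$, so the expression equals $I_i(k)^2b_{s_iw}^{k,\pm,I}(\gamma)$, which by Theorem \ref{intertwinerthm} together with $Y^{\pm\alpha_i^\vee}b_{s_iw}^{k,\pm,I}(\gamma)=\gamma^{\mp w^{-1}\alpha_i^\vee}b_{s_iw}^{k,\pm,I}(\gamma)$ evaluates to $(k_i-k_i^{-1}\gamma^{-w^{-1}\alpha_i^\vee})(k_i-k_i^{-1}\gamma^{w^{-1}\alpha_i^\vee})b_{s_iw}^{k,\pm,I}(\gamma)$. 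For $w\in B_i$, $I_{s_iw}(k)=I_i(k)I_w(k)$ yields $I_i(k)b_w^{k,\pm,I}(\gamma)=b_{s_iw}^{k,\pm,I}(\gamma)$. For $w\in C_i$, Lemma \ref{ABC}(iii) gives $s_iw=ws_{i_w}$ with $i_w\in I$, so $I_i(k)I_w(k)=I_w(k)I_{i_w}(k)$ and $I_i(k)b_w^{k,\pm,I}(\gamma)=I_w(k)I_{i_w}(k)v_e^{k,\pm,I}(\gamma)=0$ by the remark preceding Lemma \ref{expb}.

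For $w\in A_i\cup B_i$, equating the two computations and dividing by $1-\gamma^{w^{-1}\alpha_i^\vee}$ produces formulas \eqref{Aformula} and \eqref{Bformula}. The division is legitimate because $w,s_iw\in W_0^I$ preclude $w^{-1}\alpha_i\in R_0^{I,\pm}$ (otherwise some $\beta\in R_0^{I,+}$ would be sent to $\mp\alpha_i\in R_0^-$ by one of $w,s_iw$, violating minimality), whence $w^{-1}\alpha_i\in\pm(R_0^+\setminus R_0^{I,+})$, and the hypothesis on $\gamma$ gives $\gamma^{w^{-1}\alpha_i^\vee}\neq 1$. For $w\in C_i$, $w^{-1}\alpha_i^\vee=\alpha_{i_w}^\vee$ and $W$-invariance of $k$ gives $k_{i_w}=k_i$, so equating yields $(1-k_i^{\pm 2})T_ib_w^{k,\pm,I}(\gamma)=(k_i^{-1}-k_i)k_i^{\pm 2}b_w^{k,\pm,I}(\gamma)$; when $k_i^2\neq 1$, division gives $T_ib_w^{k,\pm,I}(\gamma)=\pm k_i^{\pm 1}b_w^{k,\pm,I}(\gamma)$.

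The main obstacle is the degenerate subcase $w\in C_i$ with $k_i^2=1$, where the identity collapses to $0=0$. Here I argue instead that $(w\gamma)^{\alpha_i^\vee}=k_i^{\pm 2}=1$ implies $s_i(w\gamma)=w\gamma$, so the crossrelation \eqref{crossrelation} forces $T_ib_w^{k,\pm,I}(\gamma)\in M^{k,\pm,I}(\gamma)_{w\gamma}=\mathbb{C}b_w^{k,\pm,I}(\gamma)$, say $T_ib_w^{k,\pm,I}(\gamma)=c\,b_w^{k,\pm,I}(\gamma)$. The constant $c$ is then pinned down by comparing the coefficient of $v_w^{k,\pm,I}(\gamma)$ on both sides using the expansion from Lemma \ref{expb}: by \eqref{vaction}, $T_iv_w^{k,\pm,I}(\gamma)=\pm k_i^{\pm 1}v_w^{k,\pm,I}(\gamma)$ (since $w\in C_i$), and no $v_u^{k,\pm,I}(\gamma)$ with $u\in W_0^I$, $u<w$, contributes to the $v_w^{k,\pm,I}(\gamma)$-component of $T_ib_w^{k,\pm,I}(\gamma)$ (this would require $s_iu=w$, forcing $s_iw\in W_0^I$, contradicting $w\in C_i$). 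The leading coefficient of $b_w^{k,\pm,I}(\gamma)$ at $v_w^{k,\pm,I}(\gamma)$ is nonzero by the hypothesis on $\gamma$, and the comparison forces $c=\pm k_i^{\pm 1}$.
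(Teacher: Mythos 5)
Your proposal is correct and follows essentially the same route as the paper: it evaluates $I_i(k)b_w^{k,\pm,I}(\gamma)$ (equivalently $(1-\gamma^{w^{-1}(\alpha_i)^\vee})T_ib_w^{k,\pm,I}(\gamma)$) in two ways, using $I_w(k)=I_i(k)I_{s_iw}(k)$, $I_{s_iw}(k)=I_i(k)I_w(k)$, or $I_i(k)I_w(k)=I_w(k)I_{i_w}(k)$ according to the $A_i/B_i/C_i$ trichotomy together with $I_{i_w}(k)v_e^{k,\pm,I}(\gamma)=0$, and your treatment of the degenerate case $w\in C_i$, $k_i^2=1$ (cross relation forces $T_ib_w^{k,\pm,I}(\gamma)\in M^{k,\pm,I}(\gamma)_{w\gamma}=\mathbb{C}b_w^{k,\pm,I}(\gamma)$, then compare the $v_w^{k,\pm,I}(\gamma)$-coefficient via Lemma \ref{expb} and \eqref{vaction}) is exactly the paper's argument. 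No gaps.
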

\begin{proof}
If $w\in A_i$ then $l(s_iw)=l(w)-1$ hence $w^{-1}(\alpha_i)\in R_0^-$.
Furthermore, $w^{-1}(\alpha_i)\not\in R_0^{I,-}$ since $w\in W_0^I$.
Consequently $\gamma^{w^{-1}(\alpha_i)^\vee}\not=1$, hence \eqref{Aformula}
makes sense.

Similarly, if $w\in B_i$ then $w^{-1}(\alpha_i)\in R_0^+$ since
$l(s_iw)=l(w)+1$ and $w^{-1}(\alpha_i)\not\in R_0^{I,+}$ since
$s_iw\in W_0^I$. Consequently $\gamma^{w^{-1}(\alpha_i)^\vee}\not=1$, 
and \eqref{Bformula} makes sense.

By Theorem \ref{intertwinerthm} we have for $w\in W_0^I$,
\begin{equation}\label{tooo}
\begin{split}
(1-\gamma^{w^{-1}(\alpha_i)^\vee})T_ib_w^{k,\pm,I}(\gamma)&=
T_i(1-Y^{\alpha_i^\vee})I_w(k)v_e^{k,\pm,I}(\gamma)\\
&=(I_i(k)I_w(k)+(k_i^{-1}-k_i)Y^{\alpha_i^\vee}I_w(k))v_e^{k,\pm,I}(\gamma).
\end{split}
\end{equation}
If $w\in A_i$ then 
$I_i(k)I_w(k)=I_i(k)^2I_{s_iw}(k)=d_{s_i}(Y)(s_id_{s_i})(Y)I_{s_iw}(k)$
and \eqref{Aformula} follows from \eqref{tooo}, 
Theorem \ref{intertwiner}, and the fact that $s_iw\in W_0^I$.
If $w\in B_i$ then $I_i(k)I_w(k)=I_{s_iw}(k)$ and $s_iw\in W_0^I$, 
hence \eqref{Bformula} follows from \eqref{tooo}.
It remains to show that $T_ib_w^{k,\pm,I}(\gamma)=
\pm k_i^{\pm 1}b_w^{k,\pm,I}(\gamma)$
if $w\in C_i$. 

Fix $w\in C_i$. Then $w^{-1}(\alpha_i)=\alpha_{i_w}$ 
for a unique $i_w\in I$ (cf. Lemma \ref{ABC}). In particular,
$\gamma^{w^{-1}(\alpha_i)^\vee}=
\gamma^{\alpha_{i_w}^\vee}=k_{i_w}^{\pm 2}=k_i^{\pm 2}$.

Since $l(s_iw)=l(w)+1$ it follows from \eqref{tooo} that
\[
(1-\gamma^{w^{-1}(\alpha_i)^\vee})
T_ib_w^{k,\pm,I}(\gamma)=I_{s_iw}(k)v_e^{k,\pm,I}(\gamma)+
(k_i^{-1}-k_i)\gamma^{w^{-1}(\alpha_i)^\vee}b_w^{k,\pm,I}(\gamma).
\]
Hence, using $\gamma^{w^{-1}(\alpha_i)^\vee}=k_i^{\pm 2}$,
$I_{s_iw}(k)=I_w(k)I_{i_w}(k)$ and $I_j(k)v_e^{k,\pm,I}(\gamma)=0$
for $j\in I$,
\[
(1-k_i^{\pm 2})T_ib_w^{k,\pm,I}(\gamma)=
(k_i^{-1}-k_i)k_i^{\pm 2}b_w^{k,\pm,I}(\gamma).
\]
Consequently $T_ib_w^{k,\pm,I}(\gamma)=\pm k_i^{\pm 1}b_w^{k,\pm,I}(\gamma)$ 
if $k_i^2\not=1$.

It remains to consider the case that $w\in C_i$ and that $k_i^2=1$.
Then $f(Y)T_i=T_i(s_if)(Y)$ in $H(k)$ for all $f\in\mathbb{C}[T]$ 
by \eqref{crossexp}
and $s_{i_w}\gamma=\gamma$ since $\gamma\in T_I^{k^{\pm 1}}$, hence 
\[f(Y)T_ib_w^{k,\pm,I}(\gamma)=T_if(s_iw\gamma)b_w^{k,\pm,I}(\gamma)=
f(ws_{i_w}\gamma)T_ib_w^{k,\pm,I}(\gamma)=
f(w\gamma)T_ib_w^{k,\pm,I}(\gamma)
\]
for all $f\in\mathbb{C}[T]$.
This shows that $T_ib_w^{k,\pm,I}(\gamma)=c_i^{\pm}b_w^{k,\pm,I}(\gamma)$ for some
$c_i^{\pm}\in\mathbb{C}$. 
Note that
\[T_iv_w^{k,\pm,I}(\gamma)=T_{s_iw}v_e^{k,\pm,I}(\gamma)=
T_wT_{i_w}v_e^{k,\pm,I}(\gamma)=\pm k_i^{\pm 1}v_w^{k,\pm,I}(\gamma),
\]
hence by Lemma \ref{expb},
\[
T_ib_w^{k,\pm,I}(\gamma)=
\pm k_i^{\pm 1}\lambda_w
v_w^{k,\pm,I}(\gamma)+\sum_{u\in W_0^I: u<w}a_u^{\pm}T_iv_u^{k,\pm,I}(\gamma)
\]
for some $a_u^{\pm}\in\mathbb{C}$
and with $\lambda_w:=
\prod_{\alpha\in R_0^+\setminus R_0^{I,+}\cap w^{-1}R_0^-}
(1-\gamma^{\alpha^\vee})\not=0$. Let $u\in W_0^I$ with $u<w$.
Since $w\in C_i$, hence $s_iw\not\in W_0^I$, we have 
$\overline{s_iu}\not=w$.
Furthermore, $T_iv_u^{k,\pm,I}\in
\textup{span}_{\mathbb{C}}\{v_u^{k,\pm,I}(\gamma),
v_{\overline{s_iu}}^{k,\pm,I}(\gamma)\}$ by \eqref{vaction}. Hence
\[
T_ib_w^{k,\pm,I}(\gamma)=
\pm k_i^{\pm 1}\lambda_w
v_w^{k,\pm,I}(\gamma)+\sum_{u\in W_0^I\setminus\{w\}}a_u^{\prime\,\pm}
v_u^{k,\pm,I}(\gamma)
\]
for some $a_u^{\prime\,\pm}\in\mathbb{C}$.
On the other hand,
\begin{equation*}
\begin{split}
T_ib_w^{k,\pm,I}(\gamma)&=c_i^{\pm}b_w^{k,\pm,I}(\gamma)\\
&=
c_i^{\pm}\lambda_w
v_w^{k,\pm,I}(\gamma)+\sum_{u\in W_0^I: u<w}c_i^{\pm}a_u^{\pm}v_u^{k,\pm,I}(\gamma).
\end{split}
\end{equation*}
Comparing the coefficient of $v_w^{k,\pm,I}(\gamma)$
in these two expressions of $T_ib_w^{k,\pm,I}(\gamma)$, we
conclude that $c_i^{\pm}=\pm k_i^{\pm  1}$. 
Hence $T_ib_w^{k,\pm,I}(\gamma)=\pm k_i^{\pm 1}b_w^{k,\pm,I}(\gamma)$ for 
$w\in C_i$.
\end{proof}

\subsection{(Anti)spherical vectors}\label{spherical}
\begin{defi}
Let $M$ be a left $H(k)$-module. We call 
\[
M^{\pm}:=\{m\in M \,\, | \,\, hm=\epsilon_{\pm}^k(h)m
\quad \forall\, h\in H_0(k)\}
\]
the space of spherical ($+$), respectively antispherical ($-$),
elements in $M$.
\end{defi}
We also write for $M^{I,\pm}$ for the space of vectors $m\in M$
satisfying $hm=\epsilon_{\pm}^k(h)m$ for all $h\in H_{0,I}(k)$,
so that $M^{\pm}=M^{[1,n],\pm}$.
Define
\begin{equation}\label{almostidempotent}
C_{\pm}^I(k):=\sum_{w\in W_0^I}\epsilon_{\pm}^k(T_w)T_w\in H_0(k)
\end{equation}
and write $C_{\pm}(k)=C_{\pm}^\emptyset(k)$.
\begin{lem}\label{tttt}
Let $M$ be a left $H(k)$-module. Then the action of $C_{\pm}^I(k)$ on
$M^{I,\pm}$ defines a linear map
\[
C_{\pm}^I(k): M^{I,\pm}\rightarrow M^{\pm}.
\]
\end{lem}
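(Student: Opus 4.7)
The plan is to verify, for each simple reflection $T_i$ ($1\le i\le n$) and each $m\in M^{I,\pm}$, that $T_i\cdot C_\pm^I(k)m=\epsilon_\pm^k(T_i)\cdot C_\pm^I(k)m$. Since $\{T_1,\ldots,T_n\}$ generates $H_0(k)$ and $\epsilon_\pm^k$ is an algebra map, this forces $C_\pm^I(k)m\in M^\pm$, and linearity of the map is immediate. A slicker route via the factorization $C_\pm(k)=C_\pm^I(k)\cdot\sum_{v\in W_{0,I}}\epsilon_\pm^k(T_v)T_v$ together with the standard identity $T_iC_\pm(k)=\epsilon_\pm^k(T_i)C_\pm(k)$ would suffice whenever the Poincar\'e-type sum $\sum_{v\in W_{0,I}}\epsilon_\pm^k(T_v)^2$ is nonzero, but since that sum can vanish in the root-of-unity setting, a direct combinatorial computation seems unavoidable.

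Fix $i$ and decompose $W_0^I=A_i\cup B_i\cup C_i$ as in Lemma \ref{ABC}. A case-by-case expansion of $T_iT_w$ from the quadratic and braid relations of $H_0(k)$ gives
\[
T_iT_w=\begin{cases}(k_i-k_i^{-1})T_w+T_{s_iw}, & w\in A_i,\\ T_{s_iw}, & w\in B_i,\\ T_wT_{i_w}, & w\in C_i,\end{cases}
\]
where $i_w\in I$ is as in Lemma \ref{ABC}(iii). Reindexing the two off-diagonal contributions via the involution $s_i\colon A_i\leftrightarrow B_i$ from Lemma \ref{ABC}(ii), and using multiplicativity of $\epsilon_\pm^k$ together with $T_{s_iu}=T_iT_u$ whenever $l(s_iu)=l(u)+1$, the sum $\sum_{w\in A_i}\epsilon_\pm^k(T_w)T_{s_iw}$ transforms into $\epsilon_\pm^k(T_i)\sum_{u\in B_i}\epsilon_\pm^k(T_u)T_u$, while $\sum_{w\in B_i}\epsilon_\pm^k(T_w)T_{s_iw}$ transforms into $\epsilon_\pm^k(T_i)^{-1}\sum_{u\in A_i}\epsilon_\pm^k(T_u)T_u$.

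Collecting all terms, the combined coefficient of $\sum_{w\in A_i}\epsilon_\pm^k(T_w)T_w$ is $(k_i-k_i^{-1})+\epsilon_\pm^k(T_i)^{-1}$, which equals $\pm k_i^{\pm 1}=\epsilon_\pm^k(T_i)$ in both the $+$ and $-$ cases, and the coefficient of $\sum_{w\in B_i}\epsilon_\pm^k(T_w)T_w$ is directly $\epsilon_\pm^k(T_i)$. The upshot is the identity
\[
T_iC_\pm^I(k)-\epsilon_\pm^k(T_i)C_\pm^I(k)=\sum_{w\in C_i}\epsilon_\pm^k(T_w)T_w\bigl(T_{i_w}-\epsilon_\pm^k(T_i)\bigr)
\]
in $H_0(k)$. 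The relation $s_iw=ws_{i_w}$ gives $w\alpha_{i_w}=\pm\alpha_i$, and since $w\in W_0^I$ forces $w\alpha_{i_w}\in R_0^+$ while $\alpha_i\in R_0^+$, one in fact has $w\alpha_{i_w}=\alpha_i$; $W$-invariance of $k$ then yields $k_i=k_{i_w}$, hence $\epsilon_\pm^k(T_i)=\epsilon_\pm^k(T_{i_w})$. Applying the displayed identity to $m\in M^{I,\pm}$, each summand on the right vanishes because $T_{i_w}m=\epsilon_\pm^k(T_{i_w})m$ ($i_w\in I$), completing the verification. The main obstacle is this $C_i$ bookkeeping—specifically the geometric identification $\alpha_i=w\alpha_{i_w}$ used to match Hecke parameters through $W$-invariance of $k$; the $A_i/B_i$ cancellations are a routine symmetrization.
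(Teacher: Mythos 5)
Your proof is correct and follows essentially the same route as the paper, which disposes of the lemma as ``a direct computation using Lemma \ref{ABC}'' together with the $T_iT_w$ case formulas; you have simply carried out that computation in full, including the key $C_i$ observation that $w\alpha_{i_w}=\alpha_i$ and $W$-invariance of $k$ force $\epsilon_{\pm}^k(T_{i_w})=\epsilon_{\pm}^k(T_i)$. Your remark on why the factorization through $C_{\pm}(k)$ and the Poincar\'e polynomial $P_I(k^{\pm 1})$ is insufficient at roots of unity is also accurate and consistent with the paper's remark following the lemma.
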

\begin{proof}
This follows by a direct computation using Lemma \ref{ABC} 
and \eqref{vaction}.
\end{proof}
\begin{rema}
If $M$ is a left $H(k)$-module and $m\in M^{I,\pm}$
then 
\[C_\pm(k)m=P_I(k^{\pm 1})(C_{\pm}^I(k)m)
\] 
with
\begin{equation}\label{PI}
P_I(k):=\sum_{w\in W_{0,I}}\epsilon_+^k(T_w)^2
\end{equation}
the Poincar{\'e} polynomial (see \cite{Mac}) of $W_{0,I}$.
\end{rema}

Of special interest for the present paper
is the case that $M=M^{k,\pm,I}(\gamma)$ with 
$\gamma\in T_I^{k^{\pm 1}}$.
Then we have $v_e^{k,\pm,I}(\gamma)\in M^{k,\pm,I}(\gamma)^{I,\pm}$, 
hence we obtain
a spherical ($+$), respectively antispherical ($-$), vector
\begin{equation}\label{sphericalvector}
\begin{split}
1_\gamma^{k,\pm,I}:=&C_{\pm}^I(k)v_e^{k,\pm,I}(\gamma)\\
=&\sum_{w\in W_0^I}\epsilon_{\pm}^k(T_w)v_w^{k,\pm,I}(\gamma)
\in M^{k,\pm,I}(\gamma)^{\pm}.
\end{split}
\end{equation}
Clearly $1_\gamma^{k,\pm,I}\not=0$.
\begin{lem}\label{1spher}
For all $\gamma\in T_I^{k^{\pm 1}}$ we have $M^{k,\pm,I}(\gamma)^{\pm}=
\mathbb{C}1_\gamma^{k,\pm,I}$.
\end{lem}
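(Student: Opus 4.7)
The inclusion $\mathbb{C}1_\gamma^{k,\pm,I}\subseteq M^{k,\pm,I}(\gamma)^{\pm}$ is immediate from Lemma \ref{tttt} together with the fact that $1_\gamma^{k,\pm,I}\neq 0$ (the coefficient of $v_e^{k,\pm,I}(\gamma)$ in the expansion is $1$).

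For the reverse inclusion, suppose $m=\sum_{w\in W_0^I}c_wv_w^{k,\pm,I}(\gamma)\in M^{k,\pm,I}(\gamma)^{\pm}$ with $c_w\in\mathbb{C}$. The plan is to show, by induction on $l(w)$, that $c_w=\epsilon_{\pm}^k(T_w)c_e$ for every $w\in W_0^I$. This then forces $m=c_e 1_\gamma^{k,\pm,I}$ by \eqref{sphericalvector}.

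For each simple reflection $s_i$, I would use the trichotomy $W_0^I=A_i\cup B_i\cup C_i$ from Lemma \ref{ABC} together with the action formulas \eqref{vaction} to compute $T_im$ in the basis $\{v_w^{k,\pm,I}(\gamma)\}_{w\in W_0^I}$, taking advantage of the involution $s_i\colon A_i\leftrightarrow B_i$ to pair contributions. Comparing with $\pm k_i^{\pm 1}m$, the equations at basis vectors indexed by $C_i$ are automatic, while those indexed by $A_i\cup B_i$ collapse (using $s_i A_i=B_i$) to the single relation
\begin{equation*}
c_w=\pm k_i^{\pm 1}c_{s_iw}=\epsilon_{\pm}^k(T_i)c_{s_iw}\qquad(w\in A_i).
\end{equation*}

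For the induction, let $w\in W_0^I\setminus\{e\}$ and choose $i\in\{1,\ldots,n\}$ with $l(s_iw)=l(w)-1$. Since elements of $B_i$ and $C_i$ satisfy $l(s_iw)=l(w)+1$, the trichotomy forces $w\in A_i$; in particular $s_iw\in W_0^I$ with $l(s_iw)<l(w)$. Applying the induction hypothesis to $s_iw$ and using $T_w=T_iT_{s_iw}$ (since $s_i\cdot s_iw$ is a reduced expression for $w$) gives
\begin{equation*}
c_w=\epsilon_{\pm}^k(T_i)c_{s_iw}=\epsilon_{\pm}^k(T_i)\epsilon_{\pm}^k(T_{s_iw})c_e=\epsilon_{\pm}^k(T_w)c_e,
\end{equation*}
completing the induction and hence the proof.

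The only subtlety is verifying that every $w\in W_0^I\setminus\{e\}$ lies in $A_i$ for some $i$; but this is forced by the length condition $l(s_iw)=l(w)-1$ (attainable since $w\neq e$) combined with Lemma \ref{ABC}. Note that no genericity on $k$ or $\gamma$ enters: the crucial relation $c_w=\pm k_i^{\pm 1}c_{s_iw}$ is polynomial in $k_i^{\pm 1}$, so the argument is valid at all parameter values, including roots of unity.
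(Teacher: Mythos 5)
Your proposal is correct and follows essentially the same route as the paper: the paper likewise extracts the recursion $a_w=\pm k_i^{\pm 1}a_{s_iw}$ for $w\in A_i$ from Lemma \ref{ABC} and \eqref{vaction} and then propagates it along a reduced expression of $w$ (your induction on $l(w)$ is the same mechanism), with the easy inclusion coming from Lemma \ref{tttt} and \eqref{sphericalvector}. Your extra observations (the $B_i$-equations collapsing to the $A_i$-relation, the $C_i$-equations being automatic) are correct and simply make explicit what the paper leaves implicit.
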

\begin{proof}
Let $m\in M^{k,\pm,I}(\gamma)^{\pm}$ and write  
$m=\sum_{w\in W_0^I}a_wv_w^{k,\pm,I}(\gamma)$ with
$a_w\in\mathbb{C}$ ($w\in W_0^I$). Then $a_w=\pm k_i^{\pm 1}a_{s_iw}$ 
for $w\in A_i$ in view of Lemma \ref{ABC} and \eqref{vaction}.
Let $w=s_{i_1}s_{i_2}\cdots s_{i_r}\in W_0^I$ be a reduced expression.
Then $s_{i_j}s_{i_{j+1}}\cdots s_{i_r}\in A_{i_j}$ for $1\leq j\leq r$.
Consequently $a_w=a_e\epsilon_{\pm}^k(T_w)$ for all $w\in W_0^I$.
Thus $m=a_e1_\gamma^{k,\pm,I}$.
\end{proof}

Recall from Proposition \ref{cal} that $\{b_w^{k,\pm,I}(\gamma)\}_{w\in W_0^I}$
is a complex linear basis of $M^{k,\pm,I}(\gamma)$ if 
$\gamma\in T_I^{k^{\pm 1}}$
satisfies $\gamma^{\alpha^\vee}\not=1$ for all 
$\alpha\in R_0^+\setminus R_0^{I,+}$.
The special case $I=\emptyset$ of the following theorem goes back
to Kato, see \cite[Prop. 1.20]{Kat} (in the p-adic group case, i.e.
for constant multiplicity function $k$, see Casselman \cite{Ca}).
\begin{thm}\label{1exp}
Suppose that $\gamma\in T_I^{k^{\pm 1}}$ and $\gamma^{\alpha^\vee}\not=1$
for all $\alpha\in R_0^+\setminus R_0^{I,+}$.
Let $M$ be a left $H(k)$-module and $m\in M$ satisfying
$hm=\chi_\gamma^{k,\pm,I}(h)m$ for all $h\in H_I(k)$. Then
\begin{equation}\label{Cm}
\begin{split}
C_{\pm}^I(k)m&=\Bigl(\prod_{\alpha\in R_0^+\setminus R_0^{I,+}}
\frac{\pm k_{\alpha^\vee}^{\pm 1}}{1-\gamma^{\alpha^\vee}}\Bigr)\\
&\times\sum_{w\in W_0^I}\left(
\prod_{\alpha\in (R_0^+\setminus R_0^{I,+})\cap w^{-1}(R_0^+)}
(\pm k_{\alpha^\vee}^{\mp 1}\mp 
k_{\alpha^\vee}^{\pm 1}\gamma^{\alpha^\vee})\right)I_w(k)m.
\end{split}
\end{equation}
In particular, by taking $M=M^{k,\pm,I}(\gamma)$ and $m=v_e^{k,\pm,I}(\gamma)$,  
\begin{equation}\label{1explicitformula}
\begin{split}
1_\gamma^{k,\pm,I}&=\Bigl(\prod_{\alpha\in R_0^+\setminus R_0^{I,+}}
\frac{\pm k_{\alpha^\vee}^{\pm 1}}{1-\gamma^{\alpha^\vee}}\Bigr)\\
&\times\sum_{w\in W_0^I}\left(
\prod_{\alpha\in (R_0^+\setminus R_0^{I,+})\cap w^{-1}(R_0^+)}
(\pm k_{\alpha^\vee}^{\mp 1}\mp k_{\alpha^\vee}^{\pm 1}\gamma^{\alpha^\vee})\right)
b_w^{k,\pm,I}(\gamma)
\end{split}
\end{equation}
in $M^{k,\pm,I}(\gamma)$.
\end{thm}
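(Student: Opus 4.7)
The plan is to reduce to the universal case and then match coefficients in the $b_w$-basis, using the spherical property of $1_\gamma^{k,\pm,I}$.

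\textbf{Reduction to the universal case.} If $m\in M$ satisfies $hm=\chi_\gamma^{k,\pm,I}(h)m$ for all $h\in H_I(k)$, the universal property of induction gives a unique $H(k)$-morphism $\phi:M^{k,\pm,I}(\gamma)\to M$ with $\phi(v_e^{k,\pm,I}(\gamma))=m$. Since both sides of \eqref{Cm} are obtained by applying a fixed element of $H(k)$ to $v_e^{k,\pm,I}(\gamma)$ and then $\phi$, it suffices to prove the special case $M=M^{k,\pm,I}(\gamma)$, $m=v_e^{k,\pm,I}(\gamma)$. There $C_\pm^I(k)m=1_\gamma^{k,\pm,I}$ by \eqref{sphericalvector}, so the two formulas \eqref{Cm} and \eqref{1explicitformula} are in fact equivalent, and I focus on the latter.

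\textbf{Expansion and recursion.} By the genericity hypothesis and Proposition \ref{cal}, $\{b_w^{k,\pm,I}(\gamma)\}_{w\in W_0^I}$ is a basis of $M^{k,\pm,I}(\gamma)$, so write
\[
1_\gamma^{k,\pm,I}=\sum_{w\in W_0^I}c_w\, b_w^{k,\pm,I}(\gamma)
\]
with $c_w\in\mathbb{C}$ to be determined. Since $1_\gamma^{k,\pm,I}\in M^{k,\pm,I}(\gamma)^\pm$, we have $T_i\cdot 1_\gamma^{k,\pm,I}=\pm k_i^{\pm 1}\cdot 1_\gamma^{k,\pm,I}$ for all $1\leq i\leq n$. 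Using Lemma \ref{baction} and Lemma \ref{ABC} to pair each $v\in B_i$ with $s_iv\in A_i$ (while $C_i$-terms contribute eigenvectors automatically), I compare the $b_v$-coefficient on both sides. The resulting relation
\[
c_{s_iv}=\frac{c_v}{\pm k_i^{\mp 1}\mp k_i^{\pm 1}\gamma^{v^{-1}\alpha_i^\vee}}
\qquad (v\in B_i)
\]
is the key identity; it is a direct (and routine) algebraic consequence of \eqref{Aformula} and \eqref{Bformula}, where the denominator is nonzero by the genericity of $\gamma$ combined with $v^{-1}\alpha_i\in R_0^+\setminus R_0^{I,+}$.

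\textbf{Iteration along reduced expressions.} Given $w\in W_0^I$, choose a reduced expression in $W_0^I$ starting from $e$ and apply the recursion at each step. Using that at the step from $u$ to $s_ju$ one picks up the factor corresponding to the root $u^{-1}\alpha_j$, standard bookkeeping identifies the accumulated set of roots with $(R_0^+\setminus R_0^{I,+})\cap w^{-1}R_0^-$; this yields
\[
c_w=\frac{c_e}{\displaystyle\prod_{\alpha\in(R_0^+\setminus R_0^{I,+})\cap w^{-1}R_0^-}(\pm k_{\alpha^\vee}^{\mp 1}\mp k_{\alpha^\vee}^{\pm 1}\gamma^{\alpha^\vee})}.
\]
Multiplying numerator and denominator by the full product over $R_0^+\setminus R_0^{I,+}$, this is the same as
\[
c_w=c(\gamma)\prod_{\alpha\in(R_0^+\setminus R_0^{I,+})\cap w^{-1}R_0^+}
(\pm k_{\alpha^\vee}^{\mp 1}\mp k_{\alpha^\vee}^{\pm 1}\gamma^{\alpha^\vee})
\]
for a single constant $c(\gamma)$ independent of $w$.

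\textbf{Determining the normalization.} To pin down $c(\gamma)$, I evaluate both sides on the ``top'' basis vector $v_{\overline{w_0}}^{k,\pm,I}(\gamma)$. By Lemma \ref{expb} and the fact that $b_w$ for $w<\overline{w_0}$ contributes no $v_{\overline{w_0}}$-term, the coefficient of $v_{\overline{w_0}}^{k,\pm,I}(\gamma)$ in $\sum_w c_w b_w$ equals $c_{\overline{w_0}}\prod_{\alpha\in(R_0^+\setminus R_0^{I,+})\cap \overline{w_0}^{-1}R_0^-}(1-\gamma^{\alpha^\vee})$. Using $\overline{w_0}\in W_0^I$ together with the fact that elements of $W_{0,I}$ preserve $R_0^+\setminus R_0^{I,+}$ setwise, one identifies $(R_0^+\setminus R_0^{I,+})\cap\overline{w_0}^{-1}R_0^-=R_0^+\setminus R_0^{I,+}$ (and correspondingly $(R_0^+\setminus R_0^{I,+})\cap\overline{w_0}^{-1}R_0^+=\emptyset$). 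Comparing with the direct formula $\epsilon_\pm^k(T_{\overline{w_0}})=\prod_{\alpha\in R_0^+\setminus R_0^{I,+}}(\pm k_{\alpha^\vee}^{\pm 1})$ coming from \eqref{sstar}, I obtain
\[
c(\gamma)=c_{\overline{w_0}}=\prod_{\alpha\in R_0^+\setminus R_0^{I,+}}
\frac{\pm k_{\alpha^\vee}^{\pm 1}}{1-\gamma^{\alpha^\vee}},
\]
which is exactly the claimed prefactor in \eqref{1explicitformula}.

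\textbf{Main obstacle.} The genuine work is in the algebraic verification of the recursion in Step 2 — in particular checking that the $B_i$/$A_i$-contributions from Lemma \ref{baction} combine to give precisely $\pm k_i^{\pm 1}c_v$ after the substitution $c_{s_iv}=c_v/(\pm k_i^{\mp 1}\mp k_i^{\pm 1}\gamma^{v^{-1}\alpha_i^\vee})$, and keeping track of the sign conventions uniformly in the $\pm$-case. Everything else is bookkeeping on root subsets combined with the standard identity $\epsilon_\pm^k(T_w)=\prod_{\alpha\in R_0^+\cap w^{-1}R_0^-}(\pm k_{\alpha^\vee}^{\pm 1})$.
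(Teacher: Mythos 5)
Your overall strategy coincides with the paper's (reduce to $M^{k,\pm,I}(\gamma)$ via the universal property, expand $1_\gamma^{k,\pm,I}$ in the $b$-basis, extract a two-term recursion from $T_i1_\gamma^{k,\pm,I}=\pm k_i^{\pm1}1_\gamma^{k,\pm,I}$ via Lemma \ref{baction}, and fix the normalization by comparing the $v_{\overline{w_0}}^{k,\pm,I}(\gamma)$-coefficient through Lemma \ref{expb}), but there is a genuine gap in how you run the recursion. You iterate upward from $c_e$, writing $c_{s_iv}=c_v/(\pm k_i^{\mp1}\mp k_i^{\pm1}\gamma^{v^{-1}(\alpha_i)^\vee})$ for $v\in B_i$ and asserting the denominator is nonzero ``by genericity.'' The hypothesis of the theorem is only $\gamma^{\alpha^\vee}\neq 1$ for $\alpha\in R_0^+\setminus R_0^{I,+}$; it does not exclude $\gamma^{\alpha^\vee}=k_{\alpha^\vee}^{\mp2}$ (or $k_{\alpha^\vee}^{\pm2}$), so these divisors may vanish — precisely the degenerate situation the statement is designed to cover (its right-hand side only divides by $1-\gamma^{\alpha^\vee}$; the stronger conditions appear only later, in $T_{I,reg}^k$). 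When a divisor vanishes, your chain $c_w=c_e/\prod(\cdots)$ and the definition of $c(\gamma)$ by dividing by the full product collapse. Moreover, the $B_i$-coefficient comparison you invoke actually yields, e.g. in the $+$ case, $c_{s_iv}(k_i^{-1}-k_iz)(k_i-k_i^{-1}z)=c_v(k_i-k_i^{-1}z)$ with $z=\gamma^{v^{-1}(\alpha_i)^\vee}$, so even obtaining your relation requires cancelling a factor that may be zero.

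The paper circumvents this by orienting the recursion multiplicatively: comparing the $b_w$-coefficient for $w\in A_i$ gives $a_{s_iw}^{\pm}=(\pm k_i^{\mp1}\mp k_i^{\pm1}\gamma^{-w^{-1}(\alpha_i)^\vee})a_w^{\pm}$ with no division by possibly vanishing quantities, and then every $w\in W_0^I$ is connected to $\overline{w_0}$ by a chain of such $A_i$-moves staying inside $W_0^I$ — this is exactly the role of Lemma \ref{Blift}, which your argument never needs but which is essential for this direction. One then gets $a_w^{\pm}=a_{\overline{w_0}}^{\pm}\prod_{\alpha\in(R_0^+\setminus R_0^{I,+})\cap w^{-1}(R_0^+)}(\pm k_{\alpha^\vee}^{\mp1}\mp k_{\alpha^\vee}^{\pm1}\gamma^{\alpha^\vee})$ unconditionally, and $a_{\overline{w_0}}^{\pm}$ is pinned down exactly as in your normalization step. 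Under the additional regularity $\gamma^{\alpha^\vee}\neq k_{\alpha^\vee}^{\mp2}$ your argument does go through and recovers the formula, but as written it does not prove the theorem in the stated generality; to repair it, replace the upward division by the downward multiplicative iteration from $\overline{w_0}$ based on Lemma \ref{Blift}.
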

\begin{proof}
If $m\in M$ satisfies $hm=\chi_\gamma^{k,\pm,I}(h)m$ for all
$h\in H_I(k)$ then there exists a unique $H(k)$-linear map
$M^{k,\pm,I}(\gamma)\rightarrow M$ mapping $v_e^{k,\pm,I}(\gamma)$
onto $m$. Hence it suffices to prove
\eqref{1explicitformula}.

Let $\gamma\in T_I^{k^{\pm 1}}$ satisfying $\gamma^{\alpha^\vee}\not=1$ for all
$\alpha\in R_0^+\setminus R_0^{I,+}$. 
Then
we have a unique expansion $1_{\gamma}^{k,\pm,I}=
\sum_{w\in W_0^I}a_w^{\pm}b_w^{k,\pm,I}(\gamma)$
in $M^{k,\pm,I}(\gamma)$
for some $a_w^{\pm}\in\mathbb{C}$. The fact that 
$T_i1_\gamma^{k,\pm,I}=\pm k_i^{\pm 1}1_\gamma^{k,I}$
for $1\leq i\leq n$ implies, in view of the previous lemma 
and Lemma \ref{ABC}, 
the following
recursion relation for the $a_w^{\pm}$:
\[
\frac{(k_i^{-1}-k_i)\gamma^{w^{-1}(\alpha_i)^\vee}}
{(1-\gamma^{w^{-1}(\alpha_i)^\vee})}a_w^{\pm}+
\frac{1}{(1-\gamma^{-w^{-1}(\alpha_i)^\vee})}a_{s_iw}^{\pm}=
\pm k_i^{\pm 1}a_w^{\pm}
\]
if $1\leq i\leq n$ and $w\in A_i$. Rewriting the recursion  
relation gives
\[a_{s_iw}^{\pm}=(\pm k_i^{\mp 1}\mp k_i^{\pm 1}
\gamma^{-w^{-1}(\alpha_i)^\vee})a_w^{\pm}
\]
if $1\leq i\leq n$ and $w\in A_i$.

Let $w\in W_0^I$. By Lemma \ref{Blift} there exist
$1\leq i_j\leq n$ such that 
$\overline{w_0}=s_{i_1}s_{i_2}\cdots s_{i_r}w$
and $l(\overline{w_0})=l(w)+r$.
Then for $1\leq j\leq r$,
\[
s_{i_j}s_{i_{j+1}}\cdots s_{i_r}w=s_{i_{j-1}}s_{i_{j-2}}\cdots 
s_{i_1}\overline{w_0}\in A_{i_j}.
\]
Hence 
\begin{equation*}
\begin{split}
a_{w}^{\pm}&=a_{s_{i_r}(s_{i_r}w)}^{\pm}\\
&=a_{s_{i_r}w}^{\pm}(\pm k_{i_r}^{\mp 1}\mp 
k_{i_r}^{\pm 1}\gamma^{w^{-1}(\alpha_{i_r})^\vee})\\
&=\cdots=a_{\overline{w_0}}^{\pm}
\prod_{\alpha\in (R_0^+\setminus R_0^{I,+})\cap w^{-1}(R_0^+)}
(\pm k_{\alpha^\vee}^{\mp 1}\mp k_{\alpha^\vee}^{\pm 1}\gamma^{\alpha^\vee}),
\end{split}
\end{equation*}
where we have used that 
\begin{equation*}
\begin{split}
w(R_0^+\setminus R_0^{I,+})\cap R_0^+&=
w(R_0^+\setminus R_0^{I,+}\cup R_0^{I,-})\cap R_0^+\\
&=w\overline{w_0}^{-1}(R_0^-)\cap R_0^+\\
&=(\overline{w_0}w^{-1})^{-1}R_0^-\cap R_0^+\\
&=\{\alpha_{i_r},s_{i_r}(\alpha_{i_{r-1}}),\ldots,
s_{i_r}s_{i_{r-1}}\cdots s_{i_2}(\alpha_{i_1})\}.
\end{split}
\end{equation*}
It thus remains to show that
\begin{equation}\label{lris}
a_{\overline{w_0}}^{\pm}=\prod_{\alpha\in R_0^+\setminus R_0^{I,+}}
\frac{\pm k_{\alpha^\vee}^{\pm 1}}{1-\gamma^{\alpha^\vee}}=
\epsilon_{\pm}^k(T_{\overline{w_0}})
\prod_{\alpha\in R_0^+\setminus R_0^{I,+}}
\frac{1}{1-\gamma^{\alpha^\vee}}.
\end{equation}
By Lemma \ref{expb} and the fact that
$R_0^+\cap \overline{w_0}^{-1}(R_0^-)=R_0^+\setminus R_0^{I,+}$,
we have
\[\sum_{w\in W_0^I}a_w^{\pm}b_w^{k,\pm,I}(\gamma)=
a_{\overline{w_0}}^{\pm}\Bigl(
\prod_{\alpha\in R_0^+\setminus R_0^{I,+}}(1-\gamma^{\alpha^\vee})\Bigr)
v_{\overline{w_0}}^{k,\pm,I}(\gamma)+\sum_{u<\overline{w_0}}
c_u^{\pm}v_u^{k,\pm,I}(\gamma)
\]
for certain $c_u^{\pm}\in\mathbb{C}$. On the other hand, by definition
this is
equal to $1_\gamma^{k,\pm,I}=
\sum_{w\in W_0^I}\epsilon_{\pm}^k(T_w)v_w^{k,\pm,I}(\gamma)$.
Comparing the coefficient of $v_{\overline{w_0}}^{k,\pm,I}(\gamma)$ 
in both
expressions gives \eqref{lris}.
\end{proof}

\section{Double affine Hecke algebras and 
quantum affine KZ equations}
\subsection{The double affine Hecke algebra}
In this subsection we recall the definition 
of Cherednik's double affine Hecke algebra and we state
some of its fundamentel properties.
To keep the technicalities to a minimum, we only
discuss the {\it twisted case} (in the sense that it
is naturally associated to reduced twisted affine Lie
algebras). 

Let $m$ be the positive integer such that $m\langle P^\vee,P^\vee\rangle=
\mathbb{Z}$. In the remainder of the paper we fix arbitrary 
multiplicity function
$k$ and arbitrary $q^{\frac{1}{m}}\in\mathbb{C}^\times$
(it is allowed to be a root of unity), unless explicitly specified
otherwise. We write 
\[q^r:=\bigl(q^{\frac{1}{m}}\bigr)^{mr},\qquad r\in\frac{1}{m}\mathbb{Z}.
\]
For $\lambda\in P^\vee$ define torus elements
$q^\lambda\in T=\textup{Hom}_{\mathbb{Z}}\bigl(P^\vee,\mathbb{C}^\times\bigr)$ 
by $P^\vee\ni\mu\mapsto q^{\langle\lambda,\mu\rangle}$.
We extend the $W_0$-action on $T$ to a $q^{\frac{1}{m}}$-dependent action of 
the extended affine Weyl group $W\simeq W_0\ltimes P^\vee$ on $T$ by
$\tau(\lambda)_qt=q^\lambda t$ for $\lambda\in P^\vee$
and $t\in T$. 

The corresponding contragredient action of $W$ on $\mathcal{M}(T)$
is explicitly given by 
\begin{equation}\label{qaction}
\begin{split}
(w_qf)(t)&=f(w^{-1}t),\qquad w\in W_0,\\
(\tau(\lambda)_qf)(t)&=f(q^{-\lambda}t),\qquad \lambda\in P^\vee.
\end{split}
\end{equation}
In particular, on the monomial basis $e^\mu$ ($\mu\in P^\vee$)
of $\mathbb{C}[T]$ the $W$-action takes on the form
\begin{equation*}
\begin{split}
w_q(e^\mu)&=e^{w\mu},\qquad\qquad\,\,\, w\in W_0,\\
\tau(\lambda)_q(e^\mu)&=
q^{-\langle\lambda,\mu\rangle}e^\mu,\qquad \lambda\in P^\vee.
\end{split}
\end{equation*}
We extend the definition of the monomials $e^\mu$ ($\mu\in P^\vee$) 
in such a way that the latter formulas can be 
captured in terms of a $W$-action
on the exponents of the generalized monomials. 
Consider the $W$-invariant subset
\[
\widehat{P}^\vee:=\frac{1}{m}\mathbb{Z}c+P^\vee
\]
of $V$. Note that $\widehat{P}^\vee$ contains the affine root system $R$. 
We set for
$rc+\lambda\in \widehat{P}^\vee$ ($r\in\frac{1}{m}\mathbb{Z}$, 
$\lambda\in P^\vee$),
\[
e_q^{rc+\lambda}:=q^{r}e^\lambda\in\mathbb{C}[T].
\]
Then it is an easy verification that
\[
w_q\bigl(e_q^{\hat{\mu}}\bigr)=e_q^{w\hat{\mu}}
\qquad (w\in W,\,\, \hat{\mu}\in\widehat{P}^\vee).
\]
We write $t_q^{\hat{\mu}}$ for the evaluation of $e_q^{\hat{\mu}}$
at $t\in T$.
We write $\mathbb{C}(T)\#_qW$ for the associated
smashed product algebra (note that it depends on the choice
$q^{\frac{1}{m}}$ of the $m$th root of $q$). 
It thus is $\mathbb{C}(T)\otimes_{\mathbb{C}}
\mathbb{C}[W]$ as complex vector space, with the canonical 
embeddings of $\mathbb{C}(T)$ and $\mathbb{C}[W]$ algebra maps,
and with cross relations governed by \eqref{qaction}:
$w\cdot p=(w_qp)\cdot w$ ($w\in W$, $p\in\mathbb{C}(T)$).

Note that $\mathbb{C}(T)\#_qW$ acts canonically on 
$\mathbb{C}(T)$ as $q$-difference reflection operators with coefficients
from $\mathbb{C}(T)$. 
This action is faithful unless $q^{\frac{1}{m}}$ is a root of unity.
Despite this fact, it is convenient to think of
$\mathbb{C}(T)\#_qW$ as the 
algebra of $q$-difference reflection
operators with coefficients from $\mathbb{C}(T)$. 

Define $c_{a}^{k,q}\in\mathbb{C}(T)$ by
\begin{equation}
c_a^{k,q}:=\frac{k_a^{-1}-k_ae_q^a}{1-e_q^a}\qquad (a\in R).
\end{equation}
It coincides with the definition \eqref{calpha} of $c_a^k$ 
when $a\in R_0^\vee$.
We denote $c_j^{k,q}=c_{a_j}^{k,q}$ for $0\leq j\leq n$.
The following
result is essentially due to Cherednik. 
The only difference is that we allow $q^{\frac{1}{m}}$
to be a root of unity.
\begin{thm}
There exists a unique injective unital algebra homomorphism
$\pi^{k,q}: H(k)\rightarrow \mathbb{C}(T)\#_qW$ satisfying
\begin{equation*}
\begin{split}
\pi^{k,q}(T_j)&=k_j+c_j^{k,q}(s_j-1),\\
\pi^{k,q}(T_{\omega})&=\omega
\end{split}
\end{equation*}
for $0\leq j\leq n$ and $\omega\in\Omega$.
\end{thm}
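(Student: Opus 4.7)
The plan is to construct $\pi^{k,q}$ by specifying it on the generators $T_j$ ($0\le j\le n$) and $T_\omega$ ($\omega\in\Omega$) of $H(k)$, verify the defining relations, and then establish injectivity directly inside the algebra $\mathbb{C}(T)\#_qW$ (since the polynomial representation on $\mathbb{C}(T)$ may fail to be faithful when $q^{1/m}$ is a root of unity). Uniqueness is immediate: the $T_j$ together with the group algebra of $\Omega$ generate $H(k)$ as an algebra, so any homomorphism is determined by the displayed values.

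For existence, I would define operators $Y_j:=k_j+c_j^{k,q}(s_j-1)\in\mathbb{C}(T)\#_qW$ and $Y_\omega:=\omega$, and then verify the three families of defining relations in $\mathbb{C}(T)\#_qW$. The quadratic Hecke relation reduces, after expanding $Y_j^2$ using the cross relation $s_j\cdot p=(s_{j,q}p)\cdot s_j$, to the key identity
\[
c_j^{k,q}+s_{j,q}\bigl(c_j^{k,q}\bigr)=k_j+k_j^{-1},
\]
which is a short rational computation in the variable $x=e_q^{a_j}$. The $\Omega$-relation $\omega Y_j\omega^{-1}=Y_{\omega(j)}$ follows because $\omega$ permutes the simple affine roots, so $\omega_q\bigl(c_j^{k,q}\bigr)=c_{\omega(j)}^{k,q}$ and $\omega s_j\omega^{-1}=s_{\omega(j)}$, together with $W$-invariance of $k$.

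The braid relations $Y_iY_jY_i\cdots=Y_jY_iY_j\cdots$ ($m_{ij}$ factors) are the most technical step. My plan is to reduce each braid relation to the rank-two affine sub-root system spanned by $a_i,a_j$, where the identity becomes a finite, explicit calculation in the localization of $\mathbb{C}(T)\#_qW$ at the two simple reflections involved; these are precisely the Demazure–Lusztig verifications carried out (over generic $q$) in the literature, and the point is that the entire computation only uses the multiplicativity of $e_q^{\hat\mu}$ in $\widehat{P}^\vee$ and the cross relation $w\cdot p=(w_qp)\cdot w$, both of which remain valid when $q^{1/m}$ is a root of unity. Thus the identity, being a rational identity in $\mathbb{C}(T)\#_qW$, is insensitive to the value of $q^{1/m}$.

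For injectivity I would appeal to the PBW decomposition $H(k)\simeq\mathcal{A}_Y^k\otimes_{\mathbb{C}}H_0(k)$ from Theorem~\ref{Hchar}. By induction on the length of a reduced expression, one computes that for $\lambda\in P^\vee_+$,
\[
\pi^{k,q}\bigl(Y^\lambda\bigr)=f_\lambda\,\tau(\lambda)+\sum_{w\in W\setminus\{\tau(\lambda)\}}f_{\lambda,w}\,w
\]
in $\mathbb{C}(T)\#_qW$, with a nonzero leading coefficient $f_\lambda\in\mathbb{C}(T)^\times$ and the sum over $w$ supported on strictly smaller elements in a suitable order. Combining this with the expansion of $\pi^{k,q}(T_u)$ ($u\in W_0$) as $T_u+$ lower terms, the elements $\pi^{k,q}\bigl(Y^\lambda T_u\bigr)$ have pairwise distinct leading $W$-components, hence are linearly independent inside the free left $\mathbb{C}(T)$-module $\mathbb{C}(T)\#_qW=\bigoplus_{w\in W}\mathbb{C}(T)\cdot w$. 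The hard part is controlling the leading terms in this triangularity argument uniformly in $q^{1/m}$ (including roots of unity); the rational nature of the coefficients $c_j^{k,q}$ makes this manageable, since nonvanishing of $f_\lambda$ is checked by inspecting a single monomial in $\mathbb{C}(T)$ rather than by any specialization of $q$.
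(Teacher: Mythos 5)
Your overall strategy is the same as the paper's: the paper cites the standard Demazure--Lusztig verification of the defining relations (Macdonald, [M, \S 4.3]), noting it holds for all $k$ and $q^{1/m}$, and obtains injectivity by modifying the proof of [M, (4.3.11)] so that it takes place inside $\mathbb{C}(T)\#_qW$ rather than in $\textup{End}_{\mathbb{C}}(\mathbb{C}(T))$ --- exactly the free-module idea you invoke. Your treatment of the quadratic and $\Omega$-relations via the identity $c_j^{k,q}+s_{j,q}\bigl(c_j^{k,q}\bigr)=k_j+k_j^{-1}$ is correct, and deferring the braid relations to rank-two computations that only use the cross relations is in line with the paper's citation.

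The gap is in your injectivity argument. First, your leading-term formula for $\pi^{k,q}(Y^\lambda)$ is only asserted for $\lambda\in P^\vee_+$, while the PBW basis requires all $\lambda\in P^\vee$; for $\lambda=\mu-\nu$ one has $Y^\lambda=T_{\tau(\mu)}T_{\tau(\nu)}^{-1}$ and lengths do not add, so the claimed expansion is not automatic. Second, and more seriously, even for dominant $\lambda$ the coefficient of $\tau(\lambda)u$ in $\pi^{k,q}(Y^\lambda T_u)$ is not simply the product of the two leading coefficients: since $l(\tau(\lambda)u)\neq l(\tau(\lambda))+l(u)$ in general, lower-order terms of the two factors can land on the same group element $\tau(\lambda)u$ and could a priori cancel the leading contribution, so ``pairwise distinct leading $W$-components'' alone does not yield linear independence. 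Note also that the difficulty you flag (``uniformly in $q^{1/m}$'') is not where the root-of-unity subtlety lives: the coefficients $c_a^{k,q}$ and their $W$-translates are nonzero elements of $\mathbb{C}(T)$ for every $q^{1/m}$ and every $k$ (even $k_a^2=1$ gives $c_a^{k,q}=k_a^{-1}$); the only thing that fails at roots of unity is faithfulness of the action on $\mathbb{C}(T)$, which you have already sidestepped by working in the smash product. The clean repair --- and the argument the paper intends --- is to run the triangularity on the $\mathbb{C}$-basis $\{T_w\}_{w\in W}$ of $H(k)$: induction on a reduced expression gives $\pi^{k,q}(T_w)=f_w\,w+\sum_{w'<w}f_{w,w'}\,w'$ (Bruhat order) with $f_w$ a product of $W$-translates of the $c_a^{k,q}$, hence nonzero, and picking an element of maximal length in the support of a purported linear relation finishes the proof with no condition on $q^{1/m}$. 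If you insist on the PBW route, you can reduce to dominant $\lambda$ by multiplying with the invertible element $\pi^{k,q}(Y^\mu)$ for $\mu$ deep in $P^\vee_+$, but you would still have to rule out the cancellation phenomenon above.
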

\begin{proof}
The (by now standard) arguments showing that the above formulas give 
rise to a unique algebra homomorphism $\pi^{k,q}: H(k)\rightarrow 
\mathbb{C}(T)\#_qW$ are valid without restrictions
on $k$ and $q^{\frac{1}{m}}$ (see, e.g., \cite[\S 4.3]{M}). 
It remains to show that 
$\pi^{k,q}$ is injective. This follows from a simple modification
of the proof of \cite[(4.3.11)]{M}, working in $\mathbb{C}(T)\#_qW$
instead of in $\textup{End}_{\mathbb{C}}\bigl(\mathbb{C}(T)\bigr)$
(the latter being the image space for the representation map
associated to the canonical action of $\mathbb{C}(T)\#_qW$ on $\mathbb{C}(T)$).
\end{proof}
The following result is due to Cherednik \cite[Theorem 2.1]{CDFT}. 
\begin{thm}\label{CherThm}
Up to isomorphism, there exists a unique complex, unital, 
associative algebra $\mathbb{H}(k,q)$ satisfying the following properties.
\begin{enumerate}
\item[{\bf (i)}] $\mathbb{C}[T]$
and $H(k)$ are subalgebras of $\mathbb{H}(k)$,
\item[{\bf (ii)}] the multiplication map defines a linear
isomorphism $\mathbb{C}[T]\otimes_{\mathbb{C}}H(k)\overset{\sim}{\longrightarrow}
\mathbb{H}(k,q)$,
\item[{\bf (iii)}] for all $f\in\mathbb{C}[T]$, $0\leq j\leq n$
and $\omega\in\Omega$ we have in $\mathbb{H}(k,q)$,
\begin{equation}\label{crossHH}
\begin{split}
T_jf&=(s_{j,q}f)T_j +(c_j^{k,q}-k_j)((s_{j,q}f)-f),\\
\omega f&=(\omega_qf)\omega.
\end{split}
\end{equation}
\end{enumerate}
\end{thm}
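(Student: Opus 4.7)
The plan is to prove uniqueness by observing that properties (i)--(iii) determine the algebra structure on the vector space $\mathbb{C}[T]\otimes H(k)$ dictated by (ii), and to prove existence by realizing $\mathbb{H}(k,q)$ as an explicit subalgebra of $\mathbb{C}(T)\#_qW$.

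For uniqueness, suppose $\mathbb{H}$ satisfies (i)--(iii). Property (ii) identifies the underlying vector space with $\mathbb{C}[T]\otimes H(k)$, so the product $(f\otimes h)(f'\otimes h')$ is determined once one knows how to rewrite $hf'$ in the form $\sum_j g_j\otimes h_j$. Since $H(k)$ is generated as an algebra by the $T_j$ ($0\le j\le n$) and the $T_\omega$ ($\omega\in\Omega$), the cross relations \eqref{crossHH} allow one, by iteration, to move any element of $H(k)$ past any element of $\mathbb{C}[T]$, and hence determine the multiplication on $\mathbb{H}$ completely.

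For existence, define $\mathbb{H}(k,q)$ as the subalgebra of $\mathbb{C}(T)\#_qW$ generated by $\mathbb{C}[T]$ and the image $\pi^{k,q}(H(k))$; then (i) is automatic because $\pi^{k,q}$ is injective by the preceding theorem. Property (iii) is a direct verification in $\mathbb{C}(T)\#_qW$: the $\omega$-relation is built into the definition of the smash product, and the $T_j$-relation follows by substituting $\pi^{k,q}(T_j)=k_j+c_j^{k,q}(s_j-1)$ and using the cross relation $s_jf=(s_{j,q}f)\,s_j$, after which the coefficient of $s_j$ on both sides matches thanks to the commutativity of $\mathbb{C}(T)$. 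Surjectivity of the multiplication map $\mathbb{C}[T]\otimes H(k)\to\mathbb{H}(k,q)$ then follows by iterating these cross relations to push all $\mathbb{C}[T]$-factors to the left of all $H(k)$-factors.

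The main obstacle is injectivity of the multiplication map, i.e.\ the PBW property. Writing any purported linear dependence $\sum_i f_i\otimes h_i$ using the basis $\{T_w\}_{w\in W}$ of $H(k)$, one reduces to showing that $\sum_{w\in W}f_w\,\pi^{k,q}(T_w)=0$ in $\mathbb{C}(T)\#_qW$ with $f_w\in\mathbb{C}[T]$ forces $f_w=0$ for all $w$. The key ingredient is that for any $w\in W$, taking a reduced expression $w=s_{i_1}\cdots s_{i_r}\omega$ and expanding $\pi^{k,q}(T_w)=\pi^{k,q}(T_{i_1})\cdots\pi^{k,q}(T_{i_r})\,\omega$ as a product of factors $k_{i_j}+c_{i_j}^{k,q}(s_{i_j}-1)$ yields, in the $\mathbb{C}(T)$-basis $\{v\}_{v\in W}$ of $\mathbb{C}(T)\#_qW$, a triangular expansion $\pi^{k,q}(T_w)=g_{w,w}\,w+\sum_{v<w}g_{w,v}\,v$ with nonzero leading coefficient $g_{w,w}\in\mathbb{C}(T)$ (explicitly, a product of translates of the $c_j^{k,q}$). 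Downward induction on the Bruhat order, combined with the fact that $g_{w,w}\neq 0$ and $\mathbb{C}[T]\hookrightarrow \mathbb{C}(T)$, then forces each $f_w$ to vanish, completing the verification of (ii).
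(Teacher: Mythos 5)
Your proposal is correct and follows essentially the same route as the paper: the paper also realizes $\mathbb{H}(k,q)$ as the subalgebra of $\mathbb{C}(T)\#_qW$ generated by $\mathbb{C}[T]$ and $\pi^{k,q}(H(k))$, and obtains the PBW property (ii) from the $\mathbb{C}$-linear independence of $\{e^\lambda\pi^{k,q}(T_w)\}_{\lambda\in P^\vee, w\in W}$, which it gets by the same Bruhat-triangularity argument you describe (cited there as a modification of the proof of \cite[(4.3.11)]{M}, valid also at roots of unity). Your write-up simply makes explicit the uniqueness step and the triangular expansion with nonzero leading coefficient that the paper leaves to the reference.
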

\begin{proof}
The modification of the proof of \cite[(4.3.11)]{M} (see 
the proof of the previous theorem) shows that
$\{e^\lambda \pi^{k,q}(T_w)\}_{\lambda\in P^\vee,w\in W}$ is $\mathbb{C}$-linear
independent in $\mathbb{C}(T)\#_qW$ (also if $q^{\frac{1}{m}}$ is a root of 
unity). Consequently $\mathbb{H}(k,q)$ can be realized as the subalgebra
of $\mathbb{C}(T)\#_qW$ generated by $\mathbb{C}[T]$ and $\pi^{k,q}(H(k))$.
\end{proof}
The algebra $\mathbb{H}(k,q)$ is called the double affine
Hecke algebra (note that it depends on the choice
$q^{\frac{1}{m}}$ of the $m$th root of $q$).

Since $\mathbb{C}[T]^\times:=\mathbb{C}[T]\setminus\{0\}
\subset \mathbb{H}(k,q)$ is a left 
Ore set, we can form the corresponding left 
localized double affine Hecke algebra $\mathbb{H}_{loc}(k,q)$. 
Theorem \ref{CherThm} is valid for $\mathbb{H}_{loc}(k,q)$
with the role of $\mathbb{C}[T]$ replaced by $\mathbb{C}(T)$; we will
call it the localized version of Theorem \ref{CherThm}.
By (the proof of) Theorem \ref{CherThm}, the algebra homomorphism
$\pi^{k,q}: H(k)\rightarrow \mathbb{C}(T)\#_qW$ uniquely
extends to an injective algebra homomorphism
\[\mathbb{H}_{loc}(k,q)\rightarrow \mathbb{C}(T)\#_qW
\]
mapping $f\in\mathbb{C}(T)\subset\mathbb{H}_{loc}(k,q)$
to $f$ viewed as element in $\mathbb{C}(T)\#_qW$.
The resulting algebra homomorphism will again be denoted
by $\pi^{k,q}: \mathbb{H}_{loc}(k,q)\rightarrow \mathbb{C}(T)\#_qW$.
\begin{rema}
Composing $\pi^{k,q}$ with the algebra map $\mathbb{C}(T)\#_qW\rightarrow
\textup{End}_{\mathbb{C}}\bigl(\mathbb{C}(T)\bigr)$ arising from the canonical
action of $\mathbb{C}(T)\#_qW$ on $\mathbb{C}(T)$ as $q$-difference reflection
operators, turns $\mathbb{C}(T)$ into 
a left $\mathbb{H}_{loc}(k,q)$-module. Restricting the action to the
double affine Hecke algebra $\mathbb{H}(k,q)$, 
the algebra $\mathbb{C}[T]$ of regular functions on $T$ becomes 
a $\mathbb{H}(k,q)$-invariant subspace of $\mathbb{C}(T)$. 
The resulting left $\mathbb{H}(k,q)$-module $\mathbb{C}[T]$
is Cherednik's basic representation of $\mathbb{H}(k,q)$.
It is faithful unless $q^{\frac{1}{m}}$ is a root of unity.
\end{rema}
Observe that $\pi^{k,q}: \mathbb{H}_{loc}(k,q)\rightarrow \mathbb{C}(T)\#_qW$
is in fact an algebra isomorphism.
The pre-images of the $s_j\in\mathbb{C}(T)\#_qW$
are given by
\begin{equation}\label{intertwiner}
\bigl(\pi^{k,q}\bigr)^{-1}(s_j)=\bigl(c_j^{k,q}\bigr)^{-1}
\bigl(T_j-k_j+c_j^{k,q}\bigr)\in\mathbb{H}_{loc}(k,q),
\qquad 0\leq j\leq n. 
\end{equation}
They are called the normalized ($X$-)intertwiners of
the localized double affine Hecke algebra.

\subsection{Algebras of $H(k)$-valued $q$-difference reflection
operators}

\begin{defi}
Denote $\mathbb{C}_\sigma^{q}[T]$ 
(respectively $\mathbb{C}_\nabla^{k,q}[T]$) for the subalgebra
of $\mathbb{C}(T)$
generated by $\mathbb{C}[T]$ and $(1-e_q^a)^{-1}$ for all
$a\in R$ (respectively $\mathbb{C}[T]$ and $(k_a^{-1}-k_ae_q^a)^{-1}$
for all $a\in R$).
\end{defi}
Note that $\mathbb{C}_{\sigma}^q[T]$ (respectively $\mathbb{C}_\nabla^{k,q}[T]$)
is a $W$-module subalgebra of $\mathbb{C}(T)$ 
with respect to the action \eqref{qaction}, and it contains
$c_a^{k,q}$ (respectively $(c_a^{k,q})^{-1}$) for all $a\in R$.
The possible singularities of 
$f\in\mathbb{C}_\sigma^q[T]$, respectively $f\in\mathbb{C}_\nabla^{k,q}[T]$, 
are at 
\[
\mathcal{S}_\sigma^q:=\{t\in T \,\, | \,\,
t_q^a=1 \,\, \textup{ for some } a\in R \},
\]
respectively at
\[
\mathcal{S}_\nabla^{k,q}:=\{t\in T \,\, | \,\, 
t_q^a=k_a^{2}\,\, \textup{ for some } a\in R \}.
\]
Note that $\mathcal{S}_\nabla^{k,q}=\mathcal{S}_\nabla^{k^{-1},q}$
and that $T_I^{k}\subseteq \mathcal{S}_{\nabla}^{k,q}$ if $I\not=\emptyset$.

We can now form the smashed product algebras
$\mathbb{C}_\sigma^{q}[T]\#_qW$ and $\mathbb{C}_\nabla^{k,q}[T]\#_qW$,
which are subalgebras of $\mathbb{C}(T)\#_qW$. 
The algebras of $H(k)$-valued $q$-difference reflection algebras
with coefficients from $\mathbb{C}_\sigma^q[T]$, $\mathbb{C}_\nabla^{k,q}[T]$
and $\mathbb{C}(T)$ are
\begin{equation}\label{A}
\begin{split}
\mathcal{A}_\sigma^{k,q}&:=\mathbb{C}_\sigma^q[T]\#_qW\otimes_{\mathbb{C}}H(k),\\
\mathcal{A}_\nabla^{k,q}&:=\mathbb{C}_\nabla^{k,q}[T]\#_qW\otimes_{\mathbb{C}}H(k).
\end{split}
\end{equation}
and
\[\mathcal{A}^{k,q}:=\mathbb{C}(T)\#_qW\otimes_{\mathbb{C}}H(k),
\]
respectively.
We will identify the algebras
$\mathbb{C}(T)\#_qW$ and $H(k)$ with their canonical images in
$\mathcal{A}^{k,q}$ (and similarly in case of $\mathcal{A}_\sigma^{k,q}$
and $\mathcal{A}_\nabla^{k,q}$). The following statement is essentially
a reformulation of \cite[Theorem 2.3]{CInd}. 
\begin{prop}\label{prop1}
There exists a unique algebra homomorphism
$\sigma^{k,q}: \mathbb{H}_{loc}(k,q)\rightarrow \mathcal{A}^{k,q}$
satisfying
\begin{equation}\label{sigmaMAP}
\begin{split}
\sigma^{k,q}(f)&=f,\\
\sigma^{k,q}(T_j)&=s_{j}T_j+(c_j^{k,q}-k_j)(s_{j}-1),\\
\sigma^{k,q}(T_{\omega})&=\omega T_{\omega}
\end{split}
\end{equation}
for $f\in\mathbb{C}(T)$, $0\leq j\leq n$ and $\omega\in\Omega$.
Furthermore, $\sigma^{k,q}\bigl(\mathbb{H}(k,q)\bigr)\subseteq
\mathcal{A}^{k,q}_\sigma$.
\end{prop}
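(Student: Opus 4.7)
The plan is to define $\sigma^{k,q}$ on the generators of $\mathbb{H}_{loc}(k,q)$ furnished by the localized form of Theorem \ref{CherThm} (namely $\mathbb{C}(T)$, the $T_j$ for $0\leq j\leq n$, and the $T_\omega$ for $\omega\in\Omega$) via the formulas \eqref{sigmaMAP}, and then to verify that all defining relations are respected: those internal to $\mathbb{C}(T)$ are trivial since $\sigma^{k,q}|_{\mathbb{C}(T)}$ is the inclusion, while the extended affine Hecke relations among the $T_j, T_\omega$ and the cross relations \eqref{crossHH} require genuine checking. Uniqueness is then immediate from the presentation, so all content is in existence.

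The key structural observation is that inside $\mathcal{A}^{k,q}$ the subalgebras $\mathbb{C}(T)\#_qW$ and $H(k)$ commute; in particular $s_j$ and $T_j$ commute, and one may rewrite
\[\sigma^{k,q}(T_j)=\widetilde{T}_j+s_j(T_j-k_j),\qquad \widetilde{T}_j:=\pi^{k,q}(T_j)\in\mathbb{C}(T)\#_qW.\]
From this decomposition the quadratic relation $(\sigma^{k,q}(T_j)-k_j)(\sigma^{k,q}(T_j)+k_j^{-1})=0$ follows by direct expansion using $(T_j-k_j)(T_j+k_j^{-1})=0$ in $H(k)$, the same identity for $\widetilde{T}_j$ (available since $\pi^{k,q}$ is already known to be an algebra homomorphism), and elementary identities between $c_j^{k,q}$ and $s_{j,q}c_j^{k,q}$. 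The cross relation \eqref{crossHH} is verified by pushing $f\in\mathbb{C}(T)$ through $s_j$ via $s_jf=(s_{j,q}f)s_j$ while exploiting $T_jf=fT_j$ in $\mathcal{A}^{k,q}$:
\[\sigma^{k,q}(T_j)f=(s_{j,q}f)s_jT_j+(c_j^{k,q}-k_j)\bigl((s_{j,q}f)s_j-f\bigr),\]
which rearranges, using that $(s_{j,q}f)$ and $(c_j^{k,q}-k_j)$ both lie in $\mathbb{C}(T)$ and hence commute, to $(s_{j,q}f)\sigma^{k,q}(T_j)+(c_j^{k,q}-k_j)((s_{j,q}f)-f)$; the $\omega$-cross relation is immediate from $\omega f=(\omega_qf)\omega$.

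The main obstacle is the braid relation $\sigma^{k,q}(T_i)\sigma^{k,q}(T_j)\cdots=\sigma^{k,q}(T_j)\sigma^{k,q}(T_i)\cdots$ ($m_{ij}$ factors on each side). Expanding each side via the decomposition above, every summand factors as an element of $\mathbb{C}(T)\#_qW$ times an element of $H(k)$, and the two sides match group by group: the pure $\widetilde{T}$-contribution matches by the braid relations for the $\widetilde{T}_j$ in $\mathbb{C}(T)\#_qW$ (which hold because $\pi^{k,q}$ is an algebra homomorphism); the pure $s(T-k)$-contribution matches by the braid relations in $W$ and in $H(k)$ working on independent tensor factors; and the mixed contributions collapse after repeatedly invoking $(T_j-k_j)(T_j+k_j^{-1})=0$ together with the braid and commutation identities on each side. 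The entire computation is purely algebraic and so persists when $q^{1/m}$ is a root of unity, which is the only extension over \cite[Theorem 2.3]{CInd}. The $\Omega$-relations reduce to $\omega s_j\omega^{-1}=s_{\omega(j)}$ in $W$, $\omega T_j\omega^{-1}=T_{\omega(j)}$ in $H(k)$, and $\omega_qc_j^{k,q}=c_{\omega(j)}^{k,q}$. Finally, $\sigma^{k,q}(\mathbb{H}(k,q))\subseteq\mathcal{A}_\sigma^{k,q}$ is checked on generators: $\sigma^{k,q}$ sends $\mathbb{C}[T]\subset\mathbb{H}(k,q)$ into $\mathbb{C}[T]\subset\mathbb{C}_\sigma^q[T]$, while the image of each $T_j$ involves only the coefficient $c_j^{k,q}\in\mathbb{C}_\sigma^q[T]$ and the image of each $T_\omega$ manifestly lies in $\mathbb{C}[W]\otimes H(k)\subset\mathcal{A}_\sigma^{k,q}$.
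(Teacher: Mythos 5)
Your preliminary checks are fine: the decomposition $\sigma^{k,q}(T_j)=\pi^{k,q}(T_j)+s_j(T_j-k_j)$ is correct, and with it the quadratic relation and the cross relations \eqref{crossHH} do follow by the short expansions you indicate (using $c_j^{k,q}+s_{j,q}\bigl(c_j^{k,q}\bigr)=k_j+k_j^{-1}$), as does the $\Omega$-part. The gap is in the braid relations, which is exactly the step carrying all the difficulty. Your claimed term-by-term matching is false as stated: the ``pure $s(T-k)$'' contributions do \emph{not} match by themselves, because the shifted generators $T_i-k_i$ do not satisfy braid relations in $H(k)$. For instance, if $m_{ij}=3$ (so $k_i=k_j=:k$), the quadratic relations give
\[
(T_i-k)(T_j-k)(T_i-k)-(T_j-k)(T_i-k)(T_j-k)=T_i-T_j\not=0,
\]
hence $S_iS_jS_i-S_jS_iS_j=(s_is_js_i)\otimes(T_i-T_j)$ with $S_i:=s_i(T_i-k_i)$; this defect has to be cancelled against the mixed terms, which you dismiss with the unexplained word ``collapse''. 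So the heart of the proposition --- that the elements $\sigma^{k,q}(T_j)$ satisfy the relations \eqref{braidrelations2} --- is not actually proved: the grouping you propose does not close up, and an honest direct verification would be a substantially longer computation than what you sketch. (The conclusion is of course true, but your argument does not establish it.)

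The paper avoids this computation altogether. After the direct check of the cross relations and the $\Omega$-relations, it reduces the braid relations to the case that $q^{1/m}$ is not a root of unity, and then realizes the would-be map as the representation of $\mathbb{H}_{loc}(k,q)$ on the induced module $\textup{Ind}_{H(k)}^{\mathbb{H}_{loc}(k,q)}\bigl(H(k)\bigr)\simeq\mathbb{C}(T)\otimes_{\mathbb{C}}H(k)$: for such $q$ the algebra $\mathcal{A}^{k,q}$ embeds into $\textup{End}_{\mathbb{C}}\bigl(\mathbb{C}(T)\otimes_{\mathbb{C}}H(k)\bigr)$, a direct computation identifies the operators representing $f$, $T_j$, $T_\omega$ with the right-hand sides of \eqref{sigmaMAP}, and the Hecke relations then hold automatically because $\sigma$ is a representation. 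To repair your proof you must either carry out the full braid-relation computation, tracking how the mixed terms cancel the defect exhibited above (and justify, as you do, that the resulting identities persist at roots of unity), or adopt a representation-theoretic argument of the paper's type.
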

\begin{proof}
A direct verification shows 
that the assignments \eqref{sigmaMAP} respect 
the cross relations \eqref{crossHH}, as well as the relations
$T_\omega T_jT_{\omega^{-1}}=T_{\omega(j)}$ in $H(k)\subset\mathbb{H}(k,q)$
($\omega\in\Omega$ and $0\leq j\leq n$).
It thus remains to show that the $\sigma^{q,k}(T_j)\in\mathcal{A}^{k,q}$
($0\leq j\leq n$) from \eqref{sigmaMAP} satisfy the defining relations
\eqref{braidrelations2} of $H^a(k)$, for which it 
suffices to provide a proof if $q^{\frac{1}{m}}$ is not a root of unity.

Suppose that $q^{\frac{1}{m}}$ is not a root of unity.
Consider the $\mathbb{H}_{loc}(k,q)$-module
\[
\textup{Ind}_{H(k)}^{\mathbb{H}_{loc}(k,q)}\bigl(H(k)\bigr),
\]
where $H(k)$ is considered as left $H(k)$-module by left multiplication.
By {\bf (i)} of the localized version
of Theorem \ref{CherThm} it is isomorphic to 
$\mathbb{C}(T)\otimes_{\mathbb{C}}H(k)$ as a complex vector space. 
Denote the resulting representation map by
\[\sigma: \mathbb{H}_{loc}(k,q)\rightarrow \textup{End}_{\mathbb{C}}
\bigl(\mathbb{C}(T)\otimes_{\mathbb{C}}H(k)\bigr).
\]
Since $q^{\frac{1}{m}}$ is not a root of unity, the formula
\[
(pw\otimes h): r\otimes h^\prime\mapsto p(w_qr)\otimes hh^\prime
\]
for $p,r\in\mathbb{C}(T)$, $w\in W$ and $h,h^\prime\in H(k)$
defines an algebra embedding 
\[\mathcal{A}^{k,q}\hookrightarrow
\textup{End}_{\mathbb{C}}
\bigl(\mathbb{C}(T)\otimes_{\mathbb{C}}H(k)\bigr).
\]
We identify $\mathcal{A}^{k,q}$ with its image in
$\textup{End}_{\mathbb{C}}\bigl(\mathbb{C}(T)\otimes_{\mathbb{C}}H(k)\bigr)$.
By a direct computation using {\bf (ii)} and
{\bf (iii)} of the localized version of 
Theorem \ref{CherThm}, it follows that $\sigma(f)$, $\sigma(T_j)$
and $\sigma(T_\omega)$ for $f\in\mathbb{C}(T)$, $0\leq j\leq n$
and $\omega\in\Omega$ are given by \eqref{sigmaMAP}.
In particular, they lie in the subalgebra $\mathcal{A}^{k,q}$.
Thus $\sigma$ is an algebra homomorphism $\sigma: 
\mathbb{H}_{loc}(k,q)\rightarrow
\mathcal{A}^{k,q}$, satisfying \eqref{sigmaMAP}. 

The last statement of the proposition is immediate.
\end{proof}
Note that $\pi^{k,q}=\bigl(\textup{id}\otimes \epsilon_+^k\bigr)\circ
\sigma^{k,q}$, where  
$\pi^{k,q}: \mathbb{H}_{loc}(k,q)\rightarrow
\mathbb{C}(T)\#_qW$ is the algebra isomorphism as defined in
the previous subsection. 
In particular, $\pi^{k,q}\bigl(\mathbb{H}(k,q)\bigr)\subseteq
\mathbb{C}_\sigma^q[T]\#_qW$.
\begin{rema}
Let $-k^{-1}$ be the multiplicity function on $R$ that takes the value
$-k_a^{-1}$ on $a\in R$.
Since
\[c_a^{k,q}(t)-k_a=c_a^{-k^{-1},q}(t)+k_a^{-1}\quad 
(a\in R),
\]
we have a unique algebra isomorphism
${}^\dagger: \mathbb{H}_{loc}(k,q)\overset{\sim}{\longrightarrow} 
\mathbb{H}_{loc}(-k^{-1},q)$  satisfying $T_j^\dagger=T_j$
($0\leq j\leq n$), $T_\omega^\dagger=T_\omega$ ($\omega\in\Omega$) and
$f^\dagger=f$ ($f\in\mathbb{C}(T)$). 
Then
$\pi^{-k^{-1},q}\circ 
{}^\dagger=(\textup{id}\otimes\epsilon_-^k)\circ\sigma^{k,q}$.
\end{rema}

\begin{cor}\label{cor1}
Fix a multiplicity function $k$ on $R$ and fix 
$q^{\frac{1}{m}}\in\mathbb{C}^\times$.
There exists a unique algebra homomorphism
$\nabla^{k,q}: \mathbb{C}(T)\#_qW\rightarrow \mathcal{A}^{k,q}$
satisfying
\begin{equation}\label{nablarel}
\begin{split}
\nabla^{k,q}(f)&=f,\\
\nabla^{k,q}(s_j)&=(c_j^{k,q})^{-1}s_{j}T_j+
\frac{c_j^{k,q}-k_j}{c_j^{k,q}}s_{j},\\
\nabla^{k,q}(\omega)&=\omega T_{\omega}
\end{split}
\end{equation}
for $f\in\mathbb{C}(T)$, $0\leq j\leq n$ and $\omega\in\Omega$.
Furthermore, $\nabla^{k,q}(\mathbb{C}[T]\#_qW)\subseteq
\mathcal{A}_{\nabla}^{k,q}$.
\end{cor}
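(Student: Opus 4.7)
The plan is to obtain $\nabla^{k,q}$ as the composition
\[
\nabla^{k,q} := \sigma^{k,q}\circ\bigl(\pi^{k,q}\bigr)^{-1}\colon \mathbb{C}(T)\#_qW\longrightarrow \mathcal{A}^{k,q},
\]
using the fact, recorded just before \eqref{intertwiner}, that $\pi^{k,q}\colon\mathbb{H}_{loc}(k,q)\to\mathbb{C}(T)\#_qW$ is an algebra isomorphism. Since $\sigma^{k,q}$ is an algebra homomorphism by Proposition \ref{prop1}, so is $\nabla^{k,q}$. The three defining relations in \eqref{nablarel} are then verified by direct substitution: $(\pi^{k,q})^{-1}(f)=f$ for $f\in\mathbb{C}(T)$, $(\pi^{k,q})^{-1}(\omega)=T_\omega$ for $\omega\in\Omega$ (so $\nabla^{k,q}(\omega)=\sigma^{k,q}(T_\omega)=\omega T_\omega$), while the inverse-intertwiner formula \eqref{intertwiner}, namely $(\pi^{k,q})^{-1}(s_j)=(c_j^{k,q})^{-1}(T_j-k_j+c_j^{k,q})$, combined with the second line of \eqref{sigmaMAP}, gives the asserted formula for $\nabla^{k,q}(s_j)$ after the terms $-k_j+c_j^{k,q}$ cancel the constant part $-(c_j^{k,q}-k_j)$ coming from $\sigma^{k,q}(T_j)$.

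Uniqueness is immediate because $\mathbb{C}(T)$, the simple reflections $s_j$ ($0\leq j\leq n$), and $\Omega$ together generate $\mathbb{C}(T)\#_qW$ as an algebra, so the values prescribed by \eqref{nablarel} determine $\nabla^{k,q}$ completely.

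For the inclusion $\nabla^{k,q}(\mathbb{C}[T]\#_qW)\subseteq\mathcal{A}_\nabla^{k,q}$, it suffices to check that the coefficients appearing in the three prescriptions of \eqref{nablarel} lie in $\mathbb{C}_\nabla^{k,q}[T]$ when $f\in\mathbb{C}[T]$. Writing
\[
c_j^{k,q}=\frac{k_j^{-1}-k_je_q^{a_j}}{1-e_q^{a_j}},\qquad c_j^{k,q}-k_j=\frac{k_j^{-1}-k_j}{1-e_q^{a_j}},
\]
one finds
\[
\bigl(c_j^{k,q}\bigr)^{-1}=\frac{1-e_q^{a_j}}{k_j^{-1}-k_je_q^{a_j}},\qquad \frac{c_j^{k,q}-k_j}{c_j^{k,q}}=\frac{k_j^{-1}-k_j}{k_j^{-1}-k_je_q^{a_j}},
\]
both of which are manifestly in $\mathbb{C}_\nabla^{k,q}[T]$ by the very definition of this algebra. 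Since $W$ acts on $\mathbb{C}_\nabla^{k,q}[T]$ by algebra automorphisms, the subalgebra generated inside $\mathcal{A}^{k,q}$ by $\mathbb{C}[T]$, the elements $\nabla^{k,q}(s_j)$, and the elements $\nabla^{k,q}(\omega)$ lies in $\mathcal{A}_\nabla^{k,q}$.

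The only potential obstacle is making sure the intertwiner identity \eqref{intertwiner} is applied correctly to match the second line of \eqref{nablarel}; this is a short bookkeeping calculation, essentially a one-line cancellation, and poses no real difficulty since $\sigma^{k,q}(T_j)-k_j+c_j^{k,q}=s_jT_j+(c_j^{k,q}-k_j)s_j$ is exactly what one needs after multiplication by $(c_j^{k,q})^{-1}$.
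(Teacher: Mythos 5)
Your proposal is correct and coincides with the paper's own proof: the paper likewise defines $\nabla^{k,q}:=\sigma^{k,q}\circ\bigl(\pi^{k,q}\bigr)^{-1}$ and verifies \eqref{nablarel} by a direct computation with the intertwiner formula \eqref{intertwiner}, leaving the final inclusion as immediate. Your explicit cancellation for $\nabla^{k,q}(s_j)$ and the check that $(c_j^{k,q})^{-1}$ and $(c_j^{k,q}-k_j)/c_j^{k,q}$ lie in $\mathbb{C}_\nabla^{k,q}[T]$ simply spell out the details the paper omits.
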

\begin{proof}
Consider the algebra homomorphism
\[
\nabla^{k,q}:=\sigma^{k,q}\circ\bigl(\pi^{k,q}\bigr)^{-1}:
\mathbb{C}(T)\#_qW\rightarrow \mathcal{A}^{k,q}.
\]
A direct computation using \eqref{intertwiner} 
shows that $\nabla^{k,q}$ satisfies \eqref{nablarel}.
The second statement is immediate.
\end{proof}
The algebra homomorphism $\nabla^{k,q}$ is the key ingredient
in the definition of Cherednik's \cite{CInd} quantum affine
Knizhnik-Zamolodhikov (KZ) equations. We discuss this in detail
in subsection \ref{qaKZ}.

\subsection{Characterizations of spaces of invariants}

For a complex, unital associative algebra $A$ we denote by
$\textup{Mod}_A$ the category of left $A$-modules.

Proposition \ref{prop1} gives rise to a covariant functor
$F_\sigma: \textup{Mod}_{\mathcal{A}_{\sigma}^{k,q}}\rightarrow
\textup{Mod}_{H(k)}$ in the following way. If $M$ is 
a left $\mathcal{A}_{\sigma}^{k,q}$-module $M$, then
$F_\sigma(M)$ is the
vector space $M$ with $H(k)$-module structure defined by
\[h\cdot m:=\sigma^{k,q}(h)m\qquad (h\in H(k)\subset \mathbb{H}(k,q),
\,\,\, m\in M).
\]
Similarly, Corollary \ref{cor1} gives rise to a covariant functor
$F_\nabla: \textup{Mod}_{\mathcal{A}_{\nabla}^{k,q}}\rightarrow
\textup{Mod}_{\mathbb{C}[W]}$. In this case $F_\nabla(M)$, 
for a left $\mathcal{A}_{\nabla}^{k,q}$-module $M$, 
is the vector space $M$ with $\mathbb{C}[W]$-module structure
given by
\[w\cdot m:=\nabla^{k,q}(w)m\qquad (w\in W,\,\, m\in M).
\]
\begin{rema}
Since $\mathbb{C}_\sigma^{q}[T]\#_qW$ and $H(k)$ are
mutually commuting subalgebras of $\mathcal{A}_{\sigma}^{k,q}$,
both
$\pi^{k^{-1},q}(H(k^{-1}))\subseteq \mathbb{C}_\sigma^q[T]\#_qW$
and $H(k)$ act on a left
$\mathcal{A}_{\sigma,\nabla}^{k,q}$-module $M$, and these actions
commute. It is important to carefully distinguish between these
two commuting actions. 
\end{rema}
The next aim is to relate certain invariant subspaces of $F_\sigma(M)$
and of $F_\nabla(M)$ in case $M$ is a left module over 
the subalgebra $\mathcal{A}_{\sigma,\nabla}^{k,q}$ of $\mathcal{A}^{k,q}$,
generated by $\mathcal{A}_\sigma^{k,q}$ and $\mathcal{A}_\nabla^{k,q}$.
We first need to introduce some more notations.

Write $\mathbb{C}^{k,q}_{\sigma,\nabla}[T]\subseteq \mathbb{C}(T)$
for the subalgebra generated by $\mathbb{C}[T]$, $(1-e_q^a)^{-1}$ and
$(k_a^{-1}-k_ae_q^a)^{-1}$ for all $a\in R$. Note that 
$\mathbb{C}_{\sigma,\nabla}^{k,q}[T]=
\mathbb{C}_{\sigma,\nabla}^{k^{-1},q}[T]$.
Then
\[\mathbb{C}[T]\subseteq
\mathbb{C}_\sigma^q[T], \mathbb{C}_\nabla^{k,q}[T]\subseteq
\mathbb{C}^{k,q}_{\sigma,\nabla}[T]\subseteq \mathbb{C}(T)
\]
as $W$-module algebras. 
The elements $c_a^{k,q}$ ($a\in R$) are invertible 
in $\mathbb{C}^{k,q}_{\sigma,\nabla}[T]$. Let
$\mathbb{C}_{\sigma,\nabla}^{k,q}[T]\#_qW$ be the 
algebra of $q$-difference reflection operators with coefficients
in $\mathbb{C}_{\sigma,\nabla}^{k,q}[T]$.
Then
\[\mathcal{A}_{\sigma,\nabla}^{k,q}=
\mathbb{C}_{\sigma,\nabla}^{k,q}[T]\#_qW\otimes_{\mathbb{C}} H(k)
\]
and $\mathcal{A}_\sigma^{k,q},\mathcal{A}_{\nabla}^{k,q}\subseteq
\mathcal{A}_{\sigma,\nabla}^{k,q}\subseteq \mathcal{A}^{k,q}$
as algebras.
\begin{defi}
For a left $\mathcal{A}_{\sigma,\nabla}^{k,q}$-module $M$
we write $M_\sigma=F_\sigma\bigl(M|_{\mathcal{A}_{\sigma}^{k,q}}\bigr)$
and $M_\nabla=F_\nabla\bigl(M|_{\mathcal{A}_{\nabla}^{k,q}}\bigr)$
for the associated $H(k)$-module and $\mathbb{C}[W]$-module, respectively.
\end{defi}

For a left $\mathcal{A}_{\sigma,\nabla}^{k,q}$-module $M$,
for a subalgebra $A\subseteq H(k)$ and
for a subgroup $G\subseteq W$ we now write
\begin{equation*}
\begin{split}
M_\sigma^A&:=\{m\in M \,\, | \,\, \sigma^{k,q}(a)m=\epsilon_+^{k}(a)m\quad
\forall\, a\in A\},\\
M_\nabla^G&:=\{m\in M \,\, | \,\, \nabla^{k,q}(g)m=m\quad
\forall\, g\in G\}.
\end{split}
\end{equation*}

Let $J_k: H(k^{-1})\rightarrow H(k)$ be the unique unital
algebra anti-involution satisfying $J_k(T_j)=T_j^{-1}$
($0\leq j\leq n$) and 
$J_k(T_\omega)=T_{\omega^{-1}}$ ($\omega\in\Omega$).
Note that $J_k$ restricts to an algebra anti-involution
$J_k: H_0(k^{-1})\rightarrow H_0(k)$.

\begin{prop}\label{aa}
Let $M$ be a left $\mathcal{A}_{\sigma,\nabla}^{k,q}$-module.\\
{\bf (i)} We have
\begin{equation}\label{a1}
M_\sigma^{H_0(k)}=M_\nabla^{W_0}=
\{m\in M \,\, | \,\, \pi^{k^{-1},q}(h)m=J_k(h)m
\quad \forall\, h\in H_0(k^{-1})\}.
\end{equation}
{\bf (ii)} We have
\begin{equation}\label{a2}
M_\sigma^{H(k)}=M_\nabla^W=
\{m\in M \,\, | \,\, \pi^{k^{-1},q}(h)m=J_k(h)m
\quad \forall\, h\in H(k^{-1}) \}.
\end{equation}
\end{prop}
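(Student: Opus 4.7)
The plan is to reduce the six set equalities (three in each part) to pointwise equivalences of conditions on a generating set, and then verify those generator-level equivalences by a direct computation inside $\mathcal{A}^{k,q}$.

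For a fixed $m\in M$, the condition $\sigma^{k,q}(h)m=\epsilon_+^k(h)m$ (for all $h\in H_0(k)$, resp.\ $H(k)$) reduces, by the algebra homomorphism property of $\sigma^{k,q}$ and the character property of $\epsilon_+^k$, to checking on the generators $T_j$ and the elements $T_\omega$. Analogously $\nabla^{k,q}(g)m=m$ reduces to checking on the $s_j$'s together with $\omega\in\Omega$ in part (ii). The third set is subtler because $J_k$ is an anti-homomorphism, but the key is that in $\mathcal{A}^{k,q}=\mathbb{C}(T)\#_qW\otimes_{\mathbb{C}}H(k)$ the two tensor factors commute; hence, given $\pi^{k^{-1},q}(T_{i_j})m=T_{i_j}^{-1}m$ for each generator, the element $T_{i_j}^{-1}\in H(k)$ can be slid past the $\pi^{k^{-1},q}(T_{i_{j'}})\in\mathbb{C}(T)\#_qW$ factors and, by induction on the length of a reduced expression, one obtains $\pi^{k^{-1},q}(T_w)m=J_k(T_w)m$ for every $w$.

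The main computation is then the pointwise equivalence, on a single generator $T_j$ (equivalently $s_j$), of the three conditions: (A) $\sigma^{k,q}(T_j)m=k_jm$, (B) $\nabla^{k,q}(s_j)m=m$, and (C) $\pi^{k^{-1},q}(T_j)m=T_j^{-1}m$. Using the explicit formulas \eqref{sigmaMAP} and \eqref{nablarel}, both (A) and (B) rearrange to the single equation $s_jT_jm+(c_j^{k,q}-k_j)s_jm=c_j^{k,q}m$. To match (C), I would apply $s_j$ (viewed in $\mathbb{C}(T)\#_qW\subseteq\mathcal{A}^{k,q}$) to the rearranged (A) and invoke the identity $s_{j,q}(c_j^{k,q})=c_j^{k^{-1},q}$, verified directly from the formula for $c_j^{k,q}$; this transforms (A) into $(T_j-k_j)m=c_j^{k^{-1},q}(s_j-1)m$. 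On the other hand, expanding (C) and using the Hecke relation $T_j^{-1}=T_j-(k_j-k_j^{-1})$ produces the very same equation.

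For part (ii), the additional generators $T_0$ and $T_\omega$ ($\omega\in\Omega$) must be handled. The $T_0$ case is formally identical to the $T_j$ case above. For $\omega\in\Omega$, Conditions (A) and (B) both collapse to $\omega T_\omega m=m$ (since $\sigma^{k,q}(T_\omega)=\omega T_\omega=\nabla^{k,q}(\omega)$ and $\epsilon_+^k(T_\omega)=1$), while Condition (C) reads $\omega m=T_{\omega^{-1}}m$; the two are equivalent because $\omega$ and $T_\omega$ commute in $\mathcal{A}^{k,q}$ and $T_\omega T_{\omega^{-1}}=1$. I expect the only mildly subtle point to be the extension of Condition (C) from generators to all of $H_0(k^{-1})$ (resp.\ $H(k^{-1})$), precisely because $J_k$ reverses multiplication; the tensor-product commutativity of $\mathbb{C}(T)\#_qW$ and $H(k)$ inside $\mathcal{A}^{k,q}$ is what makes the inductive step go through.
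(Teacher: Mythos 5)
Your proof is correct and follows essentially the same route as the paper: reduce to the generators, observe that the $\sigma$- and $\nabla$-conditions collapse to the common equation $s_jT_jm+(c_j^{k,q}-k_j)s_jm=c_j^{k,q}m$ via invertibility of $c_j^{k,q}$, then apply $s_j$, use $w_q(c_a^{k,q})=c_{wa}^{k,q}$, $c_{-a}^{k,q}=c_a^{k^{-1},q}$ and $T_j^{-1}=T_j-k_j+k_j^{-1}$ to match the $\pi^{k^{-1},q}$-condition, with the $\Omega$-generators handled separately. Your explicit remark that the extension of the third condition from generators to all of $H_0(k^{-1})$, resp.\ $H(k^{-1})$, uses the commutation of the two tensor factors together with the anti-homomorphism property of $J_k$ is a point the paper leaves implicit, and it is handled correctly.
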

\begin{proof}
Observe that in $\mathcal{A}_{\sigma,\nabla}^{k,q}$,
\begin{equation*}
\begin{split}
\sigma^{k,q}(T_j)&=k_j+c_{j}^{k,q}(\nabla^{k,q}(s_j)-1),\\
\sigma^{k,q}(T_{\omega})&=\nabla^{k,q}(\omega)
\end{split}
\end{equation*}
for $0\leq j\leq n$ and $\omega\in\Omega$.
Since $c_j^{k,q}$ is invertible in 
$\mathcal{A}_{\sigma,\nabla}^{k,q}$,
it implies for $m\in M$,
\begin{equation*}
\begin{split}
\sigma^{k,q}(T_j)m=k_jm \,\, &\Leftrightarrow\,\, \nabla^{k,q}(s_j)m=m,\\
\sigma^{k,q}(T_{\omega})m=m \,\, &\Leftrightarrow\,\, \nabla^{k,q}(\omega)m=m
\end{split}
\end{equation*}
for $0\leq j\leq n$ and $\omega\in\Omega$. This implies the first equalities
in \eqref{a1} and \eqref{a2}.
For the second equalities in \eqref{a1} and \eqref{a2} it suffices to show,
for $m\in M$,
\begin{equation*}
\begin{split}
\nabla^{k,q}(s_j)m=m\,\, &\Leftrightarrow\,\, \pi^{k^{-1},q}(T_j)m=J_k(T_j)m,\\
\nabla^{k,q}(\omega)m=m\,\, &\Leftrightarrow\,\, \pi^{k^{-1},q}(T_{\omega})m=
J_k(T_{\omega})m
\end{split}
\end{equation*}
for $0\leq j\leq n$ and $\omega\in\Omega$.
The second equivalence is immediate. It thus remains to prove 
the first equivalence. 

Note that $\nabla^{k,q}(s_j)m=m$ is equivalent to
\[s_jT_jm+(c_j^{k,q}-k_j)s_jm=c_j^{k,q}m.
\]
We now act by $s_j\in\mathcal{A}_{\sigma,\nabla}^{k,q}$ on both sides,
and pull the action of $s_j$ to the right. Using the fact that
\[ w_q\bigl(c_a^{k,q}\bigr)=c_{wa}^{k,q},\qquad
c_{-a}^{k,q}=c_a^{k^{-1},q}
\]
in $\mathbb{C}(T)$ for $w\in W$ and $a\in R$, it follows that
$\nabla^{k,q}(s_j)m=m$ is equivalent to
\[ T_jm+(c_j^{k^{-1},q}-k_j)m=c_j^{k^{-1},q}s_jm.
\]
Since $T_j^{-1}=T_j-k_j+k_j^{-1}$ in $H(k)$, we conclude
that $\nabla^{k,q}(s_j)m=m$ if and only if
\[T_j^{-1}m=(k_j^{-1}+c_j^{k^{-1},q}(s_j-1))m.
\]
The left hand side equals $J_k(T_j)m$ and the right hand
side equals $\pi^{k^{-1},q}(T_j)m$, hence the result.
\end{proof}
\begin{rema}
The third form of the space of invariants (the far right side
of \eqref{a2}) is used in the analysis of special solutions of 
quantum Knizhnik-Zamolodchikov equations in the context of the
Razumov-Stroganov conjectures, see, e.g., \cite[\S 4.1]{Pas}. 
\end{rema}

\subsection{Quantum affine Knizhnik-Zamolodchikov equations}\label{qaKZ}

In this subsection we recall Cherednik's \cite{CInd} construction of the
quantum affine KZ equations.

Observe that for $w\in W$,
\begin{equation}\label{connmatrix}
\nabla^{k,q}(w)=C_w^{k,q}w\in\mathcal{A}_{\nabla}^{k,q},
\end{equation}
with $C_w^{k,q}$ an element in the subalgebra 
$\mathbb{C}_{\nabla}^{k,q}[T]\otimes_{\mathbb{C}}H(k)$ of
$\mathcal{A}_{\nabla}^{k,q}$. It follows that the $C_w^{k,q}$
satisfies the cocycle conditions
\begin{equation}\label{cocycle}
C_{ww^\prime}^{k,q}=C_w^{k,q}w_q\bigl(C_{w^\prime}^{k,q}\bigr),\qquad
w,w^\prime\in W,
\end{equation}
where $w_q$ acts on the first tensor component of
$C_{w^\prime}^{k,q}\in\mathbb{C}_{\nabla}^{k,q}[T]\otimes H(k)$.
In view of the cocycle condition \eqref{cocycle}, the $C_w^{k,q}$
are uniquely determined by $C_{s_j}^{k,q}$ ($0\leq j\leq n$)
and $C_\omega^{k,q}$ ($\omega\in\Omega$). By \eqref{nablarel}, 
for $0\leq j\leq n$ and $\omega\in\Omega$,
\begin{equation}
\begin{split}
C_{s_j}^{k,q}(t)&=c_j^{k,q}(t)^{-1}T_j+\frac{c_j^{k,q}(t)-k_j}{c_j^{k,q}(t)}
=\frac{T_j^{-1}-t_q^{a_j}T_j}{k_j^{-1}-k_jt_q^{a_j}},\\
C_\omega^{k,q}(t)&=T_\omega
\end{split}
\end{equation}
as rational $H(k)$-valued functions
in $t\in T$.

For a left $\mathbb{C}_{\sigma,\nabla}^{k,q}[T]\#_qW$-module $L$
and a left $H(k)$-module $L$ we write
\[
\Gamma_L^{k,q}(N):=L\otimes_{\mathbb{C}}N
\]
for the associated $\mathcal{A}_{\sigma,\nabla}^{k,q}$-module.
Typically, $L$ is some field of functions on $T$
(for example, $L=\mathbb{C}(T)$,
or $\mathcal{M}(T)$) with 
$\mathbb{C}_{\sigma,\nabla}^{q,k}[T]\#_qW$ acting
by $q$-difference reflection operators, in which case it is convenient
to think of the resulting
$\mathcal{A}_{\sigma,\nabla}^{k,q}$-module $\Gamma_L^{k,q}(N)$ 
as some space of global sections of a trivial vector bundle over $T$ with
fiber $N$. We call $\Gamma_L^{k,q}(N)$ the space of $N$-valued functions on $T$
of class $L$.

The action of $\tau(P^\vee)$ on $\Gamma_L^{k,q}(N)_\nabla$
then gets the interpretation of an integrable $q$-connection on 
$\Gamma_L^{k,q}(N)$;
in this interpretation the cocycle values $C_{\tau(\lambda)}^{k,q}$
($\lambda\in P^\vee$), acting on $\Gamma_L^{k,q}(N)$, play the role of the
$q$-connection matrices, while the integrability is captured by the cocycle
condition \eqref{cocycle}.
The $W_0$-submodule
$\Gamma_L^{k,q}(N)_\nabla^{\tau(P^\vee)}$ 
of $\Gamma_L^{k,q}(N)_\nabla$ then plays the role of the 
subspace of flat $q$-sections. 
\begin{defi}
The system $\nabla^{k,q}(\tau(P^\vee))$
of holonomic $q$-difference equations on $\Gamma_L^{k,q}(N)$ is the
quantum affine Knizhnik-Zamolodchikov equations for $N$-valued 
functions on $T$ of class $L$. 
\end{defi}
Note that the assignment
\begin{equation}\label{functoriality}
L\times N\rightarrow \Gamma_L^{k,q}(N)_\nabla
\end{equation}
defines a covariant functor 
$\textup{Mod}_{\mathbb{C}_{\sigma,\nabla}^{k,q}[T]\#_qW}\times\textup{Mod}_{H(k)}
\rightarrow \textup{Mod}_{\mathbb{C}[W]}$.
In particular, if $L\rightarrow L^\prime$ and 
$N\rightarrow N^\prime$ are morphisms of $\mathbb{C}_{\sigma,\nabla}^{k,q}[T]\#_qW$
and $H(k)$-modules respectively, it gives rise to canonical linear maps
\[\Gamma_L^{k,q}(N)_\nabla^{\tau(P^\vee)}\rightarrow
\Gamma_{L^\prime}^{k,q}(N^\prime)_\nabla^{\tau(P^\vee)},\qquad
\Gamma_L^{k,q}(N)_\nabla^W\rightarrow \Gamma_{L^\prime}^{k,q}(N^\prime)_\nabla^W.
\]

\section{Spectral problem of the Cherednik-Dunkl operators}
For a left $\mathbb{C}_{\sigma,\nabla}^{k,q}[T]\#_qW$-module $L$
and a left $H(k)$-module of the form $M^{k,\pm,I}(\gamma)$ 
($\gamma\in T_I^{k^{\pm 1}}$)
we relate in this section the space 
$\Gamma_L^{k,q}\bigl(M^{k,\pm,I}(\gamma)\bigr)_\nabla^{W}$ of $W_0$-invariant
flat $q$-sections of the quantum affine KZ equations
to a suitable space of 
common eigenfunctions of the Cherednik-Dunkl $q$-difference
reflection operators
$\pi^{k^{-1},q}(f(Y))\in\mathbb{C}_{\sigma,\nabla}^{k,q}[T]\#_qW$ 
($f\in\mathbb{C}[T]$) acting on $L$. 
 
\subsection{$W_0$-invariants} \label{KZsection}

In this subsection we analyze
$\Gamma_L^{k,q}(M^{k,\pm,I}(\gamma))_\nabla^{W_0}$ ($\gamma\in T_I^{k^{\pm 1}}$). 
In the following subsection we extend the
analysis to its subspace $\Gamma_L^{k,q}(M^{k,\pm,I}(\gamma))_\nabla^W$
of $W_0$-invariant flat $q$-sections.

If $L$ is a left $\mathbb{C}[T]_{\sigma,\nabla}^{k,q}[T]\#_qW$-module,
then we write
\begin{equation}\label{LpiIpm}
L_\pi^{I,\pm}:=\{\phi\in L \,\, | \,\, \pi^{k^{-1},q}(h)\phi=
\epsilon_{\pm}^{k^{-1}}(h)\phi\quad \forall\, h\in H_{0,I}(k^{-1}) \}.
\end{equation}
We give first an alternative description of the spaces $L_\pi^{I,\pm}$.
Set $\rho^\vee:=\frac{1}{2}\sum_{\alpha\in R_0^+}\alpha^\vee\in P^\vee$
and define $G^{k,\pm}\in \mathbb{C}[T]$ by
\begin{equation*}
G^{k,\epsilon}(t):=
\begin{cases}
1&\qquad \hbox{ if } \epsilon=+,\\ 
t^{\rho^\vee}\prod_{\alpha\in R_0^+}\bigl(
k_{\alpha^\vee}^{-1}-k_{\alpha^\vee}t^{-\alpha^\vee}\bigr),&\qquad
\hbox{ if } \epsilon=-. 
\end{cases}
\end{equation*}
Note that $G^{k,\pm}\in\mathbb{C}_{\sigma,\nabla}^{k,q}[T]^\times$.
Define for $w\in W_0$,
\begin{equation}\label{wplusmin}
w_{\pm}:=\frac{G^{k,\pm}}{w(G^{k,\pm})}w\in 
\mathbb{C}_{\sigma,\nabla}^{k,q}[T]\#_qW.
\end{equation}
It is just $w$ in the symmetric case ($+$), but it is convenient
to write it as $w_+$ to maintain an uniform treatment of the 
symmetric and antisymmetric theory.
\begin{lem}\label{W0IinvL}
Let $L$ be a left $\mathbb{C}[T]_{\sigma,\nabla}^{k,q}\#_qW$-module.
Then $L_\pi^{I,\pm}=L^{W_{0,I,\pm}}$, where
\[
L^{W_{0,I,\pm}}:=\{\phi\in L \,\, | \,\, w_{\pm}\phi=\phi\quad \forall\,
w\in W_{0,I}\}.
\]
\end{lem}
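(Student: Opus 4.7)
My plan is to reduce the asserted equality to a per-generator check and then verify it by direct rational-function manipulation. Since $H_{0,I}(k^{-1})$ is generated as an algebra by $\{T_i\}_{i\in I}$, the set $L_\pi^{I,\pm}$ in \eqref{LpiIpm} is cut out by the conditions $\pi^{k^{-1},q}(T_i)\phi = \pm k_i^{\mp 1}\phi$ for all $i\in I$. On the other side, the assignment $w\mapsto w_\pm$ is a group homomorphism from $W_{0,I}$ into the units of $\mathbb{C}_{\sigma,\nabla}^{k,q}[T]\#_qW$, as follows from the one-line cocycle computation
\[
w_\pm w'_\pm = \frac{G^{k,\pm}}{w(G^{k,\pm})}\, w\!\left(\frac{G^{k,\pm}}{w'(G^{k,\pm})}\right) ww' = \frac{G^{k,\pm}}{ww'(G^{k,\pm})}\, ww' = (ww')_\pm,
\]
so $L^{W_{0,I,\pm}}$ is cut out by $(s_i)_\pm\phi=\phi$ for $i\in I$. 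It therefore suffices to prove, for each $i\in I$, the equivalence $\pi^{k^{-1},q}(T_i)\phi = \pm k_i^{\mp 1}\phi \iff (s_i)_\pm\phi = \phi$.

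Using the formula $\pi^{k^{-1},q}(T_i) = k_i^{-1} + c_i^{k^{-1},q}(s_i - 1)$, the symmetric case is almost immediate: since $G^{k,+}=1$ gives $(s_i)_+=s_i$, the eigenvalue equation reduces to $c_i^{k^{-1},q}(s_i\phi - \phi)=0$, and invertibility of $c_i^{k^{-1},q}$ in $\mathbb{C}_{\sigma,\nabla}^{k,q}[T]$ yields $s_i\phi=\phi$. For the antisymmetric case I would carry out two explicit computations in $\mathbb{C}(T)$. The first, obtained by putting everything over the common denominator $1-t^{\alpha_i^\vee}$, is the identity
\[
c_i^{k^{-1},q} - k_i^{-1} - k_i = -c_i^{k,q},
\]
which rewrites $\pi^{k^{-1},q}(T_i)\phi = -k_i\phi$ as $s_i\phi = -(c_i^{k,q}/c_i^{k^{-1},q})\phi$. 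The second, using that $s_i$ permutes $R_0^+\setminus\{\alpha_i\}$, sends $\alpha_i\mapsto -\alpha_i$, and satisfies $s_i\rho^\vee = \rho^\vee - \alpha_i^\vee$, gives
\[
\frac{G^{k,-}}{s_i(G^{k,-})} = -\frac{c_i^{k^{-1},q}}{c_i^{k,q}}.
\]
The defining condition $(s_i)_-\phi=\phi$ then rearranges to exactly the same equation $s_i\phi = -(c_i^{k,q}/c_i^{k^{-1},q})\phi$, closing the equivalence.

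The main obstacle is purely bookkeeping: establishing the two displayed rational identities and keeping careful track of the interplay between the substitutions $k\leftrightarrow k^{-1}$ in $c_i^{k,q}$ and the action of $s_i$ on the product defining $G^{k,-}$. Conceptually, the content of the lemma is simply that the twist by $G^{k,-}/w(G^{k,-})$ is exactly the factor which converts the $s_i$-action on $L$ into the action needed to reproduce the Steinberg character on the generators $T_i$ under $\pi^{k^{-1},q}$.
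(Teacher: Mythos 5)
Your proposal is correct and follows essentially the same route as the paper: both reduce to the generators $T_i$ ($i\in I$) and verify a rational-function identity relating $\pi^{k^{-1},q}(T_i)$ to $(s_i)_\pm$; indeed your two identities $c_i^{k^{-1},q}-k_i^{-1}-k_i=-c_i^{k,q}$ and $G^{k,-}/s_i(G^{k,-})=-c_i^{k^{-1},q}/c_i^{k,q}$ combine to precisely the paper's displayed formula $(\pi^{k^{-1},q}(T_i)+k_i)\phi=c_i^{k,q}\bigl(1-\tfrac{G^{k,-}}{s_i(G^{k,-})}s_i\bigr)\phi$. You merely make explicit what the paper leaves implicit (the cocycle property of $w\mapsto w_\pm$ and the invertibility of $c_i^{k^{\pm 1},q}$ in $\mathbb{C}_{\sigma,\nabla}^{k,q}[T]$), which is fine.
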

\begin{proof}
For all $i\in\{1,\ldots,n\}$ and $\phi\in L$ we have 
\begin{equation*}
\begin{split}
(\pi^{k^{-1},q}(T_i)-k_i^{-1})\phi&=c_i^{k^{-1},q}(s_i-1)\phi,\\
(\pi^{k^{-1},q}(T_i)+k_i)\phi&=\left(\frac{k_i^{-1}-k_it^{\alpha_i^\vee}}
{1-t^{\alpha_i^\vee}}\right)\left(1-\frac{G^{k,-}}{s_i(G^{k,-})}s_i\right)\phi
\end{split}
\end{equation*}
The lemma follows now immediately.
\end{proof}
\begin{prop}\label{KZW0}
Fix $\gamma\in T_I^{k^{\pm 1}}$. 
Let $L$ be a left $\mathbb{C}_{\sigma,\nabla}^{k,q}[T]\#_qW$-module
and let $\psi\in\Gamma_L^{k,q}\bigl(M^{k,\pm,I}(\gamma)\bigr)$. Then
$\psi\in\Gamma_L^{k,q}\bigl(M^{k,\pm,I}(\gamma)\bigr)_\nabla^{W_0}$ if and only
if 
\[
\psi=\sum_{w\in W_0^I}\pi^{k^{-1},q}(T_{w^{-1}}^{-1})\phi\otimes
v_w^{k,\pm,I}(\gamma)
\]
for some $\phi\in L_\pi^{I,\pm}$.
\end{prop}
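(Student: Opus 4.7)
The approach is to use Proposition \ref{aa}(i), which identifies $\Gamma_L^{k,q}(M^{k,\pm,I}(\gamma))_\nabla^{W_0}$ with the set of $\psi$ satisfying $\pi^{k^{-1},q}(T_i)\psi = T_i^{-1}\psi$ for all $1\leq i\leq n$, where on the right $T_i^{-1} = T_i - (k_i-k_i^{-1})$ acts on the $M^{k,\pm,I}(\gamma)$-factor. Writing $\psi = \sum_{w\in W_0^I}\phi_w\otimes v_w^{k,\pm,I}(\gamma)$, applying \eqref{vaction} with the partition $W_0^I = A_i\sqcup B_i\sqcup C_i$ from Lemma \ref{ABC}, and equating coefficients of $v_u^{k,\pm,I}(\gamma)$, I will unpack the invariance condition into the recursions
\begin{equation*}
\pi^{k^{-1},q}(T_i)\phi_u = \phi_{s_iu}\quad (u\in A_i),\qquad \pi^{k^{-1},q}(T_i)\phi_u = \pm k_i^{\mp 1}\phi_u\quad (u\in C_i),
\end{equation*}
the $B_i$-relations being equivalent to the $A_i$-relations via the involution of Lemma \ref{ABC}(ii). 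Since $e\in C_i$ exactly when $i\in I$, the $C_i$-relations at $u=e$ read precisely as $\phi_e\in L_\pi^{I,\pm}$.

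For the ``if'' direction I will put $\phi_w := \pi^{k^{-1},q}(T_{w^{-1}}^{-1})\phi$ and verify the recursions. The $A_i$-case reduces to the Hecke identity $T_{(s_iw)^{-1}}^{-1} = T_i T_{w^{-1}}^{-1}$, which is immediate from the length-additive decomposition $w^{-1} = (s_iw)^{-1}s_i$ for $w\in A_i$. The $C_i$-case is the main non-trivial step: Lemma \ref{ABC}(iii) yields $s_iw = ws_{i_w}$ with $i_w\in I$, which, combined with $l(s_iw)=l(w)+1$, gives the Hecke identity $T_{w^{-1}}T_i = T_{i_w}T_{w^{-1}}$. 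Pushing $T_i$ across $T_{w^{-1}}^{-1}$ via this relation and invoking $\phi\in L_\pi^{I,\pm}$, together with $k_{i_w}=k_i$ (from $w\alpha_{i_w}=\alpha_i$), produces the required eigenvalue $\pm k_i^{\mp 1}$.

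For the ``only if'' direction I will show $\phi_w = \pi^{k^{-1},q}(T_{w^{-1}}^{-1})\phi_e$ by induction on $l(w)$ for $w\in W_0^I$. The combinatorial input is that any $w\in W_0^I\setminus\{e\}$ lies in some $A_i$: choosing $s_i$ with $l(s_iw) = l(w)-1$, for each $j\in I$ one has $l(ws_j)=l(w)+1$ (since $w\in W_0^I$), forcing $l(s_iws_j)\in\{l(w), l(w)+2\}$, hence $l(s_iws_j)>l(s_iw)$, so $s_iw\in W_0^I$ and $w\in A_i$. The $A_i$-recursion together with the Hecke identity $T_{w^{-1}}^{-1} = T_i^{-1}T_{(s_iw)^{-1}}^{-1}$ then propagates the closed form from $\phi_{s_iw}$ to $\phi_w$. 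The main obstacle is the $C_i$-case of the ``if'' direction, where the interaction between the combinatorics of Lemma \ref{ABC}(iii), the parabolic condition $\phi\in L_\pi^{I,\pm}$, and the correct form of the Hecke conjugation all need to align; once handled, the inductive step in the ``only if'' direction is automatic, since the recursion uniquely determines each $\phi_w$ from $\phi_e$ and the closed form $\pi^{k^{-1},q}(T_{w^{-1}}^{-1})\phi_e$ is intrinsic.
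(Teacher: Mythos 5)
Your proposal is correct and takes essentially the same route as the paper: both reduce the $W_0$-invariance via Proposition \ref{aa}{\bf (i)}, Lemma \ref{ABC} and \eqref{vaction} to the recursions on the coefficients $\phi_w$ over $A_i$, $B_i$, $C_i$, and then solve them using the Hecke identities $T_iT_{w^{-1}}^{-1}=T_{(s_iw)^{-1}}^{-1}$ for $w\in A_i$ and the conjugation relation $T_iT_{w^{-1}}^{-1}=T_{w^{-1}}^{-1}T_{i_w}$ for $w\in C_i$ together with $\phi\in L_\pi^{I,\pm}$ and $k_{i_w}=k_i$, the latter being just a repackaging of the paper's identity $T_iT_{w^{-1}}^{-1}=T_{w^{-1}}^{-1}T_{i_w}^{-1}+(k_i^{-1}-k_i)T_{w^{-1}}^{-1}$. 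Your two streamlinings (discarding the $B_i$-relations as consequences of the $A_i$-relations via the quadratic relation in $H_0(k^{-1})$, and running an induction on $l(w)$ instead of telescoping along a reduced expression) are only cosmetic variations of the paper's argument.
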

\begin{proof}
Any $\psi\in \Gamma_L^{k,q}\bigl(M^{k,\pm,I}(\gamma)\bigr)$ has a unique expansion
of the form
\[\psi=\sum_{w\in W_0^I}\psi_w\otimes v_w^{k,\pm,I}(\gamma)
\]
with $\psi_w\in L$. By \eqref{a1} the condition 
$\psi\in \Gamma_L^{k,q}\bigl(M^{k,\pm,I}(\gamma)\bigr)_\nabla^{W_0}$ is 
equivalent to 
\begin{equation}\label{todo}
\sum_{w\in W_0^I}\pi^{k^{-1},q}(T_i)\psi_w\otimes v_w^{k,\pm,I}(\gamma)=
\sum_{w\in W_0^I}\psi_w\otimes T_i^{-1}v_w^{k,\pm,I}(\gamma)
\end{equation}
for all $i\in\{1,\ldots,n\}$, in view of \eqref{a1}.
Recasting \eqref{todo} as explicit recursion relations for the
$\psi_w$ using Lemma \ref{ABC}
and the explicit formulas \eqref{vaction} for the action of
the $T_i$ on the standard basis of $M^{k,\pm,I}(\gamma)$, 
implies that \eqref{todo} holds 
for $1\leq i\leq n$ if and only if
\begin{equation}\label{recurrpsi}
\pi^{k^{-1},q}(T_i)\psi_w=
\begin{cases}
\psi_{s_iw} &\,\,\hbox{ if }\,\, w\in A_i,\\
\psi_{s_iw}+(k_i^{-1}-k_i)\psi_w &\,\,\hbox{ if }\,\,
w\in B_i,\\
\pm k_{i}^{\mp 1}\psi_{w} &\,\,\hbox{ if }\,\,
w\in C_i
\end{cases}
\end{equation}
for $1\leq i\leq n$. 

Suppose that we have a solution
$\{\psi_w\}_{w\in W_0^I}\subset L$ of the recursion relations
\eqref{recurrpsi} for $1\leq i\leq n$.
Consider first the recursion \eqref{recurrpsi} for $w=e\in W_0^I$
the unit element of $W_0$ and for $i\in I$. Then $e\in C_i$,
$\overline{s_ie}=e$ and $i_e=i$, hence 
$\pi^{k^{-1},q}(T_i)\psi_e=\pm k_i^{\mp 1}\psi_e$. It follows
that $\psi_e\in L_\pi^{I,\pm}$.
If $w\in W_0^I$ with $l(w)>0$ and $w=s_{i_1}s_{i_2}\cdots s_{i_{l(w)}}$
is a reduced expression ($1\leq i_j\leq n$), then repeated application
of the first recursion in \eqref{recurrpsi} shows that
\[\psi_w=\pi^{k^{-1},q}(T_{i_1}^{-1})\psi_{s_{i_2}\cdots s_{i_{l(w)}}}=
\cdots=\pi^{k^{-1},q}(T_{w^{-1}}^{-1})\psi_e.
\]
Hence a solution $\{\psi_w\}_{w\in W_0^I}\subset L$ of the recursion relations
\eqref{recurrpsi} for $1\leq i\leq n$ is uniquely determined
by $\psi_e$, and $\psi_e$ must be an element from the
subspace $L_\pi^{I,\pm}$ of $L$.

On the other hand, let $\phi\in L_\pi^{I,\pm}$ and define
\[\psi_w:=\pi^{k^{-1},q}(T_{w^{-1}}^{-1})\phi\in L,\qquad w\in W_0^I.
\]
Then $\{\psi_w\}_{w\in W_0^I}$ satisfies the recursion relations
\eqref{recurrpsi} for $1\leq i\leq n$ due to the following
identities in $H_0(k^{-1})$,
\begin{equation*}
T_iT_{w^{-1}}^{-1}=
\begin{cases}
T_{(s_iw)^{-1}}^{-1}\,\, &\hbox{ if }\,\, w\in A_i,\\
T_{(s_iw)^{-1}}^{-1}+(k_i^{-1}-k_i)T_{w^{-1}}^{-1}\,\, &\hbox{ if }\,\, w\in B_i,\\
T_{w^{-1}}^{-1}T_{i_w}^{-1}+(k_i^{-1}-k_i)T_{w^{-1}}^{-1}\,\,
&\hbox{ if }\,\, w\in C_i
\end{cases}
\end{equation*}
for $1\leq i\leq n$. The verification of these identities 
is straightforward.
\end{proof}

\begin{cor}\label{corrLW0}
Let $\gamma\in T_I^{k^{\pm 1}}$. We have
a complex linear isomorphism
\[L_\pi^{I,\pm}
\overset{\sim}{\longrightarrow}
\Gamma_L^{k,q}\bigl(M^{k,\pm,I}(\gamma)\bigr)_\nabla^{W_0},
\]
defined by
\begin{equation}\label{isoY}
\phi\mapsto 
\sum_{w\in W_0^I}\pi^{k^{-1},q}\bigl(T_{w^{-1}}^{-1}\bigr)\phi
\otimes v_w^{k,\pm,I}(\gamma).
\end{equation}
\end{cor}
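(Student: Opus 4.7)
The plan is to observe that the corollary is essentially a direct repackaging of Proposition \ref{KZW0}. I would define the candidate map
\[
\Phi\colon L_\pi^{I,\pm}\longrightarrow \Gamma_L^{k,q}\bigl(M^{k,\pm,I}(\gamma)\bigr),\qquad
\Phi(\phi):=\sum_{w\in W_0^I}\pi^{k^{-1},q}\bigl(T_{w^{-1}}^{-1}\bigr)\phi
\otimes v_w^{k,\pm,I}(\gamma),
\]
and verify the three assertions: $\Phi$ lands in the $W_0$-invariants, $\Phi$ is injective, and $\Phi$ is surjective onto $\Gamma_L^{k,q}\bigl(M^{k,\pm,I}(\gamma)\bigr)_\nabla^{W_0}$.

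For well-definedness and surjectivity, I would invoke Proposition \ref{KZW0} directly: the proposition asserts precisely that an element $\psi$ lies in $\Gamma_L^{k,q}\bigl(M^{k,\pm,I}(\gamma)\bigr)_\nabla^{W_0}$ if and only if there exists $\phi\in L_\pi^{I,\pm}$ with $\psi=\Phi(\phi)$. This simultaneously gives that $\Phi(\phi)$ is $W_0$-invariant for every $\phi\in L_\pi^{I,\pm}$ (well-definedness) and that every $W_0$-invariant flat $q$-section arises in this way (surjectivity).

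For linearity, the formula is manifestly $\mathbb{C}$-linear in $\phi$, since $\pi^{k^{-1},q}(T_{w^{-1}}^{-1})$ is a $\mathbb{C}$-linear operator on $L$. For injectivity, I would use that $W_0^I$ contains the identity $e$, for which $T_{e^{-1}}^{-1}=1$ acts as the identity on $L$; hence the coefficient of $v_e^{k,\pm,I}(\gamma)$ in $\Phi(\phi)$ is $\phi$ itself. Since $\{v_w^{k,\pm,I}(\gamma)\}_{w\in W_0^I}$ is a $\mathbb{C}$-basis of $M^{k,\pm,I}(\gamma)$ by Lemma \ref{chilemma}(ii), the expansion of $\Phi(\phi)$ in $L\otimes_{\mathbb{C}}M^{k,\pm,I}(\gamma)$ uniquely recovers $\phi$, and $\Phi(\phi)=0$ forces $\phi=0$.

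There is no genuine obstacle here: all the work has already been done in Proposition \ref{KZW0}, whose proof sets up the recursion \eqref{recurrpsi} and shows that a solution is uniquely determined by its value $\psi_e$, which must lie in $L_\pi^{I,\pm}$. The corollary simply packages that bijection between initial data $\phi=\psi_e\in L_\pi^{I,\pm}$ and solutions $\psi\in\Gamma_L^{k,q}\bigl(M^{k,\pm,I}(\gamma)\bigr)_\nabla^{W_0}$ as a linear isomorphism.
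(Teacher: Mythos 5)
Your proposal is correct and takes essentially the same route as the paper, which states the corollary as an immediate consequence of Proposition \ref{KZW0} without further argument: the proposition supplies well-definedness and surjectivity, and your observation that the $v_e^{k,\pm,I}(\gamma)$-coefficient of $\Phi(\phi)$ is $\phi$ itself (since $T_e=1$ and the $v_w^{k,\pm,I}(\gamma)$ form a basis) settles injectivity and linearity.
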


\subsection{$W$-invariants} 

For the analysis of the $W$-invariants in 
$\Gamma_L^{k,q}(M^{k,\pm,I}(\gamma))_\nabla$
it is convenient to reformulate Corollary
\ref{corrLW0} in the following way.

For $i\in I$ let $i_I^*\in\{1,\ldots,n\}$
such that $\overline{w_0}(\alpha_i)=\alpha_{i_I^*}$, and set $I^*=
\{i_I^*\}_{i\in I}$. 
It follows from 
the identities
\[
T_{\overline{w_0}^{-1}}^{-1}T_iT_{\overline{w_0}^{-1}}=
T_{i_I^*},\qquad \forall\, i\in I
\]
in $H_0(k^{-1})$ that 
$L_\pi^{I^*,\pm}=\pi^{k^{-1},q}\bigl(T_{\overline{w_0}^{-1}}^{-1}\bigr)
L_\pi^{I,\pm}$.
Thus Corollary \ref{corrLW0} can be reformulated 
as follows.
\begin{cor}\label{coreq}
Let $L$ be a left $\mathbb{C}_{\sigma,\nabla}^{k,q}[T]\#_qW$-module
and $\gamma\in T_I^{k^{\pm 1}}$.
We have a linear isomorphism
\[
L_\pi^{I^*,\pm}
\overset{\sim}{\longrightarrow} \Gamma_L^{k,q}\bigl(M^{k,\pm,I}(\gamma)
\bigr)_{\nabla}^{W_0},
\]
defined by
\begin{equation}\label{isoY2}
\phi\mapsto \psi_\phi:=\sum_{w\in W_0^I}\pi^{k^{-1},q}
\bigl(T_{w\overline{w_0}^{-1}}\bigr)\phi\otimes
v_w^{k,\pm,I}(\gamma).
\end{equation}
\end{cor}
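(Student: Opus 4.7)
The plan is to derive Corollary \ref{coreq} directly from Corollary \ref{corrLW0} by pre-composing with a suitable isomorphism between $L_\pi^{I,\pm}$ and $L_\pi^{I^*,\pm}$. First I would verify that $\pi^{k^{-1},q}\bigl(T_{\overline{w_0}^{-1}}^{-1}\bigr)$ restricts to a linear isomorphism $L_\pi^{I,\pm}\overset{\sim}{\longrightarrow} L_\pi^{I^*,\pm}$. This uses the identities $T_{\overline{w_0}^{-1}}^{-1}T_iT_{\overline{w_0}^{-1}}=T_{i_I^*}$ stated just before the corollary (which hold in $H_0(k^{-1})$ because $\overline{w_0}(\alpha_i)=\alpha_{i_I^*}$ and both sides act by the braid relations of $H_0(k^{-1})$), together with the fact that $\epsilon_\pm^{k^{-1}}(T_i)=\epsilon_\pm^{k^{-1}}(T_{i_I^*})$ since multiplicity values are $W_0$-invariant.

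Given $\phi\in L_\pi^{I^*,\pm}$, set $\widetilde\phi:=\pi^{k^{-1},q}\bigl(T_{\overline{w_0}^{-1}}\bigr)\phi\in L_\pi^{I,\pm}$. Applying Corollary \ref{corrLW0} to $\widetilde\phi$ produces
\[
\sum_{w\in W_0^I}\pi^{k^{-1},q}\bigl(T_{w^{-1}}^{-1}T_{\overline{w_0}^{-1}}\bigr)\phi\otimes v_w^{k,\pm,I}(\gamma)\in\Gamma_L^{k,q}\bigl(M^{k,\pm,I}(\gamma)\bigr)_\nabla^{W_0},
\]
so I need to rewrite $T_{w^{-1}}^{-1}T_{\overline{w_0}^{-1}}$ as $T_{w\overline{w_0}^{-1}}$ inside $H_0(k^{-1})$ for each $w\in W_0^I$. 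This is equivalent to the length identity $l(\overline{w_0})=l(w)+l(w\overline{w_0}^{-1})$.

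The main obstacle is thus to establish this length identity. I would argue as follows: write $\overline{w_0}=w_0\underline{w_0}^{-1}$ (since $\overline{w_0}\underline{w_0}=w_0$), so that $w\overline{w_0}^{-1}=w\underline{w_0}w_0^{-1}$. By the defining property of minimal coset representatives, $l(w\underline{w_0})=l(w)+l(\underline{w_0})$ for $w\in W_0^I$, and multiplying on the right by $w_0^{-1}$ contributes $-l(w\underline{w_0})$ to the length (using $l(uw_0)=l(w_0)-l(u)$). Hence $l(w\overline{w_0}^{-1})=l(w_0)-l(w)-l(\underline{w_0})=l(\overline{w_0})-l(w)$, as required. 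Consequently $T_{w^{-1}}T_{w\overline{w_0}^{-1}}=T_{\overline{w_0}^{-1}}$ and the desired formula $\psi_\phi$ is obtained.

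Finally, bijectivity of $\phi\mapsto\psi_\phi$ follows because it is the composition of the isomorphism $L_\pi^{I^*,\pm}\to L_\pi^{I,\pm}$ given by $\pi^{k^{-1},q}\bigl(T_{\overline{w_0}^{-1}}\bigr)$ with the isomorphism from Corollary \ref{corrLW0}. The technical core of the argument is entirely the length-additivity identity above; all remaining steps are formal substitutions using properties of the affine Hecke algebra already recorded in the paper.
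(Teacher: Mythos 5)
Your proof is correct and follows essentially the same route as the paper: precompose Corollary \ref{corrLW0} with the isomorphism $L_\pi^{I,\pm}\overset{\sim}{\to}L_\pi^{I^*,\pm}$ induced by $\pi^{k^{-1},q}\bigl(T_{\overline{w_0}^{-1}}^{-1}\bigr)$, and reduce everything to the identity $T_{w^{-1}}^{-1}T_{\overline{w_0}^{-1}}=T_{w\overline{w_0}^{-1}}$ via the length computation $l(w\overline{w_0}^{-1})=l(w_0)-l(w)-l(\underline{w_0})=l(\overline{w_0}^{-1})-l(w)$, which is exactly the argument in the paper.
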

\begin{proof}
It follows from Corollary \ref{corrLW0} that 
we have a linear isomorphism
\[
L_\pi^{I^*,\pm}
\overset{\sim}{\longrightarrow} \Gamma_L^{k,q}\bigl(M^{k,\pm,I}(\gamma)
\bigr)_{\nabla}^{W_0}
\]
given by
\[
\phi\mapsto \sum_{w\in W_0^I}\pi^{k^{-1},q}
\bigl(T_{w^{-1}}^{-1}T_{\overline{w_0}^{-1}}\bigr)\phi\otimes
v_w^{k,\pm,I}(\gamma).
\]
It thus suffices to show that
$T_{w^{-1}}^{-1}T_{\overline{w_0}^{-1}}=T_{w\overline{w_0}^{-1}}$
in $H_0(k^{-1})$ if $w\in W_0^I$.
This follows from the fact that for $w\in W_0^I$,
\begin{equation*}
\begin{split}
l(w\overline{w_0}^{-1})&=l(w\underline{w_0}w_0)=l(w_0)-l(w\underline{w_0})\\
&=l(w_0)-l(\underline{w_0})-l(w)=l(\overline{w_0}^{-1})-l(w).
\end{split}
\end{equation*}
\end{proof}
The following lemma should be compared to Lemma \ref{chilemma}{\bf (i)}.
\begin{lem}\label{dualgamma}
If $\gamma\in T_I^k$ then $\overline{w_0}\gamma^{-1}\in T_{I^*}^{k^{-1}}$.
In particular, for $\gamma\in T_I^{k^{\pm 1}}$ we have a well
defined character $\chi_{\overline{w_0}\gamma^{-1}}^{k^{-1},\pm,I^*}:
H_{I^*}(k^{-1})\rightarrow\mathbb{C}$.
\end{lem}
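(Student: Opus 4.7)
The plan is to unpack both conditions in terms of the pairing $\gamma\mapsto \gamma^{\mu}$ and then move the Weyl group element $\overline{w_0}$ from $\gamma$ to the exponent via the definition of the contragredient $W_0$-action on $T$. Concretely, I would fix $i\in I$ and write $j:=i_I^*\in I^*$, so that by definition $\overline{w_0}(\alpha_i)=\alpha_j$ and hence $\overline{w_0}(\alpha_i^\vee)=\alpha_j^\vee$. The target condition $\overline{w_0}\gamma^{-1}\in T_{I^*}^{k^{-1}}$ amounts to
\[
\bigl(\overline{w_0}\gamma^{-1}\bigr)^{\alpha_j^\vee}=k_j^{-2}\qquad \forall\, j\in I^*.
\]

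The first key step is the identity $(wt)^\mu=t^{w^{-1}\mu}$ for $w\in W_0$, $t\in T$, $\mu\in P^\vee$, which is simply the definition of the $W_0$-action on $T$ as contragredient to the action on $P^\vee$. Applying it gives
\[
\bigl(\overline{w_0}\gamma^{-1}\bigr)^{\alpha_j^\vee}
=(\gamma^{-1})^{\overline{w_0}^{-1}\alpha_j^\vee}
=(\gamma^{-1})^{\alpha_i^\vee}
=\bigl(\gamma^{\alpha_i^\vee}\bigr)^{-1}.
\]
Since $\gamma\in T_I^k$ we have $\gamma^{\alpha_i^\vee}=k_i^2$, so the right-hand side equals $k_i^{-2}$.

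The second key step is to replace $k_i^{-2}$ by $k_j^{-2}$. This is where $W$-invariance of the multiplicity function enters: by definition $k$ satisfies $k_{wa}=k_a$ for all $w\in W$ and $a\in R$, and in particular $k_{\alpha_j^\vee}=k_{\overline{w_0}\alpha_i^\vee}=k_{\alpha_i^\vee}$, i.e.\ $k_j=k_i$. Combining the two steps yields $\bigl(\overline{w_0}\gamma^{-1}\bigr)^{\alpha_j^\vee}=k_j^{-2}$, which is precisely the defining condition of $T_{I^*}^{k^{-1}}$.

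For the second assertion, I would note that the argument above is symmetric under replacing $k$ by $k^{-1}$ (since $I^*$ depends only on $I$ and $\overline{w_0}$, and $(k^{-1})^{-1}=k$), giving the companion implication: if $\gamma\in T_I^{k^{-1}}$ then $\overline{w_0}\gamma^{-1}\in T_{I^*}^{k}=T_{I^*}^{(k^{-1})^{-1}}$. Together, the two cases say that if $\gamma\in T_I^{k^{\pm 1}}$ then $\overline{w_0}\gamma^{-1}\in T_{I^*}^{(k^{-1})^{\pm 1}}$, and therefore Lemma~\ref{chilemma}(i), applied with multiplicity function $k^{-1}$ and subset $I^*$, produces the desired character $\chi_{\overline{w_0}\gamma^{-1}}^{k^{-1},\pm,I^*}$ of $H_{I^*}(k^{-1})$. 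No step presents a real obstacle; the only thing to be a little careful about is keeping straight the contragredient convention for the $W_0$-action on $T$, so that $\overline{w_0}$ on $\gamma$ becomes $\overline{w_0}^{-1}$ on the exponent.
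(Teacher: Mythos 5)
Your proposal is correct and is essentially the paper's own argument: the paper likewise computes $(\overline{w_0}\gamma^{-1})^{\alpha_{i_I^*}^\vee}=\gamma^{-\alpha_i^\vee}=k_i^{-2}=k_{i_I^*}^{-2}$ using $\overline{w_0}(\alpha_i)=\alpha_{i_I^*}$ and $W$-invariance of $k$, and then deduces the character from Lemma \ref{chilemma}(i). Your extra remarks (the contragredient convention putting $\overline{w_0}^{-1}$ on the exponent, and the $k\leftrightarrow k^{-1}$ symmetry handling the $\pm$ case) are exactly the details the paper leaves implicit.
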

\begin{proof}
Let $\gamma\in T_I^k$. Since $\overline{w_0}(\alpha_i)=\alpha_{i_I^*}$
($i\in I$) we have, for $i\in I$,
\[(\overline{w_0}\gamma^{-1})^{\alpha_{i_I^*}^\vee}=
\gamma^{-\alpha_i^\vee}=k_i^{-2}=k_{i_I^*}^{-2}.
\]
Hence $\overline{w_0}\gamma^{-1}\in T_{I^*}^{k^{-1}}$.

The second statement follows from Lemma \ref{chilemma}{\bf (i)}.
\end{proof}

\begin{defi}
Let $\gamma\in T_I^{k^{\pm 1}}$. For 
a left $\mathbb{C}_{\sigma,\nabla}^{k,q}[T]\#_qW$-module $L$ we
define
\begin{equation*}
\begin{split}
L_{\pi,a}^{I^*,\pm}[\overline{w_0}\gamma^{-1}]&:=\{\phi\in L \,\, | \,\, 
\pi^{k^{-1},q}(h)\phi=\chi_{\overline{w_0}\gamma^{-1}}^{k^{-1},\pm,I^*}(h)\phi
\quad\forall h\in H_{I^*}^a(k^{-1})\},\\
L_{\pi}^{I^*,\pm}[\overline{w_0}\gamma^{-1}]&:=\{\phi\in L \,\, | \,\, 
\pi^{k^{-1},q}(h)\phi=\chi_{\overline{w_0}\gamma^{-1}}^{k^{-1},\pm,I^*}(h)\phi
\quad\forall h\in H_{I^*}(k^{-1})\}.
\end{split}
\end{equation*}
\end{defi}
Note the alternative description  
\[
L_{\pi}^{I^*,\pm}[\overline{w_0}\gamma^{-1}]=
\{\phi\in L_\pi^{I^*,\pm} \,\, | \,\,
\pi^{k^{-1},q}(f(Y))\phi=f(\overline{w_0}\gamma^{-1})\phi\quad
\forall f\in\mathbb{C}[T] \},
\]
which emphasizes the fact that $L_\pi^{I^*,\pm}[\overline{w_0}\gamma^{-1}]$
consists of
common eigenfunctions within $L_\pi^{I^*,\pm}$
of the commuting Cherednik-Dunkl operators
$\pi^{k^{-1},q}(f(Y))$ ($f\in\mathbb{C}[T]$), 
with associated spectral
point $\overline{w_0}\gamma^{-1}$. 
It is convenient to make explicit 
contact with the notations of Subsection \ref{Intersection}.
We write $L_\pi$ for a left $\mathbb{C}_{\sigma,\nabla}^{k,q}[T]\#_qW$-module $L$
when we view $L$ as a $H(k^{-1})$-module via the algebra map
$\pi^{k^{-1},q}: H(k^{-1})\rightarrow \mathbb{C}_{\sigma,\nabla}^{k,q}[T]\#_qW$.
Then we can alternatively write for $\gamma\in T_I^{k^{\pm 1}}$,
\begin{equation*}
L_\pi^{I^*,\pm}[\overline{w_0}\gamma^{-1}]
=L_\pi^{I^*,\pm}\cap L_{\pi,\overline{w_0}\gamma^{-1}},
\end{equation*}
where, recall,  
$L_{\pi,\overline{w_0}\gamma^{-1}}$
is the $\mathcal{A}_Y^{k^{-1}}$-weight space of the $H(k^{-1})$-module
$L_\pi$ 
with weight $\overline{w_0}\gamma^{-1}\in T$,
\[L_{\pi,\overline{w_0}\gamma^{-1}}=
\{\phi\in L \,\, | \,\, \pi^{k^{-1},q}(f(Y))\phi=f(\overline{w_0}\gamma^{-1})\phi
\quad \forall\, f\in\mathbb{C}[T] \}.
\]
In particular, for $I^*=\emptyset$
we have
\[L_\pi^{\emptyset,\pm}[\overline{w_0}\gamma^{-1}]=L_{\pi,\overline{w_0}\gamma^{-1}}.
\]
Similar alternative descriptions
can be given of the space $L_{\pi,a}^{I^*,\pm}[\overline{w_0}\gamma^{-1}]$.

\begin{prop}\label{Wacase}
Let $\gamma\in T_I^{k^{\pm 1}}$. Let $L$
be a left $\mathbb{C}_{\sigma,\nabla}^{k,q}[T]\#_qW$-module. The
map $\phi\mapsto \psi_\phi$, given by \eqref{isoY2}, restricts
to an isomorphism
\[
L_{\pi,a}^{I^*,\pm}[\overline{w_0}\gamma^{-1}]
\overset{\sim}{\longrightarrow}
\Gamma_L^{k,q}\bigl(M^{k,\pm,I}(\gamma)\bigr)_\nabla^{W^a}.
\]
\end{prop}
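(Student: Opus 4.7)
The plan is to sharpen Corollary \ref{coreq} by isolating, among the $W_0$-invariants $\psi_\phi$, those which are additionally $\nabla^{k,q}(\tau(\lambda))$-invariant for every $\lambda\in Q^\vee$ (since $W^a=W_0\ltimes Q^\vee$). First I would establish the direct $W^a$-analogue of Proposition \ref{aa}: for any left $\mathcal{A}_{\sigma,\nabla}^{k,q}$-module $M$,
\[
M_\nabla^{W^a}=\{m\in M\,\,|\,\,\pi^{k^{-1},q}(h)m=J_k(h)m\,\,\forall\,h\in H^a(k^{-1})\},
\]
by repeating the proof of Proposition \ref{aa} verbatim on the simple reflections $\{s_0,s_1,\ldots,s_n\}$ generating $W^a$. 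A brief computation with a reduced expression of $\tau(\lambda)$ for $\lambda\in P^\vee_+$ shows $J_k(Y^\lambda)=Y^{-\lambda}$ in $H(k)$. Combined with Corollary \ref{coreq}, which handles the $H_0(k^{-1})$-part, the remaining condition on $\phi\in L_\pi^{I^*,\pm}$ becomes $\pi^{k^{-1},q}(Y^\lambda)\psi_\phi=Y^{-\lambda}\psi_\phi$ for all $\lambda\in Q^\vee$.

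For necessity I extract the coefficient of the Bruhat-maximal basis vector $v_{\overline{w_0}}^{k,\pm,I}(\gamma)$ on both sides. This coefficient in $\psi_\phi$ is $\phi$ itself (as $T_{\overline{w_0}\cdot\overline{w_0}^{-1}}=T_e=1$), so the left side contributes $\pi^{k^{-1},q}(Y^\lambda)\phi$. For the right side, iterating the cross relation \eqref{crossrelation} in $H(k)$ to push $Y^{-\lambda}$ past $T_w$, then reducing any $T_uv_e^{k,\pm,I}(\gamma)$ with $u\notin W_0^I$ via $T_uv_e^{k,\pm,I}(\gamma)=\epsilon_\pm^k(T_{\underline{u}})v_{\overline{u}}^{k,\pm,I}(\gamma)$, one obtains
\[
Y^{-\lambda}v_w^{k,\pm,I}(\gamma)=\gamma^{-w^{-1}\lambda}v_w^{k,\pm,I}(\gamma)+\sum_{u\in W_0^I,\,u<w}c_{u,w}^\lambda\,v_u^{k,\pm,I}(\gamma)\qquad (w\in W_0^I),
\]
i.e., $Y^{-\lambda}$ is upper-triangular in the Bruhat order on $\{v_w^{k,\pm,I}(\gamma)\}_{w\in W_0^I}$. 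Since $\overline{w_0}$ is maximal in $W_0^I$, only the diagonal term contributes at $v_{\overline{w_0}}^{k,\pm,I}(\gamma)$, producing $\gamma^{-\overline{w_0}^{-1}\lambda}\phi=(\overline{w_0}\gamma^{-1})^\lambda\phi$. Matching gives $\pi^{k^{-1},q}(Y^\lambda)\phi=(\overline{w_0}\gamma^{-1})^\lambda\phi$ for all $\lambda\in Q^\vee$, which combined with $\phi\in L_\pi^{I^*,\pm}$ places $\phi$ in $L_{\pi,a}^{I^*,\pm}[\overline{w_0}\gamma^{-1}]$ via Lemma \ref{dualgamma}.

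For sufficiency I observe that $\mathcal{H}:=\{h\in H^a(k^{-1}):\pi^{k^{-1},q}(h)\psi_\phi=J_k(h)\psi_\phi\}$ is a subalgebra of $H^a(k^{-1})$: the commuting actions of $\pi^{k^{-1},q}(H(k^{-1}))$ on the $L$-factor and of $J_k(H(k^{-1}))\subseteq H(k)$ on the $M^{k,\pm,I}(\gamma)$-factor permit interchanging $\pi^{k^{-1},q}(h_1)$ past $J_k(h_2)$ to conclude $h_1h_2\in\mathcal{H}$ whenever $h_1,h_2\in\mathcal{H}$. By Lemma \ref{generateI} with $I=\{1,\ldots,n\}$ (so $\widetilde{W}_0^I=\{e\}$), $H^a(k^{-1})$ is generated by $T_1,\ldots,T_n$ and $Y^{\pm\varphi^\vee}$; the $T_i$'s already belong to $\mathcal{H}$ by Corollary \ref{coreq}, so it suffices to verify $Y^{\pm\varphi^\vee}\in\mathcal{H}$.

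The main obstacle is this final verification. I would prove $\pi^{k^{-1},q}(Y^{\pm\varphi^\vee})\psi_\phi=Y^{\mp\varphi^\vee}\psi_\phi$ by matching coefficients of each $v_w^{k,\pm,I}(\gamma)$ ($w\in W_0^I$) via a downward induction on the Bruhat order starting from the already-verified $w=\overline{w_0}$ case. The inductive step expands $\pi^{k^{-1},q}(Y^{\pm\varphi^\vee}T_{w\overline{w_0}^{-1}})\phi$ using the $H(k^{-1})$ cross relations into a combination of $\pi^{k^{-1},q}(T_u)\phi$ for $u<w\overline{w_0}^{-1}$ weighted by polynomials in the eigenvalues $\pi^{k^{-1},q}(Y^\mu)\phi=(\overline{w_0}\gamma^{-1})^\mu\phi$ ($\mu$ in the $W_0$-orbit of $\pm\varphi^\vee$), and matches this term-by-term with the analogous $H(k)$-side expansion of $Y^{\mp\varphi^\vee}v_w^{k,\pm,I}(\gamma)$.
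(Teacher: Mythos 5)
Your reduction and your necessity argument are sound, and in fact give a cleaner route to half of the statement than the paper takes. The $W^a$-analogue of Proposition \ref{aa} does hold exactly as you say (the equivalences in its proof are already stated for all $0\leq j\leq n$, and the set $\{h:\pi^{k^{-1},q}(h)m=J_k(h)m\}$ is closed under products and inverses because the $\pi^{k^{-1},q}$-action and the $H(k)$-action on the two tensor legs commute and $J_k$ is an anti-homomorphism); $J_k(Y^\lambda)=Y^{-\lambda}$ is correct; the triangularity of $f(Y)$ on $\{v_w^{k,\pm,I}(\gamma)\}_{w\in W_0^I}$ with diagonal entries $f(w\gamma)$ follows by the same manipulation as in Lemma \ref{expb}; and since $\overline{w_0}$ is the longest element of $W_0^I$, extracting the coefficient of $v_{\overline{w_0}}^{k,\pm,I}(\gamma)$ indeed yields $\pi^{k^{-1},q}(Y^\lambda)\phi=(\overline{w_0}\gamma^{-1})^\lambda\phi$ for all $\lambda\in Q^\vee$, which together with Lemma \ref{dualgamma} and $\phi\in L_\pi^{I^*,\pm}$ places $\phi$ in $L_{\pi,a}^{I^*,\pm}[\overline{w_0}\gamma^{-1}]$.

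The genuine gap is in the sufficiency direction, which is where all the real work of this proposition sits. Reducing, via Lemma \ref{generateI}, to the single relation $\pi^{k^{-1},q}(Y^{\pm\varphi^\vee})\psi_\phi=Y^{\mp\varphi^\vee}\psi_\phi$ is fine, but "expand both sides and match term-by-term" is not a proof: the matching of the coefficients of $v_w^{k,\pm,I}(\gamma)$ for $w\neq\overline{w_0}$ is exactly the nontrivial content, and it is not a formal consequence of the top coefficient plus the eigenvalue property of $\phi$, so a downward Bruhat induction does not supply it. Concretely, by \eqref{YT0} the module-side expansion produces, at $v_w^{k,\pm,I}(\gamma)$, the term $\epsilon_\pm^k(T_{w_\varphi})\gamma^{w_\varphi w^{-1}(\varphi^\vee)}\psi_{\overline{s_\varphi w}}$ (plus an extra $(k_0^{-1}-k_0)$-term when $w^{-1}\varphi\in R_0^-$), with $w_\varphi=\underline{s_\varphi w}$, while the $L$-side, rewritten through $\pi^{k^{-1},q}(Y^\mu)\phi=\gamma_I^\mu\phi$ with $\gamma_I=\overline{w_0}\gamma^{-1}=w_0(\rho_I^k\gamma)^{-1}$ (see \eqref{gammaI}), produces the factor $\gamma_I^{-w_0w^{-1}(\varphi^\vee)}$. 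Their equality, $\gamma_I^{-w_0w^{-1}(\varphi^\vee)}=\gamma^{w_\varphi w^{-1}(\varphi^\vee)}$, i.e. $(\rho_I^k)^{w^{-1}(\varphi^\vee)}\gamma^{w^{-1}(\varphi^\vee)}=\gamma^{w_\varphi w^{-1}(\varphi^\vee)}$, is a genuine identity about $\gamma\in T_I^{k^{\pm1}}$: by \eqref{W0Iact} it amounts to showing $\langle w\alpha,\varphi^\vee\rangle=1$ for $\alpha\in R_0^{I,+}\cap w_\varphi^{-1}R_0^{I,-}$ and $=0$ for $\alpha\in R_0^{I,+}\cap w_\varphi^{-1}R_0^{I,+}$, which is the paper's Lemma \ref{following}. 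Your proposal neither identifies nor proves this identity (nor handles the bookkeeping with the involution $w\mapsto\overline{s_\varphi w}$ of $W_0^I$), so the verification that $Y^{\pm\varphi^\vee}\in\mathcal{H}$, and hence the surjectivity onto $\Gamma_L^{k,q}\bigl(M^{k,\pm,I}(\gamma)\bigr)_\nabla^{W^a}$, is not established as written.
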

\begin{proof}
Let $\gamma\in T_I^{k^{\pm 1}}$ and
$\psi\in\Gamma_L^{k,q}\bigl(M^{k,\pm,I}(\gamma)\bigr)_\nabla^{W_0}$,
written as
\[
\psi=\sum_{w\in W_0^I}\psi_w\otimes v_w^{k,\pm,I}(\gamma),
\]
with $\psi_w=\pi^{k^{-1},q}\bigl(T_{w\overline{w}_0^{-1}}\bigr)\phi$
and $\phi\in L_\pi^{I^*,\pm}$, cf. Corollary \ref{coreq}.
Then we have $\psi\in\Gamma_L^{k,q}\bigl(M^{k,\pm,I}(\gamma)\bigr)_\nabla^{W^a}$
if and only if
\[
\sum_{w\in W_0^I}\pi^{k^{-1},q}(T_0)\psi_w\otimes v_w^{k,\pm,I}(\gamma)=
\sum_{w\in W_0^I}\psi_w\otimes T_0^{-1}v_w^{k,\pm,I}(\gamma)
\]
in view of Proposition \ref{aa}.
For $w\in W_0^I$ we write $s_\varphi w=\overline{s_{\varphi}w}w_\varphi$
with, as usual, $\overline{s_{\varphi}w}\in W_0^I$ the minimal coset
representative of $s_{\varphi}wW_{0,I}$, and $w_\varphi
=\underline{s_{\varphi}w}\in W_{0,I}$.
By \eqref{YT0} and the definition of $v_w^{k,\pm,I}(\gamma)$ we have
for $w\in W_0^I$,
\begin{equation*}
T_0^{-1}v_w^{k,\pm,I}(\gamma)=
\begin{cases}
\epsilon_{\pm}^k(T_{w_{\varphi}})\gamma^{-w^{-1}(\varphi^\vee)}
v_{\overline{s_{\varphi}w}}^{k,\pm,I}(\gamma) &\hbox{if } w^{-1}\varphi\in
R_0^+,\\
\epsilon_{\pm}^k(T_{w_{\varphi}})\gamma^{-w^{-1}(\varphi^\vee)}
v_{\overline{s_{\varphi}w}}^{k,\pm,I}(\gamma)+(k_0^{-1}-k_0)v_w^{k,\pm,I}(\gamma)
&\hbox{if } w^{-1}\varphi\in R_0^-.
\end{cases}
\end{equation*}
Note that $w\mapsto \overline{s_{\varphi}w}$ defines an involution
$W_0^I\overset{\sim}{\longrightarrow}W_0^I$, and that
$(\overline{s_{\varphi}w})_\varphi=w_\varphi^{-1}$ for all $w\in W_0^I$.
It follows that 
$\psi\in\Gamma_L^{k,q}\bigl(M^{k,\pm,I}(\gamma)\bigr)_\nabla^{W^a}$
if and only if
\begin{equation*}
\begin{split}
\sum_{w\in W_0^I}\pi^{k^{-1},q}(T_0)\psi_w\otimes v_w^{k,\pm,I}(\gamma)&=
\sum_{w\in W_0^I}\epsilon_{\pm}^k(T_{w_{\varphi}})
\gamma^{w_{\varphi}w^{-1}(\varphi^\vee)}\psi_{\overline{s_{\varphi}w}}\otimes
v_w^{k,\pm,I}(\gamma)\\
&+(k_0^{-1}-k_0)\sum_{w\in W_0^I: w^{-1}\varphi\in R_0^-}
\psi_w\otimes v_w^{k,\pm,I}(\gamma),
\end{split}
\end{equation*}
which holds true if and only if
\[
\pi^{k^{-1},q}\bigl(T_0^{\sigma(w^{-1}\varphi)}\bigr)\psi_w=
\epsilon_{\pm}^k(T_{w_{\varphi}})\gamma^{w_{\varphi}w^{-1}(\varphi^\vee)}
\psi_{\overline{s_{\varphi}w}},\qquad \forall\, w\in W_0^I.
\]
By \eqref{YT0} and using $T_{u^{-1}}^{-1}T_{w_0}=T_{uw_0}$ for
$u\in W_0$,
\begin{equation*}
\begin{split}
T_0^{\sigma(w^{-1}\varphi)}T_{w^{-1}}^{-1}T_{\overline{w_0}^{-1}}&=
T_0^{\sigma((s_\varphi ww_0)^{-1}\varphi)}T_{ww_0}T_{w_0}^{-1}T_{\overline{w_0}^{-1}}\\
&=T_{w^{-1}s_{\varphi}}^{-1}T_{w_0}Y^{-w_0w^{-1}(\varphi^\vee)}
T_{w_0}^{-1}T_{\overline{w_0}^{-1}}\\
&=
T_{(\overline{s_{\varphi}w})^{-1}}^{-1}T_{w_{\varphi}^{-1}}^{-1}
T_{\underline{w_{0}}}T_{\overline{w_0}^{-1}}
Y^{-w_0w^{-1}(\varphi^\vee)}
T_{\overline{w_0}^{-1}}^{-1}T_{\underline{w_{0}}}^{-1}T_{\overline{w_0}^{-1}}.
\end{split}
\end{equation*}
Since $\pi^{k^{-1},q}(T_{\overline{w_0}^{-1}})\phi\in L_\pi^{I,\pm}$,
we obtain for $w\in W_0^I$,
\begin{equation*}
\begin{split}
\pi^{k^{-1},q}\bigl(T_0^{\sigma(w^{-1}\varphi)}\bigr)\psi_w&=
\pi^{k^{-1},q}\bigl(
T_0^{\sigma(w^{-1}\varphi)}T_{w^{-1}}^{-1}T_{\overline{w_0}^{-1}}\bigr)\phi\\
&=\epsilon_{\pm}^k(T_{\underline{w_{0}}})\pi^{k^{-1},q}\bigl(
T_{(\overline{s_{\varphi}w})^{-1}}^{-1}T_{w_{\varphi}^{-1}}^{-1}
T_{\underline{w_{0}}}T_{\overline{w_0}^{-1}}
Y^{-w_0w^{-1}(\varphi^\vee)}\bigr)\phi,
\end{split}
\end{equation*}
while
\[
\epsilon_{\pm}^k(T_{w_{\varphi}})\gamma^{w_{\varphi}w^{-1}(\varphi^\vee)}
\psi_{\overline{s_{\varphi}w}}=
\epsilon_{\pm}^k(T_{w_{\varphi}})\gamma^{w_{\varphi}w^{-1}(\varphi^\vee)}
\pi^{k^{-1},q}\bigl(T_{(\overline{s_{\varphi}w})^{-1}}^{-1}T_{\overline{w_0}^{-1}}
\bigr)\phi.
\]
Hence $\psi\in \Gamma_L^{k,q}\bigl(M^{k,\pm,I}(\gamma)\bigr)_{\nabla}^{W^a}$
if and only if, for all $w\in W_0^I$,
\[
\pi^{k^{-1},q}
\bigl(Y^{-w_0w^{-1}(\varphi^\vee)}\bigr)\phi=
\gamma^{w_{\varphi}w^{-1}(\varphi^\vee)}\phi.
\]
Here we have used repeatedly that
$\pi^{k^{-1},q}(T_{\overline{w_0}^{-1}})\phi\in L_\pi^{I,\pm}$.

Note that $\{ww_0\}_{w\in W_0^I}$ is a complete set of coset
representatives of $W_0/W_{0,I^*}$. In view of Lemma \ref{generateI}
it thus remains to show that for $\gamma\in T_I^k$,
\[\gamma_I^{-w_0w^{-1}(\varphi^\vee)}=\gamma^{w_{\varphi}w^{-1}(\varphi^\vee)}
\qquad (w\in W_0^I), 
\]
with $\gamma_I\in T_{I^*}^{k^{-1}}$ given by
\begin{equation}\label{gammaI}
\gamma_I:=\overline{w_0}\gamma^{-1}=
w_0(\rho_I^{k}\gamma)^{-1}
\end{equation}
(see Lemma \ref{dualgamma}),
where the last equality follows from \eqref{W0Iactcons}.
Hence it remains to show for $\gamma\in T_I^k$,
\[(\rho_I^k)^{w^{-1}(\varphi^\vee)}\gamma^{w^{-1}(\varphi^\vee)}=
\gamma^{w_{\varphi}w^{-1}(\varphi^\vee)}\qquad (w\in W_0^I).
\]
Let $w\in W_0^I$. Since $\gamma\in T_I^k$ and $w_{\varphi}\in W_{0,I}$
we have 
\[w_{\varphi}^{-1}\gamma=\gamma\prod_{\alpha\in R_0^{I,+}\cap
w_{\varphi}^{-1}R_0^{I,-}}k_{\alpha}^{-2\alpha}
\]
in $T$ by \eqref{W0Iact}, hence it suffices to show that for all $w\in W_0^I$,
\[
(\rho_I^k)^{w^{-1}(\varphi^\vee)}=\prod_{\alpha\in R_0^{I,+}\cap 
w_{\varphi}^{-1}R_0^{I,-}}k_{\alpha^\vee}^{-2\langle\alpha,w^{-1}\varphi^\vee\rangle},
\]
i.e. that $\langle \alpha,w^{-1}\varphi^\vee\rangle=0$ if 
$\alpha\in R_0^{I,+}\cap w_{\varphi}^{-1}R_0^{I,+}$ and
$w\in W_0^I$. This follows
from Lemma \ref{following} below.
\end{proof}
\begin{lem}\label{following}
Let $w\in W_0^I$ and write $s_{\varphi}w=\overline{s_{\varphi}w}w_{\varphi}$
with $\overline{s_{\varphi}w}\in W_0^I$ and $w_{\varphi}=
\underline{s_{\varphi}w}\in W_{0,I}$.
Then
\begin{equation*}
\langle w\alpha,\varphi^\vee\rangle=
\begin{cases}
1\quad &\hbox{ if }\,\, \alpha\in R_0^{I,+}\cap w_{\varphi}^{-1}R_0^{I,-},\\
0\quad &\hbox{ if }\,\, \alpha\in R_0^{I,+}\cap w_{\varphi}^{-1}R_0^{I,+}.
\end{cases}
\end{equation*}
\end{lem}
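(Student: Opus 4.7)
The plan is to compute $s_\varphi w\alpha$ in two complementary ways. From the decomposition $s_\varphi w = \overline{s_\varphi w}\, w_\varphi$ one has $s_\varphi w\alpha = \overline{s_\varphi w}(w_\varphi\alpha)$, and since $\overline{s_\varphi w}\in W_0^I$ preserves the sign of roots in $R_0^I$ (sending $R_0^{I,\pm}$ into $R_0^\pm$), the sign of $s_\varphi w\alpha$ as a root of $R_0$ is governed by whether $w_\varphi\alpha$ lies in $R_0^{I,+}$ or $R_0^{I,-}$. The reflection formula separately gives $s_\varphi w\alpha = w\alpha - \langle w\alpha,\varphi^\vee\rangle\varphi$, with $w\alpha\in R_0^+$ because $w\in W_0^I$ and $\alpha\in R_0^{I,+}$.

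The key numerical ingredient is the highest-root dichotomy: for $\beta \in R_0^+\setminus\{\varphi\}$ one has $\langle\beta,\varphi^\vee\rangle\in\{0,1\}$, the upper bound coming from the fact that $\varphi+\beta$ is not a root (by maximality of $\varphi$) and the lower bound from the fact that $\varphi-\beta$ lies in the positive integer span of simple roots, so $\beta-\varphi$ is never a positive root. Combining: in Case (ii), both $w\alpha$ and $s_\varphi w\alpha$ are positive, so the identity forces $\langle w\alpha,\varphi^\vee\rangle\ne 1$ (else $w\alpha-\varphi\in R_0^-$) and also excludes $w\alpha=\varphi$ (which would yield $s_\varphi w\alpha=-\varphi$); hence $\langle w\alpha,\varphi^\vee\rangle = 0$. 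In Case (i), $s_\varphi w\alpha$ is negative while $w\alpha$ is positive, ruling out the value $0$ and leaving the value $1$ via the dichotomy.

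The delicate point I expect to need careful handling is the exceptional configuration $w\alpha=\varphi$, which can occur for $\alpha\in R_0^{I,+}$ whenever $w^{-1}\varphi\in R_0^{I,+}$; there $\langle w\alpha,\varphi^\vee\rangle = 2$, and direct computation gives $w_\varphi = s_\alpha$ and $w_\varphi\alpha = -\alpha\in R_0^{I,-}$, placing this situation in Case (i). The cleanest way to accommodate it is to interpret the Case (i) assertion as $\langle w\alpha,\varphi^\vee\rangle\ge 1$ (strictly positive); Proposition \ref{Wacase} only invokes the Case (ii) assertion, through the vanishing of the exponent $\langle\alpha, w^{-1}\varphi^\vee\rangle$ on $R_0^{I,+}\cap w_\varphi^{-1}R_0^{I,+}$, so this subtlety is invisible to the application of the lemma.
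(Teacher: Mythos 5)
Your argument is essentially the paper's: both compare the reflection formula $s_\varphi(w\alpha)=w\alpha-\langle w\alpha,\varphi^\vee\rangle\varphi$ with the sign of $s_\varphi(w\alpha)=\overline{s_\varphi w}(w_\varphi\alpha)$ read off from the parabolic decomposition (the paper phrases the first case via the length-additive splitting of the inversion set $R_0^+\cap (s_\varphi w)^{-1}R_0^-$, you compute the sign of $\overline{s_\varphi w}(w_\varphi\alpha)$ directly, which is the same thing), and both then invoke the highest-root dichotomy. For $\alpha\in R_0^{I,+}\cap w_\varphi^{-1}R_0^{I,+}$ — the only case actually used later — your proof coincides with the paper's and is correct, since positivity of $s_\varphi(w\alpha)$ already rules out $w\alpha=\varphi$ as well as the value $1$.

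Your remark about the configuration $w\alpha=\varphi$ is not a pedantic caveat but a genuine correction: the paper's proof asserts $\langle w\alpha,\varphi^\vee\rangle\in\{0,1\}$ for all $\alpha\in R_0^{I,+}$, which fails exactly when $w\alpha=\varphi$, and your analysis of that situation is right: then $s_\varphi w=ws_\alpha$ with $w\in W_0^I$ and $s_\alpha\in W_{0,I}$, so by uniqueness of the decomposition $w_\varphi=s_\alpha$ and $\alpha\in R_0^{I,+}\cap w_\varphi^{-1}R_0^{I,-}$, while $\langle w\alpha,\varphi^\vee\rangle=2$. Concretely, in type $A_2$ with $I=\{1\}$, $w=s_2\in W_0^I$, $\alpha=\alpha_1$: here $w\alpha=\varphi$, $s_\varphi w=s_2s_1$, so $w_\varphi=s_1$ and $\alpha\in R_0^{I,+}\cap w_\varphi^{-1}R_0^{I,-}$, yet $\langle w\alpha,\varphi^\vee\rangle=2$. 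So the first case of the lemma should be weakened as you suggest (to $\langle w\alpha,\varphi^\vee\rangle\geq 1$, with value $2$ precisely when $w\alpha=\varphi$). You are also correct that this does not propagate: in the proof of Proposition \ref{Wacase} the lemma enters only through the vanishing of $\langle\alpha,w^{-1}\varphi^\vee\rangle$ on $R_0^{I,+}\cap w_\varphi^{-1}R_0^{I,+}$, i.e. through the second case. One minor point: your justification of the dichotomy $\langle\beta,\varphi^\vee\rangle\in\{0,1\}$ for $\beta\in R_0^+\setminus\{\varphi\}$ from ``$\varphi+\beta$ is not a root'' alone is a bit thin (one also needs that $\varphi$ is long, e.g. via $\langle\beta,\varphi^\vee\rangle\langle\varphi,\beta^\vee\rangle\leq 4$ with equality only for $\beta=\pm\varphi$), but this is the same standard fact the paper itself invokes without proof.
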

\begin{proof}
Let $\alpha\in R_0^{I,+}$. Then $w\alpha\in R_0^+$ since $w\in W_0^I$.
Using
\[s_{\varphi}(w\alpha)=w\alpha-\langle w\alpha,\varphi^\vee\rangle\varphi
\]
and using the fact that
$\varphi\in R_0^+$ is the longest root, we conclude that
$\langle w\alpha,\varphi^\vee\rangle$ is $0$ or $1$.
It is $0$ if $s_{\varphi}(w\alpha)\in R_0^+$ and $1$ if
$s_{\varphi}(w\alpha)\in R_0^-$.
Furthermore,
\[R_0^+\cap (s_{\varphi}w)^{-1}R_0^-=
w_{\varphi}^{-1}\bigl(R_0^+\cap (\overline{s_{\varphi}w})^{-1}R_0^-\bigr)
\cup \bigl(R_0^{I,+}\cap w_{\varphi}^{-1}R_0^{I,-}\bigr)
\]
since $l(\overline{s_{\varphi}w}w_{\varphi})=l(\overline{s_{\varphi}w})+
l(w_{\varphi})$ and $w_{\varphi}\in W_{0,I}$.

Suppose that $\alpha\in R_0^{I,+}\cap w_{\varphi}^{-1}R_0^{I,-}$.
Then $\alpha\in R_0^+\cap (s_{\varphi}w)^{-1}R_0^-$, hence
$s_{\varphi}(w\alpha)\in R_0^-$. Consequently, 
$\langle w\alpha,\varphi^\vee\rangle=1$.

Suppose that $\alpha\in R_0^{I,+}\cap w_{\varphi}^{-1}R_0^{I,+}$.
Then 
\[
s_{\varphi}(w\alpha)=\overline{s_{\varphi}w}w_{\varphi}(\alpha)
\in \overline{s_{\varphi}w}\bigl(R_0^{I,+}\bigr)
\subseteq R_0^+,
\]
hence $\langle w\alpha,\varphi^\vee\rangle=0$.
\end{proof}
The following theorem provides an explicit bridge between the theory
of quantum affine KZ equations and the Cherednik-Macdonald theory.
\begin{thm}\label{mainthmY}
Let $\gamma\in T_I^{k^{\pm 1}}$.
Let $L$ be a left $\mathbb{C}_{\sigma,\nabla}^{k,q}[T]\#_qW$-module.
The map $\phi\mapsto \psi_\phi$, given by \eqref{isoY2},
restricts to a linear isomorphism
\[L_\pi^{I^*,\pm}[\overline{w_0}\gamma^{-1}]\overset{\sim}{\longrightarrow}
\Gamma_L^{k,q}\bigl(M^{k,\pm,I}(\gamma)\bigr)_\nabla^{W}.
\]
\end{thm}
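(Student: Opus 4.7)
The plan is to upgrade Proposition \ref{Wacase} from $W^a$-invariance to $W$-invariance by separately checking $\Omega$-invariance. Since $W\simeq\Omega\ltimes W^a$ we have
\[
\Gamma_L^{k,q}\bigl(M^{k,\pm,I}(\gamma)\bigr)_\nabla^{W}=
\Gamma_L^{k,q}\bigl(M^{k,\pm,I}(\gamma)\bigr)_\nabla^{W^a}\cap
\Gamma_L^{k,q}\bigl(M^{k,\pm,I}(\gamma)\bigr)_\nabla^{\Omega},
\]
and by Proposition \ref{Wacase} the map $\phi\mapsto\psi_\phi$ identifies $L_{\pi,a}^{I^*,\pm}[\overline{w_0}\gamma^{-1}]$ with the first intersectand. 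Since $L_\pi^{I^*,\pm}[\overline{w_0}\gamma^{-1}]\subseteq L_{\pi,a}^{I^*,\pm}[\overline{w_0}\gamma^{-1}]$, it suffices to prove that, for $\phi$ in the latter space, $\psi_\phi$ is $\Omega$-invariant if and only if $\phi$ lies in the former.

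By Proposition \ref{aa}, $\Omega$-invariance of $\psi_\phi$ translates into the identities $\pi^{k^{-1},q}(T_\omega)\psi_\phi=T_{\omega^{-1}}\psi_\phi$ for $\omega\in\Omega$, with the right-hand side acting on the $M^{k,\pm,I}(\gamma)$-factor through its $H(k)$-module structure. Substituting \eqref{isoY2}, passing $T_\omega$ through $T_{w\overline{w_0}^{-1}}$ in the $L$-factor via the relations $T_\omega T_j=T_{\omega(j)}T_\omega$, and unwinding $T_{\omega^{-1}}v_w^{k,\pm,I}(\gamma)$ in the standard basis by iterated use of \eqref{YT0} together with \eqref{vaction}, the equations (after absorbing the $H^a_{I^*}(k^{-1})$-eigenvalue data already known about $\phi$) collapse to the family of Cherednik--Dunkl eigenvalue equations
\[
\pi^{k^{-1},q}(Y^\mu)\phi=(\overline{w_0}\gamma^{-1})^\mu\phi\qquad(\mu\in P^\vee).
\]
These conditions, together with the $H^a_{I^*}(k^{-1})$-conditions already packaged in $L_{\pi,a}^{I^*,\pm}[\overline{w_0}\gamma^{-1}]$, are equivalent to membership in $L_\pi^{I^*,\pm}[\overline{w_0}\gamma^{-1}]$, since $H_{I^*}(k^{-1})$ is generated as an algebra by $H^a_{I^*}(k^{-1})$ together with $\mathcal{A}_Y^{k^{-1}}$.

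The main obstacle is the combinatorial bookkeeping of how each $\omega\in\Omega$ permutes the affine simple reflections (and hence the minimal coset representatives $W_0^I$) and how this interacts with the twist $\overline{w_0}^{-1}$. Conceptually this runs parallel to the $T_0$-calculation carried out in the proof of Proposition \ref{Wacase}, and is in fact lighter: every element of $\Omega$ has length zero and acts by a diagram symmetry, so once the matching of coefficients of $v_{w'}^{k,\pm,I}(\gamma)$ on both sides of the $\Omega$-invariance equation is performed, the reduction to $P^\vee$-weight conditions on $\phi$ is formal.
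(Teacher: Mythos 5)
Your skeleton is the same as the paper's: split $W\simeq\Omega\ltimes W^a$, invoke Proposition \ref{Wacase} for the $W^a$-part, and translate $\Omega$-invariance through Proposition \ref{aa} into the identities $\sum_{w\in W_0^I}\pi^{k^{-1},q}(T_\omega)\psi_w\otimes v_w^{k,\pm,I}(\gamma)=\sum_{w\in W_0^I}\psi_w\otimes T_{\omega}^{-1}v_w^{k,\pm,I}(\gamma)$; your closing observation that $H_{I^*}(k^{-1})$ is generated by $H^a_{I^*}(k^{-1})$ together with $\mathcal{A}_Y^{k^{-1}}$ is also correct and plays the same role as in the paper. The gap lies precisely in the step you dismiss as ``formal''. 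First, the mechanism you propose for extracting the $Y$'s --- pushing $T_\omega$ through $T_{w\overline{w_0}^{-1}}$ via $T_\omega T_j=T_{\omega(j)}T_\omega$ --- produces no $Y$'s at all, and since $\omega(j)$ may equal $0$ it takes you out of the finite Hecke algebra. What is actually needed is the identity \eqref{Uj}, $U_j=T_wY^{w^{-1}(\varpi_j^\vee)}T_{v_jw}^{-1}$ ($j\in J$, $w\in W_0$), together with the description $\Omega=\{u_j\}_{j\in J}$ and the induced bijections $w\mapsto\overline{v_jw}$ of $W_0^I$; this is the structural input that turns the $\Omega$-conditions into eigenvalue equations for the specific elements $Y^{w_0w^{-1}v_j(\varpi_j^\vee)}$, and it does not appear in your proposal.

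Second, and more seriously, after the basis bookkeeping the condition obtained for each $w\in W_0^I$ and $j\in J$ is $\pi^{k^{-1},q}\bigl(Y^{w_0w^{-1}v_j(\varpi_j^\vee)}\bigr)\phi=\gamma^{-(\overline{v_j^{-1}w})^{-1}(\varpi_j^\vee)}\phi$ (after the $\epsilon_{\pm}^k(T_{w_j^\prime})$-factors are absorbed using $\pi^{k^{-1},q}(T_{\overline{w_0}^{-1}})\phi\in L_\pi^{I,\pm}$). To conclude that these are Cherednik--Dunkl eigenvalue equations at the single spectral point $\overline{w_0}\gamma^{-1}$ you must prove \eqref{equivall}, i.e. $\gamma^{-(\overline{v_j^{-1}w})^{-1}(\varpi_j^\vee)}=\gamma_I^{w_0w^{-1}v_j(\varpi_j^\vee)}$ with $\gamma_I=\overline{w_0}\gamma^{-1}=w_0(\rho_I^k\gamma)^{-1}$: the two exponents differ by the $W_{0,I}$-element $w_j^\prime$, and the resulting discrepancy has to cancel exactly against the $\rho_I^k$-shift hidden in $\overline{w_0}\gamma^{-1}$. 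This reduces to the vanishing statement \eqref{last}, $\langle\alpha,(\overline{v_j^{-1}w})^{-1}(\varpi_j^\vee)\rangle=0$ for all $\alpha\in R_0^{I,-}\cap w_j^\prime(R_0^{I,-})$, whose proof uses $w\in W_0^I$ and the antidominance $v_j(\varpi_j^\vee)\in w_0P_+^\vee$. This is the exact analogue of Lemma \ref{following} in the $T_0$-computation --- comparable in weight, not lighter --- and without it you have not shown that the eigenvalues arising at different $w$ are mutually consistent and equal to evaluation at $\overline{w_0}\gamma^{-1}$, which is the substance of the theorem.
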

\begin{proof}
Let $\gamma\in T_I^{k^{\pm 1}}$ and
$\psi\in\Gamma_L^{k,q}\bigl(M^{k,\pm,I}(\gamma)\bigr)_\nabla^{W^a}$,
written as 
\[
\psi=\sum_{w\in W_0^I}\psi_w\otimes v_w^{k,\pm,I}(\gamma)
\]
with $\psi_w\in \pi^{k^{-1},q}\bigl(T_{w\overline{w_0}^{-1}}\bigr)\phi$
and $\phi\in L_{\pi,a}^{I^*,\pm}[\overline{w_0}\gamma^{-1}]$, 
cf. Proposition \ref{Wacase}.
Then $\psi\in\Gamma_L^{k,q}\bigl(M^{k,\pm,I}(\gamma)\bigr)_{\nabla}^W$ if and only
if
\begin{equation}\label{todoto}
\sum_{w\in W_0^I}\pi^{k^{-1},q}(T_{\omega})\psi_w\otimes
v_w^{k,\pm,I}(\gamma)=\sum_{w\in W_0^I}\psi_w\otimes 
T_{\omega}^{-1}v_w^{k,\pm,I}(\gamma)
\end{equation}
for all $\omega\in\Omega$ in view of Proposition \ref{aa}.

In order to analyze \eqref{todoto}, we first need to give some
additional properties of the abelian subgroup $\Omega\subset W$
(we refer to \cite{M} for detailed proofs of the following  
facts).
For $\lambda\in P^\vee$ we write $v(\lambda)$ for the unique
element of minimal length in $W_0$ such that 
$v(\lambda)\lambda\in w_0P_+^\vee$. We furthermore set $u(\lambda)\in W$
such that $t(\lambda)=u(\lambda)v(\lambda)$ in $W$. Set
$\varpi_0^\vee\in 0$. Then we write, for $0\leq j\leq n$,
$u_j=u(\varpi_j^\vee)\in W$ and $v_j=v(\varpi_j^\vee)$. In particular,
$u_0=e$ and $v_0=e$.
Set 
\[
J:=\{0\}\cup \{j\in \{1,\ldots,n\} \,\, | \,\,
\langle \varpi_j^\vee,\varphi\rangle=1\}.
\]
Then $\Omega=\{u_j\}_{j\in J}$. We write $\{U_j\}_{j\in J}$ for the
corresponding elements in the extended affine Hecke algebra $H(k)$. Then
in $H(k)$,
\begin{equation}\label{Uj}
U_j=T_wY^{w^{-1}(\varpi_j^\vee)}T_{v_jw}^{-1}\qquad (j\in J,\,
w\in W_0),
\end{equation}
cf. \cite[(3.3.3)]{M}.

Choose $\sigma_j\in W_0$ ($j\in J$) arbitrarily.
Then $\{\sigma_j(\varpi_j^\vee)\}_{j\in J}$
is a complete set of representatives of $P^\vee/Q^\vee$.
In particular, the standard parabolic subalgebra $H_I(k)$ of $H(k)$
is generated by $H_I^a(k)$ and the $Y^{\sigma_j(\varpi_j^\vee)}$ ($j\in J$).

We are now set to analyze \eqref{todoto}. Let $j\in J$ and $w\in W_0^I$,
then it follows from \eqref{Uj} that
\[U_j^{-1}v_w^{k,\pm,I}(\gamma)=\epsilon_{\pm}^k(T_{w_j})
\gamma^{-w^{-1}(\varpi_j^\vee)}v_{\overline{v_jw}}^{k,\pm,I}(\gamma),
\]
where we have written $v_jw=(\overline{v_jw})w_j$ with 
$\overline{v_jw}\in W_0^I$ and $w_j:=\underline{v_jw}\in W_{0,I}$.
Note that $w\mapsto \overline{v_jw}$ defines a bijection
$W_0^I\rightarrow W_0^I$ with inverse given by
$w\mapsto \overline{v_j^{-1}w}$ ($w\in W_0^I$).
We write $w_j^\prime:=\underline{v_j^{-1}w}\in W_{0,I}$, such that
$v_j^{-1}w=(\overline{v_j^{-1}w})w_j^\prime$. Note that
\begin{equation}\label{rell}
(\overline{v_j^{-1}w})_j=w_j^\prime{}^{-1},\qquad w\in W_0^I
\end{equation}
since $v_j(\overline{v_j^{-1}w})=ww_j^\prime{}^{-1}$.
Thus we conclude that
\[\sum_{w\in W_0^I}\psi_w\otimes U_j^{-1}v_w^{k,\pm,I}(\gamma)=
\sum_{w\in W_0^I}\epsilon_{\pm}^k\bigl(T_{w_j^\prime}\bigr)
\gamma^{-(\overline{v_j^{-1}w})^{-1}(\varpi_j^\vee)}
\psi_{\overline{v_j^{-1}w}}\otimes v_w^{k,\pm,I}(\gamma).
\]
Hence \eqref{todoto} holds true for all $\omega\in\Omega$ if and only
if, for all $w\in W_0^I$ and $j\in J$,
\begin{equation}\label{equival}
\pi^{k^{-1},q}(U_j)\psi_w=
\epsilon_{\pm}^k(T_{w_j^\prime})
\gamma^{-(\overline{v_j^{-1}w})^{-1}(\varpi_j^\vee)}
\psi_{\overline{v_j^{-1}w}}.
\end{equation}
As in the proof of Proposition \ref{Wacase}, it follows from
\eqref{Uj} and $\psi_w=\pi^{k^{-1},q}(T_{w^{-1}}^{-1}T_{\overline{w_0}^{-1}})\phi$
that for all $w\in W_0^I$,
\[\pi^{k^{-1},q}(U_j)\psi_w=
\pi^{k^{-1},q}\bigl(T_{(\overline{v_j^{-1}w})^{-1}}^{-1}T_{w_j^\prime{}^{-1}}^{-1}
T_{\underline{w_{0}}}T_{\overline{w_0}^{-1}}
Y^{w_0w^{-1}v_j(\varpi_j^\vee)}T_{\overline{w_0}^{-1}}^{-1}T_{\underline{w_{0}}}^{-1}
T_{\overline{w_0}^{-1}}\bigr)\phi.
\]
Using that $\phi\in L_{\pi,a}^{I^*,\pm}[\overline{w_0}\gamma^{-1}]$, in particular
$\pi^{k^{-1},q}\bigl(T_{\overline{w_0}^{-1}}\bigr)\phi\in L_\pi^{I,\pm}$,
we get $\psi\in \Gamma_L^{k,q}\bigl(M^{k,\pm,I}(\gamma)\bigr)_\nabla^W$
iff \eqref{equival} holds true for all $w\in W_0^I$ and $j\in J$,
iff
\[\pi^{k^{-1},q}\bigl(Y^{w_0w^{-1}v_j(\varpi_j^\vee)}\bigr)\phi=
\gamma^{-(\overline{v_j^{-1}w})^{-1}(\varpi_j^\vee)}\phi
\quad \forall\, w\in W_0^I,\,\, \forall\, j\in J.
\]
It thus remains to prove that for $\gamma\in T_I^k$,
\begin{equation}\label{equivall}
\gamma^{-(\overline{v_j^{-1}w})^{-1}(\varpi_j^\vee)}=
\gamma_I^{w_0w^{-1}v_j(\varpi_j^\vee)}\quad \forall\, w\in W_0^I,\,\,
\forall\, j\in J,
\end{equation}
with $\gamma_I$ given by \eqref{gammaI}.

Now \eqref{equivall} for $\gamma\in T_I^k$ is equivalent to
\[\gamma^{(\overline{v_j^{-1}w})^{-1}(\varpi_j^\vee)}=
(\rho_I^k)^{w^{-1}v_j(\varpi_j^\vee)}\gamma^{w^{-1}v_j(\varpi_j^\vee)}
\quad \forall\, w\in W_0^I,\,\, \forall\, j\in J.
\]
Since $\gamma\in T_I^k$ and $w^{-1}v_j=
w_j^\prime{}^{-1}(\overline{v_j^{-1}w})^{-1}$ for $w\in W_0^I$
we have by \eqref{W0Iact},
\[\gamma^{w^{-1}v_j(\varpi_j^\vee)}=
\gamma^{(\overline{v_j^{-1}w})^{-1}(\varpi_j^\vee)}
\prod_{\alpha\in R_0^{I,+}\cap w_j^\prime R_0^{I,-}}
k_{\alpha^\vee}^{-2\langle\alpha,(\overline{v_j^{-1}w})^{-1}(\varpi_j^\vee)\rangle},
\]
while, by the definition of $\rho_I^k$,
\[(\rho_I^k)^{w^{-1}v_j(\varpi_j^\vee)}=
\prod_{\alpha\in R_0^{I,+}}k_{\alpha^\vee}^{-2\langle\alpha,w^{-1}v_j(\varpi_j^\vee)
\rangle}=
\prod_{\alpha\in w_j^\prime(R_0^{I,-})}k_{\alpha^\vee}^{2\langle
\alpha,(\overline{v_j^{-1}w})^{-1}(\varpi_j^\vee)\rangle}.
\]
Hence \eqref{equivall} holds true iff for all $w\in W_0^I$ and $j\in J$,
\begin{equation}\label{last}
\langle\alpha,(\overline{v_j^{-1}w})^{-1}(\varpi_j^\vee)\rangle=0,
\quad \forall\, \alpha\in R_0^{I,-}\cap w_j^\prime(R_0^{I,-}).
\end{equation}
Fix $w\in W_0^I$ and $j\in J$. We show in fact that
\begin{equation}\label{lastl}
\langle \alpha,(\overline{v_j^{-1}w})^{-1}(\varpi_j^\vee)\rangle
\begin{cases}
\leq 0,\quad &\forall\, \alpha\in R_0^{I,-},\\
\geq 0,\quad &\forall\, \alpha\in w_j^\prime(R_0^{I,-}),
\end{cases}
\end{equation}
which implies \eqref{last}. The first inequality is immediate, since
$u(R_0^{I,-})\subseteq R_0^-$ if $u\in W_0^I$. For the second equality,
let $\alpha=w_j^\prime(\beta)\in w_j^\prime(R_0^{I,-})$.
Then, since $(\overline{v_j^{-1}w})w_j^\prime=v_j^{-1}w$,
\[\langle \alpha,(\overline{v_j^{-1}w})^{-1}(\varpi_j^\vee)\rangle=
\langle w(\beta),v_j(\varpi_j^\vee)\rangle.
\]
But $w\in W_0^I$ and $\beta\in R_0^{I,-}$ hence $w(\beta)\in R_0^-$,
and $v_j(\varpi_j^\vee)\in w_0P_+^\vee$, hence
$\langle w(\beta),v_j(\varpi_j^\vee)\rangle\geq 0$. This concludes
the proof.
\end{proof}

\begin{cor}\label{alright}
Let $\gamma\in T_I^{k^{\pm 1}}$ and let $L$ a left 
$\mathbb{C}_{\sigma,\nabla}^{k,q}[T]\#_qW$-module.
The we have an injective linear map
\[
\alpha_{L,\gamma}^{k,q,\pm,I}: \Gamma_L^{k,q}\bigl(M^{k,\pm,I}(\gamma)\bigr)_\nabla^W
\hookrightarrow \Gamma_L^{k,q}\bigl(M^k(\underline{w_0}\gamma)\bigr)_\nabla^W,
\]
defined by
\[
\alpha_{L,\gamma}^{k,q,\pm,I}\bigl(\sum_{w\in W_0^I}\psi_w\otimes
v_w^{k,\pm,I}(\gamma)\bigr)=
\sum_{u\in W_0^I}\psi_u\otimes\Bigl(\sum_{v\in W_{I,0}}
\epsilon_{\pm}^k(T_v)v_{uv}^k(\underline{w_0}\gamma)\Bigr).
\]
\end{cor}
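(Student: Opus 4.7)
The strategy is to realize $\alpha_{L,\gamma}^{k,q,\pm,I}$ as the map $\mathrm{id}_L\otimes\beta$ induced via the functoriality \eqref{functoriality} from an $H(k)$-linear morphism
\[
\beta: M^{k,\pm,I}(\gamma)\longrightarrow M^k(\underline{w_0}\gamma),
\]
specified on the cyclic vector by $\beta(v_e^{k,\pm,I}(\gamma))=m$, where
\[
m:=\sum_{v\in W_{0,I}}\epsilon_\pm^k(T_v)v_v^k(\underline{w_0}\gamma).
\]
By Frobenius reciprocity this extends to a well-defined $H(k)$-linear map precisely when $m$ carries the character $\chi_\gamma^{k,\pm,I}$ of $H_I(k)$. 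Granting this, the formula of the corollary follows from $T_uT_v=T_{uv}$ in $H_0(k)$ (valid because $l(uv)=l(u)+l(v)$ when $u\in W_0^I$, $v\in W_{0,I}$), giving $\beta(v_u^{k,\pm,I}(\gamma))=T_um=\sum_{v\in W_{0,I}}\epsilon_\pm^k(T_v)v_{uv}^k(\underline{w_0}\gamma)$.

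That $T_im=\pm k_i^{\pm 1}m$ for $i\in I$ is a direct pairing argument: split $W_{0,I}$ into pairs $\{v,s_iv\}$ with $l(s_iv)=l(v)+1$, apply the principal-series action formulas \eqref{vaction} (in which $C_i=\emptyset$ since the parabolic subset used in defining $M^k(\underline{w_0}\gamma)$ is $\emptyset$), and use $\epsilon_\pm^k(T_{s_iv})=\pm k_i^{\pm 1}\epsilon_\pm^k(T_v)$; the cross-terms reassemble so that the contribution from each pair equals $\pm k_i^{\pm 1}$ times the original pair.

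The $Y$-eigenvalue identity $f(Y)m=f(\gamma)m$ for $f\in\mathbb{C}[T]$ is the main technical step, and the plan is to identify $m$ with a nonzero scalar multiple of the intertwiner-built vector $b_{\underline{w_0}}^k(\underline{w_0}\gamma)=I_{\underline{w_0}}(k)v_e^k(\underline{w_0}\gamma)$. By Theorem \ref{intertwinerthm} this vector has $Y$-eigenvalue $\underline{w_0}(\underline{w_0}\gamma)=\gamma$ (since $\underline{w_0}^2=e$ in $W_{0,I}$). Using that $\underline{w_0}(\alpha_i)=-\alpha_{i^*}$ with $i^*\in I$ and $k_{i^*}=k_i$, and the defining condition $\gamma^{\alpha_i^\vee}=k_i^{\pm 2}$, Lemma \ref{baction} applied with $I=\emptyset$ and $w=\underline{w_0}$ shows that the off-diagonal term vanishes identically while the diagonal coefficient simplifies to $\pm k_i^{\pm 1}$, so that $b_{\underline{w_0}}^k(\underline{w_0}\gamma)\in M^k(\underline{w_0}\gamma)^{I,\pm}$ as well. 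On the Zariski-open locus of $T_I^{k^{\pm 1}}$ where $\gamma^{\alpha^\vee}\neq 1$ for all $\alpha\in R_0^+\setminus R_0^{I,+}$ and $k_i^2\neq 1$ for all $i\in I$, Proposition \ref{cal} forces $M^k(\underline{w_0}\gamma)_\gamma$ to be one-dimensional, while Lemma \ref{expb} shows that the leading coefficient of $b_{\underline{w_0}}^k(\underline{w_0}\gamma)$ on $v_{\underline{w_0}}^k(\underline{w_0}\gamma)$ is nonzero; comparison with the coefficient $\epsilon_\pm^k(T_{\underline{w_0}})\neq 0$ of $v_{\underline{w_0}}^k(\underline{w_0}\gamma)$ in $m$ then gives $m\propto b_{\underline{w_0}}^k(\underline{w_0}\gamma)$, whence $f(Y)m=f(\gamma)m$. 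The remaining values of $\gamma$ are covered by Zariski continuity: $T_I^{k^{\pm 1}}$ is an irreducible subtorus coset of $T$, and both sides of $f(Y)m-f(\gamma)m=0$ depend Laurent-polynomially on $\gamma$, so the identity extends from the dense generic locus to all of $T_I^{k^{\pm 1}}$.

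Finally, injectivity of $\beta$ (and hence of $\alpha_{L,\gamma}^{k,q,\pm,I}$) is immediate from the explicit formula: the coefficient of $v_u^k(\underline{w_0}\gamma)$ in $\beta(v_u^{k,\pm,I}(\gamma))$ equals $\epsilon_\pm^k(T_e)=1$, and the cosets $\{uW_{0,I}\}_{u\in W_0^I}$ partition $W_0$, so the images of distinct standard basis vectors involve disjoint subsets of the $v_w^k(\underline{w_0}\gamma)$-basis of $M^k(\underline{w_0}\gamma)$. The same argument applied in each fiber shows $\alpha_{L,\gamma}^{k,q,\pm,I}$ is injective. The hard part is the $Y$-eigenvalue step; all other ingredients are bookkeeping once the identification of $m$ with a scalar multiple of the intertwiner vector $b_{\underline{w_0}}^k(\underline{w_0}\gamma)$ is in hand.
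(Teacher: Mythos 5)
Your overall strategy is genuinely different from the paper's (which obtains $\alpha_{L,\gamma}^{k,q,\pm,I}$, up to the factor $\epsilon_{\pm}^k(T_{\underline{w_0}})$, as the composition of the two isomorphisms of Theorem \ref{mainthmY} with the trivial inclusion $L_\pi^{I^*,\pm}[\overline{w_0}\gamma^{-1}]\subseteq L_{\pi,\overline{w_0}\gamma^{-1}}$): you reduce everything to the purely algebraic claim that $m=\sum_{v\in W_{0,I}}\epsilon_{\pm}^k(T_v)v_v^k(\underline{w_0}\gamma)$ transforms under $H_I(k)$ by $\chi_\gamma^{k,\pm,I}$, so that $\alpha_{L,\gamma}^{k,q,\pm,I}=\mathrm{id}_L\otimes\beta$ with $\beta$ the induced $H(k)$-map, and then invoke the functoriality \eqref{functoriality}. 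The $H_{0,I}(k)$-eigenvector property of $m$, the identity $\beta(v_u^{k,\pm,I}(\gamma))=\sum_{v\in W_{0,I}}\epsilon_{\pm}^k(T_v)v_{uv}^k(\underline{w_0}\gamma)$, and the transfer of injectivity are all fine. The key step, $f(Y)m=f(\gamma)m$, is however not proved as written. First, the deduction ``$\dim M^k(\underline{w_0}\gamma)_\gamma=1$, both $b_{\underline{w_0}}^k(\underline{w_0}\gamma)$ and $m$ have nonzero coefficient on $v_{\underline{w_0}}^k(\underline{w_0}\gamma)$, hence $m\propto b_{\underline{w_0}}^k(\underline{w_0}\gamma)$'' is a non sequitur: you do not yet know that $m$ lies in the weight space $M^k(\underline{w_0}\gamma)_\gamma$ (that is exactly what is to be shown), and the space $M^k(\underline{w_0}\gamma)^{I,\pm}$, which does contain both vectors, has dimension $\#W_0^I$, not one. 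The gap is repairable: both vectors lie in the $H_{0,I}(k)$-submodule $N=\mathrm{span}\{v_v^k(\underline{w_0}\gamma)\}_{v\in W_{0,I}}$ (for $b_{\underline{w_0}}^k(\underline{w_0}\gamma)$ use the triangular expansion of the intertwiner in the proof of Lemma \ref{expb} together with the fact that $u<\underline{w_0}$ forces $u\in W_{0,I}$), and inside $N$ the space of $\epsilon_{\pm}^k$-eigenvectors of $H_{0,I}(k)$ is at most one-dimensional by the recursion argument of Lemma \ref{1spher}; it is this one-dimensionality, not that of the weight space, which yields the proportionality.

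Second, your passage from generic to arbitrary $\gamma$ only lets $\gamma$ vary while $k$ stays fixed. The ``generic locus'' you invoke requires $k_i^2\neq1$ for $i\in I$ and, implicitly (for Lemma \ref{baction} and Proposition \ref{cal} applied to $M^k(\underline{w_0}\gamma)$ with $I=\emptyset$), $(\underline{w_0}\gamma)^{\alpha^\vee}\neq1$ for all $\alpha\in R_0^{I,+}$; for $\gamma\in T_I^{k^{\pm1}}$ these values are fixed products of powers of the $k_i$, i.e.\ they are conditions on $k$ alone, constant along $T_I^{k^{\pm1}}$. If, say, $k_i^2=1$ for some $i\in I$, or $k$ is a root of unity with $\prod_{i\in I}k_i^{2c_i}=1$ for some $\beta^\vee=\sum_{i\in I}c_i\alpha_i^\vee$ with $\beta\in R_0^{I,+}$, then your generic locus inside $T_I^{k^{\pm1}}$ is empty and Zariski continuity in $\gamma$ gives nothing, whereas the corollary is asserted (and the paper insists) for precisely such $k$ as well. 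The repair is to run the density argument on the incidence variety $\{(k,\gamma)\,:\,\gamma\in T_I^{k^{\pm1}}\}$, an irreducible torus on which the matrix coefficients of $f(Y)$ in the standard basis, the coefficients of $m$, and $f(\gamma)$ all depend Laurent-polynomially on $(k,\gamma)$, so that the identity extends from the (nonempty) jointly generic locus to all parameters; alternatively one can give a direct computation valid for all $k$. With these two repairs your argument becomes a correct, and arguably more self-contained, alternative to the paper's proof.
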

\begin{proof}
We claim that 
$\epsilon_{\pm}^k(T_{\underline{w_0}})^{-1}\alpha_{L,\gamma}^{k,q,\pm,I}$
is equal to the composition of maps
\[
\Gamma_L^{k,q}\bigl(M^{k,\pm,I}(\gamma)\bigr)_\nabla^W
\overset{\sim}{\longrightarrow}
L_\pi^{I^*,\pm}[\overline{w_0}\gamma^{-1}]\hookrightarrow
L_{\pi,\overline{w_0}\gamma^{-1}}\overset{\sim}{\longrightarrow}
\Gamma_L^{k,q}\bigl(M^k(\underline{w_0}\gamma)\bigr)_\nabla^W,
\]
with the second the trivial inclusion, and the first and third isomorphisms
obtained from Theorem \ref{mainthmY}. This can be proved by a direct 
computation, using the fact that
\[\pi^{k^{-1},q}(T_{w^{-1}}^{-1}T_{w_0})\phi=
\epsilon_{\pm}^k(T_{\underline{w_0}})^{-1}\epsilon_{\pm}^k(T_{\underline{w}})
\pi^{k^{-1},q}(T_{\overline{w}^{-1}}^{-1}T_{\overline{w_0}^{-1}})\phi
\]
for $\phi\in L_\pi^{I^*,\pm}$ and $w\in W_0$. 
\end{proof}
Let $\gamma\in T_I^{k^{\pm 1}}$ and $L$ a left 
$\mathbb{C}_{\sigma,\nabla}^{k,q}[T]\#_qW$-module.
The functoriality of the assignment \eqref{functoriality} can be
applied to the surjective morphism $M^{k}(\gamma)\rightarrow
M^{k,\pm,I}(\gamma)$ of $H(k)$-modules mapping $v_e^{k}(\gamma)$
to $v_e^{k,\pm,I}(\gamma)$. In fact, it maps $v_{w}^{k}(\gamma)$
to $\epsilon_{\pm}^k(T_{\underline{w}})v_{\overline{w}}^{k,\pm,I}(\gamma)$ 
for $w\in W_0$. It thus gives rise to a $\mathbb{C}[W]$-linear map
\begin{equation}\label{funtapp}
\begin{split}
\Gamma_L^{k,q}(M^{k}(\gamma))_\nabla&\rightarrow 
\Gamma_L^{k,q}(M^{k,\pm,I}(\gamma))_\nabla,\\
\sum_{w\in W_0}\psi_w\otimes v_w^{k}(\gamma)&\mapsto
\sum_{u\in W_0^I}\Bigl(\sum_{v\in W_{0,I}}\epsilon_{\pm}^k(T_v)\psi_{uv}\Bigr)
\otimes v_u^{k,\pm,I}(\gamma)
\end{split}
\end{equation}
for $\gamma\in T_I^{k^{\pm 1}}$.
Combined with Theorem \ref{mainthmY} we obtain
\begin{cor}
Let $L$ be a left $\mathbb{C}_{\sigma,\nabla}^{k,q}[T]\#_qW$ and
let $\gamma\in T_I^{k^{\pm 1}}$. The assignment
\[
\phi\mapsto 
\sum_{v\in W_{0,I^*}}\epsilon_{\pm}^{k^{-1}}(T_v)\pi^{k^{-1},q}(T_v)\phi
\]
defines a linear map
\[
L_{\pi,w_0\gamma^{-1}}\rightarrow L_\pi^{I^*,\pm}[\overline{w_0}\gamma^{-1}].
\]
\end{cor}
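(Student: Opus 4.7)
The plan is to exhibit the assignment of the corollary, up to the nonzero scalar $\epsilon_\pm^k(T_{\underline{w_0}})$, as the composition of three natural maps whose image manifestly lies in $L_\pi^{I^*,\pm}[\overline{w_0}\gamma^{-1}]$. Specifically, consider
\[
L_{\pi,w_0\gamma^{-1}} \xrightarrow{\,\Theta_1\,} \Gamma_L^{k,q}\bigl(M^k(\gamma)\bigr)_\nabla^W \xrightarrow{\,\Theta_2\,} \Gamma_L^{k,q}\bigl(M^{k,\pm,I}(\gamma)\bigr)_\nabla^W \xrightarrow{\,\Theta_3\,} L_\pi^{I^*,\pm}[\overline{w_0}\gamma^{-1}],
\]
where $\Theta_1$ is the isomorphism of Theorem \ref{mainthmY} applied with $I=\emptyset$ (so $\overline{w_0}=w_0$, $W_0^\emptyset=W_0$); $\Theta_3$ is the inverse of Theorem \ref{mainthmY} for the given $I$, which by \eqref{isoY2} amounts to extracting the coefficient of $v_{\overline{w_0}}^{k,\pm,I}(\gamma)$; and $\Theta_2$ is the $\mathbb{C}[W]$-linear map \eqref{funtapp} (well-defined on $W$-invariants by $\mathbb{C}[W]$-linearity) obtained by functoriality from the surjective $H(k)$-morphism $M^k(\gamma)\twoheadrightarrow M^{k,\pm,I}(\gamma)$ sending $v_e^k(\gamma)\mapsto v_e^{k,\pm,I}(\gamma)$. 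Since this composition has image inside $L_\pi^{I^*,\pm}[\overline{w_0}\gamma^{-1}]$ by construction, identifying the corollary's formula with it (up to scalar) will finish the proof.

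Unwinding explicitly, for $\phi\in L_{\pi,w_0\gamma^{-1}}$ one has $\Theta_1(\phi)=\sum_{w\in W_0}\pi^{k^{-1},q}(T_{ww_0^{-1}})\phi\otimes v_w^k(\gamma)$; applying \eqref{funtapp} and extracting the $v_{\overline{w_0}}^{k,\pm,I}(\gamma)$-coefficient gives
\[
\Theta_3\Theta_2\Theta_1(\phi)=\sum_{v\in W_{0,I}}\epsilon_\pm^k(T_v)\,\pi^{k^{-1},q}\bigl(T_{\overline{w_0}vw_0^{-1}}\bigr)\phi.
\]
Using $w_0^{-1}=\underline{w_0}\overline{w_0}^{-1}$ one checks $\overline{w_0}vw_0^{-1}=v^*$ where $v^*:=\overline{w_0}v\underline{w_0}\overline{w_0}^{-1}$, and the assignment $v\mapsto v^*$ is a bijection $W_{0,I}\xrightarrow{\sim}W_{0,I^*}$ because $\underline{w_0}$ is an involution in $W_{0,I}$ and $\overline{w_0}$-conjugation sends $s_i$ to $s_{i_I^*}$ for $i\in I$.

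To match the corollary's formula it remains to prove the scalar identity $\epsilon_\pm^k(T_v)=\epsilon_\pm^k(T_{\underline{w_0}})\cdot\epsilon_\pm^{k^{-1}}(T_{v^*})$ for $v\in W_{0,I}$. This rests on two observations: first, $\epsilon_\pm^{k^{-1}}(T_w)=\epsilon_\pm^k(T_w)^{-1}$ for every $w\in W_0$, propagated from the generator identity $\epsilon_\pm^{k^{-1}}(T_j)\epsilon_\pm^k(T_j)=1$ by the character property; second, since $\underline{w_0}$ is the longest element of $W_{0,I}$ one has the reduced factorization $T_{\underline{w_0}}=T_{\underline{w_0}v^{-1}}T_v$, yielding $\epsilon_\pm^k(T_{\underline{w_0}v^{-1}})=\epsilon_\pm^k(T_{\underline{w_0}})\epsilon_\pm^k(T_v)^{-1}$; combining this with $\epsilon_\pm^k(T_{v\underline{w_0}})=\epsilon_\pm^k(T_{\underline{w_0}v^{-1}})$ (inverse elements share length and the multiset of simple reflections in any reduced expression) and the $\overline{w_0}$-conjugation invariance $\epsilon_\pm^k(T_{v^*})=\epsilon_\pm^k(T_{v\underline{w_0}})$ (via $W$-invariance of $k$) produces the claimed identity. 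It follows that $\Theta_3\Theta_2\Theta_1(\phi)=\epsilon_\pm^k(T_{\underline{w_0}})\tilde\phi$ where $\tilde\phi$ denotes the corollary's assignment, and the corollary is proved. The main obstacle is precisely this scalar identity, which demands careful book-keeping with reduced expressions, the self-inverse nature of $\underline{w_0}$, and the dictionary between $W_{0,I}$ and $W_{0,I^*}$ provided by $\overline{w_0}$-conjugation.
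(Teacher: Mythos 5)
Your proof is correct and follows essentially the same route as the paper: you realize the assignment (up to the scalar $\epsilon_{\pm}^k(T_{\underline{w_0}})$) as the composition of the $I=\emptyset$ isomorphism of Theorem \ref{mainthmY}, the functorial map \eqref{funtapp}, and the inverse of Theorem \ref{mainthmY} for $I$, and then unwind it explicitly. The paper performs exactly this composition, differing only in how the length/character bookkeeping is organized (it conjugates $W_{0,I}$ to $W_{0,I^*}$ first and uses $T_{\overline{w_0}^{-1}}^{-1}T_{w_0}=T_{\underline{w_0}^*}$, whereas you prove the equivalent scalar identity $\epsilon_{\pm}^k(T_v)=\epsilon_{\pm}^k(T_{\underline{w_0}})\epsilon_{\pm}^{k^{-1}}(T_{v^*})$ directly).
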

\begin{proof}
Let $\gamma\in T_I^{k^{\pm 1}}$. Denote by
\[\eta_\gamma^I: 
L_{\pi,w_0\gamma^{-1}}\rightarrow L_\pi^{I^*,\pm}[\overline{w_0}\gamma^{-1}]
\]
the composition of the linear maps
\[L_{\pi,w_0\gamma^{-1}}\overset{\sim}{\longrightarrow}
\Gamma_L^{k,q}(M^{k}(\gamma))_\nabla^W\rightarrow
\Gamma_L^{k,q}(M^{k,\pm,I}(\gamma))_\nabla^W\overset{\sim}{\longrightarrow}
L_\pi^{I^*,\pm}[\overline{w_0}\gamma^{-1}],
\]
where the isomorphisms are from Theorem \ref{mainthmY}, and the
second map is the restriction of \eqref{funtapp} to 
$\Gamma_L^{k,q}(M^{k}(\gamma))_\nabla^W$.
Then
\[\eta_\gamma^I(\phi)=\sum_{v\in W_{0,I}}\epsilon_{\pm}^k(T_v)
\pi^{k^{-1},q}(T_{\overline{w_0}^{-1}}^{-1}T_{v^{-1}}^{-1}T_{w_0})\phi
\]
for $\phi\in L_{\pi,w_0\gamma^{-1}}$.
It suffices to show that
\begin{equation}\label{todoYY}
\eta_\gamma^I(\phi)=\epsilon_{\pm}^k(T_{\underline{w_0}})
\sum_{v\in W_{0,I^*}}\epsilon_{\pm}^{k^{-1}}(T_v)\pi^{k^{-1},q}(T_v)\phi
\qquad (\phi\in L_{\pi,w_0\gamma^{-1}}).
\end{equation}
Let $\phi\in L_{\pi,w_0\gamma^{-1}}$.
Note that $W_{0,I}=\overline{w_0}^{-1}W_{0,I^*}\overline{w_0}$,
and
\[T_{\overline{w_0}^{-1}v\overline{w_0}}=T_{\overline{w_0}^{-1}}T_v
T_{\overline{w_0}^{-1}}^{-1}\qquad (v\in W_{0,I^*}).
\]
It follows that
\[\eta_\gamma^I(\phi)=\sum_{v\in W_{0,I^*}}
\epsilon_{\pm}^k(T_v)\pi^{k^{-1},q}(T_{v^{-1}}^{-1}T_{\overline{w_0}^{-1}}^{-1}T_{w_0})
\phi.
\]
Denote $\underline{w_0}^*$ for the longest Weyl group element of $W_{0,I^*}$.
Then $\underline{w_0}^*=\overline{w_0}\underline{w_0}\overline{w_0}^{-1}$, hence
\[T_{\overline{w_0}^{-1}}^{-1}T_{w_0}=T_{\overline{w_0}w_0}=
T_{\underline{w_0}^*}.
\]
It follows that 
\[\eta_\gamma^I(\phi)=\sum_{v\in W_{0,I^*}}\epsilon_{\pm}^k(T_v)
\pi^{k^{-1},q}(T_{v\underline{w_0}^*})\phi.
\]
Using that $T_{v\underline{w_0}^*}=T_{v^{-1}}^{-1}T_{\underline{w_0}^*}$
for $v\in W_{0,I^*}$, we get \eqref{todoYY}.
\end{proof}

\subsection{$\textup{GL}_m$ case}\label{GL}

It is instructive to consider the $\textup{GL}_{m}$ case of 
Theorem \ref{mainthmY}, since it has a simpler proof.
We keep the notations and definitions as before with $R_0$
the root system of type $A_{m-1}$ ($m\geq 2$). 
We redefine first, for the duration of this subsection, 
those notions from the previous subsections which need
slight modifications in the $\textup{GL}_m$ setup.

Let $\{\epsilon_i\}_{i=1}^m$ be an orthonormal basis of $\mathbb{R}^m$. 
Take $\{\epsilon_i-\epsilon_j\}_{1\leq i<j\leq m}$
as the realization of the root system $R_0$, and take
$\alpha_j=\epsilon_j-\epsilon_{j+1}$ ($1\leq j\leq n=m-1$)
as a basis of $R_0$. The affine root system is $R=\mathbb{Z}c+R_0$
with additional affine simple root $a_0=c-\epsilon_1+\epsilon_m$.

Set $T=\bigl(\mathbb{C}\setminus\{0\}\bigr)^m$
and define for $\lambda\in\mathbb{Z}^m$ and $r\in\mathbb{Z}$,
the monomial $e_q^{rc+\lambda}\in\mathbb{C}[T]$ by
\[e_q^{rc+\lambda}(t)=q^rt^\lambda,\qquad t\in T
\]
(where we use the usual multi-index notations).
The extended affine Weyl group is $W=S_m\ltimes\mathbb{Z}^m$.
A multiplicity function $R\rightarrow \mathbb{C}^\times$
(i.e. $W$-invariant)
takes on a constant value, which we denote by $k\in\mathbb{C}^\times$.
We let $W$ act on $\mathbb{C}[T]$ by $q$-difference reflection operators
(see \eqref{qaction}).

The affine Weyl group $W$ is generated by $s_i=s_{\alpha_i}$
($1\leq i<m$) and $\zeta$,
where 
\[
\zeta=\sigma\tau(\epsilon_m)
\]
with $\sigma=s_1s_2\cdots s_{m-1}\in S_m$. It thus acts on
$f\in\mathbb{C}[T]$ by
\[(\zeta_qf)(t)=f(t_2,\ldots,t_m,q^{-1}t_1).
\]
Let $\Omega\subset W$ be
the subgroup of $W$ generated by $\zeta$. It is the subgroup 
consisting of the affine Weyl group elements $w\in W$ of length zero.

The extended affine Hecke algebra $H(k)$ is generated by 
$T_\zeta$ and $T_1,\ldots,T_{m-1}$ with, besides the familiar
relations for the finite Hecke algebra generators
$\{T_i\}_{1\leq i<m}$ (including the quadratic relations
$(T_i-k)(T_i+k^{-1})=0$), the relations that $T_\zeta^m$ is central in $H(k)$
and that $T_\zeta T_i=T_{i+1}T_\zeta$ for $1\leq i<m-1$. The
commuting $Y_i=Y^{\epsilon_i}\in H(k)$ ($1\leq i\leq m$) are given by
\[
Y_i=T_{i-1}^{-1}\cdots T_2^{-1}T_1^{-1}T_\zeta T_{m-1}T_{m-2}\cdots T_i.
\]
Let $\mathcal{A}_Y^k$ be the commutative subalgebra of $H(k)$
generated by the $Y_i^{\pm 1}$ ($1\leq i\leq m$). 
The extended affine Hecke algebra is generated
by the finite Hecke algebra
$H_0(k)$ (which is generated by $T_1,\ldots,T_{m-1}$) and $\mathcal{A}_Y^k$, 
with defining relations as in Theorem \ref{Hchar}.

The algebra map $\pi^{k,q}: H(k)\rightarrow \mathbb{C}_\sigma^q[T]\#_qW$
is now given by
\[\pi^{k,q}(T_i)=k+c_i^{k,q}(s_i-1),\qquad 
\pi^{k,q}(T_\zeta)=\zeta
\]
for $1\leq i<m$. The algebra map $\nabla^{k,q}: \mathbb{C}[T]\#_qW
\rightarrow \mathcal{A}_\nabla^{k,q}$ is 
\begin{equation*}
\begin{split}
\nabla^{k,q}(f)&=f,\\
\nabla^{k,q}(s_i)&=(c_i^{k,q})^{-1}s_iT_i+\frac{c_i^{k,q}-k}{c_i^{k,q}}s_i,\\
\nabla^{k,q}(\zeta)&=\zeta T_\zeta
\end{split}
\end{equation*}
for $f\in\mathbb{C}[T]$ and $1\leq i<m$.

For $I\subseteq \{1,\ldots,m-1\}$ we write $H_I(k)$ for the subalgebra
generated by $T_i$ ($i\in I$) and $\mathcal{A}_Y^k$.
For $\gamma\in T_I^{k^{\pm 1}}$ we have a character
$\chi_\gamma^{k,\pm,I}: H_I(k)\rightarrow \mathbb{C}$ mapping $T_i$ to 
$\pm k^{\pm 1}$
for $i\in I$ and $f(Y)$ to $f(\gamma)$ for $f\in\mathbb{C}[T]$.
We have the associated affine Hecke algebra module $M^{k,\pm,I}(\gamma)=
\textup{Ind}_{H_I(k)}^{H(k)}\bigl(\chi_\gamma^{k,\pm,I}\bigr)$
with basis $v_w^{k,\pm,I}(\gamma)$ ($w\in S_m^I$), where $S_m^I$
is the set of minimal coset representatives of 
$S_m/S_{m,I}$ (with $S_{m,I}$ the subgroup of $S_m$ generated
by $s_i$ ($i\in I$)). 

Let $w_0\in S_m$ be the longest Weyl group element, mapping
$j$ to $m+1-j$ for $1\leq j\leq m$,
and set
\[\varpi_j=\epsilon_1+\epsilon_2+\cdots+\epsilon_j,\qquad 1\leq j\leq m.
\]
Corollary \ref{coreq} now holds true in the present $\textup{GL}_m$ 
setup. 
\begin{prop}\label{explicitprop}
Let $\gamma\in T_I^{k^{\pm 1}}$ and let
$L$ be a left $\mathbb{C}_{\sigma,\nabla}^{k,q}[T]\#_qW$-module.
Suppose that $\phi\in L_\pi^{I^*,\pm}$ and let $\psi_\phi\in
\Gamma_L^{k,q}\bigl(M^{k,\pm,I}(\gamma)\bigr)_\nabla^{S_m}$ be defined
by \eqref{isoY2}. Then
$\nabla^{k,q}(\tau(\varpi_j))\psi_\phi$ is equal to
\[
\sum_{w\in S_m^I}\epsilon_{\pm}^k(T_{\underline{w_{0}}}T_{w_j^\prime})
\gamma^{w_j^\prime w^{-1}(\varpi_j)}\pi^{k^{-1},q}\bigl(T_{ww_j^\prime{}^{-1}w_0}
Y^{w_0w_j^\prime w^{-1}(\varpi_j)}\bigr)\phi\otimes v_w^{k,I}(\gamma)
\]
for $1\leq j\leq m$, where $w_j^\prime\in S_{m,I}$ is such that
$\sigma^{-j}ww_j^\prime{}^{-1}\in S_m^I$.
\end{prop}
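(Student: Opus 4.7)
The plan is to use the $S_m$-invariance of $\psi_\phi$ to reduce the computation to that of $\nabla^{k,q}(\zeta^j)\psi_\phi$, evaluate the two tensor factors of $\nabla^{k,q}(\zeta^j) = \zeta^j\otimes T_{\zeta^j}$ separately via \eqref{Uj}, re-index the sum by a natural bijection of $W_0^I$, and then invoke the cross relation in $H(k^{-1})$ to convert the factors $\overline{w_0}^{\pm 1}$ produced by the computation into the $w_0^{\pm 1}$ appearing in the claim.

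Concretely, $\sigma$ cyclically permutes the $\epsilon_i$, so $\zeta^j = \sigma^j\tau(\mu_j)$ with $\mu_j := \sigma^{-j}\varpi_j$, giving $\tau(\varpi_j) = \zeta^j\sigma^{-j}$. By the $S_m$-invariance of $\psi_\phi$ and the fact that $\zeta$ has length zero, $\nabla^{k,q}(\tau(\varpi_j))\psi_\phi = (\zeta^j\otimes T_{\zeta^j})\psi_\phi$. Applying \eqref{Uj} with $u = \sigma^j w$ yields $T_{\zeta^j}T_w = T_{\sigma^j w}Y^{w^{-1}\mu_j}$; decomposing $\sigma^j w = \overline{\sigma^j w}\cdot\underline{\sigma^j w}$ and evaluating on $v_e^{k,\pm,I}(\gamma)$ gives
$$T_{\zeta^j}v_w^{k,\pm,I}(\gamma) = \gamma^{w^{-1}\mu_j}\epsilon_\pm^k(T_{\underline{\sigma^j w}})v_{\overline{\sigma^j w}}^{k,\pm,I}(\gamma).$$
The same formula with $u = \sigma^j w \overline{w_0}^{-1}$ yields $\zeta^j_q\psi_w = \pi^{k^{-1},q}(T_{\sigma^j w\overline{w_0}^{-1}}Y^{\overline{w_0}w^{-1}\mu_j})\phi$ for the $L$-slot.

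Next, re-index via the bijection $w\mapsto w' := \overline{\sigma^j w}$ of $W_0^I$ (whose inverse is $w'\mapsto \overline{\sigma^{-j}w'}$), and set $w_j' := \underline{\sigma^{-j}w'}$. Then $\sigma^j w = w'w_j'^{-1}$, $\underline{\sigma^j w} = w_j'^{-1}$, and $w^{-1}\mu_j = w_j'w'^{-1}\varpi_j$. After substituting and relabelling $w'$ as $w$, the result matches the desired form except that the Hecke factor on $L$ reads $T_{ww_j'^{-1}\overline{w_0}^{-1}}Y^{\overline{w_0}w_j'w^{-1}\varpi_j}$ in place of $T_{ww_j'^{-1}w_0}Y^{w_0 w_j'w^{-1}\varpi_j}$, and the scalar $\epsilon_\pm^k(T_{\underline{w_0}})$ is missing. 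What remains is therefore the identity
$$\pi^{k^{-1},q}\bigl(T_{u\overline{w_0}^{-1}}Y^{\overline{w_0}\nu}\bigr)\phi = \epsilon_\pm^k(T_{\underline{w_0}})\,\pi^{k^{-1},q}\bigl(T_{uw_0}Y^{w_0\nu}\bigr)\phi,\qquad \phi \in L_\pi^{I^*,\pm},$$
for $u = ww_j'^{-1}$ and $\nu = w_j'w^{-1}\varpi_j$. Using $\overline{w_0}^{-1} = \underline{w_0}w_0$ and $\overline{w_0} = w_0\underline{w_0}^{-1}$, this reduces to carrying $T_{\underline{w_0}}$ past $Y^{w_0\underline{w_0}^{-1}\nu}$ via iterated application of the cross relation \eqref{crossrelation}, which on the specific $\nu$'s arising here yields an expression of the form $Y^{w_0\nu}T_{\underline{w_0}^*}^{-1}$ with $\underline{w_0}^* = \overline{w_0}\underline{w_0}\overline{w_0}^{-1}$ the longest element of $W_{0,I^*}$; the eigenvalue $\epsilon^{k^{-1}}_\pm(T_{\underline{w_0}^*}) = \epsilon_\pm^k(T_{\underline{w_0}})^{-1}$ of $\pi^{k^{-1},q}(T_{\underline{w_0}^*})$ on $\phi$ then produces the required scalar.

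The main obstacle lies in this final matching: carrying $T_{\underline{w_0}}$ across the $Y$-factor generates intermediate rational correction terms with denominators $(1-Y^{-\alpha^\vee})$ for $\alpha\in R_0^{I^*,+}$, and verifying that these collapse cleanly to a single $T_{\underline{w_0}^*}^{-1}$ requires using the specific form of $\nu = w_j'w^{-1}\varpi_j$ (in particular the fact that $\langle\nu,\alpha_i^\vee\rangle \in \{-1,0,1\}$ for the simple roots relevant to a reduced expression of $\underline{w_0}$). A less computational route is to substitute $\phi = \pi^{k^{-1},q}(T_{\overline{w_0}^{-1}}^{-1})\phi_0$ with $\phi_0\in L_\pi^{I,\pm}$ (as in the discussion preceding Corollary \ref{coreq}) and rewrite the identity entirely in terms of $\phi_0$, where the eigenvalues $\pi^{k^{-1},q}(T_i)\phi_0 = \pm k^{\mp 1}\phi_0$ for $i\in I$ are directly available and symmetrise the two sides of the identity into an equality of elements of $\mathbb{C}_{\sigma,\nabla}^{k,q}[T]\#_qW$ acting on $\phi_0$.
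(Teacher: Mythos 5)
Your computation follows the same skeleton as the paper's proof (reduce to $\nabla^{k,q}(\zeta^j)$ via the $S_m$-invariance of $\psi_\phi$, apply the translation identity $T_\zeta^jT_u=T_{\sigma^ju}Y^{u^{-1}\sigma^{-j}(\varpi_j)}$ — which in this subsection is \eqref{Hzeta}, not \eqref{Uj} — and reindex by $w\mapsto\overline{\sigma^jw}$), and all of these steps are correct. The gap is exactly the step you flag at the end: converting $\pi^{k^{-1},q}\bigl(T_{ww_j'^{-1}\overline{w_0}^{-1}}Y^{\overline{w_0}\nu}\bigr)\phi$ into $\epsilon_{\pm}^k(T_{\underline{w_0}})\,\pi^{k^{-1},q}\bigl(T_{ww_j'^{-1}w_0}Y^{w_0\nu}\bigr)\phi$. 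This is not a routine Hecke-algebra manipulation at that stage: for $w_j'\neq e$ one has $l(ww_j'^{-1}w_0)=l(w_0)-l(w)-l(w_j')$ while $l(ww_j'^{-1}\overline{w_0}^{-1})=l(w_0)-l(w)-l(\underline{w_0})+l(w_j')$, so $T_{ww_j'^{-1}w_0}\neq T_{ww_j'^{-1}\overline{w_0}^{-1}}T_{\underline{w_0}^*}$ (lengths do not add), and you cannot simply peel off the longest element $\underline{w_0}^*$ of $W_{0,I^*}$ and use its eigenvalue on $\phi$. Your proposed repair — pushing $T_{\underline{w_0}}$ through the $Y$-factor by iterated cross relations and claiming the correction terms collapse — is neither carried out nor straightforward, and your fallback via $\phi_0=\pi^{k^{-1},q}(T_{\overline{w_0}^{-1}})\phi$ is only a sketch (an identity in $\mathbb{C}_{\sigma,\nabla}^{k,q}[T]\#_qW$ valid on all of $L$ would be too strong; the eigenproperty of $\phi_0$ must enter). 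So as written the proof is incomplete precisely at the point where the scalar $\epsilon_{\pm}^k(T_{\underline{w_0}})$ and the replacement of $\overline{w_0}$ by $w_0$ are produced.

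The paper avoids this entirely by performing the conversion \emph{before} multiplying by $T_\zeta^j$, where the group element is still a minimal coset representative: for $w\in S_m^I$ one has $ww_0=(w\overline{w_0}^{-1})\underline{w_0}^*$ with lengths adding (since $l(w\overline{w_0}^{-1})=l(\overline{w_0}^{-1})-l(w)$ and $l(\underline{w_0}^*)=l(\underline{w_0})$), hence $T_{ww_0}=T_{w\overline{w_0}^{-1}}T_{\underline{w_0}^*}$, and $\pi^{k^{-1},q}(T_{\underline{w_0}^*})\phi=\epsilon_{\pm}^{k^{-1}}(T_{\underline{w_0}^*})\phi=\epsilon_{\pm}^k(T_{\underline{w_0}})^{-1}\phi$ because $\phi\in L_\pi^{I^*,\pm}$; thus $\psi_w=\epsilon_{\pm}^k(T_{\underline{w_0}})\pi^{k^{-1},q}(T_{ww_0})\phi$. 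Applying $T_\zeta^j$ to $T_{ww_0}$ and reindexing exactly as you did then yields the stated formula with no cross-relation bookkeeping. Equivalently, the identity you left open follows by left-multiplying this pre-$\zeta^j$ identity by $\pi^{k^{-1},q}(T_\zeta^j)$ — i.e. the clean way to close your gap is to reorder the two steps, which is precisely the paper's argument.
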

\begin{proof}
Note that $\tau(\varpi_j)=\zeta^j\sigma^{-j}\in W$, hence
\[\nabla^{k,q}(t(\varpi_j))\psi_\phi=\nabla^{k,q}(\zeta^j)\psi_\phi.
\]
For $\phi\in L_\pi^{I^*,\pm}$ we have 
$\pi^{k^{-1},q}(T_{\overline{w_0}^{-1}})\phi\in
L_\pi^{I,\pm}$, hence for all $w\in S_m^I$,
\[\pi^{k^{-1},q}(T_{w\overline{w_0}^{-1}})\phi=
\pi^{k^{-1},q}(T_{w^{-1}}^{-1}T_{\overline{w_0}^{-1}})\phi=
\epsilon_{\pm}^k(T_{\underline{w_{0}}})\pi^{k^{-1},q}(T_{ww_0})\phi.
\]
Thus $\psi_\phi=\sum_{w\in S_m^I}\psi_w\otimes v_w^{k,\pm,I}(\gamma)$ with
$\psi_w=\epsilon_{\pm}^k(T_{\underline{w_{0}}})\pi^{k^{-1},q}(T_{ww_0})\phi$.
Using the fact that 
\begin{equation}\label{Hzeta}
T_\zeta T_u=T_{\sigma u}Y_{u^{-1}(m)},\qquad u\in S_m
\end{equation}
in $H(k)$ (cf., e.g., the proof of \cite[Lemma 4.1]{vMS}), we obtain
\begin{equation*}
\begin{split}
\nabla^{k,q}(\tau(\varpi_j))\psi_\phi&=\nabla^{k,q}(\zeta^j)\psi_\phi\\
&=\sum_{w\in S_m^I}\zeta^j\psi_w\otimes T_\zeta^jv_w^{k,\pm,I}(\gamma)\\
&=\sum_{w\in S_m^I}\epsilon_{\pm}^k(T_{\underline{w_{0}}}T_{w_j})
\gamma^{w^{-1}w_0(\varpi_j)}\pi^{k^{-1},q}(T_\zeta^jT_{ww_0})\phi\otimes
v_{\overline{\sigma^jw}}^{k,\pm,I}(\gamma),
\end{split}
\end{equation*}
where $w_j:=\underline{\sigma^jw}\in S_{m,I}$, such that
$\sigma^jw=(\overline{\sigma^jw})w_j$.

For $w\in S_m^I$ we write 
$w_j^\prime:=\underline{\sigma^{-j}w}\in S_{m,I}$, such that $\sigma^{-j}w=
(\overline{\sigma^{-j}w})w_j^\prime$. Then $w\mapsto \overline{\sigma^jw}$
defines a bijection $S_m^I\overset{\sim}{\longrightarrow} S_m^I$,
with inverse $w\mapsto \overline{\sigma^{-j}w}$ ($w\in S_m^I$).
Furthermore, $(\overline{\sigma^{-j}w})_j=w_j^\prime{}^{-1}$ for 
$w\in S_m^I$. Thus
\begin{equation*}
\begin{split}
&\nabla^{k,q}(\tau(\varpi_j))\psi_\phi=\\
&\qquad=
\sum_{w\in S_m^I} \epsilon_{\pm}^k(T_{\underline{w_{0}}}T_{w_j^\prime})
\gamma^{w_j^\prime w^{-1}\sigma^jw_0(\varpi_j)}\pi^{k^{-1},q}(T_\zeta^j
T_{\sigma^{-j}ww_j^\prime{}^{-1}w_0})\phi\otimes v_w^{k,\pm,I}(\gamma)\\
&\qquad=\sum_{w\in S_m^I} \epsilon_{\pm}^k(T_{\underline{w_{0}}}T_{w_j^\prime})
\gamma^{w_j^\prime w^{-1}(\varpi_j)}\pi^{k^{-1},q}\bigl(
T_{ww_j^\prime{}^{-1}w_0}Y^{w_0w_j^\prime w^{-1}(\varpi_j)}\bigr)\phi\otimes
v_w^{k,\pm,I}(\gamma),
\end{split}
\end{equation*}
where we have used \eqref{Hzeta} and $\sigma^jw_0(\varpi_j)=\varpi_j$
to obtain the last equality.
\end{proof}
\begin{rema}
The classical ($q=1$) analogue of Proposition \ref{explicitprop}
is \cite[Lemma 3.2]{O}. In contrast to the present $q$-setup,
it is for arbitrary root systems. 
\end{rema}

We write $\rho_I^k=(\rho_{I,1}^k,\ldots,\rho_{I,m}^k)\in T$ with
\[
\rho_{I,j}^k=\prod_{\alpha\in R_0^{I,+}}k^{-2\langle\epsilon_j,\alpha\rangle}.
\]
For $\gamma\in T_I^k$ we have $\overline{w_0}\gamma^{-1}=
(w_0\rho_I^k)^{-1}(w_0\gamma)^{-1}\in T_{I^*}^{k^{-1}}$ 
(cf. Lemma \ref{dualgamma}
and \eqref{gammaI}). As before, we define
for $\gamma\in T_I^{k^{\pm 1}}$ and for a left 
$\mathbb{C}_{\sigma,\nabla}^{k,q}[T]\#_qW$-module $L$,
\[L_\pi^{I^*,\pm}[\overline{w_0}\gamma^{-1}]=
\{\phi\in L_\pi^{I^*,\pm} \,\, | \,\, 
\pi^{k^{-1},q}(f(Y))\phi=f(\overline{w_0}\gamma^{-1})\phi\quad 
\forall\, f\in\mathbb{C}[T]\}.
\]
The $\textup{GL}_m$ version of Theorem \ref{mainthmY} which we now
formulate, is essentially due to Kasatani and Takeyama \cite{KT}
(the present version is more general since
we do not need to impose any parameter restraints). 
We give a proof 
based on Proposition \ref{explicitprop}. 
\begin{thm}
Let $\gamma\in T_I^{k^{\pm 1}}$ and let $L$ be a left 
$\mathbb{C}_{\sigma,\nabla}^{k,q}[T]\#_qW$-module.
Then
$\phi\mapsto \psi_\phi$ defines a linear bijection
$L_\pi^{I^*,\pm}[\overline{w_0}\gamma^{-1}]\overset{\sim}{\longrightarrow}
\Gamma_L^{k,q}\bigl(M^{k,\pm,I}(\gamma)\bigr)_\nabla^{W}$.
\end{thm}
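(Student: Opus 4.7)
The plan is to extend the $S_m$-invariance of Corollary \ref{coreq} to full $W$-invariance by imposing and analyzing $\tau(\varpi_j)$-invariance via Proposition \ref{explicitprop}.

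By the $\textup{GL}_m$ version of Corollary \ref{coreq}, the map $\phi\mapsto\psi_\phi$ is already a linear bijection $L_\pi^{I^*,\pm}\overset{\sim}{\longrightarrow}\Gamma_L^{k,q}\bigl(M^{k,\pm,I}(\gamma)\bigr)_\nabla^{S_m}$. Since $W=S_m\ltimes P^\vee$ with $P^\vee=\mathbb{Z}^m$ generated as an abelian group by $\varpi_1,\ldots,\varpi_m$ (via $\epsilon_j=\varpi_j-\varpi_{j-1}$), and since $\lambda\mapsto\nabla^{k,q}(\tau(\lambda))$ is a group homomorphism on $P^\vee$, the additional requirement of $W$-invariance of $\psi_\phi$ is equivalent to the $m$ equations $\nabla^{k,q}(\tau(\varpi_j))\psi_\phi=\psi_\phi$ for $1\le j\le m$. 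The theorem thus reduces to showing that these $m$ equations cut $L_\pi^{I^*,\pm}[\overline{w_0}\gamma^{-1}]$ out of $L_\pi^{I^*,\pm}$.

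Applying Proposition \ref{explicitprop}, and using the identity $\psi_w=\epsilon_{\pm}^k(T_{\underline{w_0}})\pi^{k^{-1},q}(T_{ww_0})\phi$ derived in its proof, the condition $\nabla^{k,q}(\tau(\varpi_j))\psi_\phi=\psi_\phi$ becomes, coefficient by coefficient in $w\in S_m^I$,
\[
\epsilon_{\pm}^k(T_{w_j^\prime})\,\gamma^{w_j^\prime w^{-1}(\varpi_j)}\,\pi^{k^{-1},q}\bigl(T_{w w_j^\prime{}^{-1}w_0}\,Y^{w_0 w_j^\prime w^{-1}(\varpi_j)}\bigr)\phi=\pi^{k^{-1},q}(T_{ww_0})\phi.
\]
For sufficiency, assume $\phi\in L_\pi^{I^*,\pm}[\overline{w_0}\gamma^{-1}]$, so that $\pi^{k^{-1},q}(Y^\mu)\phi=\gamma^{-\overline{w_0}^{-1}\mu}\phi$ for every $\mu\in P^\vee$; since $\pi^{k^{-1},q}$ is an algebra map, the $Y$-factor on the left acts first on $\phi$ and contributes the scalar $\gamma^{-\underline{w_0}w_j^\prime w^{-1}(\varpi_j)}$, reducing the equation to a purely Hecke-algebraic identity. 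This identity is verified by writing $T_{ww_0}=T_{w w_j^\prime{}^{-1}w_0}\,T_{w_0 w_j^\prime w_0}$ with additive lengths (valid since $w\in S_m^I$, $w_j^\prime\in S_{m,I}$, $w_0$ is the longest element, and $w_0 w_j^\prime w_0\in S_{m,I^*}$), applying the $L_\pi^{I^*,\pm}$-property of $\phi$ to replace $T_{w_0 w_j^\prime w_0}$ by the scalar $\epsilon_{\pm}^{k^{-1}}(T_{w_j^\prime})$, and simplifying via \eqref{W0Iactcons}, which yields $\gamma^{(1-\underline{w_0})\mu}=(\rho_I^{k^{\pm 1}})^{-\mu}$.

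For necessity, specialize the displayed equation to $w=\overline{w_0}$, where $\psi_{\overline{w_0}}=\phi$ and $\overline{w_0}w_0=\underline{w_0}^*\in S_{m,I^*}$ (from the involution identity $\overline{w_0}^2=\underline{w_0}^*\underline{w_0}$). Each $j\in\{1,\ldots,m\}$ then produces an eigenvalue equation $\pi^{k^{-1},q}(Y^{\nu_j})\phi=(\overline{w_0}\gamma^{-1})^{\nu_j}\phi$ for the weight $\nu_j:=w_0\,\underline{\sigma^{-j}\overline{w_0}}\,\overline{w_0}^{-1}(\varpi_j)\in P^\vee$. The main obstacle is the combinatorial verification that the $m$ weights $\nu_j$ span $P^\vee$ modulo the weights on which $\underline{w_0}^*$ acts trivially (those being already determined by the $L_\pi^{I^*,\pm}$-property); combined with the commutativity of the $Y^\mu$, this promotes the $m$ conditions to the full eigenvalue condition defining $L_\pi^{I^*,\pm}[\overline{w_0}\gamma^{-1}]$. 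The proof is shorter than that of Theorem \ref{mainthmY} because $\Omega\cong\mathbb{Z}$ is cyclic in the $\textup{GL}_m$ case, so the single family $\{\tau(\varpi_j)\}$ already provides explicit generators of $\tau(P^\vee)$ modulo $S_m$.
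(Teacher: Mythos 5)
Your overall route is the same as the paper's (pass through the $S_m$-invariance statement of Corollary \ref{coreq}, reduce $W$-invariance to the conditions $\nabla^{k,q}(\tau(\varpi_j))\psi_\phi=\psi_\phi$, and compare coefficients via Proposition \ref{explicitprop}), but as written the proof has two genuine gaps. First, in the sufficiency direction your chain of simplifications does not end where you claim it does. After substituting the eigenvalue of $\pi^{k^{-1},q}(Y^{w_0w_j'w^{-1}(\varpi_j)})$ on $\phi$, factoring $T_{ww_0}=T_{ww_j'^{-1}w_0}T_{w_0w_j'w_0}$ and using the $I^*$-(anti)spherical property, and applying \eqref{W0Iactcons} in the form $\gamma^{(1-\underline{w_0})\mu}=(\rho_I^{k^{\pm1}})^{-\mu}$, what is left is exactly the scalar identity $(\rho_I^k)^{w_j'w^{-1}(\varpi_j)}=\epsilon_+^k(T_{w_j'})^2$ for all $w\in S_m^I$ and $1\leq j\leq m$, equivalently the root-pairing statement \eqref{cl} that $\langle\alpha,w^{-1}(\varpi_j)\rangle$ equals $1$ on $R_0^{I,+}\cap w_j'{}^{-1}R_0^{I,-}$ and $0$ on $R_0^{I,-}\cap w_j'{}^{-1}R_0^{I,-}$. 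This is the combinatorial heart of the argument — the paper proves it using the minisculeness of $w_0(\varpi_j)$, the length-additive factorization $\sigma^{-j}w=(\overline{\sigma^{-j}w})w_j'$, and an affine-root argument with $\tau(\varpi_j)=\zeta^j\sigma^{-j}$ — and your proposal neither states nor proves it; it cannot be obtained from the three manipulations you list, so the coefficient equations are not actually verified.

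Second, the necessity direction is left as an acknowledged ``obstacle''. Restricting to $w=\overline{w_0}$ gives only $m$ eigenvalue equations, and the proposed repair — that the $\nu_j$ span $P^\vee$ modulo weights on which $\underline{w_0}^*$ acts trivially, ``those being already determined by the $L_\pi^{I^*,\pm}$-property'' — is not correct as stated: membership in $L_\pi^{I^*,\pm}$ does not fix the eigenvalue of $\pi^{k^{-1},q}(Y^\mu)$ on $\phi$ for any $\mu$ (for instance the central weight $\varpi_m$), and before $\phi$ is known to be a weight vector the Bernstein relations only yield more complicated constraints. The clean fix, implicit in the paper, is to keep all $w\in S_m^I$: since $w_j'w^{-1}(\varpi_j)=(\overline{\sigma^{-j}w})^{-1}w_0(\varpi_j)$ and $w\mapsto\overline{\sigma^{-j}w}$ is a bijection of $S_m^I$, the weights $w_0w_j'w^{-1}(\varpi_j)$ occurring in the full family of conditions include $\varpi_j$ itself for every $j$ (take $\overline{\sigma^{-j}w}=e$), so once the scalar identity above is established the full set of conditions is equivalent to $\phi\in L_{\pi,\overline{w_0}\gamma^{-1}}$, and hence, together with $\phi\in L_\pi^{I^*,\pm}$, to $\phi\in L_\pi^{I^*,\pm}[\overline{w_0}\gamma^{-1}]$ (invertibility of $\pi^{k^{-1},q}(T_{ww_j'^{-1}w_0})$ lets you pass from the coefficient equations to the eigenvalue equations). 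With these two points supplied your argument would coincide with the paper's proof.
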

\begin{proof}
Let $\gamma\in T_I^{k^{\pm 1}}$ and 
$\phi\in L_\pi^{I^*,\pm}$. Since for $w\in S_m^I$ and $1\leq j\leq m$,
\[T_{ww_j^\prime{}^{-1}w_0}=
T_{w^{-1}}^{-1}T_{w_j^\prime}^{-1}T_{\underline{w_{0}}}T_{\overline{w_0}^{-1}},
\]
it follows from Proposition \ref{explicitprop} and 
$\pi^{k^{-1},q}(T_{\overline{w_0}^{-1}})\phi\in L_\pi^{I,\pm}$
that $\psi_\phi\in\Gamma_L\bigl(M^{k,\pm,I}(\gamma)\bigr)_\nabla^{S_m}$ 
is $W$-invariant if and only of
\[
\pi^{k^{-1},q}\bigl(Y^{w_0w_j^\prime w^{-1}(\varpi_j)}\bigr)\phi=
\epsilon_{\pm}^{k}(T_{w_j^\prime})^{-2}\gamma^{-w_j^\prime 
w^{-1}(\varpi_j)}\phi
\]
for all $w\in S_m^I$ and $1\leq j\leq m$. Since
$\overline{w_0}\gamma^{-1}=w_0(\rho_I^{k^{\pm 1}}\gamma)^{-1}$,
it thus suffices to show that
\[
(\rho_I^k)^{w_j^\prime w^{-1}(\varpi_j)}=\epsilon_+^k(T_{w_j^\prime})^2
\]
for all $1\leq j\leq m$ and $w\in S_m^I$.
Fix $w\in S_m^I$ and $1\leq j\leq m$. Then
\[
(\rho_I^k)^{w_j^\prime w^{-1}(\varpi_j)}=
\prod_{\alpha\in w_j^\prime{}^{-1}(R_0^{I,-})}
k^{2\langle \alpha,w^{-1}(\varpi_j)\rangle},\quad
\epsilon_+^k(T_{w_j^\prime})^2=\prod_{\alpha\in R_0^{I,+}\cap
w_j^\prime{}^{-1}R_0^{I,-}}k^2,
\]
so it remains to show that 
\begin{equation}\label{cl}
\langle \alpha,w^{-1}(\varpi_j)\rangle=
\begin{cases}
1\quad &\hbox{ if }\, \alpha\in R_0^{I,+}\cap w_j^\prime{}^{-1}R_0^{I,-},\\
0\quad &\hbox{ if }\, \alpha\in R_0^{I,-}\cap w_j^\prime{}^{-1}R_0^{I,-}.
\end{cases}
\end{equation}
First note that if $\alpha=w_j^\prime{}^{-1}\beta$ ($\beta\in R_0^{I,-}$)
then 
\[\langle \alpha,w^{-1}(\varpi_j)\rangle=
\langle (\overline{\sigma^{-j}w})(\beta),\sigma^{-j}(\varpi_j)\rangle\geq 0
\]
since $\sigma^{-j}(\varpi_j)=w_0(\varpi_j)$ and
$u(R_0^{I,-})\subseteq R_0^-$ for $u\in S_m^I$.
But $w_0(\varpi_j)$ is miniscule (i.e.
$|\langle \alpha,w_0(\varpi_j)\rangle|\leq 1$ for all $\alpha\in R_0$),
hence $\langle \alpha,w^{-1}(\varpi_j)\rangle\in\{0,1\}$ if
$\alpha\in w_0^\prime{}^{-1}(R_0^{I,-})$.

Fix now $\alpha\in w_j^\prime{}^{-1}(R_0^{I,-})$. Suppose first
that $\alpha\in R_0^{I,-}$.
Then $\langle\alpha,w^{-1}(\varpi_j)\rangle=
\langle w(\alpha),\varpi_j\rangle\leq 0$, since $w(\alpha)\in R_0^-$.
On the other hand, we have already observed that 
the scalar product is $0$ or $1$, hence this forces
$\langle \alpha,w^{-1}(\varpi_j)\rangle=0$.
If on the other hand $\alpha\in R_0^{I,+}$, then, since
$\sigma^{-j}w=(\overline{\sigma^{-j}w})w_j^\prime$ and
$l(\sigma^{-j}w)=l(\overline{\sigma^{-j}w})+l(w_j^\prime)$,
\[
R_0^+\cap (\sigma^{-j}w)^{-1}R_0^-=w_j^\prime{}^{-1}\bigl(
R_0^+\cap (\overline{\sigma^{-j}w})^{-1}R_0^-)\cup
(R_0^{I,+}\cap w_j^\prime{}^{-1}R_0^{I,-})
\]
(disjoint union), thus $\alpha\in R_0^+\cap (\sigma^{-j}w)^{-1}R_0^-$.
Since furthermore
$\alpha\in R_0^{I,+}$ and $w\in S_m^I$ we have $w(\alpha)\in R_0^+$,
hence $w(\alpha)\in R_0^+\cap \sigma^jR_0^-$. But
\begin{equation*}
\begin{split}
R_0^+\cap \sigma^jR_0^-&=R^+\cap \sigma^jR^-\\
&=R^+\cap \sigma^j\zeta^{-j}R^-\\
&=R^+\cap \tau(-\varpi_j)R^-.
\end{split}
\end{equation*}
In other words,
\begin{equation}\label{lastcall}
\tau(\varpi_j)(w\alpha)=w\alpha-\langle\varpi_j,w\alpha\rangle c\in R^-.
\end{equation}
We already observed that $w\alpha\in R_0^+$ and that
$\langle\varpi_j,w\alpha\rangle$ is $0$ or $1$, hence \eqref{lastcall}
can only hold true if $\langle\varpi_j,w\alpha\rangle=1$. This
completes the proof of \eqref{cl}, and hence of the theorem.
\end{proof}
\section{Cherednik-Matsuo type correspondences}
We return now again to arbitrary root systems.
Cherednik \cite{CInd} related solutions of quantum affine KZ equations
with values in principal series modules to common eigenfunctions
of the commuting Cherednik-Macdonald scalar $q$-difference operators.
We discuss and deepen this result using the
results of the previous sections.
\subsection{(Anti)symmetrization}

By Lemma \ref{tttt} we have for 
$\gamma\in T_I^{k^{\pm 1}}$
a linear map 
\begin{equation}\label{symmetmap}
\pi^{k^{-1},q}(C_{\pm}^{I^*}(k^{-1})): L_\pi^{I^*,\pm}[\overline{w_0}\gamma^{-1}]
\rightarrow
\bigl(L_\pi^{W_0\gamma^{-1}}\bigr)^{\pm},
\end{equation}
which we call the symmetrization ($+$), respectively antisymmetrization
($-$), map. Recall here that
$L_\pi^{W_0\gamma^{-1}}$ is the $H(k^{-1})$-submodule 
\[L_\pi^{W_0\gamma^{-1}}=\{\phi\in L \,\, | \,\, \pi^{k^{-1},q}(f(Y))\phi=
f(\gamma^{-1})\phi\quad
\forall f\in\mathbb{C}[T]^{W_0}\}
\]
of $L_\pi$. Define
\begin{equation}\label{TIreg}
T_{I,reg}^k:=\{\gamma\in T_I^k \,\, | \,\, k_{\alpha^\vee}^2\not=
\gamma^{\alpha^\vee}\not=1 \quad \forall \alpha\in R_0^+\setminus R_0^{I,+} \}.
\end{equation}

\begin{prop}\label{repappl}
Let $L$ be a left $\mathbb{C}_{\sigma,\nabla}^{k,q}[T]\#_qW$-module.\\
{\bf (a)} If $\gamma\in T_{I,reg}^{k^{\pm 1}}$ 
then the (anti)symmetrization map \eqref{symmetmap} is injective,
\[
\pi^{k^{-1},q}(C_{\pm}^{I^*}(k^{-1})): L_\pi^{I^*,\pm}[\overline{w_0}\gamma^{-1}]
\hookrightarrow
\bigl(L_\pi^{W_0\gamma^{-1}}\bigr)^{\pm}.
\]
{\bf (b)} Let $\gamma\in T$ such that
$k_{\alpha^\vee}^2\not=\gamma^{\alpha^\vee}\not=1$ for all $\alpha\in R_0$.
Then  
\[\pi^{k^{-1},q}(C_{\pm}(k^{-1})): L_{\pi,w_0\gamma^{-1}}
\overset{\sim}{\longrightarrow} 
\bigl(L_\pi^{W_0\gamma^{-1}}\bigr)^{\pm}
\]
is a linear isomorphism.
\end{prop}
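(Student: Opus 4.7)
The plan is to apply Theorem \ref{1exp} to the $H(k^{-1})$-module $L_\pi$, with substitutions $k\mapsto k^{-1}$, $I\mapsto I^*$, and $\gamma\mapsto\overline{w_0}\gamma^{-1}$. By Lemma \ref{dualgamma}, $\overline{w_0}\gamma^{-1}\in T_{I^*}^{k^{\mp 1}}$, and by definition every $\phi\in L_\pi^{I^*,\pm}[\overline{w_0}\gamma^{-1}]$ satisfies the central character hypothesis $\pi^{k^{-1},q}(h)\phi=\chi_{\overline{w_0}\gamma^{-1}}^{k^{-1},\pm,I^*}(h)\phi$ for $h\in H_{I^*}(k^{-1})$. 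The remaining genericity hypothesis $(\overline{w_0}\gamma^{-1})^{\alpha^\vee}\neq 1$ for $\alpha\in R_0^+\setminus R_0^{I^*,+}$ I would derive by first establishing the set identity $\overline{w_0}^{-1}(R_0^+\setminus R_0^{I^*,+})=R_0^-\setminus R_0^I$: this uses $\overline{w_0}(R_0^{I,+})=R_0^{I^*,+}$ together with the fact that $\underline{w_0}\in W_{0,I}$ preserves positivity within $R_0\setminus R_0^I$ (because $\underline{w_0}$ fixes each $\varpi_j^\vee$ for $j\notin I$). The condition then follows from $\gamma\in T_{I,reg}^{k^{\pm 1}}$ via $\alpha\leftrightarrow-\alpha$.

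For part (a), Theorem \ref{1exp} yields the expansion
\[
\pi^{k^{-1},q}\bigl(C_\pm^{I^*}(k^{-1})\bigr)\phi=\mathcal{P}\sum_{w\in W_0^{I^*}}c_w\,\pi^{k^{-1},q}\bigl(I_w(k^{-1})\bigr)\phi
\]
with nonzero prefactor $\mathcal{P}$. By Theorem \ref{intertwinerthm} each summand lies in $L_{\pi,w\overline{w_0}\gamma^{-1}}$, and the first half of the regularity combined with Steinberg's theorem (as in Proposition \ref{cal}) forces $W_{0,\overline{w_0}\gamma^{-1}}\subseteq W_{0,I^*}$, so the weights $\{w\overline{w_0}\gamma^{-1}\}_{w\in W_0^{I^*}}$ are pairwise distinct and the summands are linearly independent. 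Vanishing of the left side thus forces $c_w\pi^{k^{-1},q}(I_w(k^{-1}))\phi=0$ for every $w$. In particular the coefficient $c_e=\prod_{\alpha\in R_0^+\setminus R_0^{I^*,+}}(\pm k_{\alpha^\vee}^{\pm 1}\mp k_{\alpha^\vee}^{\mp 1}(\overline{w_0}\gamma^{-1})^{\alpha^\vee})$ of the identity intertwiner must be nonzero: a sign check shows that in the $+$-case one needs $\gamma^{\beta^\vee}\neq k_{\beta^\vee}^{2}$ for $\beta=\overline{w_0}^{-1}\alpha$, which by the set identity above is precisely the second half of $\gamma\in T_{I,reg}^{k}$, and the $-$-case is symmetric. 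Since $I_e=1$, this yields $\phi=0$ and proves injectivity.

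For part (b), injectivity is part (a) with $I=\emptyset$, the hypothesis $k_{\alpha^\vee}^2\neq\gamma^{\alpha^\vee}\neq 1$ for all $\alpha\in R_0$ being symmetric under $\alpha\leftrightarrow-\alpha$ and hence implying both halves of regularity. For surjectivity I would first establish the weight-space decomposition $L_\pi^{W_0\gamma^{-1}}=\bigoplus_{w\in W_0}L_{\pi,w\gamma^{-1}}$: since the orbit $W_0\gamma^{-1}$ has full size $|W_0|$, the $\mathcal{A}_Y^{k^{-1}}$-action on $L_\pi^{W_0\gamma^{-1}}$ factors through the quotient $\mathbb{C}[T]\otimes_{\mathbb{C}[T]^{W_0}}\mathbb{C}_{\chi_{\gamma^{-1}}^{W_0}}$, which by Chevalley's theorem is $|W_0|$-dimensional and, as the function algebra of a regular orbit, semisimple. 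Given $\psi=\sum_w\psi_w\in(L_\pi^{W_0\gamma^{-1}})^\pm$, I set $\phi:=(\mathcal{P}c_e)^{-1}\psi_{w_0}\in L_{\pi,w_0\gamma^{-1}}$; applying the expansion of Theorem \ref{1exp} to $\phi$, the $L_{\pi,w_0\gamma^{-1}}$-component of $\pi^{k^{-1},q}(C_\pm(k^{-1}))\phi$ equals exactly $\psi_{w_0}$. Then $\xi:=\psi-\pi^{k^{-1},q}(C_\pm(k^{-1}))\phi$ is a $\pm$-spherical vector in $L_\pi^{W_0\gamma^{-1}}$ with vanishing $L_{\pi,w_0\gamma^{-1}}$-component, and surjectivity reduces to showing $\xi=0$.

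The main obstacle will be this last rigidity statement: that a $\pm$-spherical vector in $L_\pi^{W_0\gamma^{-1}}$ is determined by its projection onto $L_{\pi,w_0\gamma^{-1}}$. I would prove it by projecting the identities $\pi^{k^{-1},q}(T_i)\xi=\pm k_i^{\mp 1}\xi$ onto each weight space $L_{\pi,u\gamma^{-1}}$ via the cross relations of Theorem \ref{Hchar}, producing through Lemma \ref{ABC} a triangular system in the Bruhat order on $W_0$ relating $\xi_u$ and $\xi_{s_iu}$ whose leading coefficients, controlled by the analog of Lemma \ref{baction}, are nonzero under the regularity. Starting from $\xi_{w_0}=0$, this recursion will propagate the vanishing to every $\xi_u$ and close the argument.
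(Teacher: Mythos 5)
Your part (a), together with the injectivity statement in (b), is essentially the paper's own argument: apply Theorem \ref{1exp} to the $H(k^{-1})$-module $L_\pi$ with the data $(k^{-1},I^{*},\overline{w_0}\gamma^{-1})$, use the identity $\overline{w_0}(R_0^-\setminus R_0^{I,-})=R_0^+\setminus R_0^{I^*,+}$ to transport the regularity hypothesis, separate the terms of the expansion by their pairwise distinct $\mathcal{A}_Y^{k^{-1}}$-weights (Proposition \ref{cal}\textbf{(i)}), and conclude from the nonvanishing of the coefficient of the identity intertwiner. Where you genuinely diverge is surjectivity in (b). The paper starts from an arbitrary nonzero $v\in\bigl(L_\pi^{W_0\gamma^{-1}}\bigr)^{\pm}$, forms the cyclic submodule $M=H(k^{-1})v$, bounds $\dim_{\mathbb C}M\le |W_0|$ by Steinberg's theorem on the factorization of $H(k^{-1})$, shows via the intertwiners and Corollary \ref{productY} that every weight space $M_{w\gamma^{-1}}$ is nonzero, deduces $M\simeq M^{k^{-1}}(\gamma^{-1})$, and then uses $\dim M^{\pm}=1$ (Lemma \ref{1spher}) plus injectivity to hit $v$; in particular it never needs any structural statement about the full (possibly infinite-dimensional) space $L_\pi^{W_0\gamma^{-1}}$. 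You instead decompose that whole space as $\bigoplus_{w\in W_0}L_{\pi,w\gamma^{-1}}$ via semisimplicity of the fiber algebra of $T\rightarrow T/W_0$ over a regular orbit, write down the preimage explicitly as $(\mathcal{P}c_e)^{-1}\psi_{w_0}$, and prove the residual rigidity (a $\pm$-spherical vector with vanishing $w_0\gamma^{-1}$-component is zero) by a $T_i$-recursion across weight spaces; this recursion does close, since under the hypothesis $\gamma^{\alpha^\vee}\notin\{1,k_{\alpha^\vee}^{2},k_{\alpha^\vee}^{-2}\}$ the coefficients relating $\xi_u$ and $\xi_{s_iu}$ (the same ones as in Lemma \ref{baction}) never vanish and every $u$ is connected to $w_0$ by simple reflections. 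So your route is correct and buys more explicit information (an inverse formula for the symmetrization map and a weight decomposition of the whole central-character space), at the cost of redoing in $L_\pi$ a recursion the paper has already packaged into Lemma \ref{baction}/Theorem \ref{1exp}, whereas the paper's route is shorter because it localizes everything inside a finite-dimensional cyclic module and recycles the classification results. Two small points to tighten: the $|W_0|$-dimensionality of $\mathbb{C}[T]\otimes_{\mathbb{C}[T]^{W_0}}\mathbb{C}_{\chi}$ is not Chevalley's theorem (that is the polynomial-algebra statement) but the Pittie--Steinberg freeness of $\mathbb{C}[P^\vee]$ over its $W_0$-invariants (or, geometrically, reducedness of the scheme-theoretic fiber of $T\rightarrow T/W_0$ over a regular orbit) --- the very reference \cite{Stein} the paper invokes at the corresponding step; and in (a) watch the sign bookkeeping when transporting the nonvanishing condition through $\beta=\overline{w_0}^{-1}\alpha$, a negative root, since the condition defining $T_{I,reg}^{k}$ is not symmetric under $\alpha\mapsto-\alpha$, so it is $\gamma^{(-\beta)^\vee}\neq k_{\beta^\vee}^{2}$ that matches the $+$-case, exactly as in the paper's translation of regularity to $\gamma_I$.
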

\begin{proof}
{\bf (a)} For $\gamma\in T_I^{k}$ we have 
$\gamma_I:=\overline{w_0}\gamma^{-1}
\in T_{I^*}^{k^{-1}}$, cf. Lemma \ref{dualgamma}. Furthermore,
\[
\overline{w_0}(R_0^-\setminus R_0^{I,-})=
R_0^+\setminus R_0^{I^*,+}.
\]
Hence, for $\gamma\in T$, we have 
$\gamma\in T_{I,reg}^k$ if and only if $\gamma_I\in T_{I^*}^{k^{-1}}$
and 
$k_{\alpha^\vee}^{2}\not=
\gamma_I^{\alpha^\vee}\not=1$ for all
$\alpha\in R_0^+\setminus R_0^{I^*,+}$.

Let $\gamma\in T_{I,reg}^{k^{\pm 1}}$
and $\phi\in L_\pi^{I^*,\pm}[\gamma_I]$, and suppose that
$\pi^{k^{-1},q}(C_{\pm}^{I^*}(k^{-1}))\phi=0$. 
Since $\phi\in L_{\pi,\gamma_I}$ we have $I_w(k^{-1})\phi\in L_{\pi,w\gamma_I}$
for $w\in W_0^{I^*}$. The condition 
$\gamma_I^{\alpha^\vee}\not=1$
for all $\alpha\in R_0^+\setminus R_0^{I^*,+}$ implies that
the $w\gamma_I$ ($w\in W_0^{I^*}$) are pairwise different
(see Proposition \ref{cal}{\bf (i)}). 
Hence $\pi^{k^{-1},q}(C_{\pm}^{I^*}(k^{-1}))\phi=0$ implies, in view of
Theorem \ref{1exp}, that
\[\Bigl(\prod_{\alpha\in R_0^+\setminus R_0^{I^*,+}}
(\pm k_{\alpha^\vee}^{\pm 1}\mp
k_{\alpha^\vee}^{\mp 1}\gamma_I^{\alpha^\vee})\Bigr)\phi=0.
\]
Since $\gamma_I^{\alpha^\vee}\not=k_{\alpha^\vee}^{\pm 2}$ for all
$\alpha\in R_0^+\setminus R_0^{I^*,+}$, we conclude that $\phi=0$.\\
{\bf (b)} Fix $\gamma\in T$ satisfying
$k_{\alpha^\vee}^2\not=\gamma^{\alpha^\vee}\not=1$ for all $\alpha\in R_0$.
By {\bf (a)}, $\pi^{k^{-1},q}(C_{\pm}(k^{-1}))$
defines an injective linear map
\begin{equation}\label{mapinj}
\pi^{k^{-1},q}(C_{\pm}(k^{-1})): L_{\pi,w_0\gamma^{-1}}\hookrightarrow
\bigl(L_\pi^{W_0\gamma^{-1}}\bigr)^{\pm}.
\end{equation}
It remains to show that 
\eqref{mapinj} is surjective. For the remainder of the proof
we leave out the map $\pi^{k^{-1},q}$ from the notations.

Choose a nonzero element $v\in
\bigl(L_\pi^{W_0\gamma^{-1}}\bigr)^{\pm}$ and set
$M:=H(k^{-1})v$ for the associated
cyclic $H(k^{-1})$-submodule of $L_\pi^{W_0\gamma^{-1}}$.
By a result of Steinberg \cite[Thm. 2.2]{Stein}, 
$H(k^{-1})\simeq \mathcal{K}\otimes_{\mathbb{C}} 
\bigl(\mathcal{A}_Y^{k^{-1}}\bigr)^{W_0}\otimes_{\mathbb{C}}H_0(k^{-1})$ 
as complex vector spaces by the multiplication map,
with $\mathcal{K}\subseteq \mathcal{A}_Y^{k^{-1}}$ a complex subspace
of dimension $|W_0|$. It follows that 
$\textup{Dim}_{\mathbb{C}}\bigl(M\bigr)\leq |W_0|$.

For all $w\in W_0$, the intertwiner $I_{ww_0}(k^{-1})\in H(k^{-1})$ defines a 
linear bijection
\[I_{ww_0}(k^{-1}): M_{w_0\gamma^{-1}}\overset{\sim}{\longrightarrow}
M_{w\gamma^{-1}}
\]
with inverse $c_w^{-1}I_{w_0w^{-1}}(k^{-1})$, where (cf. 
Corollary \ref{productY}),
\[c_w=d_{ww_0}(w\gamma^{-1})d_{w_0w^{-1}}(w_0\gamma^{-1}),
\]
which is nonzero since $\gamma^{\alpha^\vee}\not=
k_{\alpha^\vee}^2$
for all $\alpha\in R_0$.
Since $M$ has central character $W_0\gamma^{-1}$, there
exists a $u\in W_0$ such that $M_{u\gamma^{-1}}\not=0$. 
We conclude that $M_{w\gamma^{-1}}\not=0$ for all $w\in W_0$.
Furthermore, $w\gamma^{-1}=w^\prime\gamma^{-1}$ for $w,w^\prime\in W_0$
iff $w=w^\prime$, since $\gamma^{\alpha^\vee}\not=1$
for all $\alpha\in R_0$ (see Proposition \ref{cal}{\bf (i)}). 
Combined with $\textup{Dim}_{\mathbb{C}}(M)\leq |W_0|$
we thus conclude that
\begin{equation}\label{Li}
M=\bigoplus_{w\in W_0}M_{w\gamma^{-1}},\qquad
\textup{Dim}_{\mathbb{C}}\bigl(M_{w\gamma^{-1}}\bigr)=1 \,\,\,
\forall\, w\in W_0.
\end{equation}

It follows from \eqref{Li} and the conditions on $\gamma$ that
$M\simeq M^{k^{-1}}(\gamma^{-1})$ as $H(k^{-1})$-modules. Hence,
$M^{\pm}=\mathbb{C}v$ (cf. Lemma \ref{1spher}) and $C_\pm(k^{-1})$ 
defines a linear isomorphism
\begin{equation}\label{isoi}
C_\pm(k^{-1}): 
M_{w_0\gamma^{-1}}\overset{\sim}{\longrightarrow} M^{\pm}
\end{equation}
between the one-dimensional complex vector spaces $M_{w_0\gamma^{-1}}$
and $M^{\pm}=\mathbb{C}v$ of $M\subseteq L_\pi^{W_0\gamma^{-1}}$. 
We conclude that there exists
a $m\in M_{w_0\gamma^{-1}}\subseteq
L_{\pi,w_0\gamma^{-1}}$ such that $C_{\pm}(k^{-1})m=v$. Hence 
the map \eqref{mapinj} is surjective.
\end{proof}
\subsection{The Cherednik-Matsuo map}

\begin{defi}\label{CMdef}
Let $L$ be a left $\mathbb{C}_{\sigma,\nabla}^{k,q}[T]\#_qW$
and $\gamma\in T_I^{k^{\pm 1}}$. The Cherednik-Matsuo map
$\xi_{L,\gamma}^{k,q,\pm,I}: 
\Gamma_L^{k,q}\bigl(M^{k,\pm,I}(\gamma)\bigr)\rightarrow L$
is defined by
\[
\xi_{L,\gamma}^{k,q,\pm,I}\bigl(\sum_{w\in W_0^I}\psi_w\otimes 
v_w^{k,\pm,I}(\gamma)\bigr)=\sum_{w\in W_0^I}\epsilon_{\pm}^k(T_w)\psi_w.
\]
If $I=\emptyset$ then we 
write $\xi_{L,\gamma}^{k,q,\pm}=\xi_{L,\gamma}^{k,q,\pm,\emptyset}$.
\end{defi}
The map $\xi_{L,\gamma}^{k,q,+}$ was considered
by Cherednik \cite{CQKZ,CInd}. Its classical analog appears in
the work of Matsuo \cite{Mat}. Observe that the Cherednik-Matsuo maps
are compatible with the functorial maps 
$\Gamma_L^{k,q}(M^{k,\pm,I}(\gamma))\rightarrow 
\Gamma_L^{k,q}(M^{k,\pm,J}(\gamma))$
for $I\subseteq J$ and $\gamma\in T_J^{k^{\pm 1}}$
(cf. \eqref{funtapp} in the special case 
that $I=\emptyset$).

Note that the maps $\xi_{L,\gamma}^{k,q,\pm,I}$ for $\gamma\in T_I^{k^{\pm 1}}$
are $W_0$-equivariant in the following sense,
\begin{equation}\label{xiequiv}
\xi_{L,\gamma}^{k,q,\pm,I}\bigl(\nabla^{k,q}(w)\psi\bigr)=
w_{\pm}\bigl(\xi_{L,\gamma}^{k,q,\pm,I}(\psi)\bigr)\qquad 
(\psi\in\Gamma^{k,q}_L(M^{k,\pm,I}(\gamma)), w\in W_0),
\end{equation}
with $w_{\pm}\in\mathbb{C}_{\sigma,\nabla}^{k,q}[T]\#_qW$ given by
\eqref{wplusmin}.
This follows directly from the explicit expression \eqref{nablarel}
of $\nabla^{k,q}(s_j)$ ($1\leq j\leq n$) and \eqref{vaction}. 
Recall the set $T_{I,reg}^{k}$ (see \eqref{TIreg}) of regular
elements in $T_I^k$.
\begin{prop}\label{CMcorrsymprop}
Let $L$ be a left $\mathbb{C}_{\sigma,\nabla}^{k,q}[T]\#_qW$-module.\\
{\bf (a)} Let $\gamma\in T_I^{k^{\pm 1}}$. Then $\xi_{L,\gamma}^{k,q,\pm,I}$
restricts to a linear map
\[
\xi_{L,\gamma}^{k,q,\pm,I}: \Gamma_L^{k,q}\bigl(M^{k,\pm,I}(\gamma)\bigr)_\nabla^W
\rightarrow \bigl(L_\pi^{W_0\gamma^{-1}}\bigr)^{\pm}.
\]
The map is injective if $\gamma\in T_{I,reg}^{k^{\pm 1}}$.\\
{\bf (b)} Let $\gamma\in T$ such that $k_{\alpha^\vee}^2\not=\gamma^{\alpha^\vee}
\not=1$ for all $\alpha\in R_0$. Then $\xi_{L,\gamma}^{k,q,\pm}$ restricts
to a linear isomorphism
\[
\xi_{L,\gamma}^{k,q,\pm}: \Gamma_L^{k,q}\bigl(M^{k}(\gamma)\bigr)_\nabla^W
\overset{\sim}{\longrightarrow} \bigl(L_\pi^{W_0\gamma^{-1}}\bigr)^{\pm}.
\]
\end{prop}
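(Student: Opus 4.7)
The plan is to factor the Cherednik-Matsuo map as the composition of the inverse of the isomorphism of Theorem~\ref{mainthmY} with the (anti)symmetrization map of Proposition~\ref{repappl}, and then read off both conclusions. Concretely, for $\phi\in L_\pi^{I^*,\pm}[\overline{w_0}\gamma^{-1}]$ and the corresponding $\psi_\phi$ given by \eqref{isoY2}, I would establish the key identity
\[
\xi_{L,\gamma}^{k,q,\pm,I}(\psi_\phi)
=\epsilon_\pm^k\bigl(T_{\overline{w_0}}\bigr)\,
\pi^{k^{-1},q}\bigl(C_\pm^{I^*}(k^{-1})\bigr)\phi.
\]

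To prove this identity, I would first verify that $w\mapsto u:=w\overline{w_0}^{-1}$ is a bijection $W_0^I\overset{\sim}{\longrightarrow}W_0^{I^*}$ satisfying the length identity $l(u)+l(w)=l(\overline{w_0})$: membership $u\in W_0^{I^*}$ follows from $u(\alpha_{i_I^*})=w(\alpha_i)\in R_0^+$ for $i\in I$ (using $\overline{w_0}(\alpha_i)=\alpha_{i_I^*}$ and $w\in W_0^I$), bijectivity follows by cardinality since $|W_0^I|=|W_0/W_{0,I}|=|W_0/W_{0,I^*}|=|W_0^{I^*}|$, and the length identity comes from reading $\overline{w_0}=u^{-1}w$ as a reduced product. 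This length identity forces $T_{u\overline{w_0}}=T_w=T_{u^{-1}}^{-1}T_{\overline{w_0}}$ in $H_0(k^{-1})$, and together with the scalar identities $\epsilon_\pm^k(T_j^{-1})=\epsilon_\pm^{k^{-1}}(T_j)$ (immediate from $T_j^{-1}=T_j+k_j^{-1}-k_j$) one obtains the character factorization
\[
\epsilon_\pm^k(T_w)=\epsilon_\pm^{k^{-1}}(T_u)\,\epsilon_\pm^k\bigl(T_{\overline{w_0}}\bigr).
\]
Substituting $w=u\overline{w_0}$ in Definition~\ref{CMdef} and re-indexing the sum over $u\in W_0^{I^*}$ then delivers the boxed identity.

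With the factorization established, part (a) is almost immediate: Theorem~\ref{mainthmY} identifies $\Gamma_L^{k,q}(M^{k,\pm,I}(\gamma))_\nabla^W$ with $L_\pi^{I^*,\pm}[\overline{w_0}\gamma^{-1}]$ via $\phi\mapsto\psi_\phi$, Lemma~\ref{tttt} supplies the sphericity of $\pi^{k^{-1},q}(C_\pm^{I^*}(k^{-1}))\phi$, and the centrality of $f(Y)\in Z(H(k^{-1}))$ for $f\in\mathbb{C}[T]^{W_0}$ passes the eigenvalue $f(\overline{w_0}\gamma^{-1})=f(\gamma^{-1})$ through the (anti)symmetrizer, placing the image in $(L_\pi^{W_0\gamma^{-1}})^\pm$. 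Injectivity under $\gamma\in T_{I,reg}^{k^{\pm 1}}$ is precisely the content of Proposition~\ref{repappl}(a). For part (b), specialize to $I=\emptyset$: then $\overline{w_0}=w_0$, $I^*=\emptyset$, and $L_\pi^{\emptyset,\pm}[w_0\gamma^{-1}]=L_{\pi,w_0\gamma^{-1}}$, so Proposition~\ref{repappl}(b) upgrades the (anti)symmetrization map to an isomorphism; since $\epsilon_\pm^k(T_{w_0})\neq 0$, the factorization makes $\xi_{L,\gamma}^{k,q,\pm}$ one as well.

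The main technical obstacle is the bijection $W_0^I\leftrightarrow W_0^{I^*}$ via $w\mapsto w\overline{w_0}^{-1}$ with the length-reversal property; all remaining Hecke-algebra manipulations and the passage through Proposition~\ref{repappl} are routine, being pure reformulations of results already secured in the excerpt.
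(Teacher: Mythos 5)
Your proposal is correct and follows essentially the same route as the paper: the paper likewise factors the Cherednik--Matsuo map through the isomorphism of Theorem \ref{mainthmY} and the (anti)symmetrization map of Proposition \ref{repappl}, via the identity $\tilde{\xi}_\gamma^{I,\pm}(\psi_\phi)=\epsilon_\pm^{k^{-1}}(T_{\overline{w_0}})\,\xi_{L,\gamma}^{k,q,\pm,I}(\psi_\phi)$, which matches your boxed identity since $\epsilon_\pm^{k^{-1}}(T_{\overline{w_0}})^{-1}=\epsilon_\pm^{k}(T_{\overline{w_0}})$. The only cosmetic remark is that the length identity $l(w\overline{w_0}^{-1})=l(\overline{w_0})-l(w)$ for $w\in W_0^I$, which you assert by declaring $\overline{w_0}=u^{-1}w$ reduced, is exactly the fact already proved in the paper in the proof of Corollary \ref{coreq} and should be cited rather than asserted.
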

\begin{proof}
By Theorem \ref{mainthmY} and \eqref{symmetmap}
we have for 
$\gamma\in T_I^{k^{\pm 1}}$ a linear map
\begin{equation}\label{tildexi}
\tilde{\xi}_\gamma^{I,\pm}: \Gamma_L^{k,q}\bigl(M^{k,\pm,I}(\gamma)\bigr)_\nabla^W
\rightarrow \bigl(L_\pi^{W_0\gamma^{-1}}\bigr)^{\pm}
\end{equation}
given by
\[
\tilde{\xi}_\gamma^{I,\pm}
\bigl(\sum_{w\in W_0^I}\psi_w\otimes v_w^{k,\pm,I}(\gamma)\bigr)=
\sum_{w\in W_0^{I^*}}\epsilon_{\pm}^{k^{-1}}(T_w)\pi^{k^{-1},q}(T_w)
\psi_{\overline{w_0}}.
\]
By Theorem \ref{mainthmY} and Proposition \ref{repappl}{\bf (a)},
the map $\tilde{\xi}_\gamma^{I,\pm}$ 
is injective if $\gamma\in T_{I,reg}^{k^{\pm 1}}$.
For $I=\emptyset$, by Theorem \ref{mainthmY} and Proposition
\ref{repappl}{\bf (b)},
$\tilde{\xi}_\gamma^{\emptyset,\pm}$ is a linear isomorphism
if $k_{\alpha^\vee}^2\not=\gamma^{\alpha^\vee}\not=1$ for all $\alpha\in R_0$.
To complete the proof of the theorem, it thus suffices to show that
\[
\tilde{\xi}_\gamma^{I,\pm}(\psi)=\epsilon_{\pm}^{k^{-1}}(T_{\overline{w_0}})
\xi_{L,\gamma}^{k,q,\pm,I}(\psi),\qquad \forall\, \psi\in 
\Gamma_L^{k,q}\bigl(M^{k,\pm,I}(\gamma)\bigr)_\nabla^W.
\]
Let $\psi=\sum_{w\in W_0^I}\psi_w\otimes v_w^{k,\pm,I}(\gamma)\in
\Gamma_L^{k,q}\bigl(M^{k,\pm,I}(\gamma)\bigr)_\nabla^W$. Then
$\psi_w=\pi^{k^{-1},q}(T_{w\overline{w_0}^{-1}})\phi$ 
for all $w\in W_0^I$ with
$\phi\in L_\pi^{I^*,\pm}[\overline{w_0}\gamma^{-1}]$.

Since $\overline{w_0}(\alpha_i)=\alpha_{i_I^*}$ for
$i\in I$, we have $W_{0,I^*}=\overline{w_0}W_{0,I}\overline{w_0}^{-1}$
and $W_0^{I^*}=W_0^I\overline{w_0}^{-1}$. 
Furthermore, for $w\in W_0^I$
we have $T_{w\overline{w_0}^{-1}}=T_{w^{-1}}^{-1}T_{\overline{w_0}^{-1}}$,
see the proof of Corollary \ref{coreq}.
Hence we compute,
\begin{equation*}
\begin{split}
\tilde{\xi}_\gamma^{I,\pm}(\psi)&=\sum_{w\in W_0^{I^*}}
\epsilon_{\pm}^{k^{-1}}(T_w)\pi^{k^{-1},q}(T_w)\phi\\
&=\sum_{w\in W_0^I}\epsilon_{\pm}^{k^{-1}}(T_{w\overline{w_0}^{-1}})
\pi^{k^{-1},q}(T_{w\overline{w_0}^{-1}})\phi\\
&=\sum_{w\in W_0^I}\epsilon_{\pm}^{k^{-1}}(T_{w^{-1}}^{-1}T_{\overline{w_0}^{-1}})
\psi_w\\
&=\epsilon_{\pm}^{k^{-1}}(T_{\overline{w_0}})\sum_{w\in W_0^I}\epsilon_{\pm}^k(T_w)
\psi_w\\
&=\epsilon_{\pm}^{k^{-1}}(T_{\overline{w_0}})\xi_{L,\gamma}^{k,\gamma,\pm,I}(\psi),
\end{split}
\end{equation*}
as desired.
\end{proof}

\subsection{Spectral problem of families of 
commuting $q$-difference operators}
For a left $\mathbb{C}_{\sigma,\nabla}^{k,q}[T]\#_qW$-module $L$
and $\gamma\in T$, we will identify, following Cherednik \cite{CInd},
the subspace $\bigl(L_\pi^{W_0\gamma^{-1}}\bigr)^\pm$,
which appears in Proposition \ref{CMcorrsymprop},
with a space of common eigenfunctions
of a commuting family of $q$-difference operators. In case of the
symmetric theory ($+$),
they are the Cherednik-Macdonald $q$-difference operators. 
\begin{defi}
The subalgebra
\[\mathbb{D}_{\sigma,\nabla}^{k,q}:=\mathbb{C}_{\sigma,\nabla}^{k,q}[T]\#_q
\tau(P^\vee)
\]
of $\mathbb{C}_{\sigma,\nabla}^{k,q}[T]\#_qW$ is called 
the algebra of $q$-difference
operators on $T$ with coefficients from $\mathbb{C}_{\sigma,\nabla}^{k,q}[T]$.
\end{defi}

Recall the elements $w_{\pm}\in\mathbb{C}_{\sigma,\nabla}^{k,q}[T]\#_qW$
from \eqref{wplusmin}.
A (twisted) $W_0$-action by algebra
automorphisms on $\mathbb{D}_{\sigma,\nabla}^{k,q}\subset
\mathbb{C}_{\sigma,\nabla}^{k,q}[T]\#_qW$ is given by
\[
(w,D)\mapsto w_{\pm}Dw_{\pm}^{-1}\qquad (w\in W_0,\,\, D\in
\mathbb{D}_{\sigma,\nabla}^{k,q}).
\]
We set
\[
\mathbb{D}_{\sigma,\nabla}^{k,q,\pm}:=\{D\in\mathbb{D}_{\sigma,\nabla}^{k,q}
\,\, | \,\, w_{\pm}Dw_{\pm}^{-1}=D\quad \forall\, w\in W_0\}
\]
the subalgebra of $\mathbb{D}_{\sigma,\nabla}^{k,q}$ consisting of
$W_{0,\pm}$-invariant $q$-difference operators. 

The following result is due to Cherednik \cite{CInd}.
\begin{prop}\label{CMDEprop}
For $f\in\mathbb{C}[T]^{W_0}$ write $D_f^{k,q,\pm}:=\sum_{w\in W_0}D_{f,w}^{\pm}$ 
with $D_{f,w}^{\pm}$ the unique elements from $\mathbb{D}_{\sigma,\nabla}^{k,q}$
such that
\[\pi^{k^{-1},q}(f(Y))=\sum_{w\in W_0}D_{f,w}^{\pm}w_{\pm}
\]
in $\mathbb{C}_{\sigma,\nabla}^{k,q}[T]\#_qW$.
Then the map $f\mapsto D_f^{k,q,\pm}$ defines an algebra map
\begin{equation}\label{algmap}
\mathbb{C}[T]^{W_0}\rightarrow \mathbb{D}_{\sigma,\nabla}^{k,q,\pm}.
\end{equation}
In particular, the $q$-difference operators $D_f^{k,q,\pm}$ 
($f\in\mathbb{C}[T]^{W_0}$) pairwise commute.
\end{prop}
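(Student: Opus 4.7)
The plan consists of three movements: unique decomposition, $W_{0,\pm}$-invariance, and the algebra-homomorphism property. First I would verify that $\pi^{k^{-1},q}(f(Y))$ lies in the subalgebra $\mathbb{D}_{\sigma,\nabla}^{k,q}\#_qW_0$ of $\mathbb{C}_{\sigma,\nabla}^{k,q}[T]\#_qW$. This is immediate from the formulas for $\pi^{k^{-1},q}$ on the generators $T_j$ ($0\leq j\leq n$) and $T_\omega$ ($\omega\in\Omega$): using $s_0=\tau(\varphi^\vee)s_\varphi$ and expressing each $\omega\in\Omega$ through the factorization $W=W_0\ltimes\tau(P^\vee)$, each generator visibly sits in $\mathbb{D}_{\sigma,\nabla}^{k,q}\#_qW_0$. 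Because $w_\pm$ differs from $w$ only by an invertible element of $\mathbb{C}_{\sigma,\nabla}^{k,q}[T]^\times$, the family $\{w_\pm\}_{w\in W_0}$ is a left $\mathbb{D}_{\sigma,\nabla}^{k,q}$-basis of $\mathbb{D}_{\sigma,\nabla}^{k,q}\#_qW_0$, giving a unique decomposition $\pi^{k^{-1},q}(f(Y))=\sum_{w\in W_0}D_{f,w}^\pm w_\pm$ with $D_{f,w}^\pm\in\mathbb{D}_{\sigma,\nabla}^{k,q}$.

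For the crucial $W_{0,\pm}$-invariance of $D_f^{k,q,\pm}$, I would invoke Bernstein's theorem: since $f\in\mathbb{C}[T]^{W_0}$, $f(Y)\in Z(H(k^{-1}))$, so $X:=\pi^{k^{-1},q}(f(Y))$ commutes with each $\pi^{k^{-1},q}(T_i)$ for $1\leq i\leq n$. By Lemma \ref{W0IinvL}, $L^{W_{0,\pm}}$ is carved out as the common eigenspace of the $\pi^{k^{-1},q}(T_i)$ with eigenvalues $\epsilon_\pm^{k^{-1}}(T_i)$; the commutation then forces $X$ to preserve $L^{W_{0,\pm}}$ for any $\mathbb{C}_{\sigma,\nabla}^{k,q}[T]\#_qW$-module $L$. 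Since $w_\pm$ acts trivially on $L^{W_{0,\pm}}$, the decomposition gives $X\phi=D_f^{k,q,\pm}\phi$ for $\phi\in L^{W_{0,\pm}}$, so $D_f^{k,q,\pm}$ itself preserves $L^{W_{0,\pm}}$. Taking $L=\mathcal{M}(T)$ with $q^{1/m}$ not a root of unity, the element $\Delta_j:=s_{j,\pm}D_f^{k,q,\pm}s_{j,\pm}^{-1}-D_f^{k,q,\pm}\in\mathbb{D}_{\sigma,\nabla}^{k,q}$ annihilates $L^{W_{0,\pm}}$, and richness of this space (linear independence of its $q$-translates over $\mathbb{C}(T)$) forces $\Delta_j=0$, proving invariance. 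The root-of-unity case follows since each $D_{f,w}^\pm$ depends rationally on $q^{1/m}$ and on $k$, so the vanishing of $\Delta_j$ at generic parameters extends to all values.

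The algebra-homomorphism property then follows from a direct computation. Expanding
\begin{equation*}
\pi^{k^{-1},q}(f(Y))\pi^{k^{-1},q}(g(Y))=\sum_{u,v\in W_0}D_{f,u}^\pm\bigl(u_\pm D_{g,v}^\pm u_\pm^{-1}\bigr)(uv)_\pm
\end{equation*}
using the cocycle identity $u_\pm v_\pm=(uv)_\pm$, and then collecting the coefficients of the basis elements $(uv)_\pm$, yields
\begin{equation*}
D_{fg}^{k,q,\pm}=\sum_{u\in W_0}D_{f,u}^\pm\,u_\pm D_g^{k,q,\pm}u_\pm^{-1}=D_f^{k,q,\pm}D_g^{k,q,\pm},
\end{equation*}
where the second equality uses $W_{0,\pm}$-invariance of $D_g^{k,q,\pm}$. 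Unitality is immediate from $\pi^{k^{-1},q}(1)=1$, and pairwise commutativity of the $D_f^{k,q,\pm}$ is inherited from commutativity of $\mathbb{C}[T]^{W_0}$. The principal obstacle is the invariance step of the second paragraph: passing from the assertion that $D_f^{k,q,\pm}$ preserves $L^{W_{0,\pm}}$ to the assertion that it is $W_{0,\pm}$-invariant as a $q$-difference operator requires a non-trivial density/separation argument for the generic case, together with a rationality-in-parameters argument to cover the root-of-unity regime.
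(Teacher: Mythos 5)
Your skeleton — unique decomposition over the basis $\{w_{\pm}\}_{w\in W_0}$ of $\mathbb{C}_{\sigma,\nabla}^{k,q}[T]\#_qW$ as a free left $\mathbb{D}_{\sigma,\nabla}^{k,q}$-module, $W_{0,\pm}$-invariance of $D_f^{k,q,\pm}$ from centrality of $f(Y)$, then the homomorphism property via the cocycle identity $u_{\pm}v_{\pm}=(uv)_{\pm}$ — is sound, and your third step is essentially the computation the paper delegates to \cite[Lemma 2.7]{LS}. Where you genuinely diverge is the invariance step, and there your route is heavier than it needs to be. The paper's hint (centrality of $f(Y)$ together with the factorizations of $\pi^{k^{-1},q}(T_i)-k_i^{-1}$ and of $\pi^{k^{-1},q}(T_i)+k_i$ as invertible elements of $\mathbb{C}_{\sigma,\nabla}^{k,q}[T]$ times $s_{i,\pm}-1$, taken from the proof of Lemma \ref{W0IinvL}) leads to a purely algebraic argument inside $\mathbb{C}_{\sigma,\nabla}^{k,q}[T]\#_qW$: writing $X=\pi^{k^{-1},q}(f(Y))=\sum_w D_{f,w}^{\pm}w_{\pm}$, the relation $X\,c(s_{i,\pm}-1)=c(s_{i,\pm}-1)X$ gives $(s_{i,\pm}-1)X=\bigl(c^{-1}Xc\bigr)(s_{i,\pm}-1)$; comparing coefficients of the basis $\{w_{\pm}\}$ and summing over $w\in W_0$ makes the right-hand contributions telescope to zero and yields $s_{i,\pm}D_f^{k,q,\pm}s_{i,\pm}^{-1}=D_f^{k,q,\pm}$ uniformly in $q^{\frac{1}{m}}$ and $k$, roots of unity included — which is the whole point of the paper's generality. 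You instead only extract that $D_f^{k,q,\pm}$ preserves $L^{W_{0,\pm}}$ for every module $L$ (that part is correct, and conceptually pleasant), and then must (i) prove that a nonzero element of $\mathbb{D}_{\sigma,\nabla}^{k,q}$ cannot annihilate $\mathcal{M}(T)^{W_{0,\pm}}$ when $q^{\frac{1}{m}}$ is not a root of unity, and (ii) run a rationality-in-parameters specialization to cover roots of unity. Both can be completed — (i) because $\mathbb{C}[T]^{W_0}$ separates the pairwise distinct $W_0$-orbits of the finitely many points $q^{-\lambda}t$ for generic $t$, with the antisymmetric case reduced to this by multiplying by $G^{k,-}$; (ii) because the decomposition can be performed over the localization of the coefficient ring with $q^{\frac{1}{m}}$ and $k$ as invertible indeterminates at the elements $1-e_q^a$ and $k_a^{-1}-k_ae_q^a$, which specializes at every parameter value — but you leave exactly these two points, correctly identified by you as the principal obstacle, unproved, and in your route they carry most of the weight. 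In short: your argument is a valid alternative once (i) and (ii) are written out, but the algebraic summation trick suggested by the paper eliminates both the density argument and the specialization argument at the cost of a two-line coefficient comparison.
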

For the proof of Proposition \ref{CMDEprop}
one first shows that 
$D_f^{k,q,\pm}$ ($f\in\mathbb{C}[T]^{W_0}$) is $W_{0,\pm}$-invariant
(for which one uses the fact that $f(Y)$ lies in the center of $H(k^{-1})$
if $f\in\mathbb{C}[T]^{W_0}$, as well as the expressions of $\pi^{k^{-1},q}(T_i)$
($1\leq i\leq n$) from
the proof of Lemma \ref{W0IinvL}). It then follows that 
\eqref{algmap} is an algebra homomorphism
(see \cite[Lemma 2.7]{LS} for a detailed proof
in the symmetric case ($+$)).

\begin{defi}
The $D_f^{k,q,+}\in \mathbb{D}_{\sigma,\nabla}^{k,q,+}$ 
($f\in\mathbb{C}[T]^{W_0}$) are the
Cherednik-Macdonald $q$-difference operators.
\end{defi}
The Macdonald $q$-difference operators correspond to $D_f^{k,q,+}$ with
$f\in\mathbb{C}[T]^{W_0}$ given by
$f(t)=\sum_{\mu\in W_0\lambda}t^\mu$ and $w_0\lambda$ 
a (quasi-)miniscule coweight. They can be written down explicitly (cf., e.g., 
\cite[\S 4.4]{M}).

\begin{defi}\label{SPM}
Let $L$ be a left $\mathbb{C}_{\sigma,\nabla}^{k,q}[T]\#_qW$-module
and $\gamma\in T$.
We call 
\[
\textup{SpM}_L^{k,q,\pm}(W_0\gamma):=\{\phi\in L \,\, | \,\, 
D_f^{k,q,\pm}\phi=f(\gamma)\phi\quad \forall\, f\in\mathbb{C}[T]^{W_0}\}
\]
the solution space of the spectral problem of the commuting operators
$D_f^{k,q,\pm}$ ($f\in\mathbb{C}[T]^{W_0}$) on $L$, 
with spectral parameter $W_0\gamma\in T/W_0$.
\end{defi}
By the previous proposition, $\textup{SpM}_L^{k,q,\pm}(W_0\gamma)$
is a $W_{0,\pm}$-invariant subspace of $L$.
We write $\textup{SpM}_L^{k,q,\pm}(W_0\gamma)^{W_{0,\pm}}$ 
for the corresponding subspace
of $W_{0,\pm}$-invariant elements.

\begin{prop}\label{propsame}
Let $L$ be a $\mathbb{C}_{\sigma,\nabla}^{k,q}[T]\#_qW$-module and
$\gamma\in T$. 
Then
\begin{equation}\label{same}
\bigl(L_\pi^{W_0\gamma}\bigr)^{\pm}=\textup{SpM}_L^{k,q,\pm}(W_0\gamma)^{W_{0,\pm}}.
\end{equation}
\end{prop}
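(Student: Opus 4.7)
The plan is to show that both sides of \eqref{same} coincide with the same concrete subspace of $L$: the $W_{0,\pm}$-invariant vectors $\phi\in L$ satisfying $D_f^{k,q,\pm}\phi=f(\gamma)\phi$ for all $f\in\mathbb{C}[T]^{W_0}$. The whole argument is a bookkeeping exercise combining Lemma \ref{W0IinvL} (which rewrites the spherical condition as $W_{0,\pm}$-invariance) with the defining decomposition of the operators $D_f^{k,q,\pm}$ from Proposition \ref{CMDEprop}.

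First I would unravel the left-hand side. By definition $(L_\pi^{W_0\gamma})^{\pm}=L_\pi^{W_0\gamma}\cap L_\pi^{\pm}$, where $L_\pi^{\pm}=L_\pi^{[1,n],\pm}$ is the (anti)spherical part for the $H_0(k^{-1})$-action through $\pi^{k^{-1},q}$. Applying Lemma \ref{W0IinvL} with $I=\{1,\ldots,n\}$ (so that $W_{0,I}=W_0$) identifies $L_\pi^{\pm}$ with $L^{W_{0,\pm}}=\{\phi\in L \,\, | \,\, w_{\pm}\phi=\phi\,\,\forall\,w\in W_0\}$. Hence
\[
(L_\pi^{W_0\gamma})^{\pm}=\{\phi\in L^{W_{0,\pm}} \,\, | \,\, \pi^{k^{-1},q}(f(Y))\phi=f(\gamma)\phi\quad \forall\,f\in\mathbb{C}[T]^{W_0}\}.
\]

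Next I would use the defining identity
\[
\pi^{k^{-1},q}(f(Y))=\sum_{w\in W_0}D_{f,w}^{\pm}w_{\pm}
\]
in $\mathbb{C}_{\sigma,\nabla}^{k,q}[T]\#_qW$ from Proposition \ref{CMDEprop}. For any $\phi\in L^{W_{0,\pm}}$ every $w_{\pm}$ acts as the identity, so
\[
\pi^{k^{-1},q}(f(Y))\phi=\Bigl(\sum_{w\in W_0}D_{f,w}^{\pm}\Bigr)\phi=D_f^{k,q,\pm}\phi.
\]
Consequently, on the subspace $L^{W_{0,\pm}}$ the spectral condition $\pi^{k^{-1},q}(f(Y))\phi=f(\gamma)\phi$ (for all $f\in\mathbb{C}[T]^{W_0}$) is equivalent to the spectral condition $D_f^{k,q,\pm}\phi=f(\gamma)\phi$, giving
\[
(L_\pi^{W_0\gamma})^{\pm}=L^{W_{0,\pm}}\cap \textup{SpM}_L^{k,q,\pm}(W_0\gamma)=\textup{SpM}_L^{k,q,\pm}(W_0\gamma)^{W_{0,\pm}}.
\]

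There is no substantive obstacle: the statement essentially records the definition of the Cherednik-Macdonald operators as the $W_{0,\pm}$-collapse of the Cherednik-Dunkl operators, once the (anti)spherical vectors have been reinterpreted as $W_{0,\pm}$-fixed vectors. The only points worth a second look are bookkeeping: that Lemma \ref{W0IinvL} at $I=\{1,\ldots,n\}$ produces exactly $L_\pi^{\pm}=L^{W_{0,\pm}}$ (consistent with the convention $M^{\pm}=M^{[1,n],\pm}$ stated just before that lemma), and that the formula for $D_f^{k,q,\pm}$ from Proposition \ref{CMDEprop} is unambiguous for $f\in\mathbb{C}[T]^{W_0}$ (which is ensured by the centrality of $f(Y)$ in $H(k^{-1})$ used in the proof of that proposition).
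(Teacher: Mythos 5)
Your proof is correct and follows essentially the same route as the paper: apply Lemma \ref{W0IinvL} (at $I=\{1,\ldots,n\}$) to identify the (anti)spherical vectors with $L^{W_{0,\pm}}$, then use the decomposition $\pi^{k^{-1},q}(f(Y))=\sum_{w\in W_0}D_{f,w}^{\pm}w_{\pm}$ from Proposition \ref{CMDEprop} to see that on $W_{0,\pm}$-invariants the Cherednik-Dunkl and Cherednik-Macdonald spectral conditions coincide. Nothing further is needed.
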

\begin{proof}
We have $L_\pi^{\pm}=L^{W_{0,\pm}}$ by Lemma \ref{W0IinvL} (where, recall,
$L_\pi^{\pm}$ is the space of (anti)spherical vectors in $L_\pi$,
see Subsection \ref{spherical}).
Furthermore, 
for $\phi\in L^{W_{0,\pm}}$ we have, for all $f\in\mathbb{C}[T]^{W_0}$,
\[D_f^{k,q,\pm}\phi=\sum_{w\in W_0}D_{f,w}^{\pm}\phi=
\sum_{w\in W_0}D_{f,w}^{\pm}w_{\pm}\phi=\pi^{k^{-1},q}(f(Y))\phi.
\]
This implies now immediately \eqref{same}.
\end{proof}

\begin{rema}\label{antisymmpol}
In this remark we explain the link to the theory
of (anti)symmetric Macdonald polynomials (see \cite{C,M}).
Fix $q\in\mathbb{C}^\times$, not a root of unity.
Fix furthermore a multiplicity function 
$k$, sufficiently generic so that the theory of 
nonsymmetric Macdonald polynomials with respect to the parameters
$(k^{-1},q)$ is to our proposal.

Set $L:=\mathbb{C}_{\sigma,\nabla}^{k,q}[T]$ (viewed as left
$\mathbb{C}_{\sigma,\nabla}^{k,q}[T]\#_qW$-module) and consider 
the linear map
\[\pi^{k^{-1},q}(C_{\pm}(k^{-1})): L_\pi\rightarrow L_\pi^{\pm}
\]
(see Lemma \ref{tttt}). It restricts, for each $\gamma\in T$,
to a linear map 
\[\pi^{k^{-1},q}(C_{\pm}(k^{-1})): L_\pi^{W_0\gamma^{-1}}
\rightarrow \bigl(L_\pi^{W_0\gamma^{-1}}\bigr)^{\pm}.
\]
Let $\lambda\in P^\vee$. Then there exists, up to a scalar multiple, a unique
$0\not=E_\lambda(k^{-1},q)\in\mathbb{C}[T]\subset L$ satisfying
\[
\pi^{k^{-1},q}(f(Y))E_\lambda(k^{-1},q)=
f(\gamma_\lambda(k^{-1},q)^{-1})E_\lambda(k^{-1},q),\qquad
\forall f\in \mathbb{C}[T],
\]
where
\[
\gamma_\lambda(k^{-1},q)=q^{\lambda}\prod_{\alpha\in R_0^+}
k_{\alpha^\vee}^{-\eta(\langle\lambda,\alpha\rangle)\alpha}
\]
with $\eta(x)=1$ if $x>0$ and $=-1$ if $x\leq 0$.
The Laurent polynomial $E_\lambda(k^{-1},q)$
is the nonsymmetric Macdonald polynomial of degree $\lambda$. Let
$P_+^\vee$ be the cone of dominant coweights. Then,
writing $\gamma_\lambda=\gamma_\lambda(k^{-1},q)$, the symmetric and
antisymmetric Macdonald polynomials 
are defined, for $\lambda\in P_+^\vee$, by
\[
P_\lambda^{(\pm)}(k^{-1},q):=\pi^{k^{-1},q}(C_{\pm}(k^{-1}))E_\lambda(k^{-1},q),
\]
which is an element in $\bigl(L_\pi^{W_0\gamma_\lambda^{-1}}\bigr)^{\pm}=
\textup{SpM}_L^{k,q,\pm}(W_0\gamma_\lambda^{-1})^{W_{0,\pm}}$.
Under generic conditions on the parameters we have 
$P_\lambda^{(+)}(k^{-1},q)\not=0$ for all $\lambda\in P_+^\vee$,
and $P_\lambda^{(-)}(k^{-1},q)\not=0$ unless 
$\lambda\in P_+^\vee\setminus (\rho^\vee+P_+^\vee)$, in which case
$P_\lambda^{(-)}(k^{-1},q)=0$
(cf., e.g., \cite[\S 5.7]{M}).
\end{rema}

From Proposition \ref{CMcorrsymprop} we now immediately obtain
the following important intermediate result.
\begin{prop}\label{CMcorrsym}
Let $L$ be a left $\mathbb{C}_{\sigma,\nabla}^{k,q}[T]\#_qW$-module.\\
{\bf (a)} Let $\gamma\in T_I^{k^{\pm 1}}$. Then $\xi_{L,\gamma}^{k,q,\pm,I}$
restricts to a linear map
\[
\xi_{L,\gamma}^{k,q,\pm,I}: \Gamma_L^{k,q}\bigl(M^{k,\pm,I}(\gamma)\bigr)_\nabla^W
\rightarrow \textup{SpM}_L^{k,q,\pm}(W_0\gamma^{-1})^{W_{0,\pm}}.
\]
The map is injective if $\gamma\in T_{I,reg}^{k^{\pm 1}}$ (see \eqref{TIreg}).\\
{\bf (b)} Let $\gamma\in T$ such that $k_{\alpha^\vee}^2\not=\gamma^{\alpha^\vee}
\not=1$ for all $\alpha\in R_0$. Then $\xi_{L,\gamma}^{k,q,\pm}$ restricts
to a linear isomorphism
\[
\xi_{L,\gamma}^{k,q,\pm}: \Gamma_L^{k,q}\bigl(M^{k}(\gamma)\bigr)_\nabla^W
\overset{\sim}{\longrightarrow} 
\textup{SpM}_L^{k,q,\pm}(W_0\gamma^{-1})^{W_{0,\pm}}.
\]
\end{prop}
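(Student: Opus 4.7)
The plan is straightforward, since this proposition is essentially a repackaging of two results already established earlier: Proposition \ref{CMcorrsymprop}, which analyzes the Cherednik-Matsuo map as a map into the invariant subspace $\bigl(L_\pi^{W_0\gamma^{-1}}\bigr)^{\pm}$, and Proposition \ref{propsame}, which identifies that invariant subspace with the $W_{0,\pm}$-invariants of the spectral space $\textup{SpM}_L^{k,q,\pm}(W_0\gamma^{-1})$. So my approach would be simply to concatenate these two facts.

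More precisely, first I would apply Proposition \ref{propsame} with $\gamma$ replaced by $\gamma^{-1}$ to obtain the equality
\[
\bigl(L_\pi^{W_0\gamma^{-1}}\bigr)^{\pm}=\textup{SpM}_L^{k,q,\pm}(W_0\gamma^{-1})^{W_{0,\pm}}
\]
as subspaces of $L$. Note this uses nothing beyond the defining formula $\pi^{k^{-1},q}(f(Y))=\sum_{w\in W_0}D_{f,w}^{\pm}w_{\pm}$, which collapses to $\pi^{k^{-1},q}(f(Y))\phi=D_f^{k,q,\pm}\phi$ on $W_{0,\pm}$-invariant elements $\phi$, the latter subspace being precisely $L_\pi^{\pm}$ by Lemma \ref{W0IinvL}.

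Then, having made this identification, assertion (a) of the proposition is nothing but the statement of Proposition \ref{CMcorrsymprop}(a), and likewise for (b). No further computation is required.

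The only subtlety worth flagging is that this short reduction hides all the real work, which already went into Proposition \ref{CMcorrsymprop}: namely the factorization of $\xi_{L,\gamma}^{k,q,\pm,I}$ through the isomorphism of Theorem \ref{mainthmY} onto the weight space $L_\pi^{I^*,\pm}[\overline{w_0}\gamma^{-1}]$, followed by (anti)symmetrization, together with Proposition \ref{repappl} controlling the injectivity/bijectivity of that symmetrization step via the explicit formula from Theorem \ref{1exp} and the analysis of intertwiners on generic weight spaces. In particular the principal series case (b) is where surjectivity is genuinely used, and this is the only nontrivial input; in the parabolic case (a) only injectivity on the regular locus $T_{I,reg}^{k^{\pm 1}}$ is available, as there is in general no reason for a $W_{0,\pm}$-invariant spectral eigenfunction to descend from a $W$-invariant solution of the quantum affine KZ equations with values in the smaller induced module.
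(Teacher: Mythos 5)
Your proposal is correct and matches the paper exactly: the paper obtains Proposition \ref{CMcorrsym} immediately by combining Proposition \ref{CMcorrsymprop} with the identification $\bigl(L_\pi^{W_0\gamma^{-1}}\bigr)^{\pm}=\textup{SpM}_L^{k,q,\pm}(W_0\gamma^{-1})^{W_{0,\pm}}$ from Proposition \ref{propsame}, which is precisely the concatenation you describe. Your remarks on where the real work lies (Theorem \ref{mainthmY}, Proposition \ref{repappl}) also accurately reflect the structure of the earlier arguments.
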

\begin{rema}
Let $L$ be a left $\mathbb{C}_{\sigma,\nabla}^{k,q}[T]\#_qW$-module.
Multiplcation by $G^{k,-}$ defines a linear isomorphism
$L_\pi^+\overset{\sim}{\longrightarrow} L_\pi^-$.
Through this map, in case $L=\mathbb{C}_{\sigma,\nabla}^{k,q}[T]$
and the parameters $k,q$ are generic, symmetric Macdonald polynomials
are mapped to antisymmetric Macdonald polynomials with respect to a 
shifted multiplicity function, see, e.g., \cite[(5.8.9)]{M}.   
\end{rema}

\subsection{The correspondences}

In this subsection we prove a nonsymmetric version of Proposition
\ref{CMcorrsym} (Proposition \ref{CMcorrsym} then is reobtained 
by restriction to the subspace of $W_0$-invariant
elements).
We will prove the nonsymmetric version of the theorem using a 
construction which is
motivated by Opdam's \cite[\S3]{O} analysis of the trigonometric KZ equation
(see also Cherednik and Ma \cite[\S 3.5]{CM} for a different but closely
related treatment).

Let $A$ and $B$ be unital associative $\mathbb{C}$-algebras.
We write 
$\textup{BiMod}_{(A,B)}$ for the category of left $(A,B)$-bimodules over
$\mathbb{C}$. 

Let $A$ be an an unital associative $\mathbb{C}$-algebra, endowed with a left
action of a group $G$ by unital algebra automorphisms. Write $A\#G$ for the
associated smashed product algebra 
(so $A\#G\simeq A\otimes_{\mathbb{C}}\mathbb{C}[G]$
as vector spaces, with multiplication law $(a\otimes g)
(a^\prime\otimes g^\prime)=ag(a^\prime)\otimes gg^\prime$).
We then have a covariant functor
\begin{equation}\label{spinor}
F_A^G: \textup{Mod}_{A\#G}\rightarrow \textup{BiMod}_{(A\#G,\mathbb{C}[G])}
\end{equation}
defined as follows. Let $M$ be a left $A\#G$-module. Then
$F_A^G(M)$ as complex vector space is the space of functions
$f: G\rightarrow M$. It is viewed as left $A\#G$-module by
\begin{equation*}
\begin{split}
(a\cdot f)(g^\prime)&:=a\cdot f(g^\prime),\\
(g\cdot f)(g^\prime)&:=g\cdot f(g^\prime g)
\end{split}
\end{equation*}
for $a\in A$, $g,g^\prime\in G$ and $f\in F_A^G(M)$
where, on the right hand side, the dot stands for the $A\#G$-action
on $M$ (from now on we leave out the dot from the notations). 
The left $\mathbb{C}[G]$-action on $F_A^G(M)$, commuting with the
above $A\#G$-action, is defined by
\[(\mu(g)f)(g^\prime):=f(g^{-1}g^\prime)
\]
for $f\in F_A^G(M)$ and $g,g^\prime\in G$. If $\phi$ is a morphism of
$A\#G$-modules then $F_{M}^G(\phi)$ is set to be
$(F_{A}^G(\phi)f)(g):=\phi(f(g))$.

Let $M$ be a left $A\#G$-module. Then we have a linear
isomorphism of $A\#G$-modules
\begin{equation}\label{identpure}
F_A^G(M)^{\mu(G)}\overset{\sim}{\longrightarrow} M,\qquad
f\mapsto f(e),
\end{equation}
where $e\in G$ is the identity 
element (we could as well evaluate $f$ at any other
element $g\in G$). 
We will freely use this identification in the remainder of this subsection.
If $M$ is a $(A\#G,\mathbb{C}[G])$-bimodule, then we write $M^G$ for
the subspace of $G$-invariants in $M$, with the $G$-action coming from
the action of $A\#G$ on $M$.
\begin{lem}\label{otheridentification}
Let $A$ be a unital associative $\mathbb{C}$-algebra endowed with an action
of a group $G$ by unital algebra automorphisms. Let $M$ be a left $A\#G$-module.
\begin{enumerate}
\item[{\bf (i)}] We have a linear isomorphism
\[\Xi_M: M\overset{\sim}{\longrightarrow} F_A^G(M)^{G}
\]
defined by $(\Xi_Mm)(g):=g^{-1}m$ for $g\in G$ and $m\in M$.
Furthermore, $\Xi_M^{-1}(f)=f(e)$ for $f\in F_A^G(M)^G$. 
\item[{\bf (ii)}] For $m\in M$ and $g\in G$ we have $\Xi_M(gm)=
\mu(g)(\Xi_Mm)$. In particular, $\Xi_M|_{M^G}=\textup{id}$ if we take
the identification \eqref{identpure} into account.
\end{enumerate}
\end{lem}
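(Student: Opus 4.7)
The plan is to verify directly from the definitions that $\Xi_M$ takes values in the $G$-invariant subspace of $F_A^G(M)$, then exhibit its inverse as evaluation at $e$, and finally check the compatibility with the $A\#G$-action and the identification \eqref{identpure}. The argument is essentially bookkeeping of how the two commuting actions (the $A\#G$-action via $(g\cdot f)(g')=g f(g'g)$ and the $\mu$-action via $(\mu(g)f)(g')=f(g^{-1}g')$) interact with the formula $(\Xi_M m)(g)=g^{-1}m$.

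For (i), I would first verify $\Xi_M m\in F_A^G(M)^G$ by computing, for $h\in G$,
\[
(h\cdot \Xi_M m)(g')=h\,(\Xi_M m)(g'h)=h(g'h)^{-1}m=g'^{-1}m=(\Xi_M m)(g').
\]
Linearity of $\Xi_M$ in $m$ is immediate. Next I would define $\Theta: F_A^G(M)^G\to M$ by $\Theta(f):=f(e)$ and check the two composites: clearly $\Theta(\Xi_M m)=(\Xi_M m)(e)=m$. Conversely, for $f\in F_A^G(M)^G$ the identity $g\cdot f=f$ for all $g\in G$ reads $g\,f(g'g)=f(g')$, i.e.\ $f(g'g)=g^{-1}f(g')$; setting $g'=e$ gives $f(g)=g^{-1}f(e)=(\Xi_M(f(e)))(g)$, so $\Xi_M\circ\Theta=\mathrm{id}$. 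This proves $\Xi_M$ is bijective with inverse $\Theta$.

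For (ii), I would compute both sides of the equivariance identity on an arbitrary $g'\in G$:
\[
(\Xi_M(gm))(g')=g'^{-1}(gm)=(g'^{-1}g)m,\qquad (\mu(g)\Xi_M m)(g')=(\Xi_M m)(g^{-1}g')=(g^{-1}g')^{-1}m=g'^{-1}g\,m,
\]
which agree. For the final claim, if $m\in M^G$ then $(\Xi_M m)(g)=g^{-1}m=m$ for every $g\in G$, so $\Xi_M m$ is the constant function with value $m$; in particular it lies in $F_A^G(M)^{\mu(G)}$ and maps to $m$ under the identification \eqref{identpure} given by $f\mapsto f(e)$. Hence $\Xi_M|_{M^G}$ is the identity modulo \eqref{identpure}.

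I do not anticipate a real obstacle here: the lemma is purely formal, and the only delicate point is keeping straight the difference between the action of $G\subset A\#G$ on $F_A^G(M)$ (which shifts the argument by right multiplication and also acts on the value) and the commuting $\mu$-action (which shifts the argument by left multiplication by the inverse and leaves the value alone). Once that distinction is made explicit, both parts reduce to one-line verifications.
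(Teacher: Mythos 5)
Your proof is correct and follows essentially the same route as the paper: check $G$-invariance of $\Xi_M m$ by the same one-line computation, exhibit evaluation at $e$ as the two-sided inverse using the invariance identity $f(g)=g^{-1}f(e)$, and verify the $\mu$-equivariance pointwise. The only difference is that you spell out the final "in particular" statement (which the paper dismisses as obvious), and that is done correctly.
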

\begin{proof}
We write $\Xi=\Xi_M$ for the duration of the proof.\\
{\bf (i)} Let $m\in M$ and $g,g^\prime\in G$, then
\begin{equation*}
\begin{split}
\bigl(g^\prime(\Xi m)\bigr)(g)&=
g^\prime\bigl((\Xi m)(gg^\prime)\bigr)\\
&=g^\prime(g^\prime{}^{-1}g^{-1})m\\
&=g^{-1}m\\
&=(\Xi m)(g).
\end{split}
\end{equation*}
This shows that $\Xi m\in F_A^G(M)$
is $G$-invariant. 

Define now $\Xi^\prime: F_A^G(M)^{G}\rightarrow
M$ by $\Xi^\prime(f)=f(e)$.
Note that $\Xi^\prime\circ\Xi=\textup{id}$.
On the other hand, if $f\in F_A^G(M)^{G}$, then
for $g\in G$,
\[\Xi(\Xi^\prime(f))(g)=g^{-1}(f(e))=
g^{-1}g(f(g))=f(g),
\]
where we used that $f$ is $G$-invariant in the second equality.\\
{\bf (ii)} Let $m\in M$ and $g,g^\prime\in G$. Then
\[(\Xi(gm))(g^\prime)=g^\prime{}^{-1}gm=
(\mu(g)(\Xi m))(g^\prime).
\]
The last statement of {\bf (ii)} is obvious.
\end{proof}

Since $W_0$ acts by conjugation
on the subalgebra $\mathbb{D}_{\sigma,\nabla}^{k,q}$ of
$\mathbb{C}_{\sigma,\nabla}^{k,q}[T]\#_qW$, 
we have an isomorphism
\[\mathbb{C}_{\sigma,\nabla}^{k,q}[T]\#_qW\simeq 
\mathbb{D}_{\sigma,\nabla}^{k,q}\#W_0
\]
of algebras, cf. Proposition \ref{CMDEprop}. 
We can thus apply the above constructions with
$A=\mathbb{D}_{\sigma,\nabla}^{k,q}$ the algebra of $q$-difference operators
viewed as a $G=W_0$-module algebra.

If $N$ is a left $\mathbb{C}_{\sigma,\nabla}^{k,q}[T]\#_qW$-module,
then we write
\[
\widehat{N}:=F_{\mathbb{D}_{\sigma,\nabla}^{k,q}}^{W_0}\bigl(N\bigr)
\]
for the corresponding $(\mathbb{C}_{\sigma,\nabla}^{k,q}[T]\#_qW,
\mathbb{C}[W_0])$-bimodule. 
Concretely, $\widehat{N}$ consists
of the space of functions $f: W_0\rightarrow N$, with actions
\begin{equation*}
\begin{split}
(af)(w^\prime)&:=a\bigl(f(w^\prime)\bigr),\\
(wf)(w^\prime)&:=w\bigl(f(w^\prime w)\bigr),\\
(\mu(w)f)(w^\prime)&:=f(w^{-1}w^\prime)
\end{split}
\end{equation*}
for $f\in\widehat{N}$,
$a\in\mathbb{D}_{\sigma,\nabla}^{k,q}$ and $w,w^\prime\in W_0$.

Let $L$ be a left $\mathbb{C}_{\sigma,\nabla}^{k,q}[T]\#_qW$-module and
$M$ a left $H(k)$-module. Then $\Gamma_{L}^{k,q}(M)_\nabla$ is a 
$\mathbb{C}_{\sigma,\nabla}^{k,q}[T]\#_qW$-module via the algebra homomorphism
\[\nabla^{k,q}: 
\mathbb{C}_{\sigma,\nabla}^{k,q}[T]\#_qW\rightarrow \mathcal{A}^{k,q}
\]
(see Corollary \ref{cor1}). Hence we can form the corresponding
$\bigl(\mathbb{C}_{\sigma,\nabla}^{k,q}[T]\#_qW,\mathbb{C}[W_0]\bigr)$-bimodule
$\widehat{\Gamma_L^{k,q}(M)}_{\nabla}$. 
On the other hand, we have the left 
$\mathbb{C}_{\sigma,\nabla}^{k,q}[T]\#_qW$-module
$\Gamma_{\widehat{L}}^{k,q}\bigl(M\bigr)_\nabla$, with the action again obtained
from the algebra map $\nabla^{k,q}$.
The $\mu(W_0)$-action on $\widehat{L}$ 
naturally extends to an $W_0$-action
on $\Gamma_{\widehat{L}}^{k,q}\bigl(M\bigr)_\nabla$ (using
$\Gamma_{\widehat{L}}^{k,q}\bigl(M\bigr)=\widehat{L}\otimes M$ as vector
spaces, it is the $\mu(W_0)$-action on the first tensor
leg).  We will use the same notation
$\mu$ for this extended action. With these two actions, 
$\Gamma_{\widehat{L}}^{k,q}\bigl(M\bigr)_\nabla$ becomes a 
$(\mathbb{C}_{\sigma,\nabla}^{k,q}[T]\#_qW,
\mathbb{C}[W_0])$-bimodule. 
Using \eqref{connmatrix}, in particular using that
$\nabla^{k,q}(\mathbb{D}_{\sigma,\nabla}^{k,q})\subseteq
\mathbb{D}_{\sigma,\nabla}^{k,q}\otimes_{\mathbb{C}}H(k)$,
it follows that the canonical complex linear isomorphism
\[
\widehat{\Gamma_L^{k,q}(M)}_{\nabla}\simeq 
\Gamma_{\widehat{L}}^{k,q}\bigl(M\bigr)_\nabla
\]
is an isomorphism of
$(\mathbb{C}_{\sigma,\nabla}^{k,q}[T]\#_qW,\mathbb{C}[W_0])$-bimodules.
We will use this isomorphism to
identify $\widehat{\Gamma_L^{k,q}(M)}_\nabla$ and 
$\Gamma_{\widehat{L}}^{k,q}\bigl(M\bigr)_\nabla$
in the remainder of the text. Lemma \ref{otheridentification} then gives
\begin{cor}\label{identcorr}
Let $L$ be a left $\mathbb{C}_{\sigma,\nabla}^{k,q}[T]\#_qW$-module
and $M$ a left $H(k)$-module. 
\begin{enumerate}
\item[{\bf (i)}] We have a linear isomorphism
\[
\Xi: \Gamma_{L}^{k,q}(M)_\nabla\overset{\sim}{\longrightarrow}
\Gamma_{\widehat{L}}^{k,q}(M)_\nabla^{W_{0}}
\]
defined by $(\Xi f)(w):=\nabla^{k,q}(w^{-1})f$ for $w\in W_0$ and
$f\in \Gamma_L(M)$. Furthermore, $\Xi^{-1}(h)=h(e)$ for
$h\in\Gamma_{\widehat{L}}^{k,q}(M)_\nabla^{W_{0}}$.
The map $\Xi$  restricts to a linear isomorphism
\[\Gamma_{L}^{k,q}(M)_\nabla^{\tau(P^\vee)}\overset{\sim}{\longrightarrow} 
\Gamma_{\widehat{L}}^{k,q}(M)_\nabla^W.
\]
\item[{\bf (ii)}] For $f\in \Gamma_L^{k,q}(M)_\nabla$ and $w\in W_0$
we have $\Xi (\nabla^{k,q}(w)f)=\mu(w)(\Xi f)$.
In particular, $\Xi|_{\Gamma_L^{k,q}(M)_\nabla^{W_0}}=\textup{id}$ if we take
the identification \eqref{identpure} into account.
\end{enumerate}
\end{cor}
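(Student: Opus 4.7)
The plan is to recognize Corollary \ref{identcorr} as essentially an immediate consequence of Lemma \ref{otheridentification}, applied with $A = \mathbb{D}_{\sigma,\nabla}^{k,q}$ and $G = W_0$ (acting on $A$ by conjugation). By Proposition \ref{CMDEprop} we have $A\#G \simeq \mathbb{C}_{\sigma,\nabla}^{k,q}[T]\#_qW$, and I will view $\Gamma_L^{k,q}(M)_\nabla$ as an $A\#G$-module via $\nabla^{k,q}$. Lemma \ref{otheridentification}(i) will then produce a linear isomorphism $\Xi$ onto the $G$-invariants of $F_A^G(\Gamma_L^{k,q}(M)_\nabla)$, with the explicit formula $(\Xi f)(w) = \nabla^{k,q}(w^{-1})f$ and inverse $h \mapsto h(e)$. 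The bimodule identification $\widehat{\Gamma_L^{k,q}(M)}_\nabla \simeq \Gamma_{\widehat{L}}^{k,q}(M)_\nabla$ established immediately before the corollary statement will then identify the target with $\Gamma_{\widehat{L}}^{k,q}(M)_\nabla^{W_0}$, and part (ii) of the corollary will fall out at once from Lemma \ref{otheridentification}(ii).

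The only new content requiring verification is the restriction of $\Xi$ to a bijection $\Gamma_L^{k,q}(M)_\nabla^{\tau(P^\vee)} \overset{\sim}{\longrightarrow} \Gamma_{\widehat{L}}^{k,q}(M)_\nabla^W$. Since $W \simeq W_0 \ltimes \tau(P^\vee)$ and $\Xi$ already lands in the $W_0$-invariants, it will suffice to check that $\Xi$ intertwines the $\nabla^{k,q}(\tau(P^\vee))$-actions on both sides. Using the relation $\tau(\lambda)w^{-1} = w^{-1}\tau(w\lambda)$ in $W$ (a direct consequence of $w\tau(v)w^{-1} = \tau(wv)$), a short computation yields
\[
\bigl(\nabla^{k,q}(\tau(\lambda))(\Xi f)\bigr)(w) = \nabla^{k,q}(\tau(\lambda)w^{-1})f = \nabla^{k,q}(w^{-1})\nabla^{k,q}(\tau(w\lambda))f
\]
for all $w \in W_0$, $\lambda \in P^\vee$, and $f \in \Gamma_L^{k,q}(M)_\nabla$. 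Since $P^\vee$ is $W_0$-stable, this expression equals $(\Xi f)(w)$ for every such $w$ if and only if $f$ is $\nabla^{k,q}(\tau(P^\vee))$-invariant, which will give the desired restriction (the converse direction follows at once by evaluating at $w = e$).

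I do not anticipate any real obstruction; the main thing to keep straight will be the distinction between the two commuting actions on $\widehat{L}$ — the $\mathbb{C}_{\sigma,\nabla}^{k,q}[T]\#_qW$-action (inherited from $A\#G$) and the commuting $\mathbb{C}[W_0]$-action via $\mu$ — and the fact that the bimodule identification $\widehat{\Gamma_L^{k,q}(M)}_\nabla \simeq \Gamma_{\widehat{L}}^{k,q}(M)_\nabla$ respects both. This compatibility hinges on the observation, already used in the paragraph preceding the corollary, that $\nabla^{k,q}(\mathbb{D}_{\sigma,\nabla}^{k,q}) \subseteq \mathbb{D}_{\sigma,\nabla}^{k,q} \otimes_\mathbb{C} H(k)$, as is visible from \eqref{connmatrix}.
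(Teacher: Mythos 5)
Your proposal is exactly the paper's argument: apply Lemma \ref{otheridentification} to the $\mathbb{C}_{\sigma,\nabla}^{k,q}[T]\#_qW$-module $\Gamma_L^{k,q}(M)_\nabla$ (via the bimodule identification $\widehat{\Gamma_L^{k,q}(M)}_\nabla\simeq\Gamma_{\widehat{L}}^{k,q}(M)_\nabla$), which yields everything except the restriction statement, and then handle that by the direct computation you carry out. The paper only remarks that this last step "follows by a direct computation"; your verification using $\tau(\lambda)w^{-1}=w^{-1}\tau(w\lambda)$ and the fact that $\tau(P^\vee)\subset\mathbb{D}_{\sigma,\nabla}^{k,q}$ acts pointwise is precisely the intended check, so the proof is correct and essentially identical to the paper's.
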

\begin{proof}
Apply Lemma \ref{otheridentification} to the
$\mathbb{C}_{\sigma,\nabla}^{k,q}[T]\#_qW$-module
$\Gamma_L^{k,q}(M)_\nabla$ and set $\Xi=\Xi_{\Gamma_L^{k,q}(M)_\nabla}$. 
This gives all the results besides 
the second statement of {\bf (i)}, which though follows by
a direct computation.
\end{proof}
The corollary can be used to formulate nonsymmetric versions of 
Theorem \ref{mainthmY} and of Proposition \ref{CMcorrsym}.
We start with the nonsymmetric version of Theorem \ref{mainthmY}.
\begin{cor}\label{corspin}
Let $\gamma\in T_I^{k^{\pm 1}}$.
Let $L$ be a left $\mathbb{C}_{\sigma,\nabla}^{k,q}[T]\#_qW$-module.
We have a linear isomorphism
\[
\eta_{\pm}: \widehat{L}_\pi^{I^*,\pm}[\overline{w_0}\gamma^{-1}]
\overset{\sim}{\longrightarrow}
 \Gamma_L^{k,q}\bigl(M^{k,\pm,I}(\gamma)\bigr)_{\nabla}^{\tau(P^\vee)},
\]
given by 
\[
\eta_{\pm}(\phi):=\sum_{w\in W_0^I}\bigl(\pi^{k^{-1},q}(
T_{w\overline{w_0}^{-1}})\phi\bigr)(e)\otimes v_w^{k,\pm,I}(\gamma),\qquad
\phi\in \widehat{L}_\pi^{I^*,\pm}[\overline{w_0}\gamma^{-1}].
\]
Furthermore, $\widehat{L}_\pi^{I^*,\pm}[\overline{w_0}\gamma^{-1}]
\subseteq \widehat{L}$
is a $\mu(W_0)$-submodule, and, 
with the identification \eqref{identpure},
the map $\eta_{\widehat{L}_\pi^{I^*,\pm}[\overline{w_0}\gamma^{-1}]^{\mu(W_0)}}$
coincides with the isomorphism 
\[
L_\pi^{I^*,\pm}[\overline{w_0}\gamma^{-1}]\overset{\sim}{\longrightarrow}
\Gamma_L^{k,q}\bigl(M^{k,\pm,I}(\gamma)\bigr)_\nabla^W
\]
from Theorem \ref{mainthmY}, given by $\phi\mapsto \psi_\phi$
(see \eqref{isoY2}).
\end{cor}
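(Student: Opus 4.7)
The idea is to obtain $\eta_{\pm}$ as the composition of Theorem \ref{mainthmY} applied to the left $\mathbb{C}_{\sigma,\nabla}^{k,q}[T]\#_qW$-module $\widehat{L}$ with the inverse of the isomorphism $\Xi$ from Corollary \ref{identcorr}. Concretely, the plan is to observe that the functor $\Gamma^{k,q}_{(-)}\bigl(M^{k,\pm,I}(\gamma)\bigr)_\nabla$ sends $\widehat{L}$ to a space whose $W$-invariants are isomorphic, via $\Xi^{-1}$, to $\Gamma_L^{k,q}\bigl(M^{k,\pm,I}(\gamma)\bigr)_\nabla^{\tau(P^\vee)}$. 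Composing the isomorphism
\[
\widehat{L}_\pi^{I^*,\pm}[\overline{w_0}\gamma^{-1}]\overset{\sim}{\longrightarrow}
\Gamma_{\widehat{L}}^{k,q}\bigl(M^{k,\pm,I}(\gamma)\bigr)_\nabla^{W},
\qquad \phi\mapsto \sum_{w\in W_0^I}\pi^{k^{-1},q}\bigl(T_{w\overline{w_0}^{-1}}\bigr)\phi\otimes v_w^{k,\pm,I}(\gamma)
\]
coming from Theorem \ref{mainthmY} with the evaluation-at-$e$ isomorphism
\[
\Gamma_{\widehat{L}}^{k,q}\bigl(M^{k,\pm,I}(\gamma)\bigr)_\nabla^{W}\overset{\Xi^{-1}}{\longrightarrow}
\Gamma_{L}^{k,q}\bigl(M^{k,\pm,I}(\gamma)\bigr)_\nabla^{\tau(P^\vee)}
\]
from Corollary \ref{identcorr}\textbf{(i)} produces exactly the map $\phi\mapsto\sum_{w\in W_0^I}\bigl(\pi^{k^{-1},q}(T_{w\overline{w_0}^{-1}})\phi\bigr)(e)\otimes v_w^{k,\pm,I}(\gamma)$ of the corollary, yielding the desired isomorphism $\eta_{\pm}$.

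Next, to show $\mu(W_0)$-stability of $\widehat{L}_\pi^{I^*,\pm}[\overline{w_0}\gamma^{-1}]$, I would verify that the $\mu(W_0)$-action commutes with the action of $\pi^{k^{-1},q}(h)$ for any $h\in H_{I^*}(k^{-1})$. This is built into the bimodule construction: identifying $\mathbb{C}_{\sigma,\nabla}^{k,q}[T]\#_qW\simeq \mathbb{D}_{\sigma,\nabla}^{k,q}\#W_0$ so that $\pi^{k^{-1},q}$ lands in the ambient $A\#G$ for $A=\mathbb{D}_{\sigma,\nabla}^{k,q}$ and $G=W_0$, the $\mu(W_0)$-action on $\widehat{L}=F_A^{G}(L)$ is by construction the commuting right factor of the bimodule structure. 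Hence the eigenspace condition defining $\widehat{L}_\pi^{I^*,\pm}[\overline{w_0}\gamma^{-1}]$ is preserved by each $\mu(w)$.

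Finally, for the compatibility with Theorem \ref{mainthmY}, I would apply Corollary \ref{identcorr}\textbf{(ii)} and Lemma \ref{otheridentification}\textbf{(ii)}, together with the identification \eqref{identpure} of $\mu(W_0)$-invariants with $L$ itself. Under this identification, a $\mu(W_0)$-invariant $\phi\in\widehat{L}$ corresponds to the constant function equal to $\phi(e)\in L$, and the bimodule action of $\pi^{k^{-1},q}(h)$ on such a constant function reduces to the ordinary action of $\pi^{k^{-1},q}(h)$ on $\phi(e)\in L$. Consequently $\widehat{L}_\pi^{I^*,\pm}[\overline{w_0}\gamma^{-1}]^{\mu(W_0)}$ is identified with $L_\pi^{I^*,\pm}[\overline{w_0}\gamma^{-1}]$, and evaluating the formula for $\eta_{\pm}(\phi)$ on such constant functions gives $\sum_{w\in W_0^I}\pi^{k^{-1},q}(T_{w\overline{w_0}^{-1}})\phi(e)\otimes v_w^{k,\pm,I}(\gamma)=\psi_{\phi(e)}$, matching the map from Theorem \ref{mainthmY}.

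The only place where some care is required is the bookkeeping in the second paragraph: one must be sure that the transport of $\pi^{k^{-1},q}(h)$ (which mixes the $W_0$-component of the smashed product with the $\mathbb{D}_{\sigma,\nabla}^{k,q}$-component) through the evaluation-at-$e$ map correctly produces the stated formula. This reduces to the elementary identity $\bigl((a\cdot u)\phi\bigr)(e)=a\cdot u(\phi(u))$ for $a\in\mathbb{D}_{\sigma,\nabla}^{k,q}$, $u\in W_0$, which specializes to $\pi^{k^{-1},q}(h)\phi(e)$ when $\phi$ is constant; everything else is a direct consequence of the two previously established results.
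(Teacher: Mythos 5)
Your proposal is correct and follows essentially the same route as the paper: composing Theorem \ref{mainthmY} applied to $\widehat{L}$ with the evaluation-at-$e$ isomorphism $\Xi^{-1}$ of Corollary \ref{identcorr}, observing that $\mu(W_0)$-stability is built into the bimodule structure, and checking the compatibility claim by restricting to the $\mu(W_0)$-invariants (constant functions) via the identification \eqref{identpure}. No gaps; the bookkeeping you flag at the end is exactly the short computation the paper carries out.
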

\begin{proof}
The first statement follows from the chain of isomorphisms
\[
\widehat{L}_\pi^{I^*,\pm}[\overline{w_0}\gamma^{-1}]
\overset{\sim}{\longrightarrow}
\Gamma_{\widehat{L}}^{k,q}\bigl(M^{k,\pm,I}(\gamma)\bigr)_\nabla^W
\overset{\sim}{\longrightarrow} 
\Gamma_L^{k,q}\bigl(M^{k,\pm,I}(\gamma))_\nabla^{\tau(P^\vee)},
\]
with the first isomorphism given by
\[\widehat{L}_\pi^{I^*,\pm}[\overline{w_0}\gamma^{-1}]\ni \phi\mapsto \psi_\phi=
\sum_{w\in W_0^I}\pi^{k^{-1},q}(T_{w\overline{w_0}^{-1}})\phi\otimes
v_w^{k,\pm,I}(\gamma)
\] 
(cf. Theorem \ref{mainthmY}), and the second isomorphism given by $\Xi^{-1}$,
which maps $f\in \Gamma_{\widehat{L}}^{k,q}\bigl(M^{k,\pm,I}(\gamma)\bigr)_\nabla^W$
to $f(e)\in\Gamma_L^{k,q}\bigl(M^{k,\pm,I}(\gamma)\bigr)_\nabla^{\tau(P^\vee)}$.

It is clear that $\widehat{L}_\pi^{I^*,\pm}[\overline{w_0}\gamma^{-1}]
\subseteq \widehat{L}$ is a 
$\mu(W_0)$-submodule. Let 
$w^\prime\in W_0$ and $\phi\in 
\widehat{L}_\pi^{I^*,\pm}[\overline{w_0}\gamma^{-1}]$.
Then 
\[\eta_{\pm}(\mu(w^\prime)\phi)=
\sum_{w\in W_0^I}\bigl(\pi^{k^{-1},q}(
T_{w\overline{w_0}^{-1}})\phi\bigr)(w^\prime{}^{-1})\otimes v_w^{k,\pm,I}(\gamma).
\]
Thus, with the identification 
$\widehat{L}^{\mu(W_0)}\simeq L$
as $\mathbb{C}_{\sigma,\nabla}^{k,q}[T]\#_qW$-modules 
(see \eqref{identpure}), we get
for $\phi\in \widehat{L}_\pi^{I^*,\pm}[\overline{w_0}\gamma^{-1}]^{\mu(W_0)}
\simeq L_\pi^{I^*,\pm}[\overline{w_0}\gamma^{-1}]$,
\[\eta_{\pm}(\phi)=
\sum_{w\in W_0^I}\pi^{k^{-1},q}(T_{w\overline{w_0}^{-1}})\phi
\otimes v_w^{k,\pm,I}(\gamma)=\psi_\phi.
\]
\end{proof}
Similarly we are now going to prove the 
nonsymmetric version of Proposition \ref{CMcorrsym}
and Proposition \ref{CMcorrsymprop}. We need the following
prepatory lemma. Recall that $W_{0,\pm}=\{w_{\pm}\}_{w\in W_0}\subset
\bigl(\mathbb{C}_{\sigma,\nabla}^{k,q}[T]\#_qW\bigr)^\times$,
with $w_{\pm}$ given by \eqref{wplusmin}.
\begin{lem}\label{515}
Let $L$ be a left $\mathbb{C}_{\sigma,\nabla}^{k,q}[T]\#_qW$-module.\\
{\bf (a)} We have a linear isomorphism
\[
\Xi_L^{\pm}: L\overset{\sim}{\longrightarrow} \widehat{L}^{W_{0,\pm}}
\]
defined by $(\Xi_L^{\pm}\phi)(w):=w_{\pm}^{-1}\phi$ for
$w\in W_0$ and $\phi\in L$. Furthermore, $\bigl(\Xi_L^{\pm}\bigr)^{-1}(h)=h(e)$
for $h\in\widehat{L}^{W_{0,\pm}}$. The map $\Xi_L^{\pm}$ restricts
to a linear isomorphism
\[ \textup{SpM}_L^{k,q,\pm}(W_0\gamma^{-1})\overset{\sim}{\longrightarrow}
\textup{SpM}_{\widehat{L}}^{k,q,\pm}(W_0\gamma^{-1})^{W_{0,\pm}}.
\]
{\bf (b)} For $\phi\in L$ and $w\in W_0$ we have 
$\Xi_L^{\pm}(w_{\pm}\phi)=\mu(w)(\Xi_L^{\pm}\phi)$. In particular,
$\Xi_L^{\pm}|_{L^{W_{0,\pm}}}=\textup{id}$ if we take
the identification \eqref{identpure} into account.
\end{lem}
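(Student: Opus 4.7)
The plan is to mimic the proof of Lemma \ref{otheridentification}, replacing the straight $W_0$-action on $\widehat{L}$ (coming from $W_0 \subset \mathbb{D}_{\sigma,\nabla}^{k,q}\#W_0 \simeq \mathbb{C}_{\sigma,\nabla}^{k,q}[T]\#_qW$) by the twisted $W_{0,\pm}$-action throughout. The structural fact that makes the argument go through is that $W_0 \to W_{0,\pm}$, $w\mapsto w_{\pm}$, is a group isomorphism onto the subgroup $W_{0,\pm}\subset (\mathbb{C}_{\sigma,\nabla}^{k,q}[T]\#_qW)^\times$: indeed a direct computation from \eqref{wplusmin} gives $(vw)_{\pm}=v_{\pm}w_{\pm}$, and hence $(w^{-1})_{\pm}=w_{\pm}^{-1}$ and $e_{\pm}=1$.

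For part \textbf{(a)}, I first verify that $\Xi_L^{\pm}\phi$ is $W_{0,\pm}$-invariant. Writing $w_{\pm}$ as $p\cdot w$ with $p=G^{k,\pm}/w(G^{k,\pm})\in\mathbb{C}_{\sigma,\nabla}^{k,q}[T]\subseteq\mathbb{D}_{\sigma,\nabla}^{k,q}$ and $w\in W_0$, the action formula on $\widehat{L}$ (from the $\mathbb{D}_{\sigma,\nabla}^{k,q}\#W_0$-module structure) yields
\[
(w_{\pm}\,\Xi_L^{\pm}\phi)(v)=w_{\pm}\cdot(\Xi_L^{\pm}\phi)(vw)=w_{\pm}(vw)_{\pm}^{-1}\phi=v_{\pm}^{-1}\phi=(\Xi_L^{\pm}\phi)(v),
\]
using $(vw)_{\pm}=v_{\pm}w_{\pm}$. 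Next, to see that $h\mapsto h(e)$ is a two-sided inverse, the equality $(\Xi_L^{\pm}\phi)(e)=\phi$ is immediate; conversely, if $h\in\widehat{L}^{W_{0,\pm}}$ then the identity $v_{\pm}h=h$ applied to $e$ gives $v_{\pm}\cdot h(v)=h(e)$ for every $v\in W_0$, hence $h(v)=v_{\pm}^{-1}h(e)=(\Xi_L^{\pm}h(e))(v)$. The statement about the spectral problem then reduces to a one-line computation: for $\phi\in\textup{SpM}_L^{k,q,\pm}(W_0\gamma^{-1})$ and $D=D_f^{k,q,\pm}$, the $W_{0,\pm}$-invariance of $D$ from Proposition \ref{CMDEprop} gives $Dv_{\pm}^{-1}=v_{\pm}^{-1}D$, so
\[
(D\,\Xi_L^{\pm}\phi)(v)=D\,v_{\pm}^{-1}\phi=v_{\pm}^{-1}D\phi=f(\gamma^{-1})v_{\pm}^{-1}\phi=f(\gamma^{-1})(\Xi_L^{\pm}\phi)(v),
\]
and the reverse implication follows by evaluating at $e$.

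For part \textbf{(b)}, the two computations
\[
\Xi_L^{\pm}(w_{\pm}\phi)(v)=v_{\pm}^{-1}w_{\pm}\phi,\qquad
(\mu(w)\Xi_L^{\pm}\phi)(v)=(\Xi_L^{\pm}\phi)(w^{-1}v)=(w^{-1}v)_{\pm}^{-1}\phi=v_{\pm}^{-1}w_{\pm}\phi
\]
match, using $(w^{-1}v)_{\pm}^{-1}=v_{\pm}^{-1}w_{\pm}$. The final assertion is then immediate: on $L^{W_{0,\pm}}$ one has $\Xi_L^{\pm}\phi\in\widehat{L}^{\mu(W_0)}$, which under \eqref{identpure} is identified with $(\Xi_L^{\pm}\phi)(e)=\phi$.

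There is no real obstacle; the argument is entirely formal once the group law $(vw)_{\pm}=v_{\pm}w_{\pm}$ is in hand. The only conceptual point worth emphasizing is that this lemma is the exact analogue of Lemma \ref{otheridentification} for the twisted action, so that the same passage-to-$\widehat{L}$ trick used in Corollary \ref{corspin} can be applied to upgrade Proposition \ref{CMcorrsym} to a nonsymmetric (i.e.\ $\tau(P^\vee)$-invariant rather than $W$-invariant) correspondence.
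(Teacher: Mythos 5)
Your argument is correct and is essentially the paper's own proof: the paper disposes of this lemma as "a straightforward adjustment of the proof of Corollary \ref{identcorr}" (i.e.\ of Lemma \ref{otheridentification}) together with the $W_{0,\pm}$-invariance of the operators $D_f^{k,q,\pm}$ from Proposition \ref{CMDEprop}, which is exactly what you carry out, with the group law $(vw)_{\pm}=v_{\pm}w_{\pm}$ made explicit.
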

\begin{proof}
This is a straightforward adjust of the proof of
Corollary \ref{identcorr}. The second part of {\bf (a)}
uses the fact that the $D_f^{k,q,\pm}$ ($f\in \mathbb{C}[T]^{W_0}$)
are $W_{0,\pm}$-equivariant, cf. Proposition \ref{CMDEprop}.
\end{proof}

\begin{thm}\label{CMcorr}
Let $L$ be a left $\mathbb{C}_{\sigma,\nabla}^{k,q}[T]\#_qW$-module.\\
{\bf (a)} Let $\gamma\in T_I^{k^{\pm 1}}$. The Cherednik-Matsuo map
$\xi_{L,\gamma}^{k,q,\pm,I}$ (see Definition \ref{CMdef})
restricts to a linear map $W_0$-equivariant (in the sense of
\eqref{xiequiv}) linear map
\[
\xi_{L,\gamma}^{k,q,\pm,I}: 
\Gamma_L^{k,q}\bigl(M^{k,\pm,I}(\gamma)\bigr)_\nabla^{\tau(P^\vee)}
\rightarrow \textup{SpM}_L^{k,q,\pm}(W_0\gamma^{-1}).
\]
The map is injective if $\gamma\in T_{I,reg}^{k^{\pm 1}}$ (see \eqref{TIreg}).\\
{\bf (b)} Let $\gamma\in T$ such that $k_{\alpha^\vee}^2\not=\gamma^{\alpha^\vee}
\not=1$ for all $\alpha\in R_0$. Then $\xi_{L,\gamma}^{k,q,\pm}$ restricts
to a $W_0$-equivariant (in the sense of
\eqref{xiequiv}) linear isomorphism
\[
\xi_{L,\gamma}^{k,q,\pm}: 
\Gamma_L^{k,q}\bigl(M^{k}(\gamma)\bigr)_\nabla^{\tau(P^\vee)}
\overset{\sim}{\longrightarrow} \textup{SpM}_L^{k,q,\pm}(W_0\gamma^{-1}).
\]
\end{thm}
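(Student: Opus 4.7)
The plan is to reduce the theorem to the symmetric version Proposition \ref{CMcorrsym} applied to the spinor extension $\widehat{L} = F_{\mathbb{D}_{\sigma,\nabla}^{k,q}}^{W_0}(L)$, and then descend via the identifications of Corollary \ref{identcorr} and Lemma \ref{515}. Concretely, I will produce the desired map as the composition
\[
\Gamma_L^{k,q}\bigl(M^{k,\pm,I}(\gamma)\bigr)_\nabla^{\tau(P^\vee)}
\xrightarrow{\Xi}
\Gamma_{\widehat{L}}^{k,q}\bigl(M^{k,\pm,I}(\gamma)\bigr)_\nabla^{W}
\xrightarrow{\xi_{\widehat{L},\gamma}^{k,q,\pm,I}}
\textup{SpM}_{\widehat{L}}^{k,q,\pm}(W_0\gamma^{-1})^{W_{0,\pm}}
\xrightarrow{(\Xi_L^{\pm})^{-1}}
\textup{SpM}_L^{k,q,\pm}(W_0\gamma^{-1}),
\]
where the outer arrows are the restricted isomorphisms from Corollary \ref{identcorr}{\bf (i)} and Lemma \ref{515}{\bf (a)}, and the middle arrow is Proposition \ref{CMcorrsym} applied to the $\mathbb{C}_{\sigma,\nabla}^{k,q}[T]\#_qW$-module $\widehat{L}$. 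Injectivity under the regularity hypothesis in part (a) and bijectivity in part (b) then transfer immediately from the corresponding statements for $\widehat{L}$, since the two flanking arrows are isomorphisms.

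The crux of the argument is to verify that this composition coincides with the restriction of the Cherednik-Matsuo map of Definition \ref{CMdef}. For $\psi = \sum_{w\in W_0^I} \psi_w \otimes v_w^{k,\pm,I}(\gamma)$ in $\Gamma_L^{k,q}\bigl(M^{k,\pm,I}(\gamma)\bigr)_\nabla^{\tau(P^\vee)}$, write $\Xi(\psi) = \sum_w \Psi_w \otimes v_w^{k,\pm,I}(\gamma)$ with $\Psi_w \in \widehat{L}$ under the canonical identification $\widehat{\Gamma_L^{k,q}(M)}_\nabla \simeq \Gamma_{\widehat{L}}^{k,q}(M)_\nabla$. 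Evaluating the defining relation $\Xi(\psi)(e) = \psi$ at the identity forces $\Psi_w(e) = \psi_w$ for every $w \in W_0^I$. Then $\xi_{\widehat{L},\gamma}^{k,q,\pm,I}(\Xi(\psi)) = \sum_w \epsilon_{\pm}^k(T_w) \Psi_w \in \widehat{L}$, and evaluating at $e$ — which is exactly the action of $(\Xi_L^{\pm})^{-1}$ on a $W_{0,\pm}$-invariant function — yields $\sum_w \epsilon_{\pm}^k(T_w)\psi_w$, which is precisely $\xi_{L,\gamma}^{k,q,\pm,I}(\psi)$.

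Having identified the composition with the Cherednik-Matsuo map, all conclusions of the theorem are then automatic: the image of the restriction of $\xi_{L,\gamma}^{k,q,\pm,I}$ to $\Gamma_L^{k,q}\bigl(M^{k,\pm,I}(\gamma)\bigr)_\nabla^{\tau(P^\vee)}$ lies in $\textup{SpM}_L^{k,q,\pm}(W_0\gamma^{-1})$, and the injectivity (respectively bijectivity) assertions follow from those of the middle arrow. The $W_0$-equivariance in the sense of \eqref{xiequiv} was already established directly from \eqref{nablarel} and \eqref{vaction}; it is compatible with the restriction because $\tau(P^\vee)$ is normal in $W$ (so the source is $W_{0,\pm}$-stable) and the operators $D_f^{k,q,\pm}$ are $W_{0,\pm}$-invariant by Proposition \ref{CMDEprop} (so the target is $W_{0,\pm}$-stable). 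The main conceptual obstacle I anticipate is the careful bookkeeping needed to verify that Proposition \ref{CMcorrsym} can be applied to $\widehat{L}$ as a bona fide $\mathbb{C}_{\sigma,\nabla}^{k,q}[T]\#_qW$-module, and that the identification $\widehat{\Gamma_L^{k,q}(M)}_\nabla \simeq \Gamma_{\widehat{L}}^{k,q}(M)_\nabla$ is an isomorphism of $\bigl(\mathbb{C}_{\sigma,\nabla}^{k,q}[T]\#_qW,\mathbb{C}[W_0]\bigr)$-bimodules; both rest on the inclusion $\nabla^{k,q}\bigl(\mathbb{D}_{\sigma,\nabla}^{k,q}\bigr) \subseteq \mathbb{D}_{\sigma,\nabla}^{k,q} \otimes_{\mathbb{C}} H(k)$ recorded via \eqref{connmatrix}.
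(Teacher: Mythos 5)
Your proposal is correct and follows essentially the same route as the paper: the paper defines the map through the very same composition $(\Xi_L^{\pm})^{-1}\circ\xi_{\widehat{L},\gamma}^{k,q,\pm,I}\circ\Xi$ (Proposition \ref{CMcorrsym} applied to $\widehat{L}$, flanked by the isomorphisms of Corollary \ref{identcorr} and Lemma \ref{515}) and verifies, exactly as you do by evaluating at $e$, that it coincides with the restriction of $\xi_{L,\gamma}^{k,q,\pm,I}$, so that injectivity and bijectivity transfer directly.
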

\begin{proof}
Let $\gamma\in T_I^{k^{\pm 1}}$ and let
\[\xi: \Gamma_L^{k,q}\bigl(M^{k,\pm,I}(\gamma)\bigr)_\nabla^{\tau(P^\vee)}
\rightarrow \textup{SpM}_L^{k,q,\pm}(W_0\gamma^{-1})
\]
be the linear map such that the following diagram\\ 
\vspace{.3cm}
$\qquad\qquad\qquad\qquad$
\xymatrix{ 
\Gamma_L^{k,q}\bigl(M^{k,\pm,I}(\gamma)\bigr)_\nabla^{\tau(P^\vee)}
\ar[r]_{\Xi}^{\sim}
\ar[d]^{\xi} & 
\Gamma_{\widehat{L}}^{k,q}\bigl(M^{k,\pm,I}(\gamma)\bigr)_\nabla^W
\ar[d]^{\xi_{\widehat{L},\gamma}^{k,q,\pm,I}}\\
\textup{SpM}_L^{k,q,\pm}(W_0\gamma^{-1})
\ar[r]_{\Xi_L^{\pm}\,\,\,\,\,\,\,\,}^{\sim\,\,\,\,\,\,\,\,}& 
\textup{SpM}_{\widehat{L}}^{k,q,\pm}(W_0\gamma^{-1})^{W_{0,\pm}}
}\\
is commutative. 
In view of Proposition \ref{CMcorrsym}
(applied to the $\mathbb{C}_{\sigma,\nabla}^{k,q}[T]\#_qW$-module $\widehat{L}$)
it then suffices to show that
\[\xi=\xi_{L,\gamma}^{k,q,\pm,I}.
\]
Let $\phi\in\Gamma_L^{k,q}\bigl(M^{k,\pm,I}(\gamma)\bigr)_\nabla^{\tau(P^\vee)}$
and write $\Xi(\phi)=
\sum_{w\in W_0^I}\widehat{\psi}_w\otimes v_w^{k,\pm,I}(\gamma)$
with $\widehat{\psi}_w\in\widehat{L}$. Then by Corollary \ref{corspin}
and Lemma \ref{515},
\begin{equation*}
\begin{split}
\xi_{L,\gamma}^{k,q,\pm,I}(\phi)&=\xi_{L,\gamma}^{k,q,\pm,I}((\Xi \phi)(e))\\
&=\sum_{w\in W_0^I}\epsilon_{\pm}^k(T_w)\widehat{\psi}_w(e)\\
&=\bigl(\xi_{\widehat{L},\gamma}^{k,q,\pm,I}(\Xi \phi)\bigr)(e)\\
&=\bigl(\Xi_L^{\pm}\bigr)^{-1}\xi_{\widehat{L},\gamma}^{k,q,\pm,I}\Xi(\phi)\\
&=\xi(\phi),
\end{split}
\end{equation*}
as desired.
\end{proof}
\begin{rema}
Cherednik used different methods in \cite{CInd} to
show that 
$\xi_{L,\gamma}^{k,q,+}$ 
defines a linear map $\xi_{L,\gamma}^{k,q,+}: 
\Gamma_L^{k,q}(M^k(\gamma))_\nabla^{\tau(P^\vee)}
\rightarrow \textup{SpM}_L^{k,q,+}(W_0\gamma^{-1})$. 
The advantage of the present techniques is that they lead
to precise conditions on $\gamma$ and $k$
to ensure that $\xi_{L,\gamma}^{k,q,+}$ 
is a linear isomorphism. Such properties of the map $\xi_{L,\gamma}^{k,q,+}$
(for special choices of $L$ and $q$) were also discussed in
\cite[Thm. 4.3]{CInd}. It is though not clear to the author that 
the suggested proof of \cite[Thm. 4.3]{CInd} (based on
classical techniques from, e.g., \cite[Part I, Section 4.1]{HS}) 
works out in the present $q$-setup.
\end{rema}
\begin{rema}
The classical analogue of Theorem \ref{CMcorr}{\bf (b)}
is due to Matsuo \cite[Thm. 5.4.1]{Mat}
(in the symmetric ($+$) case) and Cherednik \cite[Thm. 4.7]{CInt}.
The arguments leading to Theorem \ref{CMcorr}{\bf (b)}
is motivated by Opdam's \cite[\S3 ]{O} approach to this
classical correspondence. 
It is likely that Theorem \ref{CMcorr}{\bf (b)}
can be strenghened a bit, in the sense that the genericity
conditions $k_{\alpha^\vee}^2\not=\gamma^{\alpha^\vee}\not=1$ ($\alpha\in R_0$)
can be weaked further
(so that they match the genericity conditions
in \cite[Thm. 5.4.1]{Mat}, \cite[Thm. 4.7]{CInt} and 
\cite[Cor. 3.12]{O} for the classical correspondence). Note that
the sharpened genericity conditions in \cite[Cor. 3.12]{O}, compared
to e.g. the genericity conditions in \cite[Cor. 3.11]{O}, are justified
by the fact that the analogue of 
$\textup{SpM}_L^{k,q,+}(W_0\gamma^{-1})$ 
is known to be of dimension $\#W_0$;
such a type of result is not known in the present setup, as far as we know.
\end{rema}
\begin{rema}
Using the notations from Subsection \ref{GL}, Theorem \ref{CMcorr}
holds true in the $\textup{GL}_m$-case. Here $P^\vee$ should be taken
to be the lattice $\mathbb{Z}^m$, the complex torus is
$T=\bigl(\mathbb{C}^\times\bigr)^m$ and $W_0=S_m$.
In the symmetric case ($+$) the commuting $q$-difference operators
$D_f^{k,q,+}$ ($f\in\mathbb{C}[T]^{S_m}$) were obtained for the first
time by Ruijsenaars \cite{R}. Concretely,
for the elementary symmetric functions $e_i\in\mathbb{C}[T]^{S_m}$
($1\leq i\leq m$) defined by
\[e_i(t)=\sum_{\stackrel{I\subseteq\{1,\ldots,m\}}{\#I=i}}
\prod_{j\in I}t_j,
\]
we have the explicit expressions
\[
D_{e_i}^{k,q,+}=
\sum_{\stackrel{I\subseteq\{1,\ldots,m\}}{\#I=i}}
\left(
\prod_{\stackrel{r\in I}{s\not\in I}}\frac{kt_r-k^{-1}t_s}{t_r-t_s}
\right)\tau\bigl(\sum_{r\in I}\epsilon_r\bigr)\in
\mathbb{C}[T]_{\sigma,\nabla}^{k,q}\#_q\mathbb{Z}^m,
\] 
see, e.g., \cite[\S 1.3.5]{C} and \cite{KN}.
\end{rema}
Recall the morphism \eqref{funtapp} of $\mathbb{C}[W]$-modules,
valid for $\gamma\in T_I^{k^{\pm 1}}$.
Combined with Theorem \ref{CMcorr} we obtain
\begin{cor}\label{injectivitycor}
Let $\gamma\in T_I^{k^{\pm 1}}$ and let $L$ be a left 
$\mathbb{C}_{\sigma,\nabla}^{k,q}[T]\#_qW$-module. We have a commutative
diagram of linear maps\\
\vspace{.3cm}
$\qquad\qquad\qquad\qquad$
\xymatrix{\Gamma_L^{k,q}\bigl(M^{k}(\gamma)\bigr)_\nabla^{\tau(P^\vee)}
\ar[r]
\ar[d]^{\xi_{L,\gamma}^{k,q,\pm}} & 
\Gamma_{L}^{k,q}\bigl(M^{k,\pm,I}(\gamma)\bigr)_\nabla^{\tau(P^\vee)}
\ar[ld]^{\xi_{L,\gamma}^{k,q,\pm,I}}\\
\textup{SpM}_L^{k,q,\pm}(W_0\gamma^{-1})
}\\
where the horizontal arrow is the restriction of the map
\eqref{funtapp} to $\Gamma_L^{k,q}(M^{k}(\gamma))_\nabla^{\tau(P^\vee)}$.
The southwest arrow represents an injective map
if $\gamma\in T_{I,reg}^{k^{\pm 1}}$ (see \eqref{TIreg}).
\end{cor}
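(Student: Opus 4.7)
The plan is to reduce the corollary to two observations: (i) the commutativity of the diagram, which is a clean bookkeeping calculation, and (ii) the injectivity of the diagonal arrow, which is immediate from Theorem \ref{CMcorr}{\bf (a)}. First I would unpack the three arrows from their definitions. The horizontal arrow, obtained by applying the functor $\Gamma_L^{k,q}(\cdot)_\nabla$ of \eqref{functoriality} to the surjection $M^k(\gamma)\twoheadrightarrow M^{k,\pm,I}(\gamma)$, acts on $\psi=\sum_{w\in W_0}\psi_w\otimes v_w^k(\gamma)$ by
\[
\psi\mapsto \sum_{u\in W_0^I}\Bigl(\sum_{v\in W_{0,I}}\epsilon_\pm^k(T_v)\psi_{uv}\Bigr)\otimes v_u^{k,\pm,I}(\gamma),
\]
while $\xi_{L,\gamma}^{k,q,\pm}$ and $\xi_{L,\gamma}^{k,q,\pm,I}$ are given by Definition \ref{CMdef}. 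Note that the horizontal arrow is $\mathbb{C}[W]$-linear (and in particular $\tau(P^\vee)$-equivariant), so it genuinely lands in $\Gamma_L^{k,q}(M^{k,\pm,I}(\gamma))_\nabla^{\tau(P^\vee)}$.

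Next I would verify commutativity by composing these two formulas. Running $\psi$ first through the horizontal map and then through $\xi_{L,\gamma}^{k,q,\pm,I}$ produces
\[
\sum_{u\in W_0^I}\epsilon_\pm^k(T_u)\sum_{v\in W_{0,I}}\epsilon_\pm^k(T_v)\psi_{uv}
=\sum_{u\in W_0^I}\sum_{v\in W_{0,I}}\epsilon_\pm^k(T_u T_v)\psi_{uv}.
\]
The key input is now the fact, recorded at the start of the paper, that every $w\in W_0$ factors uniquely as $w=\overline{w}\,\underline{w}$ with $\overline{w}\in W_0^I$, $\underline{w}\in W_{0,I}$ and $l(w)=l(\overline{w})+l(\underline{w})$, so that $T_w=T_{\overline{w}}T_{\underline{w}}$ and the inner sum collapses to $\sum_{w\in W_0}\epsilon_\pm^k(T_w)\psi_w=\xi_{L,\gamma}^{k,q,\pm}(\psi)$. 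This matches the value of the vertical arrow on $\psi$, so the triangle commutes.

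Finally, the asserted injectivity of the diagonal arrow $\xi_{L,\gamma}^{k,q,\pm,I}$ when $\gamma\in T_{I,reg}^{k^{\pm 1}}$ is exactly the statement of Theorem \ref{CMcorr}{\bf (a)}, so nothing more needs to be proved there. Since the whole argument is a one-line multiplicativity check for $\epsilon_\pm^k$ combined with the minimal-coset factorization, there is no real obstacle; the only thing to be careful about is distinguishing $W_0$-sums from $W_0^I\times W_{0,I}$-sums correctly, and using the length additivity $l(\overline{w}\underline{w})=l(\overline{w})+l(\underline{w})$ in the reduced expression for $T_w$.
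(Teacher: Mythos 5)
Your proposal is correct and follows the same route as the paper, which states the corollary as an immediate consequence of Theorem \ref{CMcorr}{\bf (a)} together with the compatibility of the Cherednik--Matsuo maps with the functorial map \eqref{funtapp} (noted after Definition \ref{CMdef}); your explicit check of that compatibility via $\epsilon_{\pm}^k(T_u)\epsilon_{\pm}^k(T_v)=\epsilon_{\pm}^k(T_{uv})$ and the unique factorization $w=\overline{w}\,\underline{w}$ with $l(w)=l(\overline{w})+l(\underline{w})$ is exactly the computation the paper leaves implicit. Nothing is missing.
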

\subsection{An application}
The Cherednik-Matsuo correspondence (Theorem \ref{CMcorr}) 
with $L=\mathcal{M}(T)$ and $0<|q|<1$ is instrumental for 
quantum (noncompact) harmonic analysis; the author will discuss this
in the forthcoming second part \cite{Sprep} of this paper.
In this subsection we consider consequences of the Cherednik-Matsuo
correspondence when $q=1$; this case actually also deserves much more
attention in view of the potential applications to spin chains and
the Razumov-Stroganov conjectures, see, e.g., \cite{RS, Pas, dFZ, KP}.
We hope to return to this in more detail in future work.

For $f\in\mathbb{C}[T]^{W_0}$ we write the
$q$-difference operator $D_f^{k,q,\pm}\in\mathbb{D}_{\sigma,\nabla}^{k,q}$
as
\[D_f^{k,q,\pm}=\sum_{\lambda\in P^\vee}u_{f,\lambda}^{k,q,\pm}\tau(\lambda)
\]
with $u_{f,\lambda}^{k,q,\pm}\in\mathbb{C}_{\sigma,\nabla}^{k,q}[T]$. 

The proof of the following lemma hinges on the theory of (anti)symmetric
Macdonald polynomials (cf. Remark \ref{antisymmpol}). Recall the
definition \eqref{deltapm} of $\delta_{\pm}^k\in T$. 
\begin{lem}
Let $f\in\mathbb{C}[T]^{W_0}$. Then
\[
\sum_{\lambda\in P^\vee}u_{f,\lambda}^{k,1,\pm}=
f\bigl(\delta_+^k\bigr)
\]
as identity in $\mathbb{C}_{\sigma,\nabla}^{k,1}[T]$.
\end{lem}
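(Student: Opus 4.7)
The plan is to exploit the fact that at $q=1$ the shift $\tau(\lambda)_1$ acts trivially on functions (since $\tau(\lambda)_1 f(t)=f(q^{-\lambda}t)|_{q=1}=f(t)$), so that $D_f^{k,1,\pm}=\sum_\lambda u_{f,\lambda}^{k,1,\pm}\tau(\lambda)$, viewed as an operator on $\mathbb{C}_{\sigma,\nabla}^{k,1}[T]$, reduces to multiplication by the rational function $\sum_\lambda u_{f,\lambda}^{k,1,\pm}$. Proving the claim thus amounts to exhibiting a single nonzero $\phi^\pm\in\mathbb{C}_{\sigma,\nabla}^{k,1}[T]$ with $D_f^{k,1,\pm}\phi^\pm=f(\delta_+^k)\phi^\pm$. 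I would pick $\phi^\pm\in L^{W_{0,\pm}}=L_\pi^\pm$ (Lemma \ref{W0IinvL}); on this subspace the defining decomposition $\pi^{k^{-1},1}(f(Y))=\sum_{w\in W_0}D^{\pm}_{f,w}w_{\pm}$ collapses to $D_f^{k,1,\pm}\phi=\pi^{k^{-1},1}(f(Y))\phi$, reducing the task to computing the eigenvalue of the central Cherednik--Dunkl operator $\pi^{k^{-1},1}(f(Y))$ on $\phi^\pm$.

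For the symmetric case I would take $\phi^+=1$. A direct induction on the length of a reduced expression shows $\pi^{k^{-1},q}(T_w)\cdot 1=\epsilon_+^{k^{-1}}(T_w)$ for every $w\in W$, using $\pi^{k^{-1},q}(T_j)\cdot 1=k_j^{-1}$ and $\pi^{k^{-1},q}(T_\omega)\cdot 1=1$. Applied to $Y^\lambda=T_{\tau(\lambda)}$ and extended linearly, \eqref{trivY} yields $\pi^{k^{-1},1}(f(Y))\cdot 1=f(\delta_+^{k^{-1}})$. But $w_0(R_0^+)=R_0^-$ together with \eqref{deltapm} gives $w_0\delta_+^k=(\delta_+^k)^{-1}=\delta_+^{k^{-1}}$, so $W_0$-invariance of $f$ rewrites the right-hand side as $f(\delta_+^k)$, closing the $+$ case.

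For the antisymmetric case I would take $\phi^-=G^{k,-}\in L^{W_{0,-}}$ (an inclusion established in the proof of Lemma \ref{W0IinvL}). Centrality of $f(Y)$ forces $\pi^{k^{-1},1}(f(Y))G^{k,-}\in L^{W_{0,-}}=G^{k,-}\cdot L^{W_0}$, so the result has the form $g\cdot G^{k,-}$ with $g\in L^{W_0}$; only the identification $g=f(\delta_+^k)$ remains. For generic multiplicity functions $k$ this can be extracted from the Macdonald theory of Remark \ref{antisymmpol} specialised at $q=1$: the lowest-degree antisymmetric Macdonald polynomial $P_{\rho^\vee}^{(-)}(k^{-1},1)$ is a nonzero polynomial in $L^{W_{0,-}}$ with leading term $t^{\rho^\vee}$, hence, by rank-one freeness of $L^{W_{0,-}}\cap\mathbb{C}[T]$ over $\mathbb{C}[T]^{W_0}$, is a scalar multiple of $G^{k,-}$; the Macdonald eigenvalue formula gives the eigenvalue of $\pi^{k^{-1},1}(f(Y))$ on it as $f(\gamma_{\rho^\vee}(k^{-1},1)^{-1})=f(\delta_+^k)$. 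Rational dependence of $u_{f,\lambda}^{k,1,-}$ on $k$ then extends the identity $g=f(\delta_+^k)$ to all $k$ by analytic continuation.

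The principal technical obstacle is this last step: unlike in the $+$ case, $G^{k,-}$ is not a simultaneous eigenvector of the non-central operators $\pi^{k^{-1},1}(Y^\lambda)$ ($\lambda\notin Q^\vee$), so pinning down the central eigenvalue requires genuine input (the Macdonald-theoretic route above, or, alternatively, a direct but technical induction using the affine intertwiners of Theorem \ref{intertwinerthm} and the presentation of $Y^{w^{-1}(\varphi^\vee)}$ in terms of $T_0,T_w,T_{s_\varphi w}$ from Lemma \ref{generateI}).
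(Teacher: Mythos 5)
Your symmetric half is correct and in fact more self-contained than the paper's argument: the direct computation $\pi^{k^{-1},q}(T_w)1=\epsilon_+^{k^{-1}}(T_w)$, hence $D_f^{k,q,+}(1)=\pi^{k^{-1},q}(f(Y))1=f(\delta_+^{k^{-1}})=f(w_0\delta_+^k)=f(\delta_+^k)$, requires no genericity at all, whereas the paper obtains the same eigen-equation from $P_0^{(+)}(k^{-1},q)=1$ and an extension from generic parameters. Your overall reduction (both sides are multiplication operators at $q=1$, so one nonzero eigenfunction in the integral domain $\mathbb{C}_{\sigma,\nabla}^{k,1}[T]$ suffices, and on $L^{W_{0,\pm}}=L_\pi^{\pm}$ one has $D_f^{k,1,\pm}\phi=\pi^{k^{-1},1}(f(Y))\phi$) is also the reduction the paper uses, with the same test functions $1$ and $G^{k,-}$.

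The antisymmetric half, however, has a genuine gap. You appeal to ``the Macdonald theory of Remark \ref{antisymmpol} specialised at $q=1$'' to assert that $G^{k,-}$ is an eigenfunction of the central operators $\pi^{k^{-1},1}(f(Y))$ with eigenvalue $f(\gamma_{\rho^\vee}(k^{-1},1)^{-1})$. But Remark \ref{antisymmpol} and the underlying theory of (non)symmetric Macdonald polynomials require $q$ not a root of unity, and $q=1$ is precisely a degenerate point: $\gamma_\lambda(k^{-1},1)=\delta_+^{k^{-1}}$ for \emph{every} strictly dominant $\lambda$, so the spectra no longer separate degrees, and the existence of $E_{\rho^\vee}(k^{-1},1)$, of $P_{\rho^\vee}^{(-)}(k^{-1},1)$, and the eigenvalue formula at $q=1$ are exactly what needs proof rather than something that can be quoted. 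Your proposed analytic continuation is only in $k$ at fixed $q=1$, so it cannot supply this missing base case; the continuation that is actually needed is in $q$. This is how the paper proceeds: it first establishes, for generic $(q,k)$, that $P_{\rho^\vee}^{(-)}(k^{-1},q)=G^{k,-}$ up to a nonzero constant (citing \cite[(5.8.10)]{M}), hence $D_f^{k,q,-}G^{k,-}=f(\gamma_{\rho^\vee}(k^{-1},q)^{-1})G^{k,-}$, then extends this identity, cf. \eqref{klk}, to \emph{all} values of $q$ and $k$ by rational dependence on the parameters, and only afterwards specializes $q=1$, where $\gamma_{\rho^\vee}(k^{-1},1)^{-1}=\delta_+^k$. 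A secondary point: your ``rank-one freeness of $L^{W_{0,-}}\cap\mathbb{C}[T]$ over $\mathbb{C}[T]^{W_0}$'' with generator $G^{k,-}$ is essentially the content of \cite[(5.8.10)]{M} and itself needs an argument (inside $\mathbb{C}_{\sigma,\nabla}^{k,1}[T]$, where $G^{k,-}$ is a unit, $L^{W_{0,-}}=G^{k,-}L^{W_0}$ is immediate, but the polynomial refinement is not); and the alternative ``direct induction with the affine intertwiners'' is only sketched, so as written the $-$ case is not established.
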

\begin{proof}
We use the notations from Remark \ref{antisymmpol}.
Suppose for the moment that we have fixed $q$ and $k$
satisfying the generic conditions as in Remark \ref{antisymmpol}.
Then it is known that, up to a nonzero constant,
\[P_{0}^{(+)}(k^{-1},q)=1,\qquad P_{\rho^\vee}^{(-)}(k^{-1},q)=G^{k,-},
\]
where $1\in \mathbb{C}[T]$ is the constant function one,
cf., e.g., \cite[(5.8.10)]{M} for the second equality. Thus
for all possible values of $q$ and $k$,
\begin{equation}\label{klk}
\begin{split}
1&\in \textup{SpM}_L^{k,q,+}(W_0\gamma_0(k^{-1},q)^{-1})^{W_{0,+}},\\
G^{k,-}&\in 
\textup{SpM}_L^{k,q,-}(W_0\gamma_{\rho^\vee}(k^{-1},q)^{-1})^{W_{0,-}}
\end{split}
\end{equation}
with $L=\mathbb{C}_{\sigma,\nabla}^{k,q}[T]$.
Since
\[\gamma_0(k^{-1},q)^{-1}=\prod_{\alpha\in R_0^+}k_{\alpha^\vee}^{-\alpha}=
w_0(\delta_+^k)
\]
(independent of $q$) and 
\[
\gamma_{\rho^\vee}(k^{-1},q)^{-1}=q^{-\rho^\vee}\prod_{\alpha\in R_0^+}
k_{\alpha^\vee}^\alpha=q^{-\rho^\vee}\delta_+^k,
\]
we get for all $f\in\mathbb{C}[T]^{W_0}$,
\[\sum_{\lambda\in P^\vee}u_{f,\lambda}^{k,q,+}=D_f^{k,q,+}(1)=
f\Bigl(\gamma_0(k^{-1},q)^{-1}\Bigr)=
f\bigl(\delta_+^k\bigr)
\]
in $\mathbb{C}_{\sigma,\nabla}^{k,q}[T]$
(this is valid for all $q\in\mathbb{C}^\times$, in particular for $q=1$),
as well as
\[
\sum_{\lambda\in P^\vee}u_{f,\lambda}^{k,1,-}=
\bigl(G^{k,-}\bigr)^{-1}D_f^{k,1,-}\bigl(G^{k,-}\bigr)
=f\Bigl(\gamma_{\rho^\vee}(k^{-1},1)^{-1}\Bigr)
=f\bigl(\delta_+^k\bigr)
\]
in $\mathbb{C}_{\sigma,\nabla}^{k,1}[T]$.
\end{proof}

\begin{prop}\label{TM}
Let $L$ be a 
left $\mathbb{C}_{\sigma,\nabla}^{k,1}[T]\#W_0$
module. 
Turn $L$ into a $\mathbb{C}_{\sigma,\nabla}^{k,1}[T]\#_1W$
module by letting $\tau(P^\vee)$ act trivially on $L$.
Suppose that $\gamma\in T_{I,reg}^{k^{\pm 1}}$ (see \eqref{TIreg})
and $\gamma\not\in W_0(\delta_+^k)$.
Then
$\Gamma_L^{k,1}\bigl(M^{k,\pm,I}(\gamma)\bigr)_\nabla^{\tau(P^\vee)}=\{0\}$. 
\end{prop}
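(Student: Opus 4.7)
The plan is to apply the Cherednik-Matsuo correspondence (Theorem \ref{CMcorr}(a)) to reduce the claim to showing that $\textup{SpM}_L^{k,1,\pm}(W_0\gamma^{-1})=\{0\}$. Since $\gamma\in T_{I,reg}^{k^{\pm 1}}$, Theorem \ref{CMcorr}(a) provides an injective linear map
\[
\xi_{L,\gamma}^{k,1,\pm,I}: \Gamma_L^{k,1}\bigl(M^{k,\pm,I}(\gamma)\bigr)_\nabla^{\tau(P^\vee)}\hookrightarrow \textup{SpM}_L^{k,1,\pm}(W_0\gamma^{-1}),
\]
so it suffices to show the codomain is zero.

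Next, I would exploit that $\tau(P^\vee)$ acts trivially on $L$ to evaluate the Cherednik-Macdonald operators on $L$ as scalar multiplications. Writing $D_f^{k,1,\pm}=\sum_{\lambda\in P^\vee}u_{f,\lambda}^{k,1,\pm}\tau(\lambda)$, the triviality of the $\tau(P^\vee)$-action gives, for every $\phi\in L$,
\[
D_f^{k,1,\pm}\phi=\Bigl(\sum_{\lambda\in P^\vee}u_{f,\lambda}^{k,1,\pm}\Bigr)\phi=f\bigl(\delta_+^k\bigr)\phi
\]
by the preceding lemma. Hence a nonzero $\phi\in \textup{SpM}_L^{k,1,\pm}(W_0\gamma^{-1})$ would yield $f(\delta_+^k)=f(\gamma^{-1})$ for all $f\in\mathbb{C}[T]^{W_0}$, forcing $W_0\gamma^{-1}=W_0\delta_+^k$.

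Finally, I would close the argument by observing that the orbit $W_0\delta_+^k$ is stable under inversion: indeed, $w_0\delta_+^k=\prod_{\alpha\in R_0^+}k_{\alpha^\vee}^{w_0\alpha}=\prod_{\alpha\in R_0^+}k_{\alpha^\vee}^{-\alpha}=(\delta_+^k)^{-1}$, using $w_0(R_0^+)=R_0^-$ and the $W$-invariance of $k$. Therefore $\gamma^{-1}\in W_0\delta_+^k$ is equivalent to $\gamma\in W_0\delta_+^k$, contradicting the standing hypothesis. The argument is essentially two line once the pieces are assembled; the only (very minor) point requiring care is keeping straight the duality between $\gamma$ and $\gamma^{-1}$ in the spectral parameter of the Cherednik-Matsuo correspondence, which is resolved by the inversion-invariance of $W_0\delta_+^k$.
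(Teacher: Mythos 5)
Your proof is correct and follows essentially the same route as the paper: reduce via the injectivity part of Theorem \ref{CMcorr}(a) to showing $\textup{SpM}_L^{k,1,\pm}(W_0\gamma^{-1})=\{0\}$, then use the triviality of the $\tau(P^\vee)$-action together with the lemma $\sum_{\lambda}u_{f,\lambda}^{k,1,\pm}=f(\delta_+^k)$ to see that the spectral problem forces $W_0\gamma^{-1}=W_0\delta_+^k$. Your explicit check that $w_0\delta_+^k=(\delta_+^k)^{-1}$, so that the orbit $W_0\delta_+^k$ is inversion-stable, is a welcome detail the paper leaves implicit when it phrases the dichotomy in terms of $\gamma$ rather than $\gamma^{-1}$.
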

\begin{proof}
Suppose that $\gamma\in T_{I,reg}^{k^{\pm 1}}$ and 
$\gamma\not\in W_0(\delta_+^k)$.
In view of Theorem \ref{CMcorr}{\bf (a)} 
it suffices to prove that
$\textup{SpM}_L^{k,1,\pm}(W_0\gamma^{-1})=\{0\}$. 

Since $L=L^{\tau(P^\vee)}$, 
the previous lemma shows that
\[
\textup{SpM}_L^{k,1,\pm}(W_0\gamma^{-1})=
\{\phi\in L \,\,\, | \,\,\, f\bigl(\delta_+^k\bigr)\phi=
f(\gamma^{-1})\phi\qquad 
\forall\, f\in \mathbb{C}[T]^{W_0}\}.
\]
Consequently
\begin{equation*}
\textup{SpM}_L^{k,1,\pm}(W_0\gamma^{-1})=
\begin{cases}
L\quad &\hbox{ if }\, \gamma\in 
W_0\bigl(\delta_+^k\bigr),\\
\{0\}\quad &\hbox{ if }\, \gamma\not\in W_0\bigl(\delta_+^k\bigr),
\end{cases}
\end{equation*}
hence the result.
\end{proof}

\begin{rema} 
Let $L$ be field and
a left $\mathbb{C}_{\sigma,\nabla}^{k,q}[T]\#_qW$
module such that $W$ acts by field automorphisms on $L$.
Let $M$ be a finite dimensional $H(k)$-module over $\mathbb{C}$.
Then it can be shown that
\begin{equation}\label{dimension}
\textup{Dim}_{L^{\tau(P^\vee)}}
\bigl(\Gamma_L^{k,q}\bigl(M\bigr)_\nabla^{\tau(P^\vee)}
\bigr)\leq \textup{Dim}_{\mathbb{C}}(M).
\end{equation}
Since the quantum affine KZ equations are holonomic it is natural
to expect equality in \eqref{dimension}.
This is for instance the case if $0<|q|<1$ and if $L=\mathcal{M}(T)$ with
the left action of $\mathbb{C}_{\sigma,\nabla}^{k,q}[T]\#_qW$ on $\mathcal{M}(T)$
by $q$-difference reflection operators (see \cite{Et}). In general
equality does not hold true though, as Proposition \ref{TM} shows.\\
\end{rema}


\end{document}